\newcommand{\bmnum}[1]{{\textbf{\num[detect-weight=true]{#1}}}}
\newcolumntype{P}[1]{>{\centering\arraybackslash}p{#1}}
\newcolumntype{C}{>{\centering\arraybackslash}c}
\newcommand{\bR}{{\mathbb{R}}}
\newcommand{\cB}{{\mathcal{B}}}
\newcommand{\cC}{{\mathcal{C}}}
\newcommand{\cF}{{\mathcal{F}}}
\newcommand{\cH}{{\mathcal{H}}}
\newcommand{\cM}{{\mathcal{M}}}
\newcommand{\cN}{{\mathcal{N}}}
\newcommand{\cR}{{\mathcal{R}}}
\newcommand{\bss}{{\boldsymbol{s}}}
\newcommand{\bst}{{\boldsymbol{t}}}
\newcommand{\bsu}{{\boldsymbol{u}}}
\newcommand{\bsv}{{\boldsymbol{v}}}
\newcommand{\bsw}{{\boldsymbol{w}}}
\newcommand{\bssigma}{{\boldsymbol{\sigma}}}
\newcommand{\bsalpha}{{\boldsymbol{\alpha}}}
\newcommand{\pp}{\mathfrak{p}}
\newcommand{\PP}{\mathfrak{P}}
\newcommand{\U}{\mathbb{U}}
\newcommand{\V}{\mathbb{V}}
\renewcommand{\div}{{\rm div\,}}
\newcommand{\uc}{u^\circ}
\renewcommand{\sc}{\sigma^\circ}
\newcommand{\e}{\varepsilon}
\newcommand{\bz}{\mathbf{z}}
\newcommand{\bbf}{\mathbf{f}}
  \renewcommand{\cH}{\mathcal{H}}
\newcommand{\cL}{\mathcal{L}}
\newcommand{\HH}{\mathbb{H}}
\newcommand{\LL}{\mathbb{L}}
\newcommand{\EE}{\mathbb{E}}
\newcommand{\N}{\mathbb{N}}
\newcommand{\R}{\mathbb{R}}
\newcommand{\X}{\mathbb{X}}
\newcommand{\Y}{\mathbb{Y}}
\newcommand{\rgrad}{{\mathrm{grad}\,}}
\newcommand{\rdiv}{{\mathrm{div}\,}}
\newcommand{\rtr}{{\mathrm{tr}}}
\newcommand{\mR}{\mathfrak{R}}
\newcommand{\prob}{{\rm Prob}}
\renewcommand{\div}{{\rm div\,}}
\newtheorem{theorem}{Theorem}
\newtheorem{lemma}{Lemma}
\newtheorem{proposition}{Proposition}
\newtheorem{assumption}{Assumption}
\newtheorem{remark}{Remark}
\newtheorem{corollary}{Corollary}
\newcommand\numberthis{\addtocounter{equation}{1}\tag{\theequation}}
\titleformat{\paragraph}[runin]{\normalfont\bfseries}{\theparagraph}{1em}{}
\newcommand\tenbar[2][1]{%
  \def\useanchorwidth{T}%
  \ifnum#1>1
    \stackunder[0.5pt]{\tenbar[\numexpr#1-1\relax]{#2}}{\scriptscriptstyle\rule{1.4ex}{0.6pt}}%
  \else
    \stackunder[2pt]{#2}{\scriptscriptstyle\rule{1.4ex}{0.6pt}}%
  \fi
}
\def\argmin{\mathop{\rm argmin}}
\def\ps@pprintTitle{%
 \let\@oddhead\@empty
 \let\@evenhead\@empty
 \def\@oddfoot{}%
 \let\@evenfoot\@oddfoot}
\begin{document}



\title{Variationally correct operator learning: Reduced basis neural operator with a posteriori error estimation}

\author[1]{Yuan Qiu}
\ead{yuan.qiu@gatech.edu}

\author[2]{Wolfgang Dahmen}
\ead{dahmen@math.sc.edu}

\author[1]{Peng Chen\corref{cor1}}
\ead{pchen402@gatech.edu}

\affiliation[1]{organization={School of Computational Science and Engineering},
                addressline={Georgia Institute of Technology},
                city={Atlanta},
                postcode={GA 30332},
                country={USA}}

\affiliation[2]{organization={Department of Mathematics},
                addressline={University of South Carolina},
                city={Columbia},
                postcode={SC 29208},
                country={USA}}
 
\cortext[cor1]{Corresponding author.}
\begin{abstract}
Minimizing PDE-residual losses is a common strategy to promote physical consistency in neural operators. However, standard formulations often lack \emph{variational correctness}, meaning that small residuals do not guarantee small solution errors due to the use of non-compliant norms or ad hoc penalty terms for boundary conditions. This work develops a variationally correct operator learning framework by constructing first-order system least-squares (FOSLS) objectives whose values are provably equivalent to the solution error in PDE-induced norms. We demonstrate this framework on stationary diffusion and linear elasticity, incorporating mixed Dirichlet-Neumann boundary conditions via \emph{variational lifts} to preserve norm equivalence without inconsistent penalties. To ensure the function space conformity required by the FOSLS loss, we propose a Reduced Basis Neural Operator (RBNO). The RBNO predicts coefficients for a pre-computed, conforming reduced basis, thereby ensuring variational stability by design while enabling efficient training. We provide a rigorous convergence analysis that bounds the total error by the sum of finite element discretization bias, reduced basis truncation error, neural network approximation error, and statistical estimation errors arising from finite sampling and optimization. Numerical benchmarks validate these theoretical bounds and demonstrate that the proposed approach achieves superior accuracy in PDE-compliant norms compared to standard baselines, while the residual loss serves as a reliable, computable \emph{a posteriori} error estimator.
\end{abstract}

\begin{keyword}
    parametric partial differential equations \sep neural operator \sep reduced basis methods \sep first-order system least squares \sep variational correctness \sep a posteriori error estimate
\end{keyword}

\maketitle

\section{Introduction}
\label{sec:introduction}

Determining physical states of interest solely from observational data is generally intractable without invoking governing physical laws. These laws are typically modeled by systems of partial differential equations (PDEs) that depend on unspecified problem data, such as boundary conditions, source terms, or coefficient fields. The collection of solutions generated by varying these inputs forms the \emph{solution manifold}. In many-query tasks such as uncertainty quantification (UQ)~\cite{smith2024uncertainty,chen2017reduced}, Bayesian inversion~\cite{stuart2010inverse,chen2021stein} and optimal experimental design \cite{huan2024optimal,wu2023fast}, exploring this manifold requires evaluating the \emph{data-to-solution map}---the solution operator---thousands or millions of times.

The computational burden of traditional high-fidelity discretizations (e.g., finite elements) has driven a surge of interest in \emph{operator learning} for efficient surrogate modeling. Architectures such as Deep Operator Network (DeepONet)~\cite{lu2021learning} and Fourier Neural Operators (FNO)~\cite{li2020fourier} learn mappings between infinite-dimensional function spaces, typically via \emph{regression} on pre-computed high-fidelity snapshots. However, this supervised paradigm faces two critical limitations: the prohibitive cost of generating large training datasets and the reliance on standard $L^2$-type losses. Crucially, these regression norms are often not \emph{PDE-compliant}, meaning a small training error does not necessarily guarantee accuracy in the physically relevant norms.

To reduce reliance on high-fidelity data, \emph{residual minimization} (as used in Physics-Informed Neural Networks or PINNs~\cite{Kar1,Kar2}) offers an alternative by embedding the PDE directly into the loss function. However, standard residual losses often lack \emph{variational correctness}~\cite{bachmayr2025variationally}: they fail to provide uniform upper and lower bounds on the true error in a proper PDE norm. This typically occurs because bulk residuals are measured in norms that are too strong (e.g., $L^2$ for second-order operators), while boundary conditions are enforced via penalty terms in norms that are too weak or inconsistent~\cite{kharazmi2021hp}. Consequently, minimizing such losses does not rigorously control the solution error. While recent research has proposed \emph{a posteriori} error estimation strategies---including learning error certificates~\cite{fanaskov2024neural}, developing computable bounds~\cite{ernst2025posteriori}, and utilizing functional-type norms~\cite{cao2024spectral}---achieving variational correctness \emph{by design} requires a stable variational formulation. Specifically, one must identify a pair of \emph{PDE-compliant norms} for the trial and test spaces such that the residual in the \emph{dual test norm} is uniformly proportional to the error in the trial norm. This condition holds precisely when the variational formulation is stable in the sense of the Babu\v{s}ka-Ne\v{c}as Theorem~\cite{bachmayr2025variationally}.

A practical obstruction, well-known in adaptive finite elements, is that evaluating the dual norm is generally difficult; we refer to \cite{bachmayr2025variationally} for various approaches to overcome this. In this work, we focus on a strategy that seeks stable variational formulations where the test space is \emph{self-dual} (i.e., an $L^2$-space), allowing the residual to be computed simply as an $L^2$-norm. While standard higher-order PDE formulations typically do not admit such stability, reformulating them as a \emph{first-order system} often does. This approach, known as First-Order System Least-Squares (FOSLS)~\cite{bochev2009least,FOSLS}, has been successfully applied to space-time formulations of parabolic and wave equations~\cite{KF1,KF2}, as well as stationary diffusion and linear elasticity~\cite{opschoor2024first}. Deep learning methods utilizing FOSLS have already demonstrated significant promise for solving PDEs~\cite{cai2020deep, lyu2022mim, bersetche2023deep, li2024solving, opschoor2024first}. 
However, these studies have primarily focused on solving individual PDE instances, rather than addressing the more challenging task of learning the solution operator over a distribution of parameters.

In this work, we bridge this gap by developing a variationally correct operator learning framework for parametric PDEs. To demonstrate the methodology, we restrict our attention here to linear parametric PDEs---specifically, stationary diffusion and linear elasticity. Our approach diverges from standard methods in two key ways. First, we handle mixed Dirichlet-Neumann boundary conditions through \emph{variational lifts} rather than penalty terms, preserving rigorous norm equivalence. Second, to inherently satisfy the \emph{conformity requirements} of our loss function, we propose a \emph{Reduced Basis Neural Operator} (RBNO). While standard neural operators effectively learn mappings between generic function spaces, enforcing specific regularity constraints (such as flux continuity) remains non-trivial. The RBNO bypasses this difficulty by predicting coefficients for a basis that is conforming by construction, thereby ensuring variational stability by design while mitigating the complexity of high-dimensional outputs.

To practically construct this basis, we exploit the rapid decay of Kolmogorov $n$-widths for elliptic problems~\cite{CDacta,quarteroni2015reduced}. We project the solution manifold onto a low-dimensional linear space generated by Proper Orthogonal Decomposition (POD) of high-fidelity snapshots. Since these snapshots are computed using a conforming discretization, their linear combinations naturally preserve the required function space regularity. The neural operator then learns the mapping from parameters to the \emph{coefficients} of this reduced basis. This hybrid architecture functions analogously to an encoder-decoder with a fixed linear decoder~\cite{bhattacharya2021model,lu2022comprehensive}, yet distinctively, it is trained by minimizing the rigorous FOSLS residual rather than a regression loss. We note that while data-driven reduced basis neural networks have been explored elsewhere~\cite{dal2020data,fresca2021comprehensive,kutyniok2022theoretical, o2022derivative,franco2023deep,o2024derivative,zheng2025rebano,wang2025reduced}, our approach uniquely leverages this structure to enable variationally correct training.

\textbf{Contributions.} The specific contributions of this paper are as follows:
\begin{enumerate}
\item \textit{Rigorous construction of variationally correct FOSLS objectives.} We derive $L^2$-based FOSLS residual losses for stationary diffusion and linear elasticity. Crucially, we show how mixed Dirichlet/Neumann boundary data can be incorporated via \emph{variational lifts}, provably preserving the equivalence between the loss and the solution error in PDE-induced norms without relying on ad hoc boundary penalties.
\item \textit{Finite element realization and discrete error control.} We formulate conforming finite element discretizations that make the quadratic residual loss efficiently computable. We establish a theoretical link between this discrete loss and the discretization errors, proving convergence rates that depend on the mesh size, polynomial order, and PDE regularity. This ensures the training objective remains a reliable proxy for the true error even after discretization.
\item \textit{Reduced Basis Neural Operator (RBNO) with theoretical guarantees.} We propose the RBNO architecture, which predicts coefficients for a pre-computed, conforming POD basis. By construction, this ensures the network's output satisfies the function space conformity required by the FOSLS loss. We provide a convergence analysis that bounds the total error by the sum of finite element discretization bias, reduced basis projection error, neural network approximation error, statistical estimation errors arising from finite sampling, and optimization error, rigorously justifying the hybrid approach.
\item \textit{Numerical validation.} We demonstrate the method on diffusion benchmarks and a linear elasticity problem. We numerically validate the convergence analysis with respect to finite element discretization, reduced basis projection, statistical estimation and optimization errors in RBNO training. The results confirm that the proposed residual loss serves as a tight, computable \emph{a posteriori} error estimator and that the RBNO achieves superior accuracy in PDE-compliant norms compared to two baselines.
\end{enumerate}

The remainder of the paper is organized as follows. \cref{sec:conceptual_background_and_orientation} reviews the abstract framework of operator learning and variational correctness. \cref{sec:first_order_system_least_squares_loss_formulation} derives the variationally correct FOSLS formulations for diffusion and elasticity, including the variational lifting strategy for boundary conditions. \cref{sec:conforming_finite_element_approximations} details the finite element realization of the loss and the associated discrete error control. \cref{sec:reduced_basis_neural_operators} introduces the RBNO architecture and establishes the theoretical convergence analysis. \cref{sec:numerical_experiments} presents numerical validation of the error estimates and performance comparisons with standard neural operators. Finally, \cref{sec:conclusions_limitations_and_future_work} discusses limitations and future directions.

\section{Conceptual background and orientation}
\label{sec:conceptual_background_and_orientation}
We briefly outline the abstract problem formulation, upon which subsequent developments will be based. We consider a {\em parametric family} of linear partial differential equations (PDEs)
in residual form
\begin{equation}
\label{par}
\cR(u;\pp)= \mathcal{B}_\pp u - f = 0
\end{equation}
where $\pp$ stands for a scalar/vector-valued parameter---or more generally for a parameter field---that may range over a given parameter range or space $\PP$ with measure $\mu$. Given $\pp\in\PP$, we are interested in assessing
the solution $u=u(\pp)$ of \eqref{par} for the parameter-instance $\pp$. In what follows, we
assume that for each $\pp\in\PP$,  \eqref{par} is well-posed (in  a sense detailed below).
Hence, the {\em solution operator} or {\em parameter-to-solution map} $\pp \mapsto u(\pp)$ is 
well-defined and its range $\cM=\cM(\PP)$---the set of all solution-states, obtained by  traversing $\PP$---is often referred to as {\em solution manifold}. It will be important to specify for each $\pp\in\PP$  a suitable 
(infinite dimensional) trial space~$\U$ that is to accommodate $\cM$ and could, in principle, depend on $\pp$. 

We postpone a discussion on this issue for the moment and recall that a common approach to learning $\pp\mapsto  u(\pp)$ is completely data-driven and employs regression in $\U$ (or 
for convenience in $L_2$). This requires computing first a large number of high-fidelity solutions, e.g., for randomly chosen parameter samples. A large number of additional test samples is needed to assess the generalization error, as the inherent uncertainty in optimization success renders a priori expressivity results insufficient. In summary, given the at best achievable Monte Carlo rates, depending on the accuracy requirements, the associated computational cost may be prohibitive. 

Therefore, we focus in this paper on contriving {\em residual-type} loss functions. In this setting, the unknown solution is encoded by known problem data (such as source terms), allowing one to directly optimize over the trainable weights that describe the hypothesis class. The number of training samples scales with the number of residual evaluations, not with the number of high-fidelity solution snapshots. The price is that we can no longer assess directly the (generalization) error of the optimization outcome in the chosen model-compliant norm, {\em unless} we can show that the size of the loss is {\em uniformly proportional} to that error. We refer to such a loss as {\em variationally correct}, see \cite{bachmayr2025variationally}.

As shown in \cite{bachmayr2025variationally}, variational correctness is intimately related to a {\em stable variational formulation} for each {\em fiber problem} \eqref{par}. To be specific, suppose one has found for each $\pp\in\PP$ a pair of Hilbert spaces $\U_\pp,\V_\pp$ such that the family of bilinear forms $b_\pp(\cdot,\cdot)$, $\pp\in\PP$,  defined by $b_\pp(w,v)= (\cB_\pp w)(v)$, $w\in\U_\pp, v\in \V_\pp$. Then well-posedness of \eqref{par} is equivalent to saying that $b_\pp(\cdot,\cdot)$ satisfies for each $\pp$ a continuity condition, an inf-sup condition and a surjectivity condition according to the Babu\v{s}ka-Ne\v{c}as-Theorem, see e.g. \cite[Chapter 1]{DGActa} or \cite{Bab,Bab2}. This in turn means that $\cB_\pp$, defined weakly as above, is an {\em isomorphism} from $\U_\pp$ onto $\V'_\pp$. Noting that for any $w\in \U_\pp$,  $\cB_\pp(u(\pp)-w)= f - \cB_\pp w$, this is equivalent to saying that  there exist constants $0<c_\pp\le C_\pp<\infty$ such that
\begin{equation}
\label{err-res}
c_\pp\|u(\pp) -w\|_{\U_\pp}\le \|f- \cB_\pp w\|_{\V'_\pp}= \|\cR(w;\pp)\|_{\V'}\le C_\pp \|u(\pp) -w\|_{\U_\pp}
\quad \forall\, w\in \U_\pp.
\end{equation}
Thus, the residual, measured in the dual test-norm, is a lower and upper bound for
the error in the model-compliant norm $\|\cdot\|_{\U_\pp}$, albeit with proportionality constants that may depend on $\pp$, see e.g. \cite{CDG2025}. For elliptic (coercive) problems, the choice $\V_\pp=\U_\pp$ is natural, but for other problems, finding a pair $\U_\pp, \V_\pp$ that gives rise to a stable variational formulation (and hence to~\eqref{err-res}) is part of the homework.

First, the above notation indicates that the dependence of a stable pair of trial and test space may be {\em essential}, meaning when $\pp$ varies, then $\U_\pp$ or $\V_\pp$ or both
may vary even as sets, see~\cite{bachmayr2025variationally} for related results. For the problems studied here, this is not the case, i.e., the spaces agree as sets, and the norms are equivalent, so that we henceforth write  $\U_\pp=\U$ and $\V_\pp=\V$. In particular, it makes sense to view the solution manifold as a subset of $\U$. We refer to~\cite{bachmayr2025variationally} for scenarios where $\U_\pp$ and/or $\V_\pp$ depend on $\pp$ in an essential way, i.e., even as sets.

Note that learning the map $\pp\mapsto u(\pp)$ (respecting the correct metrics) is tantamount to approximating the function $u=u(x,\pp)$, as a function of spatial and parametric variables in a model-compliant norm. It is then natural to interpret this function as the solution of a {\em single} (lifted) variational problem, whose residual enters the training loss. Following \cite{bachmayr2025variationally,CDG2025}, consider the Bochner spaces 
$$
\X:= L_2(\PP;\U),\quad \Y:= L_2(\PP;\V) 
$$
and   the bilinear form
$$
b(w,v):= \int_\PP b_\pp(w(\pp),v(\pp))d\mu(\pp) ,\quad \X\times \Y \to \R,
$$
where $\mu$ is a (fixed) probability measure on $\PP$, reflecting the probabilistic nature
of the parameter dependence. It has been shown in \cite{bachmayr2025variationally} that the ``lifted'' problem:
given $f\in \Y'= L_2(\PP;\V')$, find $u\in \X$ such that
\begin{equation}
\label{lifted}
b(u,v)= f(v),\quad v\in \Y,
\end{equation}
is well-posed if and only if the family of fiber problems \eqref{par} is {\em uniformly} well-posed over $\PP$, which then means that for $\cB:\X\to\Y'$, defined by $(\cB w)(v):=
b(w,v)$, $w\in\X, v\in\Y$,
\begin{equation}
\label{err-res-lift}
c_0\|u-w\|_\X \le \|f-\cB w\|_{\Y'}=: \|\cR(w)\|_{\Y'}\le C_0 \|u-w\|_\X,\quad w\in \X,
\end{equation}
where $c_0=\inf_{\pp\in\PP}c_\pp$, $C_0=\sup_{\pp\in\PP}C_\pp$, for $c_\pp,C_\pp$ from
\eqref{err-res}. 

Therefore, $\|\cR(w)\|_{\Y'}$ can be viewed as an {\em ideal} residual loss. However, first, integration over $\PP$ is not practically feasible, and second, in view of the definition $\|\cR(w)\|_{\Y'}:= \sup_{v\in\Y}\frac{b(w,v)-f(v)}{\|v\|_\Y}$, the (exact) evaluation of a non-trivial ($\Y\neq\Y'$) dual norm is not practical.

Regarding the first issue, note that \eqref{err-res-lift} describes proportionalities between {\em expectations} because, by definition
\begin{equation}
\label{exp}
\|u-w\|^2_\X= \mathbb{E}_{\pp\sim \mu}\big[ \|w(\pp)- u(\pp)\|^2_\U\big],\quad 
\|\cR(w)\|^2_{\Y'} = \mathbb{E}_{\pp\sim \mu}\big[\|\cR(w;\pp)\|^2_{\V'}\big].
\end{equation}
A common response to the first issue is to approximate the expectation by its empirical counterpart
\begin{equation}
\label{emploss}
\cL(w;\widehat\PP):= \frac{1}{\#\widehat\PP}\sum_{\pp\in \widehat\PP}\cL(w;\pp)=: \|\cR(w)\|^2_{\ell_2(\widehat\PP;\V')},\quad \text{where}\;\cL(w;\pp):= \|\cR(w;\pp)\|^2_{\V'}.
\end{equation}
Here $\widehat\PP$ denotes a collection of finite random samples of $\pp$ sampled from $\mu$ and $\#\widehat\PP$ denotes the sample size.
Of course, \eqref{err-res-lift} implies its discrete counterpart
\begin{equation}
\label{discr}
c_0\|u-w\|_{\ell_2(\widehat\PP;\U)} \le 
\cL(w;\widehat\PP)\le C_0 
\|u-w\|_{\ell_2(\widehat \PP;\U)},\quad w\in \X.
\end{equation}
Specifically, when working with a hypothesis class $\cH=\cH(\Theta)= \{w(\cdot,\pp;\theta):\theta\in\Theta\}$ determined by
 a given budget of trainable weights $\theta\in\Theta$, the regression ansatz
 \begin{equation}
\label{emprisk}
\theta^*\in \argmin_{\theta\in\Theta}\frac{1}{\# \widehat\PP} \sum_{\pp \in \widehat\PP} \| u(\pp)- w(\pp;\theta)\|^2_\U
\end{equation}
is replaced by the ``equivalent'' task of finding
\begin{equation}
\label{res}
\theta^*\in \argmin_{\theta\in\Theta}\cL(w(\theta);\widehat\PP),
\end{equation}
that spares us from computing the snapshots $u(\pp)$, which could be done approximately by minimizing the corresponding {\em empirical risk} with finite training samples. The deviation between $\cL(w;\widehat\PP)$ and $\mathbb{E}_{\pp\sim \mu}\big[\|\cR(w;\pp)\|^2_{\V'}\big]$, when $\#\widehat\PP\to \infty$, the generalization error, so to speak,  is a matter of statistical estimation.

In summary, $ \|f- \cB w\|_{\V' }= \|\cR(w)\|_{\V'}$, or its discrete counterpart \eqref{emploss}, is ``ideal'' residual loss; however, the second issue remains how to evaluate \eqref{discr} for training purposes. A central message from~\cite{bachmayr2025variationally}, which we reinforce here, is that one should reformulate \eqref{par} (if it is not already in this form) as a \emph{first-order system} of PDEs, since this often provides greater flexibility in constructing stable variational formulations. In particular, it is often possible to find a formulation in which the test space $\V$ is an $L_2$-type space (a direct product of $L_2$-spaces) and hence self-dual $\V'=\V$. This is, for instance, the case for space-time variational formulations of parabolic problems or the wave equation, \cite{KF1,KF2,GS}, as well as for parametric diffusion problems and static elasticity models, which are our focus here; see~\cite{bochev2009least,FOSLS,bachmayr2025variationally,CDG2025,opschoor2024first}.

When $\V=\V'$ is an $L_2$-space, the loss \eqref{emploss} is a least squares $L_2$-residual and can therefore (up to quadrature errors) be evaluated.  Nevertheless, the ensuing optimization task \eqref{res}, obtained when $w(\cdot,\cdot,\theta)$ belongs to a hypothesis class comprising deep neural networks with input variables $(x,\pp)\in \Omega\times \PP$, is, in general, hard to handle. This optimization task can be substantially alleviated when using a {\em hybrid} format of low-rank type, where we separate dependence on spatial and parametric variables. This has been done, for instance, in~\cite{bachmayr2025variationally,CDG2025}, where approximation systems are employed that contain elements, which are finite elements as functions of spatial variables with $\pp$-dependent expansion coefficients that can be represented, for instance, by DNNs. In the present paper, we also employ a hybrid format that differs in terms of the spatially dependent modes.  

The underlying rationale is that for elliptic models and their close relatives, the map $\pp\to u(\pp)$ is (under mild assumptions on the parametric coefficient fields) even holomorphic (see \cite{CDacta,opschoor2024first}) and the so-called  {\em Kolmogorov} $n$-widths decay robustly in the parametric dimension. Recall that for a compact subset $\mathcal{K}$ in a Banach space $\mathbb{X}$, the Kolmogorov $n$-widths (as a measure of
thickness of $\mathcal{K}$ in $\mathbb{X}$) are given by
\begin{equation}
    \label{Kolm}
d_n(\mathcal{K})_{\mathbb{X}}:= \inf_{{\rm dim}\,\V_n=n}\sup_{v\in \mathcal{K}}\inf_{v_n\in
\V_n}\|v-v_n\|_{\mathbb{X}},\quad n\in\mathbb{N}.
\end{equation} 
For $\mathcal{K} = \cM$ and $\X=L^2_\mu(\PP;\U)$ the $d_n(\mathcal{K})_{\mathbb{X}}$ measure how well the solution manifold can be approximated from a single linear space in a worst-case sense, i.e., in $L_\infty(\PP;\U)$. Thinking of $\pp$ as a random variable distributed according to some measure $\mu$ on $\PP$, the Bochner space $L^2_\mu(\PP;\U)$ is more akin to a machine learning approach, and \eqref{exp} suggests measuring accuracy in a mean-squared sense. The corresponding optimality benchmark then reads
\begin{equation}
\label{deltan}
\delta_n(\cM,\mu)^2_\U:= \inf_{{\rm dim}\,\U_n=n}\int_\PP \min_{w\in \U_n}\|u(\pp)-w\|^2_\U
d\mu(\pp).
\end{equation}
It is well-known that for this metric the {\em best linear approximation spaces} result from the Hilbert-Schmidt decomposition of the operator $M_u: L^2_\mu(\PP)\to \U$ defined by
$$
M_u w := \int_\PP u(\pp)w(\pp)d\mu(\pp),
$$
has a Hilbert-Schmidt decomposition
$$
M_u:= \sum_{k=1}^\infty s_k \langle \cdot,\phi_k\rangle_{L^2_\mu(\PP)}u_k,
$$
where $\mathbf{s}= (s_k)_{k\in\N}\in \ell_2(\N)$ is non-negative and non-increasing while $(u_k)_{k\in\N}$ and $(\phi_k)_{k\in\N}$ are orthonormal systems in $\U$ and $L^2_\mu(\PP)$, respectively. In particular, the spaces $\U_n:= {\rm span}\,\{u_k: k\le n\}$ realize
the benchmark \eqref{deltan} with 
\begin{equation}
\label{how}
\delta_n(\cM,\mu)^2_\U= \sum_{k>n}s^2_k,\quad n\in\N.
\end{equation}
While it is impossible to determine $\U_n$ exactly, one can approximate the $u_k$ with the aid of the well-established strategy of {\em Proper Orthogonal Decomposition} (POD). Roughly, this is done by choosing a sufficiently large ``truth space'' $\U_h\subset \U$, typically a finite element space with mesh size $h$, compute for a sufficiently large number of random samples $\pp^i$, $i=1,\ldots,N_{\text{POD}}$, corresponding (approximate) solutions $u^i=\tilde u(\pp^i)$ of $\cR(u;\pp^i)$ in $\U_h$, and compute a Singular Value Decomposition of $u^i$, $i=1,\ldots,N_{\text{POD}}$, in $\U$. When $\mu$ is a probability measure it readily follows that $\delta_n(\cM,\mu)_\U\le
d_n(\cM)_\U$ so that a rapid decay of the Kolmogorov $n$-widths implies a rapid decay of the $\delta_n(\cM,\mu)_\U$. In such a situation, we expect that for a sufficiently large training set $\PP_{N_{\text{POD}}}=\{\pp^i:i=1,\ldots, N_{\text{POD}}\}$, the decay of the approximate singular values $\tilde s_i$ closely reflects the decay of the exact singular values $s_i$ and
that the tail $\sum_{k=n+1}^{N_{\text{POD}}} \tilde s^2_k$ becomes smaller than a given target accuracy $\e$ already for some $n_\e\ll {\rm dim}\,\U_h$ of moderate size. In this case
\begin{equation}
\label{Un}
\U_{n_\e}:= {\rm span}\,\big\{u_k:k=1,\ldots,n_\e\big\},
\end{equation}
with the POD bases $u_k$, $k = 1, \ldots, n_\e$, serves as a {\em reduced space}. More details will be provided later below for the respective examples. 

One should keep in mind that a rapid decay of \eqref{deltan} requires a significantly smaller number
of high-fidelity approximate solutions of \eqref{par} than a purely data-driven approach, based on \eqref{emprisk}.

Given $\U_{n_\e}$ we will then seek approximations to the solutions $u$ of \eqref{par} in the ``hybrid'' form
\begin{equation}
\label{hybrid}
u(x,\pp;\theta):= \sum_{k=1}^{n_\e} \phi_k(\pp;\theta)u_k(x).
\end{equation}
Aside from facilitating optimization, this format offers advantages in handling boundary conditions. As mentioned earlier, one major objective is to incorporate non-trivial mixed Dirichlet-Neumann conditions in the training objective in a variationally correct way. First, due to the nature of the POD basis functions $u_k$, it is easy to incorporate essential Dirichlet conditions. As detailed below for both models, the stationary diffusion equation and linear elasticity, the first-order formulations allow us, in particular, to incorporate Neumann conditions by an $L_2$ source in the least-squares residual, which is determined via solving a single auxiliary second-order elliptic problem.

\section{First-order system least-squares loss formulation}
\label{sec:first_order_system_least_squares_loss_formulation}

 \subsection{Stationary diffusion equation with mixed boundary conditions}

\subsubsection{Second order formulation}

Let $\Omega\subset \bR^{d}$ be a bounded domain. The strong form of a stationary diffusion equation with heterogeneous  diffusivity $\pp(x)\in \bR^{d\times d}$ and mixed boundary conditions (Dirichlet and Neumann) is given by
\begin{equation}\label{eq:poisson_strong_primal}
    \begin{cases}
        -\div{(\pp\nabla{u})} = f, &\qq{in $\Omega$,} \\
        u = u_{0}, & \qq{on $\Gamma_D$,} \\
        \pp\nabla{u}\cdot n = g, & \qq{on $\Gamma_N$,}
    \end{cases}
\end{equation}
where $u(x)\in\bR$ is the unknown scalar field and $f$ is a given source term with properties specified below. The Dirichlet boundary is denoted by $\Gamma_D\subset\partial\Omega$ with prescribed boundary data $u_0$. The  Neumann boundary is given by $\Gamma_N=\partial\Omega\setminus\Gamma_D$, where $n=n(x)$ is (for almost all $x\in \partial\Omega$) the outward unit normal at $x\in\partial\Omega$. Finally, $g$ stands for the prescribed normal flux.

To explain in which sense a first-order formulation is equivalent, we need to refer in both cases to an appropriate weak formulation. To that end, we recapitulate for the
convenience of the reader, a few classical facts, and define as usual
$$ 
\U:= H^1_{0,\Gamma_D}(\Omega)= {\rm clos}_{H^1(\Omega)}\,\{\phi\in C^\infty(\Omega): {\rm supp}\, \phi \cap \Gamma_D=\emptyset\},
$$ 
where we adopt common terminology when abbreviating $H^1_0(\Omega):= H^1_{0,\partial\Omega}(\Omega)$ when the Dirichlet boundary $\Gamma_D$ is all of $\partial\Omega$. Moreover, let $\U':= (H^1_{0,\Gamma_D}(\Omega))'$ denote its normed dual, i.e., the space of all bounded linear functionals on $H^1_{0,\Gamma_D}(\Omega)$. The classical weak formulation of \eqref{eq:poisson_strong_primal} then reads: given $f\in \U'$ find $u\in H^1(\Omega)$ satisfying
\begin{equation}
\label{weak2nd}
u|_{\Gamma_D}= u_0 \quad\mbox{and}\quad b_\pp(u,v):= \int_\Omega \pp\nabla u\cdot \nabla v\,dx = f(v)+ \langle g,v\rangle_{\Gamma_N}, \quad v\in \U,
\end{equation}
where the boundary constraint is understood in the sense of traces. Moreover, $\langle\cdot,\cdot\rangle_{\Gamma_N}$ denotes the duality pairing on $H^{-1/2}(\Gamma_N)\times H^{1/2}_{00}(\Gamma_N)$, where $H^{-1/2}(\Gamma_N):=
(H^{1/2}_{00}(\Gamma_N))'$. Here, $H^{1/2}_{00}(\Gamma_N)$ is the subspace of $H^{1/2}(\Gamma_N)$ that consists of those elements whose extension by zero to the rest of $\partial\Omega$ belongs to $H^{1/2}(\partial\Omega)$. Note that $v\in \U$ implies $v|_{\Gamma_N}\in H^{1/2}_{00}(\Gamma_N)$, 
so that $\langle g,\cdot\rangle_{\Gamma_N}: \U \to \bR$ indeed defines a 
bounded linear functional on $\U$, hence belongs to $\U'$. The operator induced by this weak formulation is affine. 

A proper weak first-order formulation requires a different representation of boundary conditions, which will follow from a different but equivalent alternate weak formulation of \eqref{eq:poisson_strong_primal}.
This will be obtained by solving two auxiliary parameter-independent elliptic problems representing harmonic extensions for both types of boundary data and taking a Riesz representation of the right-hand side functional. 

Regarding the Dirichlet conditions, choose a fixed $w\in H^{1}(\Omega)$ such that $w|_{\Gamma_D}=u_0$ and $\nabla w \cdot n|_{\Gamma_N}=0$ (in the sense of traces), by solving the parameter-independent problem: find $w\in H^1(\Omega)$ such that 
\begin{equation}\label{eq:DiriLiftPoisson}
(\nabla w,\nabla v)_\Omega=0,\quad v\in H^1(\Omega), \,\,w|_{\Gamma_D} = u_0.    
\end{equation}
Then consider the equivalent variational problem:  find $\uc\in \U$ such that
\begin{equation}
\label{weak2nd2}
b_\pp(\uc,v) = -b_\pp(w,v) + f(v) + \langle g,v\rangle_{\Gamma_N},\quad v\in \U,
\end{equation} 
so that $u:= \uc+w$ obviously satisfies \eqref{weak2nd}.  Note that $-b_\pp(w,\cdot) + f + \langle g,\cdot\rangle_{\Gamma_N}$ defines a bounded linear functional on $\U$ - as needed to render \eqref{weak2nd}  well-posed - provided that $f\in \U'$. 

As such, $f$ could, in principle, incorporate an additional flux-condition on $\Gamma_N$, violating the 
explicit specification in terms of $g$. To avoid this we confine $f$ to be  {\em flux-free} which in the present context means that
\begin{equation}
\label{flfree}
f(v) = (f_2,v)_\Omega + (\div\,f_1)(v) = (f_2,v)_\Omega - (f_1, \nabla v)_\Omega,\quad v\in \U.
\end{equation}
where $f_2\in L_2(\Omega)$ and $f_1\in L_2(\Omega;\R^d)$. We refer to a detailed discussion in \ref{app:decomposition} that explains in which sense the implied condition $\langle  f_1 \cdot n,v\rangle_{\Gamma_N}=0$ for $v\in \U$, is to be understood.

Regarding the Neumann condition, consider the second similar auxiliary problem: find $q\in\U$ such that
\begin{equation}
\label{harm}
(\nabla q,\nabla v)_\Omega = \langle g,v\rangle_{\Gamma_N}
,\,\, v\in \U,
\end{equation}
which has a unique solution $q\in \U$ satisfying the distributional relation 
\begin{equation}
\label{eq:aux-z}
-(\Delta q)(v) =0,\qquad  \langle  \nabla q \cdot n,v\rangle_{\Gamma_N} = \langle g,v\rangle_{\Gamma_N}, \quad v\in\U. 
\end{equation}
 Setting $z := \nabla q\in L^2(\Omega;\bR^d)$, we have $\langle g,v\rangle_{\Gamma_N}
= (z,\nabla v)= -(\div\,z)(v) +\langle  z \cdot n,v\rangle_{\Gamma_N}= \langle z \cdot n,v\rangle_{\Gamma_N}$. 
  
In summary, we obtain an equivalent weak formulation to \eqref{weak2nd}: find $\uc\in \U$ such that
\begin{equation}
\label{finally}
(\pp\nabla \uc + \pp\nabla w - z + f_1, \nabla v)_\Omega = (f_2,v)_\Omega ,\quad v\in \U,
\end{equation} 
where $u= \uc + w$ is the solution \eqref{weak2nd} and the normal trace term is realized through the field $z$.

\subsubsection{First-order system least-squares formulation}\label{sssec:FOSLS}
Introducing the auxiliary flux variable
\begin{equation}
\label{smc}
\sc:= \pp\nabla \uc + \pp\nabla w - z + f_1,
\end{equation}
\eqref{finally} takes the simple form
\begin{equation}
\label{simple}
(\sc,\nabla v)_\Omega = -(\div\,\sc)(v) + \langle \sc\cdot n,v\rangle_{\Gamma_N}= (f_2,v)_\Omega,\quad v\in\U.
\end{equation}
Testing first with $v\in H^1_0(\Omega)$,
shows that $-(\div\,\sc)(v)=(f_2,v)_\Omega$, $v\in H^1_0(\Omega)$. Since $H^1_0(\Omega)$ is
dense in $L_2(\Omega)$, we conclude that $- \div\,\sc = f_2$ in $L_2(\Omega)$, hence 
$\sc\in H(\div;\Omega)$. Testing subsequently with all $v\in\U$, yields $\langle \sc\cdot n,v\rangle_{\Gamma_N} =0$ for all $v\in\U$, which says that $\sc\cdot n|_{\Gamma_N}=0$ in $H^{-1/2}(\Gamma_N)$.

This shows that we have to seek $\sc$ in $\Sigma$ and $(\div\,\sc + f_2,v)_\Omega$ is well-defined for $v\in L_2(\Omega)$, where 
\begin{equation}
\label{divsc}
\Sigma:= \{\eta\in H(\div;\Omega): \eta \cdot n|_{\Gamma_N} =0\}.
\end{equation}
Hence, we arrive at the well-known FOSLS formulation from \cite{bochev2009least,FOSLS}: find $(\sc,\uc)\in \HH := \Sigma \times \U$ such that
\begin{equation}
\label{fosls}
(\sc - \pp\nabla\uc ,\tau)_\Omega = (F,\tau)_\Omega ,\quad -(\div \,\sc, v)_\Omega = (f_2,v)_\Omega,\quad (\tau,v)\in \LL_2:= L_2(\Omega;\R^{d+1}),
\end{equation}
where $F:= \pp\nabla w -z +f_1\in L_2(\Omega;\R^d)$. Defining the operator
\begin{equation}\label{eq:stableB}
(\cB_\pp[\sc,\uc])([\tau,v]):= (\sc - \pp\nabla \uc, \tau)_\Omega - (\div \sc,v)_\Omega,
\quad [\tau,v]\in \LL_2= L_2(\Omega;\bR^{d+1}),
\end{equation}
it has been shown in \cite{bochev2009least,FOSLS} that $\cB_\pp$ is an {\em isomorphism} from $\HH$ onto $\LL_2:=L_2(\Omega;\bR^{d+1})$, i.e.,
\begin{equation}
\label{stability}
\|\cB_\pp[\sigma',u']\|^2_{\LL_2}\eqsim \|[\sigma',u']\|^2_\HH := \|\sigma'\|^2_{\Sigma} + \|u'\|^2_{\U},\quad \forall [\sigma',u']\in \HH, 
\end{equation}
where the proportionality constants depend on lower and upper bounds on $\pp\in\PP$ in $\Omega$. For the convenience of the reader, we provide a proof adapted to the parametric case in \ref{app:PoissonNormEquiv}.

For any approximation $[\tilde{\sigma}^\circ,\tilde{u}^\circ ]\in \HH$ of the exact solution $[\sc,\uc]\in \HH$ at any parameter $\pp \in \PP$, applying this to the error $[\sigma',u']= [\tilde{\sigma}^\circ - \sc,\tilde{u}^{\circ}-\uc]$
yields the desired error-residual relation
\begin{equation}
\label{err-res}
\|[\tilde{\sigma}^\circ, \tilde{u}^\circ] -[\sc, \uc]\|^2_\HH
\eqsim \cL([\tilde{\sigma}^\circ, \tilde{u}^\circ]; \pp),\quad [\tilde\sigma^\circ,\tilde{u}^\circ]\in \HH, \, \pp\in \PP,
\end{equation} 
where the residual fiber-loss $\cL([\tilde\sigma^\circ, \tilde{u}^\circ]; \pp)$ of the approximate solution $[\tilde\sigma^\circ, \tilde{u}^\circ]$ at parameter $\pp$ is given by 
\begin{equation}\label{eq:lossPoisson}
    \cL([\tilde\sigma^\circ, \tilde{u}^\circ]; \pp) := \|\tilde\sigma^\circ - ( \pp\nabla \tilde{u}^\circ + \pp\nabla w - z + f_1) \|^2_{L_2(\Omega;\bR^d)} +
\|\div \tilde\sigma^\circ+ f_2\|^2_{L_2(\Omega)}.
\end{equation}
Note that the loss function involves only $L_2$-norms and no explicit boundary term in a broken or dual norm. It is therefore  computable and can be used to form the mean squared loss
$\cL([\tilde\sigma^\circ, \tilde{u}^\circ];\widehat\PP) $ in \eqref{emploss}, on which   our surrogate models will be trained:
\begin{equation}
\label{trainPoisson}
\theta^*\in \argmin_{\theta\in\Theta}\frac{1}{\#\widehat\PP}\sum_{\pp\in \widehat\PP}
\| \sc(\pp;\theta) - ( \pp\nabla \uc (\pp;\theta) + \pp\nabla w - z + f_1) \|^2_{L^2(\Omega;\bR^d)} +
\|\div  \sc(\pp;\theta)+ f_2\|^2_{L^2(\Omega)},
\end{equation}

To put the above developments into perspective, while the training loss does not contain any explicit boundary term, the homogeneous boundary conditions on $\sc$ and $\uc$ need to be built into the trial system. Since we will be using a hybrid representation system, the functions $\sc(\cdot,\pp;\theta)$ and $\uc(\cdot,\pp;\theta)$, as functions of the spatial variables $x$, are piecewise polynomial, such boundary conditions are easy to realize.
Second, the loss is variationally correct, i.e., its size bounds the error in the trial norm from below and above.  

Alternatively, recombining $\uc$ and $w$, one can omit the source term $w$ and absorb the data term $\pp\nabla w$ back in $u= \uc +w$. In this case, one has to enforce the inhomogeneous boundary condition $u|_{\Gamma_D}=u_0$ in the trial space so that the POD reduced space needs to be affine. Likewise, $\sigma:=\sc +z$ would satisfy the correct Neumann conditions so that one might consider a corresponding affine subspace of $H(\div;\Omega)$ as well.

Instead of incorporating any boundary conditions into the trial system, it is common practice in machine learning to enforce boundary conditions by penalizing boundary residuals. Again, to preserve variational correctness (the loss is a sharp error bound), one could consider the residual with respect to an ``extended'' operator 
\begin{equation}
\label{extended}
\hat\cB_\pp : H(\div;\Omega)\times H^1(\Omega) \to \LL_2 \times H^{-1/2}(\Gamma_N) \times H^{1/2}_{00}(\Gamma_D),
\quad \hat\cB_\pp[\sigma',u']:= \begin{pmatrix}\cB_\pp[\sigma',u']\\ \sigma' \cdot n|_{\Gamma_N}\\
u'|_{\Gamma_D} 
\end{pmatrix} .
\end{equation}
If one can show that this is also an isomorphism, a corresponding
variationally correct fiber-residual relation would now read for each $\pp\in\PP$
\begin{align}
\label{trace}
\|[\tilde\sigma,\tilde u] - [\sigma,u]\|^2_{H(\div;\Omega)\times H^1(\Omega)} 
& \eqsim  \|\tilde\sigma -   \pp\nabla \tilde u   - f_1 \|^2_{L_2(\Omega;\bR^d)} 
  +
\|\div \tilde\sigma + f_2\|^2_{L_2(\Omega)}\nonumber\\
&\qquad + \|\tilde u- u_0\|^2_{H^{1/2}(\Gamma_D)} + \|\tilde\sigma \cdot n-g\|^2_{H^{-1/2}(\Gamma_N)},\nonumber\\
&\qquad \qquad [\tilde\sigma,\tilde u]\in H(\div;\Omega)\times H^1(\Omega).
\end{align}
Thus, the residual now involves a broken Sobolev and dual Sobolev trace-norm whose evaluation is far from straightforward (although possible for the Dirichlet conditions using the intrinsic definitions as a double integral over $\Gamma_D$). Simply incorporating the Dirichlet boundary conditions by adding the term 
$\|\tilde u- u_0\|^2_{L^2(\Gamma_D)} $ instead of $\|\tilde u- u_0\|^2_{H^{1/2}(\Gamma_D)}$ is {\bf not} variationally correct, i.e., does {\bf not} provide a rigorous error bound. Replacing $\|\tilde\sigma \cdot n-g\|^2_{H^{-1/2}(\Gamma_N)}$ by $\|\tilde\sigma \cdot n-g\|^2_{L_2(\Gamma_N)}$ would at least yield an upper bound for the error, but may still, depending on the data, ask too much regularity.

In summary, the above findings will be used as follows. Given a hypothesis class $\mathcal{H}(\theta)$,
comprised of functions $[\sigma(\cdot;\theta),u(\cdot;\theta)]$ that depend on trainable weights 
$\theta\in \Theta$, we can now follow the lines outlined at the end of Section \ref{sec:conceptual_background_and_orientation}. Most importantly,
in the present situation, we have $\V= L_2(\Omega;\R^{d+1})$ so that the losses in \eqref{emploss} and \eqref{trainPoisson}, respectively, are computable.

\subsection{Linear elasticity equation} 
We turn now to the classical model for linear elasticity. We consider a convex polygonal domain $\Omega\subset\bR^{d}$ (typically $d=1,2,3$ for practical scenarios). While plain letters denote scalar-valued functions, we distinguish vectors and (rank-2) tensors by single or double underscores such as 
$$
\tenbar[1]{f}=(f_1, f_2, \ldots, f_d) \text{ and } \tenbar[2]{f} = 
\begin{psmallmatrix}
    f_{11} & f_{12} & \cdots & f_{1d} \\
    f_{21} & f_{22} & \cdots & f_{2d} \\
    \vdots & \vdots & \ddots & \vdots \\
    f_{d1} & f_{d2} & \cdots & f_{dd}
\end{psmallmatrix}.
$$

\begin{equation*}
    \tenbar[1]{\rgrad} f = 
    \begin{pmatrix}
        \frac{\partial f}{\partial x_1}, & \frac{\partial f}{\partial x_2}, & \cdots, & \frac{\partial f}{\partial x_d}
    \end{pmatrix}, \quad 
    \rdiv \tenbar[1]{f} = \sum_{i=1}^{d}\frac{\partial f_i}{\partial x_i}, 
\quad 
    \tenbar[2]{\rgrad}{\tenbar[1]{f}} = 
    \begin{pmatrix}
        \tenbar[1]{\rgrad}{f_1} \\ 
        \tenbar[1]{\rgrad}{f_2} \\ 
        \vdots \\
        \tenbar[1]{\rgrad}{f_d}
    \end{pmatrix}, \quad 
     \tenbar[1]{\rdiv} \tenbar[2]{f} = 
        \begin{pmatrix}
           \rdiv \tenbar[1]{f_1} \\
           \rdiv \tenbar[1]{f_2} \\
           \vdots \\
           \rdiv \tenbar[1]{f_d} \\
        \end{pmatrix}.
\end{equation*}
The vector-vector and tensor-tensor inner products ``$\cdot$'' respectively  ``$\colon$'' are then
given  by
\begin{equation*}
    \tenbar[1]{f} \cdot \tenbar[1]{g} = \sum_{i=1}^{d}f_ig_i, 
    \quad 
    \tenbar[2]{f}\colon\tenbar[2]{g} = \sum_{i=1}^{d}\sum_{j=1}^{d} f_{ij}g_{ij}, 
\end{equation*}
and the tensor-vector inner product reads 
\begin{equation*}
    \tenbar[2]{f} \cdot \tenbar[1]{g} = 
    \begin{pmatrix}
        \tenbar[1]{f_1} \cdot \tenbar[1]{g}, \tenbar[1]{f_2} \cdot \tenbar[1]{g}, \ldots,  \tenbar[1]{f_d} \cdot \tenbar[1]{g}
    \end{pmatrix}^{T}.
\end{equation*}

In these terms, the strong formulation of a linear elasticity equation with mixed boundary conditions (Dirichlet and Neumann) can be written as 
\begin{equation}\label{eq:elasticity_strong_primal}
    \begin{cases}
       -\tenbar[1]{\rdiv} \tenbar[2]{\sigma}(\tenbar[1]{u})= \tenbar[1]{f},  &\qq{in $\Omega$,} \\
        \tenbar[1]{u} = \tenbar[1]{u_0},  &\qq{on $\Gamma_D$,} \\
        \tenbar[2]{\sigma}(\tenbar[1]{u}) \cdot \tenbar[1]{n} = \tenbar[1]{t}, &\qq{on $\Gamma_N$,} 
    \end{cases}
\end{equation}
where the Cauchy stress tensor $\tenbar[2]{\sigma}$ is a function of the displacement vector field $\tenbar[1]{u}$
\begin{equation}\label{eq:stress-tensor}
    \tenbar[2]{\sigma}(\tenbar[1]{u})= 2\mu \, \tenbar[2]{\varepsilon}(\tenbar[1]{u}) + \lambda \, \rtr(\tenbar[2]{\varepsilon}(\tenbar[1]{u})) \tenbar[2]{I_d}.
\end{equation}
Here $\lambda, \mu\in\bR$ are (possibly spatially dependent) parameters associated with the material, $\tenbar[2]{I_d}$ is the $d\times d$ identity matrix, and $\tenbar[2]{\varepsilon}(\tenbar[1]{u})=\frac{1}{2}(\tenbar[2]{\rgrad}\tenbar[1]{u} + (\tenbar[2]{\rgrad}\tenbar[1]{u})^{T})$ is the strain tensor.
Note that in these terms, we have
\begin{equation}
\label{Green}
\int_\Omega (\tenbar[1]{{\rm div}}\, \tenbar[2]{\sigma})\cdot \tenbar[1]{v}\,dx = - \int_\Omega
\tenbar[2]{\sigma} : {\rgrad}\tenbar[1]{v}\,dx + \int_{\partial\Omega}(\tenbar[2]{\sigma}\cdot\tenbar[1]{n})\cdot\tenbar[1]{v}\,d\gamma.
\end{equation}

As in the previous section, to properly handle boundary conditions, we start with the standard weak formulation of \eqref{eq:elasticity_strong_primal}: find $\tenbar[1]{u}\in H^1 (\Omega;\R^d)$ such that 
\begin{equation}
\label{elast-weak}
b(\tenbar[1]{u},\tenbar[1]{v}):= \int_\Omega {\tenbar[2]\sigma(\tenbar[1]{u})}:\tenbar[2]{\rgrad}\tenbar[1]{v}\,dx =\int_{\Gamma_N} 
\tenbar[1]{t}\cdot 
\tenbar[1]{v}\, d\gamma + \int_{\Omega} \tenbar[1]{f}\cdot \tenbar[1]{v}\, dx,
\quad v\in \U= H^1_{0,\Gamma_D}(\Omega;\R^d),
\end{equation}
where we assume here for convenience that $\tenbar[1]{f}\in L^2(\Omega;\R^d)$. Otherwise, we would need to consider a decomposition of flux-free $\tenbar[1]{f}\in (H^1_{0,\Gamma_D}(\Omega;\R^d))'$, by applying the scalar-valued decompositions \eqref{flfree} componentwise,
\begin{equation}
\label{tendeco}
\tenbar[1]{f}= \tenbar[1]{f_2} + \tenbar[1]{{\rm div}}\,\tenbar[2]{f_1},
\end{equation}
where $\tenbar[1]{f_2}\in L_2(\Omega;\R^d)$ and $\tenbar[2]{f_1}\in L_2(\Omega;\R^{d\times d})$.

As in the previous section, we wish to represent the boundary conditions as source terms. Regarding the displacement condition, let $\tenbar[1]{w} \in H^1(\Omega;\R^d)$ be any fixed function satisfying $\tenbar[1]{w}|_{\Gamma_D}= \tenbar[1]{u_0}|_{\Gamma_D}$ and $\tenbar[2]{\rgrad}{\tenbar[1]w}\cdot\tenbar[1]{n} = \tenbar[1] 0$ on $\Gamma_N$ (componentwise in the sense of traces). For instance, one can solve
\begin{equation}
\label{eq:DiriLiftElasticity}
a(\tenbar[1]{w},\tenbar[1]{v}):=\int_\Omega \tenbar[2]{\rgrad}{\tenbar[1]{w}}:\tenbar[2]{\rgrad}{\tenbar[1]{v}}\,dx = 0,\quad \tenbar[1]{w}|_{\Gamma_D}= \tenbar[1]{u_0}|_{\Gamma_D},
\end{equation}
approximately in some finite element space. Then \eqref{elast-weak} is equivalent to finding $\tenbar[1]{u}^\circ\in \U$ such that
\begin{equation}
\label{elast-weak2}
b(\tenbar[1]{u}^\circ,\tenbar[1]{v}) = -b(\tenbar[1]{w},\tenbar[1]{v}) + 
\int_{\Gamma_N} 
\tenbar[1]{t}\cdot 
\tenbar[1]{v}\, d\gamma + \int_{\Omega} \tenbar[1]{f}\cdot \tenbar[1]{v}\, dx,
\quad v\in \U.
\end{equation}
As in the previous section, $\int_{\Gamma_N} 
\tenbar[1]{t}\cdot 
\tenbar[1]{v}\, d\gamma$ is to be understood as a dual pairing, denoted by $\langle\tenbar[1]{t},\tenbar[1]{v}\rangle_{\Gamma_N}$. Likewise, we abbreviate for convenience $(\tenbar[1]{f},\tenbar[1]{v})_\Omega := \int_{\Omega} \tenbar[1]{f}\cdot \tenbar[1]{v}\, dx$. We can view $-b(\tenbar[1]{w},\cdot)$ as a functional in $\U'$ acting on $v\in \U$, namely, for $\tenbar[1]{w}$ as above we have (see \eqref{Green})
\begin{equation}
\label{act}
-b(\tenbar[1]{w},\tenbar[1]{v})= \tenbar[1]{{\rm div}}(  \tenbar[2]{\sigma}(\tenbar[1]{w}) )(\tenbar[1]{v}),
\end{equation}
and hence is the divergence of an $L_2(\Omega;\R^{d\times d})$-tensor field. 
To represent the normal-trace integral in $\U'$, we proceed as before and consider the auxiliary problem: find $\tenbar[1]{q}\in \U=H^1_{0,\Gamma_D}(\Omega;\R^{d})$ such that
\begin{equation}
\label{aux-elast}
a(\tenbar[1]{q}, \tenbar[1]{v}) = \int_{\Gamma_N} \tenbar[1]{t}\cdot 
\tenbar[1]{v}\, d\gamma.
\end{equation}
In strong form, this solves $-\tenbar[1]{{\rm div}}\,\tenbar[2]{\rgrad}(\tenbar[1]{q}) = 
0$ with $\tenbar[2]{\rgrad} \tenbar[1]{q}\cdot \tenbar[1]{n} = \tenbar[1]{t} $ on $\Gamma_N$ and $\tenbar[1]{q}=0$ on $\Gamma_D$.

Hence, (recalling that $\tenbar[1]{f}\in L_2(\Omega;\R^d)$), in complete analogy to the previous section, a weak formulation of \eqref{eq:elasticity_strong_primal} can be stated with $\tenbar[2]{z}:= \tenbar[2]{\rgrad}\tenbar[1]{q}$ as
\begin{equation}
\label{restate}
 b(  \tenbar[1]{u}^\circ+ \tenbar[1]{w},\tenbar[1]{v})= (\tenbar[1]{f},\tenbar[1]{v})_\Omega  
 +a( 
 \tenbar[1]{q},\tenbar[1]{v}),
 \quad \tenbar[1]{u}^\circ|_{\Gamma_D}=0,
 \end{equation}
which is a linear elasticity system with homogeneous displacement boundary conditions on boundary $\Gamma_D$.
In subsequent experiments, we consider the parametric linear elasticity equation with the following parametrization: $(\lambda, \mu): \Omega \to\mathbb{R}^2$ 
are the Lambda parameters depending on the Young's modulus $E: \Omega \to \mathbb{R}$ and Poisson ratio $\nu\in\mathbb{R}$ as
\begin{align*}
    \mu(x)=\frac{E(x)}{2(1+\nu)}, \quad \lambda(x)=\frac{\nu E(x)}{(1+\nu)(1-2\nu)}.
\end{align*}
We assume the Young's modulus is a random field parameter given by $E(x)=\exp{\pp(x)} + 1$, where $\pp$ is a random field with Gaussian measure $\cN(\bar{\pp}, \cC)$ with mean $\bar{\pp}$ and covariance operator $\cC :=(\delta I -\gamma \Delta)^{-\alpha}$, with $\delta, \gamma, \alpha > 0$ collectively determining the correction, variance, and smoothness. 
We restrict our analysis to random field parameters satisfying $\pp \leq \hat{\pp}$ for a sufficiently large constant $\hat{\pp}$. This ensures that the Young's modulus $E$ is uniformly bounded, with $1 \leq E \leq \exp(\hat{\pp}) + 1$. We denote the parametric linear mapping $\cC_\pp$, a rank-4 stiffness tensor, that maps the strain $\tenbar[2]{\varepsilon}$ to the stress $\tenbar[2]{\sigma}$ as 
\begin{equation*}
    \tenbar[2]{\sigma} = \cC_\pp\tenbar[2]{\varepsilon} := 2\mu\,\tenbar[2]{\varepsilon} + \lambda\, \rtr (\tenbar[2]{\varepsilon})\tenbar[2]{I_d}.
\end{equation*}
Under our assumptions on the range of parameters,
its inverse $\cC_\pp^{-1}$ is given by 
\begin{equation}\label{eq:Cinv}
    \cC^{-1}_\pp\tenbar[2]{\sigma} = \frac{1}{2\mu}\tenbar[2]{\sigma} - \frac{\lambda}{2\mu(\lambda d+2\mu)}\rtr (\tenbar[2]{\sigma})\tenbar[2]{I_d}.
\end{equation}

The following first-order formulation of Equation~\eqref{restate} is obtained by treating the stress tensor $\tenbar[2]{\sigma}$ as an  independent unknown variable and (in its preconditioned form) reads (recall that
$\tenbar[2]{z}= \tenbar[2]{\rgrad}\tenbar[1]{q}$)
\begin{equation}\label{eq:elasticity_strong_first_order}
    \begin{cases}
       -\tenbar[1]{\rdiv} \tenbar[2]{\sigma}^\circ = \tenbar[1]{f},  &\qq{in $\Omega$,} \\[4pt]
       \cC_\pp^{-1/2} \tenbar[2]{\sigma}^\circ= \cC_\pp^{1/2}\tenbar[2]{\varepsilon}(\tenbar[1]{u}^\circ +\tenbar[1]{w})-   \cC_\pp^{-1/2}\tenbar[2]{z}, & \qq{in $\Omega$,}\\[4pt]
        \tenbar[1]{u}^\circ = 0
        ,  &\qq{on $\Gamma_D$,} \\[4pt]
       \tenbar[2]{\sigma}^\circ \cdot \tenbar[1]{n} = \tenbar[1]{0}, &\qq{on $\Gamma_N$.} 
    \end{cases}
\end{equation}
In this form we can apply the results in \cite{bochev2009least,FOSLS,opschoor2024first} to arrive at
 the weak (fiber) formulation:
 find $\tenbar[2]{\sigma}^\circ \in \Sigma := H_{0, \Gamma_N}(\rdiv; \Omega;\R^{d\times d})$ and 
$\tenbar[1]{u}^\circ\in \U := H^1_{0,\Gamma_D}(\Omega;\R^d)$ such that
\begin{align}
\label{1stweak}
b_\pp([\tenbar[2]{\sigma}^\circ,\tenbar[1]{u}^\circ], [\tenbar[2]{\tau},\tenbar[1]{v}]) &:= (
-\tenbar[1]{\rdiv}\tenbar[2]{\sigma}^\circ,\tenbar[1]{v})_\Omega + ( \cC_\pp^{-1/2} (\tenbar[2]{\sigma}^\circ- \cC_\pp\tenbar[2]{\varepsilon}(\tenbar[1]{u}^\circ)),   \tenbar[2]{\tau})_\Omega\nonumber\\
& = (\tenbar[1]{f},\tenbar[1]{v})_\Omega + ( \cC_\pp^{1/2}\tenbar[2]{\varepsilon}(\tenbar[1]{w}) - \cC_\pp^{-1/2}\tenbar[2]{z},
\tenbar[2]{\tau})_\Omega,\quad \tenbar[2]{\tau}\in L^2(\Omega;\R^{d\times d}),\, 
\tenbar[1]{v}\in L^2(\Omega;\R^d).
\end{align}
We define as in the previous section the operator
$\cB_\pp: \HH:= \Sigma \times \U \to \LL_2
:= L_2(\Omega;\R^{d\times d})\times L_2(\Omega;\R^d)$ by $(\cB_\pp[\tenbar[2]{\sigma}^\circ,\tenbar[1]{u}^\circ])([\tenbar[2]{\tau},\tenbar[1]{v}])= b_\pp([\tenbar[2]{\sigma}^\circ,\tenbar[1]{u}^\circ],[\tenbar[2]{\tau},\tenbar[1]{v}])$ for all $([\tenbar[2]{\sigma}^\circ,\tenbar[1]{u}^\circ],[\tenbar[2]{\tau},\tenbar[1]{v}])\in \HH \times \LL_2$, i.e.,
$$
\cB_\pp[\tenbar[2]{\sigma}^\circ,\tenbar[1]{u}^\circ] =  {\cC_\pp^{-1/2}(\tenbar[2]{\sigma}^\circ -
\cC_\pp\tenbar[2]{\e}(\tenbar[1]{u}^\circ))\choose -\tenbar[1]{\rdiv}\tenbar[2]{\sigma}^\circ}.
$$
It has been shown in \cite{bochev2009least,FOSLS,opschoor2024first} that
$\cB_\pp$ is for each $\pp\in\PP$ an isomorphism. Corresponding inf-sup- or continuity constants depend in general on $\pp$ so that any uniform stability depends on the range permitted in~$\PP$. 
Similarly to the error-residual relation \eqref{err-res} for the case of the stationary diffusion problem, we obtain, for any approximation $[\tenbar[2]{\tilde\sigma}^\circ, \tenbar[1] {\tilde u}^\circ] $ of the exact solution $[\tenbar[2]{\sigma}^\circ, \tenbar[1]{u}^\circ] $ at any given parameter $\pp$, the fiber-error-residual relation
\begin{equation}
\label{err-res-el}
\|[\tenbar[2]{\tilde\sigma}^\circ, \tenbar[1] {\tilde u}^\circ]- [\tenbar[2]{\sigma}^\circ, \tenbar[1]{u}^\circ]
\|^2_\HH \eqsim \cL([\tenbar[2]{\tilde \sigma}^\circ, \tenbar[1]{\tilde u}^\circ]; \pp)\quad \forall\, [\tenbar[2]{\tilde\sigma}^\circ, \tenbar[1] {\tilde u}^\circ] \in \HH, \, \pp\in \PP, 
\end{equation}
where the residual $\cL([\tenbar[2]{\tilde \sigma}^\circ, \tenbar[1]{\tilde u}^\circ]; \pp)$ of the approximation $[\tenbar[2]{\tilde \sigma}^\circ, \tenbar[1]{\tilde u}^\circ]$ at parameter $\pp$ is given by 
\begin{equation}\label{eq:lossElasticity}
   \cL([\tenbar[2]{\tilde \sigma}^\circ, \tenbar[1]{\tilde u}^\circ]; \pp) := \|\cC_\pp^{-1/2}\tenbar[2]{\tilde\sigma}^\circ - \cC_\pp^{1/2}\tenbar[2]{\varepsilon}(\tenbar[1]{\tilde u}^\circ+ \tenbar[1]{w}) + \cC_\pp^{-1/2}\tenbar[2]{z}\|_{L^2(\Omega;\R^{d\times d})}^2 + \| \tenbar[1]{\rdiv} \tenbar[2]{\tilde \sigma}^\circ + \tenbar[1]{f} \|_{L^2(\Omega;\R^d)}^2.
\end{equation}
This is for each $\pp\in\PP$  an easily computable loss function that enters
the corresponding mean-squared loss in \eqref{emploss} which is used to train the surrogate models.

Training yields then $[\tenbar[2]{\sigma}^\circ(\theta^*),\tenbar[1]{u}^\circ(\theta^*)]
\in  \X = L_\mu^2(\PP;\HH)$
from which we recover an approximation to the solution of \eqref{eq:elasticity_strong_first_order} by
\begin{equation}
\label{ptou}
\tenbar[1]{\tilde u}  (\pp) = \tenbar[1]{u}^\circ (\pp;\theta^*)) + \tenbar[1]{w},\quad 
\tenbar[2]{\tilde \sigma}(\pp) = \tenbar[2]{\sigma}^\circ(\pp;\theta^*) +  \tenbar[2]{z}.
\end{equation}
We immediately infer from \cite{bachmayr2025variationally} that the exact
solution $[\sc ,\uc]\in \X$ of \eqref{eq:elasticity_strong_first_order} satisfies
\begin{equation}
\label{lifted-err-res}
\|\tenbar[2]{\sigma}^\circ(\theta^*),\tenbar[1]{u}^\circ(\theta^*)] - [\tenbar[2]{\sigma}^\circ ,\tenbar[1]{u}^\circ]\|^2_{\ell_2(\widehat\PP;\HH)}
\eqsim \cL([\tenbar[2]{\sigma}^\circ(\theta^*),\tenbar[1]{u}^\circ(\theta^*)];\widehat\PP).
\end{equation}

\section{Conforming finite element approximations}
\label{sec:conforming_finite_element_approximations}

To evaluate the residual losses \eqref{eq:lossPoisson} (diffusion) and \eqref{eq:lossElasticity} (elasticity), we employ conforming Galerkin FOSLS discretizations with mixed finite-element (FE) pairs \(\Sigma_h\times\U_h\) and discrete boundary lifts. We formulate the discrete problems with lifted boundary conditions and, under uniform ellipticity, establish (i)~variational error–residual equivalence and (ii)~convergence rates for the FE losses evaluated at the FE solutions.

\subsection{Conforming finite element approximation of the diffusion problem}

For the diffusion problem and each parameter $\pp\in\PP$, we compute the conforming FE approximation $[\tilde{\sigma}^\circ, {\tilde u}^\circ] = [\sc_h, \uc_h] \in \HH_h$ of the lifted fiber solution as 
\begin{equation}\label{eq:FEApproxPoisson}
    \sc_h(x, \pp) = \sum_{n = 1}^{N_h^\sigma} \sigma^\circ_n(\pp) \varphi_n^\sigma(x) \text{ and } u^\circ_h(x, \pp) = \sum_{n = 1}^{N_h^u}  u_n^\circ(\pp) \varphi_n^u(x),
\end{equation}
where $\boldsymbol{\sigma}^\circ(\pp) = ( \sigma^\circ_1(\pp), \dots, \sigma^\circ_{N_h^\sigma}(\pp))^\top$ and $\boldsymbol{ u^\circ}(\pp) = ( u_1^\circ(\pp), \dots, u^\circ_{N_h^u}(\pp))^\top$ are the parametric coefficient vectors, $(\varphi_n^\sigma)_{n =1}^{N_h^\sigma}$ and $(\varphi_n^u)_{n =1}^{N_h^u}$ are the basis functions of the FE space $\HH_h = \Sigma_h \times \U_h\subset H(\rdiv;\Omega) \times H^1(\Omega)$, defined over a mesh with element size $h$. Here $N_h^\sigma$ and $N_h^u$ are the number of degrees of freedom (DoFs) of the FE spaces $\Sigma_h$ and $\U_h$, respectively. Specifically, we employ the conforming FE pairs 
$$
\Sigma_h \times \U_h = \text{RT}_k^\circ \times \text{CG}_m^\circ,\quad k\ge0,\; m\ge1,
$$ 
where $\text{RT}_k$ is the \emph{Raviart–Thomas element}~\cite{logg2012automated} of order $k$ (which has polynomial degree $\leq k+1$), $H(\rdiv;\Omega)$-conforming vector fields with continuous normal fluxes (see, e.g., \href{https://defelement.org/elements/raviart-thomas.html}{DefElement} for details), and CG$_m$ is the \emph{continuous Galerkin element} (or Lagrange element) of degree $m$, with piecewise polynomials of degree $\leq m$ on each cell and $C^0$-continuous across cell interfaces, thus $H^1(\Omega)$-conforming scalar fields. Here $\text{RT}^\circ_k$ and $\text{CG}^\circ_m$ denote the subspaces with the essential boundary conditions incorporated, i.e., $\tau\cdot n = 0$ on $\Gamma_N$ for any $\tau\in \text{RT}^\circ_k$ and $v=0$ on $\Gamma_D$ for any $v\in \text{CG}^\circ_m$, practically by setting the corresponding DoFs to zero.

For each $\pp$ we compute the FE approximation of the solution $[\sc(\pp),\uc(\pp)]$ by solving the Galerkin system
\begin{align}
\label{gal}
(\cB_\pp [\sc_h,u^\circ_h],\cB_\pp[\tau_h,v_h])_\Omega &= ( [\pp\nabla w_h - z_h + f_1,f_2],
\cB_\pp[\tau_h,v_h])_\Omega, \quad \forall \, [\tau_h,v_h]\in \HH_h,
\end{align}
which reads in explicit terms as: find $[\sc_h,u_h^\circ]\in \Sigma_h\times\U_h$ such that
\begin{align*}
&  \quad (\sc_h -   \pp\nabla u^\circ_h, \tau_h- \pp\nabla v_h)_\Omega + ( \text{div}\,\sc_h, \text{div}\,\tau_h)_\Omega\\
& = 
(\pp\nabla w_h - z_h + f_1, \tau_h-\pp\nabla v_h)_\Omega - (f_2, \text{div}\,\tau_h)_\Omega,
\quad \forall \,  [\tau_h,v_h]\in \HH_h.
\numberthis\label{eq:normal_poisson}
\end{align*}
Here the auxiliary variables $w_h \in \text{CG}_m$ and $q_h \in \text{CG}^\circ_m$ are conforming Galerkin approximations of the lifts $w$ and $q$ from \eqref{eq:DiriLiftPoisson} and \eqref{harm}, respectively, on the same mesh and with the same polynomial degree $m$ as $u_h^\circ$; we set $z_h := \nabla q_h$. The FE loss is assembled with these discrete lifts $(w_h, z_h)$. Note that the solution of the Galerkin problem \eqref{eq:normal_poisson} satisfies the minimal residual property
\begin{equation}\label{eq:FELSPoisson}
    [\sc_h(\pp), u^\circ_h(\pp)] = \argmin_{[\tau_h, v_h] \in \HH_h} \cL([\tau_h,  v_h]; \pp).
    \end{equation}
By construction, $[\sc_h, u_h^\circ]$ minimizes the fiber loss over $\Sigma_h\times\U_h$, i.e., it is the orthogonal projection in the FOSLS inner product induced by $(\tau-\pp\nabla v,\eta-\pp\nabla s) + (\div\tau,\div\eta)$.

Under standard regularity assumptions for the exact fiber solution $[\sc, \uc]$ and uniform ellipticity of the parameter $\pp$, the FE loss evaluated at the FE solution $[\sc_h, u_h^\circ]$ satisfies the following properties; a proof is given in \ref{app:proofFELSPoisson}.

\begin{theorem}\label{thm:FELSPoisson}
Assume $\Omega$ is Lipschitz and $\pp\in L^\infty(\Omega)$ is uniformly bounded with $0<\alpha\le \pp(x)\le \beta<\infty$ independent of $\pp\in\PP$. Let $\Sigma_h=\mathrm{RT}^\circ_k$ and $\U_h=\mathrm{CG}^\circ_m$ on a shape-regular mesh of size $h$ with $k\ge0$, $m\ge1$. Let $[\sc,\uc]$ be the exact solution and $[\sc_h,\uc_h]$ the Galerkin solution in $\Sigma_h\times\U_h$. Then, for each fixed $\pp\in\PP$, one has the error-residual equivalence
\begin{equation}\label{eq:FE-loss-equiv}
  \mathcal{L}([\sc_h,\uc_h];\pp) \eqsim \|\sc-\sc_h\|_{H(\div;\Omega)}^2 + \|\uc-\uc_h\|_{H^1(\Omega)}^2,
\end{equation}
with equivalence constants depending only on $\alpha,\beta$ and the domain.

Moreover, if $\sc\in H^{s_{\sigma}}(\Omega;\R^d)$, $\div\sc\in H^{s_{\div}}(\Omega)$ with $s_{\sigma},s_{\div}\ge0$, and $\uc\in H^{s_u}(\Omega)$ with $s_u\ge1$, then
\begin{equation}\label{eq:FELS-bound-Poisson}
  \mathcal{L}([\sc_h,\uc_h];\pp)
  \lesssim h^{2\min(k+1,s_{\sigma})}\,\|\sc\|_{H^{s_{\sigma}}}^2
        + h^{2\min(k+1,s_{\div})}\,\|\div\sc\|_{H^{s_{\div}}}^2
        + h^{2\min(m,s_u-1)}\,\|\uc\|_{H^{s_u}}^2.
\end{equation}
In particular, if $\sc\in H^{k+1}$, $\div\sc\in H^{k+1}$, and $\uc\in H^{m+1}$, then
\(\mathcal{L}([\sc_h,\uc_h];\pp)\lesssim h^{2(k+1)} + h^{2m}\), and the balanced choice $m=k+1$ yields the optimal scaling
\(\mathcal{L}([\sc_h,\uc_h];\pp)\lesssim h^{2(k+1)}\).

\end{theorem}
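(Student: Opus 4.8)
The plan is to obtain both assertions from three facts already in hand: the continuous fiber isomorphism \eqref{stability}, the conformity $\HH_h=\mathrm{RT}_k^\circ\times\mathrm{CG}_m^\circ\subset\Sigma\times\U$, and the minimal-residual characterization \eqref{eq:FELSPoisson}, combined with textbook finite-element interpolation estimates. For the error–residual equivalence \eqref{eq:FE-loss-equiv} there is essentially nothing beyond conformity: since $\mathrm{RT}_k^\circ$ and $\mathrm{CG}_m^\circ$ are $H(\div;\Omega)$- resp.\ $H^1(\Omega)$-conforming with the essential boundary conditions built in, the Galerkin pair $[\sc_h,\uc_h]$ lies in $\HH$, and applying \eqref{stability} to the error $[\sc-\sc_h,\uc-\uc_h]\in\HH$ — noting that the fiber loss \eqref{eq:lossPoisson} is exactly the squared $\LL_2$-residual of $\cB_\pp$, which vanishes at $[\sc,\uc]$ — gives $\mathcal{L}([\sc_h,\uc_h];\pp)\eqsim\|\sc-\sc_h\|_\Sigma^2+\|\uc-\uc_h\|_\U^2$ with constants inherited from \eqref{stability}, hence depending only on $\alpha,\beta$ and $\Omega$. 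Identifying $\|\cdot\|_\Sigma$ with the $H(\div;\Omega)$-norm and using Friedrichs' inequality on $H^1_{0,\Gamma_D}(\Omega)$ to pass from $\|\cdot\|_\U$ to the full $H^1$-norm yields the stated form.

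For the convergence rates I would use \eqref{eq:FELSPoisson}: $[\sc_h,\uc_h]$ minimizes the computable fiber loss over $\HH_h$, so $\mathcal{L}([\sc_h,\uc_h];\pp)\le\mathcal{L}([\tau_h,v_h];\pp)$ for all $[\tau_h,v_h]\in\HH_h$, and I would insert $\tau_h=\Pi_h^{\mathrm{RT}}\sc$ (Raviart–Thomas interpolant) and $v_h=I_h\uc$ (Lagrange interpolant), both admissible under the assumed regularity and compatible with the essential boundary conditions. Since $\mathcal{L}([\sc,\uc];\pp)=0$ — i.e.\ $\sc=\pp\nabla\uc+\pp\nabla w-z+f_1$ by \eqref{smc} and $\div\sc=-f_2$ — the data $\pp\nabla w-z+f_1$ cancels against $\sc$ in the first loss term, leaving $\|(\tau_h-\sc)-\pp\nabla(v_h-\uc)\|\le\|\sc-\Pi_h^{\mathrm{RT}}\sc\|+\beta\|\nabla(\uc-I_h\uc)\|$, while the commuting-diagram identity $\div\Pi_h^{\mathrm{RT}}\sc=P_h\div\sc$ reduces the second term to $\|\div\sc-P_h\div\sc\|$, an $L_2$-projection error onto the discontinuous $\mathbb{P}_k$ space. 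Plugging in the standard bounds $\|\sc-\Pi_h^{\mathrm{RT}}\sc\|\lesssim h^{\min(k+1,s_\sigma)}\|\sc\|_{H^{s_\sigma}}$, $\|\div\sc-P_h\div\sc\|\lesssim h^{\min(k+1,s_{\div})}\|\div\sc\|_{H^{s_{\div}}}$ and $\|\nabla(\uc-I_h\uc)\|\lesssim h^{\min(m,s_u-1)}\|\uc\|_{H^{s_u}}$, then squaring and absorbing $\beta$ into the constants, yields \eqref{eq:FELS-bound-Poisson}; the two ``in particular'' statements follow by taking $s_\sigma=s_{\div}=k+1$, $s_u=m+1$, and then $m=k+1$.

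The one nonroutine point is that the Galerkin system \eqref{eq:normal_poisson} is assembled with the discrete lifts $w_h$ and $z_h=\nabla q_h$ rather than $w,z$, so the computable loss differs from \eqref{eq:lossPoisson} by the substitution $(w,z)\mapsto(w_h,z_h)$. Carrying this through adds $\beta\|\nabla(w-w_h)\|+\|\nabla(q-q_h)\|$ to the first-term estimate above; because \eqref{eq:DiriLiftPoisson} and \eqref{harm} are parameter-independent second-order elliptic problems discretized with the same $\mathrm{CG}_m$ space, standard $H^1$-error estimates bound these by $h^{\min(m,r_w-1)}\|w\|_{H^{r_w}}+h^{\min(m,r_q-1)}\|q\|_{H^{r_q}}$, with $r_w,r_q$ governed by the smoothness of $u_0$ and $g$; under the ``standard regularity assumptions'' invoked in the statement these are of matching or higher order and are absorbed into \eqref{eq:FELS-bound-Poisson}. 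I expect this lift bookkeeping (and the regularity hypotheses it silently needs) to be the only real subtlety: no discrete inf–sup verification is required, since conformity together with the continuous isomorphism \eqref{stability} already delivers the discrete best-approximation property for free.
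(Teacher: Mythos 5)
Your proposal is correct and follows essentially the same route as the paper's proof: the equivalence comes from applying the continuous FOSLS isomorphism to the conforming error (using that the loss is the squared $\LL_2$-residual, which vanishes at the exact solution), and the rate follows from the minimal-residual/best-approximation property with the Raviart--Thomas and Lagrange interpolants inserted as competitors, followed by standard interpolation estimates. The paper treats the discrete-lift substitution $(w,z)\mapsto(w_h,z_h)$ in a separate remark rather than inside the proof, but your bookkeeping for it matches that remark's conclusion.
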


\subsection{Conforming finite element approximation of the elasticity problem}

For the elasticity problem and each parameter $\pp\in\PP$, we approximate the lifted fiber solution $[\tenbar[2]{\sigma}^\circ(\pp),\tenbar[1]{u}^\circ(\pp)]$ by a conforming Galerkin FOSLS discretization in the mixed FE space
\[
  \HH_h = \Sigma_h\times\U_h \subset H_{0,\Gamma_N}(\rdiv;\Omega;\R^{d\times d})\times H^1_{0,\Gamma_D}(\Omega;\R^d),
\]
where we take
\[
  \Sigma_h \times \U_h = (\mathrm{RT}_k^{\circ})^d \times (\mathrm{CG}_m^{\circ})^d,\quad k\ge0,\; m\ge1.
\]
Note the essential boundary conditions are built into the spaces: $\tenbar[2]{\tau}_h\cdot\tenbar[1]{n}=\tenbar[1]0$ on $\Gamma_N$ for $\tenbar[2]{\tau}_h\in\Sigma_h$ and $\tenbar[1]{v}_h=\tenbar[1]0$ on $\Gamma_D$ for $\tenbar[1]{v}_h\in\U_h$.
We parametrize the FE approximations $[\tenbar[2]{\sigma}^\circ_h(\pp),\tenbar[1]{u}^\circ_h(\pp)]$ in the same form as in \eqref{eq:FEApproxPoisson} with the tensor- and vector-valued basis functions of $\Sigma_h$ and $\U_h$, respectively, with corresponding coefficient vectors $\boldsymbol{\sigma}^\circ(\pp)$ and $\boldsymbol{u}^\circ(\pp)$.
Discrete boundary lifts are computed analogously to the diffusion case. Specifically, we compute $\tenbar[1]{w}_h\in (\mathrm{CG}_m)^d$ as the vector-valued harmonic lift of the Dirichlet data by solving the auxiliary problem \eqref{eq:DiriLiftElasticity}, and $\tenbar[1]{q}_h\in (\mathrm{CG}^\circ_m)^d$ as the solution of the auxiliary problem \eqref{aux-elast}; we then set the tensor lift $\tenbar[2]{z}_h := \tenbar[2]{\rgrad}\,\tenbar[1]{q}_h$. The FE loss and right-hand side are assembled with these discrete lifts $(\tenbar[1]{w}_h,\tenbar[2]{z}_h)$.

For each $\pp \in \PP$, we compute the FE solution $[\tenbar[2]{\sigma}^\circ_h,\tenbar[1]{u}^\circ_h]\in\Sigma_h\times\U_h$ by solving the Galerkin system
\begin{align}
\label{eq:normal_elasticity}
  (\cB_\pp[\tenbar[2]{\sigma}^\circ_h,\tenbar[1]{u}^\circ_h],\,\cB_\pp[\tenbar[2]{\tau}_h,\tenbar[1]{v}_h])_\Omega
  &= ([\cC_\pp^{1/2}\tenbar[2]{\e}(\tenbar[1]{w}_h) - \cC_\pp^{-1/2}\tenbar[2]{z}_h, \tenbar[1]{f}],\,\cB_\pp[\tenbar[2]{\tau}_h,\tenbar[1]{v}_h])_\Omega, \quad \forall[\tenbar[2]{\tau}_h,\tenbar[1]{v}_h]\in\Sigma_h\times\U_h,
\end{align}
which in expanded form reads: find $[\tenbar[2]{\sigma}^\circ_h,\tenbar[1]{u}^\circ_h]\in\Sigma_h\times\U_h$ such that 
\begin{align*}
  &\quad (\cC_\pp^{-1/2}(\tenbar[2]{\sigma}^\circ_h- \cC_\pp\tenbar[2]{\e}(\tenbar[1]{u}^\circ_h)),\,\cC_\pp^{-1/2}(\tenbar[2]{\tau}_h- \cC_\pp\tenbar[2]{\e}(\tenbar[1]{v}_h)))_\Omega
   + (\tenbar[1]{\rdiv}\,\tenbar[2]{\sigma}^\circ_h,\,\tenbar[1]{\rdiv}\,\tenbar[2]{\tau}_h)_\Omega\\
  &= (\cC_\pp^{1/2}\tenbar[2]{\e}(\tenbar[1]{w}_h) - \cC_\pp^{-1/2}\tenbar[2]{z}_h,\,\cC_\pp^{-1/2}(\tenbar[2]{\tau}_h- \cC_\pp\tenbar[2]{\e}(\tenbar[1]{v}_h)))_\Omega
   - (\tenbar[1]{f},\,\tenbar[1]{\rdiv}\,\tenbar[2]{\tau}_h)_\Omega, \quad \forall[\tenbar[2]{\tau}_h,\tenbar[1]{v}_h]\in\Sigma_h\times\U_h.  
   \numberthis
\end{align*}
Equivalently, $[\tenbar[2]{\sigma}^\circ_h,\tenbar[1]{u}^\circ_h]$ minimizes the FE fiber loss over $\Sigma_h\times\U_h$, i.e.,
\begin{equation}\label{eq:FELSElasticity}
  [\tenbar[2]{\sigma}^\circ_h(\pp),\tenbar[1]{u}^\circ_h(\pp)] = \argmin_{[\tenbar[2]{\tau}_h,\tenbar[1]{v}_h]\in\HH_h}
  \cL([\tenbar[2]{\tau}_h,\tenbar[1]{v}_h];\pp).
\end{equation}

Note that the Cauchy stress tensor $\tenbar[2]{\sigma}$ in \eqref{eq:stress-tensor} is symmetric, while the FE approximation $\tenbar[2]{\sigma}_h = \tenbar[2]{\sigma}^\circ_h + \tenbar[2]{z}_h$, with $\tenbar[2]{\sigma}^\circ_h \in (\text{RT}_k^\circ)^d$ and $\tenbar[2]{z}_h = \tenbar[2]{\rgrad}\,\tenbar[1]{q}_h$ with $\tenbar[1]{q}_h \in (\text{CG}_m^\circ)^d$, is not necessarily symmetric. To address this, we can impose weak symmetry by adding a penalty term $\|\tenbar[2]{\sigma}_h - \tenbar[2]{\sigma}_h^\top\|_{L^2(\Omega)}^2$ to the FE loss, leading to corresponding modifications in the Galerkin system \eqref{eq:normal_elasticity}. We omit this for simplicity. 

Under standard regularity assumptions for the exact fiber solution $[\tenbar[2]{\sigma}^\circ,\tenbar[1]{u}^\circ]$ and uniform bounds for the elasticity tensor $\cC_\pp$, the FE loss evaluated at the FE solution satisfies similar properties as in Theorem \ref{thm:FELSPoisson}. We present the theorem and proof in \ref{app:proofFELSElasticity} for completeness.

\section{Reduced basis neural operators}
\label{sec:reduced_basis_neural_operators}

\subsection{Reduced basis approximations}\label{ssec:errors}
Given the high-dimensional nature of the FE coefficient vectors $\bssigma^\circ_h(\pp) \in \bR^{N_h^\sigma}$ and $\bsu^\circ_h(\pp) \in \bR^{N_h^u}$ in \eqref{eq:FEApproxPoisson}, directly parameterizing these full DoF representations with a neural network leads to very large models and expensive residual evaluations, especially for large-scale FE discretizations. Instead, we seek a low-dimensional representation of the parametric solution manifold that is conformal with the underlying variational structure and allows for efficient evaluation of the loss.

The following facts and notational conventions will be frequently used in what follows.
For the parametric coefficient vector $\bss_h(\pp) = (\bssigma^\circ_h(\pp);\bsu^\circ_h(\pp)) \in \bR^{N_h^s}$ of the FE functions $s_h(\pp) = [\sc_h(\pp), u^\circ_h(\pp)]$ in the diffusion case, we first equip $\bR^{N_h^s}$ with the discrete $H(\rdiv) \times H^1$-norm 
\begin{equation}\label{eq:X-norm-Poisson}
    ||\bss_h||_{{X_h}}^2 := \|[\sc_h,\uc_h]\|^2_{\HH_h} = (\sc_h, \sc_h)_{H(\rdiv)} + (u^\circ_h, u^\circ_h)_{H^1}, 
\end{equation}
where each term is assembled using the same FE pair as in the high-fidelity discretization. The corresponding Gram matrix ${X_h}$ defines the discrete norm $||\bss_h||_{{X_h}}^2 = \bss_h^\top {X_h} \bss_h$.
We then perform Proper Orthogonal Decomposition (POD) in this ${X_h}$-inner product. Specifically, given $N_s$ snapshot solutions for a training sample set $\PP_{N_s}=\{\pp_1,\ldots,\pp_{N_s}\}\subset \PP$, stored in the matrix $\mathbf{S}= \mathbf{S}_{\PP_{N_s}} = (\bss_h(\pp_{1}), \dots, \bss_h(\pp_{N_s})) \in \bR^{N_h^s \times N_s}$,
we solve the eigenvalue problem
\begin{equation}
    \mathbf{C} \bsv_k = \lambda_k \bsv_k, \quad k = 1, \dots, N_s,
\end{equation}
where $\mathbf{C} := \mathbf{S}^\top {X_h} \mathbf{S}/N_s \in \bR^{N_s \times N_s}$. The eigenpairs $(\lambda_k, \bsv_k)$, with $\lambda_1 \geq \lambda_2 \geq \cdots$, yield the POD basis vectors $\pi_k = \mathbf{S} \bsv_k/\sqrt{\lambda_k} \in \bR^{N_h^s}$, $k = 1, \dots, N_s$, which are orthonormal with respect to the ${X_h}$-inner product; see \cite[Chapter 6]{quarteroni2015reduced}.

The functions $\phi_k \in \HH_h$, $ k=1,\ldots,r$, whose FE coefficients are given by the POD basis vectors $\pi_k$, thus form an $\HH$-orthonormal basis
of an $r$-dimensional subspace $\HH_r = \text{span}\{\phi_1, \dots, \phi_r\}$, which we refer to as the POD-RB subspace of $\HH_h$. Hence, the matrix $\Pi_r = (\pi_1, \dots, \pi_r) \in \bR^{N^s_h \times r}$  with columns $\pi_k$, defines for any $\bss_h\in\R^{N_h^s}$ via $\bss_r = \Pi_r^\top {X_h} \bss_h \in \bR^r$ a projection onto an $r$-dimensional subspace of $\R^{N_h^s}$. Specifically, 
 $$
 s_r = \Phi_r \bss_r \in \HH_r, \text{ with } \Phi_r = (\phi_1, \dots, \phi_r) \text{ and } \bss_r = \Pi_r^\top {X_h} \bss_h \in \bR^r,
 $$
is the $\HH$-orthogonal projection of the FE function $s_h$ with coefficient vector $\bss_h$. Conversely, given a coefficient vector $\bss_r\in \R^r$ of a function $s_r$ in $\HH_r$, the coefficient vector of $s_h$ is given by
\begin{equation}\label{eq:bssr}
    \bss_h = \Pi_r  \bss_r \in \R^{N_h^s}.
\end{equation}
 In practice, we  take the projection dimension $r$ such that \cite{quarteroni2015reduced} 
\begin{equation}\label{eq:low_rank_approximation_via_trailing_eigenvalues}
    ||s_h - s_r||^2_{L^2(\PP; \HH_h)} = ||\bss_h - \Pi_r\bss_r||_{L^2(\PP; {X_h})}^2 \approx \sum_{k = r+1}^{N_s} \lambda_k \leq \tau \sum_{k = 1}^{N_s} \lambda_k,
\end{equation}
for a desired tolerance $\tau \ll 1$. Equivalently, $s_r$ approximates the ``truth-space'' solution
$s_h$ in $\HH_h$ with relative error~$\sqrt{\tau}$. 
When the solution manifold {has rapidly decaying Kolmogorov $r$-widths},  we have $r \ll N_h^s$.  

 We will learn reduced basis neural operator (RBNO) {\em surrogates} $s_r(\pp;\theta) = [\sc_r,\uc_r](\pp;\theta) \in \HH_r$ to the  parameter-to-solution map $\pp\mapsto s(\pp) = [\sc(\pp),\uc(\pp)]\in \HH$ in terms of elements from the {\em hypothesis class}
\begin{equation}
\label{hypo}
\mathcal{H}_r(\Theta):= \Big\{s_r(\pp;\theta)
= \Phi_r \bss_r(\pp;\theta), \; \bss_r(\pp;\theta) \in \bR^r, \pp \in \PP, \theta \in \Theta \Big\}\subset L_{\mu}^2(\PP,\HH_r),
\end{equation}
where the expansion coefficient vector $\bss_r(\pp;\theta)$ is represented by neural networks with input variables $\pp\in\PP$ and trainable parameters $\theta\in \Theta$ in a suitable parameter space $\Theta$.

On account of FOSLS stability in \eqref{Hnormeq} and \eqref{elasteq}, the loss $\mathcal{L}$
quantifies the error
$$
\|s(\pp)- s_r\|^2_{\HH}\eqsim \cL(s_r;\pp)
= \|\cB_\pp s_r - S\|^2_{\LL_2}
$$
uniformly in $\pp\in\PP$, where $S$ is the source term, e.g., $S = [\pp\nabla w - z + f_1,f_2]$ for the diffusion problem. 

In order to see how errors of this type depend on the two constituents $\HH_h, \HH_r$, we derive first corresponding decompositions of the loss. To that end, we make standard use of the fact that loss minimizers over any subspace of $\HH$ are solutions of the normal equation posed on this subspace (with source data projected accordingly) and hence enjoy Galerkin orthogonality.
\begin{lemma}
\label{lem:gal}
Assume that $\tilde\HH$ is a closed subspace of $\HH$ and that $\tilde s(\pp)= [\tilde\sigma^\circ(\pp),\tilde u^\circ(\pp)]
\in\tilde\HH$ solves
\begin{equation}
\label{tFOSLS}
(\cB_\pp \tilde s(\pp),\cB_\pp \tilde s')_\Omega= (S,\cB_\pp \tilde s')_\Omega, \quad \forall\tilde s'\in\tilde\HH.
\end{equation}
Then we have
\begin{equation}
\label{tildedec}
\cL(\tilde s';\pp) = \|\cB_\pp (\tilde s'-\tilde s(\pp))\|^2_{\LL_2} + \|\cB_\pp\tilde s(\pp)- S\|^2_{\LL_2},\quad \forall\tilde s'\in\tilde \HH.
\end{equation}
In other words,
\begin{equation}
\label{tildemin}
\tilde s(\pp)=\argmin_{\tilde s'\in \tilde\HH} \cL(\tilde s';\pp)= \argmin_{\tilde s'\in \tilde\HH}\|\cB_\pp(\tilde s'- \tilde s(\pp))\|^2_{\LL_2}.
\end{equation}
\end{lemma}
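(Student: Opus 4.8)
The plan is to prove \eqref{tildedec} by a Pythagorean expansion of the residual that exploits the Galerkin orthogonality built into the normal equation \eqref{tFOSLS}, and then to deduce \eqref{tildemin} at once, using injectivity of $\cB_\pp$ for uniqueness.

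First I would recall that, by the definition of the fiber loss used just above the lemma, $\cL(\tilde s';\pp) = \|\cB_\pp \tilde s' - S\|^2_{\LL_2}$ for every $\tilde s'\in\HH$. Fixing $\tilde s'\in\tilde\HH$, I would write $\cB_\pp\tilde s' - S = \cB_\pp(\tilde s' - \tilde s(\pp)) + (\cB_\pp\tilde s(\pp) - S)$ and expand the squared $\LL_2$-norm of the right-hand side. This gives the two squared terms appearing in \eqref{tildedec} together with twice the cross term $(\cB_\pp(\tilde s' - \tilde s(\pp)),\, \cB_\pp\tilde s(\pp) - S)_{\LL_2}$.

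The crucial observation is that this cross term vanishes. Since $\tilde s'$ and $\tilde s(\pp)$ both belong to the subspace $\tilde\HH$, so does their difference; hence $\tilde s' - \tilde s(\pp)$ is an admissible test element in \eqref{tFOSLS}, and inserting it yields precisely $(\cB_\pp\tilde s(\pp) - S,\, \cB_\pp(\tilde s' - \tilde s(\pp)))_{\LL_2} = 0$. Substituting back proves \eqref{tildedec}.

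Finally, for \eqref{tildemin} I would note that the second term on the right of \eqref{tildedec} is independent of $\tilde s'$, so minimizing $\cL(\cdot;\pp)$ over $\tilde\HH$ is equivalent to minimizing $\tilde s'\mapsto\|\cB_\pp(\tilde s' - \tilde s(\pp))\|^2_{\LL_2}$ over $\tilde\HH$. This functional is nonnegative and attains the value $0$ at $\tilde s' = \tilde s(\pp)$; moreover, by the FOSLS isomorphism property \eqref{stability} (and its elasticity analogue), $\cB_\pp$ is injective on $\HH$, so the value $0$ is attained only there. Thus $\tilde s(\pp)$ is the unique minimizer, which gives both equalities in \eqref{tildemin}. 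I do not anticipate a genuine obstacle: this is the classical best-approximation identity in the $\cB_\pp$-induced inner product $(\cB_\pp\,\cdot\,,\cB_\pp\,\cdot\,)_{\LL_2}$ on $\HH$, the only points worth stating being that existence of $\tilde s(\pp)$ is granted by hypothesis while its uniqueness rests on the injectivity of $\cB_\pp$.
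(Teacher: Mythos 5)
Your proof is correct and follows essentially the same route as the paper: a Pythagorean expansion of the residual combined with the Galerkin orthogonality from \eqref{tFOSLS}, testing with $\tilde s'-\tilde s(\pp)\in\tilde\HH$. The only (cosmetic) difference is that the paper writes the second term as $\|\cB_\pp(\tilde s(\pp)-s(\pp))\|^2_{\LL_2}$ by inserting the exact solution with $\cB_\pp s(\pp)=S$, whereas you work directly with $\cB_\pp\tilde s(\pp)-S$; these coincide, and your added remark on uniqueness via injectivity of $\cB_\pp$ is a harmless bonus.
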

\begin{proof} By definition, we have for the exact FOSLS solution $s(\pp)=[\sc(\pp),\uc(\pp)]\in \HH$
\begin{align*}
\cL(\tilde s';\pp)&= \|\cB_\pp \tilde s' -S\|^2_{\LL_2}=\|\cB_\pp (\tilde s' -\tilde s(\pp)) +\cB_\pp(\tilde s(\pp)- s(\pp))\|_{\LL_2}^2\\
& = \|\cB_\pp (\tilde s' -\tilde s(\pp))\|_{\LL_2}^2
+ \|\cB_\pp(\tilde s(\pp)- s(\pp))\|_{\LL_2}^2 + 2(\cB_\pp (\tilde s' -\tilde s(\pp)),
\cB_\pp(\tilde s(\pp)- s(\pp)))_\Omega.
\end{align*}
Since by \eqref{tFOSLS}, $(\cB_\pp(\tilde s(\pp)-s(\pp)),\cB_\pp\tilde s')_\Omega =0$ for all $\tilde s'\in\tilde \HH$, and since $\tilde s' -\tilde s(\pp)\in \tilde\HH$, we conclude that $(\cB_\pp (\tilde s' -\tilde s(\pp)),
\cB_\pp(\tilde s(\pp)- s(\pp)))_\Omega=0$ for all $\tilde s'\in\tilde\HH$, whence the assertion follows. 
\end{proof}

We are now in a position to interrelate best approximation errors for later purposes.

\begin{theorem}
\label{prop:reduced_loss_error_equivalence}
As before, let $\HH_h=\Sigma_h\times\U_h\subset H(\rdiv;\Omega)\times H^1(\Omega)$ be the $H(\div)\times H^1$-conforming FE space and, for each fixed parameter $\pp\in\PP$, let $s_h(\pp) = [\sc_h(\pp),\uc_h(\pp)]\in\HH_h$  be the FE FOSLS solution and
\begin{equation}\label{eq:RBmin}
  s_r(\pp) = [\sc_r(\pp),\uc_r(\pp)] = \argmin_{[\tau_r,v_r]\in\HH_r} \cL([\tau_r,v_r];\pp)
\end{equation} 
be the RB FOSLS solution in the POD subspace $\HH_r \subset \HH_h$.
Then the following hold:
\begin{enumerate}
\item
For any $s_r \in \HH_r$, there holds
\begin{equation}
\label{eq:RB-fiber-equivalence}
  \cL(s_r;\pp)
  \eqsim \|\, s_r(\pp) - s_r \,\|_{\HH}^2 + \cL(s_r(\pp); \pp).
\end{equation}
In addition, the reduced FOSLS minimizer $s_r(\pp)$ in \eqref{eq:RBmin} satisfies the quasi-optimality bound \begin{equation}\label{eq:RB-best-approx}
  \|\,s_r(\pp) - s(\pp) \,\|_{\HH}
  \eqsim \min_{t_r\in\HH_r}\|\,t_r - s(\pp) \,\|_{\HH}.
\end{equation}

\item
For any $s_r \in \HH_r$, there holds
\begin{equation}\label{eq:RB-fiber-equivalence-t_r}
  \cL(s_r;\pp) \eqsim \|\, s_h(\pp) - s_r \,\|_{\HH}^2 + \cL(s_h(\pp); \pp).
\end{equation}
In addition, the reduced FOSLS minimizer $s_r(\pp)$ in \eqref{eq:RBmin}
satisfies the quasi-optimality bound
\begin{equation}\label{eq:RB-best-approx-t_r}
  \|\,s_r(\pp) - s_h(\pp) \,\|_{\HH}
  \eqsim \min_{t_r\in\HH_r}\|\,t_r - s_h(\pp) \,\|_{\HH}.
\end{equation}
\item
The function $s_r \in \X_r:= L_{\mu}^2(\PP;\HH_r)$ that for each $\pp\in\PP$ solves \eqref{tFOSLS} for
$\tilde \HH = \HH_r$,    
is the unique minimizer
\begin{equation}
\label{rmin}
s_r = \argmin_{t\in \X_r}\EE_{\pp\sim\mu}\big[\cL(t(\pp);\pp)\big]=: \argmin_{t\in \X_r}\cL(t;\PP)
\end{equation}
and  
\begin{equation}
\begin{aligned}
\label{quasiopt}
&\EE_{\pp\sim\mu}\big[ \cL(s_r(\pp);\pp) \big] \eqsim \EE_{\pp\sim\mu}\big[\|s_r(\pp)- s(\pp)\|^2_{\HH}\big] \eqsim
\min_{t_r\in \X_r}\EE_{\pp\sim\mu}\big[\|t_r(\pp)-s(\pp) \|^2_{\HH}\big],
\end{aligned}
\end{equation}
with a constant that depends only on $\PP$.
\end{enumerate}
\end{theorem}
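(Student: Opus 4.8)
\textbf{Proof proposal for Theorem~\ref{prop:reduced_loss_error_equivalence}.} The strategy is to reduce everything to two ingredients: the FOSLS isomorphism \eqref{stability} (and its elasticity analogue), which makes $a_\pp(w,w'):=(\cB_\pp w,\cB_\pp w')_\Omega$ an inner product on $\HH$ with $\|\cB_\pp w\|_{\LL_2}\eqsim\|w\|_{\HH}$, the equivalence constants depending on $\pp$ only through the uniform ellipticity/boundedness bounds and hence being $\pp$-uniform over $\PP$; and Lemma~\ref{lem:gal}, which identifies the FOSLS minimizer over any closed subspace with the $a_\pp$-orthogonal projection onto that subspace. The minimization characterizations \eqref{eq:FELSPoisson} and \eqref{eq:RBmin} then say precisely that $s_h(\pp)$ and $s_r(\pp)$ are the $a_\pp$-orthogonal projections of $s(\pp)$ onto $\HH_h$ and $\HH_r$; and since $\HH_r\subset\HH_h$, composition of orthogonal projections gives that $s_r(\pp)$ is also the $a_\pp$-orthogonal projection of $s_h(\pp)$ onto $\HH_r$. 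These observations drive all three parts.

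For part~1, I apply Lemma~\ref{lem:gal} with $\tilde\HH=\HH_r$ and $\tilde s(\pp)=s_r(\pp)$: the decomposition \eqref{tildedec} reads $\cL(s_r;\pp)=\|\cB_\pp(s_r-s_r(\pp))\|_{\LL_2}^2+\cL(s_r(\pp);\pp)$ for any $s_r\in\HH_r$, and replacing the first term by $\|s_r-s_r(\pp)\|_{\HH}^2$ via the $\pp$-uniform norm equivalence yields \eqref{eq:RB-fiber-equivalence}. For \eqref{eq:RB-best-approx}, since $s_r(\pp)$ is the $a_\pp$-projection of $s(\pp)$ onto $\HH_r$ one has $\|s(\pp)-s_r(\pp)\|_{a_\pp}=\min_{t_r\in\HH_r}\|s(\pp)-t_r\|_{a_\pp}$; passing to $\|\cdot\|_{\HH}$ on both sides (taking the minimum over $t_r$ before or after the equivalence only changes constants) gives the claim. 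Part~2 is the same argument one level down: applying Lemma~\ref{lem:gal} with $\tilde\HH=\HH_h$ and the admissible test element $\tilde s'=s_r\in\HH_r\subset\HH_h$ gives $\cL(s_r;\pp)=\|\cB_\pp(s_r-s_h(\pp))\|_{\LL_2}^2+\cL(s_h(\pp);\pp)$, hence \eqref{eq:RB-fiber-equivalence-t_r}; and \eqref{eq:RB-best-approx-t_r} follows because $s_r(\pp)$ is the $a_\pp$-projection of $s_h(\pp)$ onto $\HH_r$.

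For part~3, fiberwise the map $t_r\mapsto\cL(t_r;\pp)$ is, by the decomposition just obtained, a strictly convex quadratic on $\HH_r$ with unique minimizer $s_r(\pp)$; hence $t\mapsto\EE_{\pp\sim\mu}[\cL(t(\pp);\pp)]$ on $\X_r$ is minimized exactly by the pointwise minimizer, provided $\pp\mapsto s_r(\pp)$ is a legitimate element of $\X_r$. For this I use that $\pp\mapsto s_r(\pp)$ is the image of the measurable map $\pp\mapsto s(\pp)$ under the $a_\pp$-projection onto $\HH_r$ (a bounded linear operator depending measurably on $\pp$), hence measurable, together with $\|s_r(\pp)\|_{\HH}\lesssim\|s(\pp)\|_{\HH}$ (take $t_r=0$ in \eqref{eq:RB-best-approx}), so that $\|s_r\|_{\X_r}^2=\EE_{\pp\sim\mu}[\|s_r(\pp)\|_{\HH}^2]\lesssim\|s\|_{\X}^2<\infty$; this establishes \eqref{rmin}. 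The first equivalence in \eqref{quasiopt} is the $\pp$-uniform fiber error–residual relation $\cL(s_r(\pp);\pp)\eqsim\|s_r(\pp)-s(\pp)\|_{\HH}^2$ integrated against $\mu$. The second follows by integrating the fiberwise quasi-optimality \eqref{eq:RB-best-approx} and then using $\EE_{\pp\sim\mu}[\min_{v_r\in\HH_r}\|v_r-s(\pp)\|_{\HH}^2]=\min_{t_r\in\X_r}\EE_{\pp\sim\mu}[\|t_r(\pp)-s(\pp)\|_{\HH}^2]$, which holds because the pointwise $\HH$-orthogonal projection $\pp\mapsto P_{\HH_r}s(\pp)$ lies in $\X_r$ and realizes the fiberwise minimum.

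The argument is essentially bookkeeping around Lemma~\ref{lem:gal} and \eqref{stability}; the only points that genuinely need care are that every equivalence constant be $\pp$-uniform — which is exactly where the uniform ellipticity/boundedness hypotheses are used, since otherwise integrating the fiber equivalences over $\PP$ would not be permissible — and the measurability arguments in part~3 that license computing the Bochner minimum fiberwise and interchanging $\min$ with $\EE$.
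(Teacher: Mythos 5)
Your proposal is correct and follows essentially the same route as the paper: the Pythagorean decomposition of Lemma~\ref{lem:gal} applied with $\tilde\HH=\HH_r$ and $\tilde\HH=\HH_h$, combined with the $\pp$-uniform FOSLS norm equivalence \eqref{Hnormeq}/\eqref{elasteq}, with your orthogonal-projection phrasing of quasi-optimality being equivalent to the paper's loss-comparison chain $\cL(s_r(\pp);\pp)\le\cL(t_r;\pp)$. Your treatment of part~3 (measurability of $\pp\mapsto s_r(\pp)$, the bound $\|s_r(\pp)\|_{\HH}\lesssim\|s(\pp)\|_{\HH}$, and the interchange of $\min$ and $\EE$) is more explicit than the paper's one-line appeal to part~1 and uniform stability, but it is a refinement rather than a different argument.
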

\begin{proof}
\begin{enumerate}
    \item Taking $\tilde\HH := \HH_r$, \eqref{eq:RB-fiber-equivalence} follows from
        \eqref{tildedec} in Lemma \ref{lem:gal} with $\tilde s(\pp)= s_r(\pp)$ and
        $\tilde s'= s_r$, combined with FOSLS stability \eqref{Hnormeq} and \eqref{elasteq}
        for the diffusion and elasticity model, respectively. We obtain \eqref{eq:RB-best-approx} by the optimality of $s_r(\pp)$ and FOSLS stability, i.e., for any $t_r \in \HH_r$,
        \begin{equation}
            \|\,s_r(\pp) - s(\pp) \,\|_{\HH} \eqsim  \|\cB_\pp s_r(\pp) - S\|^2_{\LL_2} = \cL(s_r(\pp);\pp) \leq \cL(t_r;\pp) = \|\cB_\pp t_r - S\|^2_{\LL_2} \eqsim \|\,t_r - s(\pp) \,\|_{\HH}.
        \end{equation} 

    \item The same reasoning for $\tilde\HH= \HH_h$, $\tilde s(\pp) =[\sc_h(\pp),\uc_h(\pp)]$, confirms \eqref{eq:RB-fiber-equivalence-t_r}, noting that $s_r\in\HH_r$ are also elements in $\HH_h$. Applying minimization of $s_r\in \HH_r$ over \eqref{eq:RB-fiber-equivalence-t_r}, we have 
    \begin{equation}
        \min_{s_r\in \HH_r} \cL(s_r; \pp)\simeq \min_{s_r\in \HH_r}\|s_h(\pp)-s_r \|_{\HH}^2 + \cL(s_h(\pp);\pp), 
    \end{equation}
    or equivalently, 
    \begin{equation}\label{eq:proof_2_RB-best-approx-t_r}
        \cL(s_r(\pp)\; \pp) \simeq \min_{t_r\in \HH_r} \|t_r-s_h(\pp)\|_{\HH}^2 + \cL(s_h(\pp); \pp).
    \end{equation}
    Then \eqref{eq:RB-best-approx-t_r} holds when we combing~\eqref{eq:proof_2_RB-best-approx-t_r} with $\cL(s_r(\pp);\pp) \eqsim \|\, s_h(\pp) - s_r(\pp) \,\|_{\HH}^2 + \cL(s_h(\pp); \pp)$ which is obtained by plugging $s_r=s_r(\pp)$ in~\eqref{eq:RB-fiber-equivalence-t_r}. 

    \item This is an immediate consequence of 1. and uniform FOSLS stability \eqref{Hnormeq} and \eqref{elasteq}.
\end{enumerate}

\end{proof}

\begin{remark}
\label{rem:deco}
Theorem \ref{prop:reduced_loss_error_equivalence} says that $s_r(\pp)$ minimizes for each $\pp$ the loss over $\HH_r$ and (near-)best approximates $s(\pp)$.
Statement 2 says that $s_r(\pp)$ (near-)best approximates the FE-FOSLS solution from 
$\HH_r$.  Overall this states that the error incurred by $s_r(\pp)$
with respect to the exact solution is uniformly proportional to the sum of the 
best approximation errors of $s(\pp)$ from the FE space $\HH_h$ and
the error of the approximation of the FE-FOSLS solution by the RB-FOSLS solution.
\end{remark}

\begin{corollary}
\label{cor:err}
Adhering to the above assumptions, 
let
\begin{equation}
\label{theta*}
s_r(\theta^*)\in \argmin_{t_r\in \cH_r(\Theta)}\EE_{\pp\sim\mu}\big[
\cL(t_r(\pp);\pp)\big].
\end{equation}
Then, with a proportionality constant, depending only on uniform FOSLS-stability
\eqref{Hnormeq} and \eqref{elasteq},  there holds
\begin{align}
\label{error-dec}
\EE_{\pp\sim \mu}\big[\|s(\pp)- s_r(\pp; \theta^*)\|^2_{\HH}\big]\lesssim h^{2\eta} + \sum_{j>r}\lambda_j^2 +
\EE_{\pp\sim\mu}\Big[  
\|s_r(\pp)- s_r(\pp;\theta^*)\|^2_{\HH}\Big],
\end{align}
where $\eta$ depends on the polynomial orders of the FE approximation and the regularity of $s(\pp)\in \HH$, $\pp\in \PP$, as in Theorem \ref{thm:FELSPoisson}. Note that the last summand reflects the best approximation from the hypothesis class to the best approximation of the exact FOSLS solutions $s(\pp)$ from $\HH_r$. 
\end{corollary}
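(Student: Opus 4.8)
The plan is to prove \eqref{error-dec} by a single triangle‑inequality split into three pieces — finite‑element discretization, reduced‑basis truncation, and hypothesis‑class (neural network) approximation — and then to bound each piece by an already established result: Theorem~\ref{thm:FELSPoisson} (and its elasticity analogue in~\ref{app:proofFELSElasticity}) for the first, statement~2 of Theorem~\ref{prop:reduced_loss_error_equivalence} together with the POD truncation estimate~\eqref{eq:low_rank_approximation_via_trailing_eigenvalues} for the second, and nothing at all for the third, since it appears verbatim on the right‑hand side of \eqref{error-dec}.

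Concretely, for each $\pp\in\PP$ I would write
\[
\|s(\pp)-s_r(\pp;\theta^*)\|_{\HH}
\le \|s(\pp)-s_h(\pp)\|_{\HH}
 + \|s_h(\pp)-s_r(\pp)\|_{\HH}
 + \|s_r(\pp)-s_r(\pp;\theta^*)\|_{\HH},
\]
where $s_h(\pp)$ is the FE--FOSLS solution and $s_r(\pp)$ the RB--FOSLS minimizer of \eqref{eq:RBmin}; all three summands are well defined because $\HH_r\subset\HH_h\subset\HH$ and $s_r(\pp;\theta^*)\in\HH_r$ by \eqref{hypo}. Squaring, using $(a_1+a_2+a_3)^2\le 3(a_1^2+a_2^2+a_3^2)$, and taking $\EE_{\pp\sim\mu}$ reduces the claim to three expectations. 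For the first, the error--residual equivalence \eqref{eq:FE-loss-equiv} together with the convergence rate \eqref{eq:FELS-bound-Poisson} gives $\|s(\pp)-s_h(\pp)\|_{\HH}^2\lesssim h^{2\eta}$ for each fixed $\pp$, with $\eta$ determined by the FE orders and the Sobolev regularity of the fiber solution; under the standing uniform‑ellipticity hypothesis and the assumption that these regularity seminorms are bounded uniformly over $\PP$, the expectation is $\lesssim h^{2\eta}$ as well (the shorthand $h^{2\eta}$ standing for the full bound \eqref{eq:FELS-bound-Poisson}). For the second, statement~2 of Theorem~\ref{prop:reduced_loss_error_equivalence}, i.e.\ \eqref{eq:RB-best-approx-t_r}, yields $\|s_r(\pp)-s_h(\pp)\|_{\HH}\eqsim\min_{t_r\in\HH_r}\|t_r-s_h(\pp)\|_{\HH}=\|s_h(\pp)-P_{\HH_r}s_h(\pp)\|_{\HH}$, where $P_{\HH_r}$ is the $\HH$‑orthogonal projection, which coincides with the POD projection since the $\phi_k$ are $\HH$‑orthonormal; taking expectations and invoking \eqref{eq:low_rank_approximation_via_trailing_eigenvalues} bounds this by the trailing POD eigenvalues (written $\sum_{j>r}\lambda_j^2$ in \eqref{error-dec}, matching the eigenvalue/singular‑value convention used there). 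The third expectation is already the last summand of \eqref{error-dec}. Assembling the three bounds, with all equivalence constants traced to the uniform FOSLS‑stability \eqref{Hnormeq} and \eqref{elasteq}, gives the assertion; and to justify the concluding remark that this summand is the best hypothesis‑class approximation to $s_r$, I would apply \eqref{eq:RB-fiber-equivalence} with $\tilde s'=t_r(\pp;\theta)$, take expectations, and use that $\EE_{\pp\sim\mu}[\cL(s_r(\pp);\pp)]$ is independent of $\theta$ so that minimizing over $\Theta$ equates $\EE_{\pp\sim\mu}\|s_r(\pp)-s_r(\pp;\theta^*)\|_{\HH}^2$ with $\min_{\theta}\EE_{\pp\sim\mu}\|s_r(\pp)-t_r(\pp;\theta)\|_{\HH}^2$.

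The main obstacle is not the algebra, which is routine, but the passage from the \emph{per‑fiber} estimate of Theorem~\ref{thm:FELSPoisson} to an estimate in $\mu$‑expectation: this requires that the regularity seminorms $\|\sc\|_{H^{s_\sigma}},\|\div\sc\|_{H^{s_{\div}}},\|\uc\|_{H^{s_u}}$ (and the ellipticity bounds $\alpha,\beta$) be controlled uniformly over $\PP$, which is exactly what the ``standard regularity assumptions'' and the uniform‑boundedness hypothesis of the corollary supply; once uniformity is in hand the remaining work is bookkeeping with equivalence constants. A secondary subtlety is that $\HH_r$ is built from finitely many snapshots $\PP_{N_s}$, so \eqref{eq:low_rank_approximation_via_trailing_eigenvalues} is, strictly speaking, an empirical surrogate for the true benchmark $\delta_r(\cM,\mu)^2_{\HH}$; absorbing this Monte‑Carlo discrepancy (equivalently, taking $N_s$ sufficiently large) is what permits replacing the mean‑squared POD projection error by the trailing eigenvalue sum in \eqref{error-dec}.
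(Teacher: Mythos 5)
Your proof is correct and follows essentially the same route as the paper: the same three-way split into FE discretization error, POD/RB truncation error, and the hypothesis-class term, with each piece bounded by exactly the ingredients you cite (Theorem \ref{thm:FELSPoisson} and its elasticity analogue, the quasi-optimality \eqref{eq:RB-best-approx-t_r} combined with \eqref{eq:low_rank_approximation_via_trailing_eigenvalues}, and the tautological last summand). The only difference is that you produce the split by a triangle inequality, whereas the paper chains the two-sided loss decompositions of Theorem \ref{prop:reduced_loss_error_equivalence} obtained from Galerkin orthogonality; since the corollary asserts only an upper bound this is immaterial, and your caveats about uniformity of the regularity constants over $\PP$ and the $\lambda_j$ versus $\lambda_j^2$ notational mismatch apply equally to the paper's own argument.
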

\begin{proof} We infer from Theorem \ref{prop:reduced_loss_error_equivalence}, 
\eqref{eq:RB-fiber-equivalence}, that, with $s_r(\theta^*) \in \cH_r(\Theta)$,  and FOSLS stability  \eqref{Hnormeq} and \eqref{elasteq},
\begin{align}
\label{steps}
& \EE_{\pp\sim \mu}\big[\|s(\pp)- s_r(\pp;\theta^*)\|^2_{\HH}\big]\nonumber \\
& \eqsim\EE_{\pp\sim \mu}\Big[ 
 \|s_r(\pp;\theta^*)-s_r(\pp)\|^2_{\HH}\Big]
+ \EE_{\pp\sim\mu}\Big[\cL(s_r(\pp);\pp) \Big]\nonumber\\
& \eqsim 
\EE_{\pp\sim \mu}\Big[ 
  \|s_r(\pp;\theta^*)-s_r(\pp)\|^2_{\HH}\Big] + \EE_{\pp\sim\mu}\Big[\|s_h(\pp)- s_r(\pp)\|^2_{
 \HH}\Big] + \EE_{\pp\sim\mu}\Big[\cL(s_h(\pp);\pp)\Big],
\end{align}
where we have invoked Theorem \ref{prop:reduced_loss_error_equivalence}, Statement 2 in the last step.
Bounding the first term in the last inequality by \eqref{eq:low_rank_approximation_via_trailing_eigenvalues}, using again that, uniformly in 
$\pp\in\PP$, on account of FOSLS-stability and the best approximation property of the FE-FOSLS solutions, we conclude that
$$
\cL(s_h(\pp);\pp)\eqsim  \inf_{t_h(\pp)\in \HH_h}\|
t_h(\pp)- s(\pp)\|^2_{\HH}.
$$
Finally, invoking the error bounds in   \ref{app:proofFELSPoisson} and \ref{app:proofFELSElasticity}, finishes the proof.
\end{proof}

\subsection{Loss evaluation}\label{ssec:losseval}
We learn an approximation to the optimal approximation $s_r(\pp) \in \HH_r$ from the hypothesis class $\mathcal{H}_r(\Theta)\subset \X_r$ (see \eqref{hypo}) by minimizing an empirical loss
over $\Theta$. Since this will require frequent evaluations of fiber losses $\cL(\cdot;\pp)$ at randomly drawn samples $\pp\in\PP$, the cost of such evaluations is an issue. In fact, substituting the FE approximation \eqref{eq:FEApproxPoisson} in the residual fiber loss function \eqref{eq:lossPoisson}, we obtain 
\begin{equation}\label{eq:FE-loss-Poisson}
    \cL(s_h(\pp); \pp) = \bsw(\pp)^\top \, W_\pp \, \bsw(\pp) = \bss_h(\pp)^\top W_\pp^\circ \bss_h(\pp) + 2 \bss_h(\pp)^\top \bsalpha_\pp + \beta_\pp,
\end{equation}
where $\bsw(\pp) = (\bss_h(\pp); 1) \in \bR^{N_h^s+1}$ is a concatenation of the parametric coefficient vectors $\bss_h(\pp)\in \bR^{N_h^s}$ and scalar $1$. Here $W_\pp \in \bR^{(N_h^s+1) \times (N_h^s+1)}$ is a sparse symmetric matrix assembled from the FE approximation at each parameter $\pp \in \PP$, which has the block structure  
\begin{align*}
    W_\pp=\begin{pmatrix}
        W^\circ_\pp & \bsalpha_\pp \\[4pt]
        \bsalpha^\top_\pp & \beta_\pp
    \end{pmatrix}, 
\end{align*}
with $W^\circ_\pp \in \mathbb{R}^{N_h^s\times N_h^s}$, $\bsalpha_\pp \in \mathbb{R}^{N_h^s}$, and $\beta_\pp \in\mathbb{R}$ assembled from the bilinear, linear, and constant forms in the loss function \eqref{eq:lossPoisson}. Note that evaluation of the loss function \eqref{eq:FE-loss-Poisson} has complexity $O(N_h^s)$ {when employing sparse matrix vector products, which is expensive} for large-scale FE discretizations. 

To reduce the {computational complexity}, we introduce a RB computation of the loss function \eqref{eq:FE-loss-Poisson}, {when evaluated at elements in the RB space $\HH_r$}. Let $s_r$ denote the RB approximation of the FE solution $s_h$,
with corresponding approximation of the coefficient vectors by the projection in \eqref{eq:bssr}. 
 Then the RB loss function can be evaluated as 
\begin{equation}\label{eq:lossPoissonReduced}
    \cL(s_r; \pp) = \bss_r(\pp)^\top W_\pp^r \bss_r(\pp) + 2 \bss_r(\pp)^\top \bsalpha_\pp^r + \beta_\pp,
\end{equation}
where reduced weights $W_{\pp}^{r}:=\Pi^\top_r W^\circ_\pp \Pi_r \in\mathbb{R}^{r\times r}$ and vectors $\bsalpha^{r}_\pp:=\Pi^\top_r \bsalpha_\pp\in\mathbb{R}^{r}$ can be precomputed and stored for the training samples of the parameter, allowing a fast evaluation of the loss function with complexity $O(r^2)$. The RB FOSLS minimizer $s_r(\pp)\in \HH_r$ from \eqref{eq:RBmin} can then be computed with the RB coefficient vector $\bss_r(\pp) \in \bR^r$ obtained by solving the reduced normal equation
\begin{equation}\label{eq:RB_normal}
    W_\pp^r\, \bss_r(\pp) = -\bsalpha_\pp^r.
\end{equation}

For the elasticity problem, we follow the same computation of the RB approximation as in the diffusion case. Specifically, we adopt the following discrete $H(\rdiv;\Omega;\R^{d\times d})\times H^1(\Omega;\R^d)$-norm as in \cite{opschoor2024first} 
\begin{equation}\label{eq:X-norm-elasticity}
    ||\bss_h||_{{X_h}}^2 := (\tenbar[1]{\rdiv} \tenbar[2]{\sc_h}, \tenbar[1]{\rdiv} \tenbar[2]{\sc_h})_{L^2} + (\tenbar[2]{\sc_h}, \cC_{\bar{\pp}}^{-1} \tenbar[2]{\sc_h})_{L_2} + 
(\tenbar[2]{\e}(\tenbar[1]{\uc_h}), \cC_{\bar{\pp}}\tenbar[2]{\e}(\tenbar[1]{ \uc_h}))_{L^2}, 
\end{equation}
where $\cC_{\bar{\pp}}$ and $\cC_{\bar{\pp}}^{-1}$ are the stiffness tensor and its inverse at the mean of the parameter $\bar{\pp} \in \PP$. The same statements as in Proposition \ref{prop:reduced_loss_error_equivalence} hold for the RB approximation of the elasticity problem with the loss function \eqref{eq:lossElasticity} and the discrete norm \eqref{eq:X-norm-elasticity}.

We conclude this section with one further prerequisite that will be needed in the next section.
\begin{proposition}
  \label{prop:uniform-bounds}
Assume $\pp\in \PP \subset L_\infty(\Omega)$ is uniformly bounded as in Theorem \ref{thm:FELSPoisson}. Then for all $\pp\in\PP$, the reduced weights $W_\pp^r$, vectors $\bsalpha_\pp^r$, and scalars $\beta_\pp$, defined in \eqref{eq:lossPoissonReduced}, satisfy the uniform bounds
\begin{equation*}
 \gamma_- I \preceq W_\pp^r \preceq \gamma_+ I,\qquad \|\bsalpha_\pp^r\|_2 \le \sqrt{\gamma_+}\, S_{\max},\qquad |\beta_\pp| \le S_{\max}^2.
\end{equation*}
Here $c \le \gamma_- \le \gamma_+ \le C $ with the stability constants $0<c<C<\infty$ in Lemma \ref{lem:PoissonNormEquiv}, and $S_{\max}$ is the uniform bound for the input source $\|S(\pp)\|_{\LL^2} \le S_{\max}$, $\pp \in \PP$. For instance, for diffusion one has  $S=[\pp\nabla w - z + f_1, f_2]$.
\end{proposition}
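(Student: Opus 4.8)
The plan is to re-express the algebraic objects $W_\pp^r$, $\bsalpha_\pp^r$, $\beta_\pp$ through the FOSLS operator $\cB_\pp$ and the source $S$, and then read the bounds off the $\pp$-uniform stability of $\cB_\pp$. First I would combine the explicit form \eqref{eq:lossPoissonReduced} of the reduced loss with the identity $\cL(s_r;\pp)=\|\cB_\pp s_r-S\|^2_{\LL_2}$, which holds for every $s_r\in\HH_r$ (here $s_r=\Phi_r\bss_r$, with coefficient vector $\bss_h=\Pi_r\bss_r$ in the FE space, cf. \eqref{eq:bssr}). Both sides are quadratic polynomials in $\bss_r\in\bR^r$, so matching the quadratic, linear and constant coefficients gives
\begin{equation*}
\bss_r^\top W_\pp^r\bss_r=\|\cB_\pp s_r\|^2_{\LL_2},\qquad
\bss_r^\top\bsalpha_\pp^r=-(\cB_\pp s_r,S)_{\LL_2},\qquad
\beta_\pp=\|S\|^2_{\LL_2}.
\end{equation*}
I would also record that, since the POD basis $\Phi_r=(\phi_1,\dots,\phi_r)$ is $\HH$-orthonormal, $\|s_r\|_\HH=\|\bss_r\|_2$ for every $\bss_r\in\bR^r$.

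For the two-sided bound on $W_\pp^r$, I would insert $s_r$ into the $\pp$-uniform FOSLS norm equivalence \eqref{stability} (with the explicit constants $0<c\le C<\infty$ from Lemma \ref{lem:PoissonNormEquiv}), $c\|s_r\|^2_\HH\le\|\cB_\pp s_r\|^2_{\LL_2}\le C\|s_r\|^2_\HH$, and combine it with $\|s_r\|_\HH=\|\bss_r\|_2$ to obtain $c\|\bss_r\|_2^2\le\bss_r^\top W_\pp^r\bss_r\le C\|\bss_r\|_2^2$ for all $\bss_r\in\bR^r$ and all $\pp\in\PP$, i.e. $cI\preceq W_\pp^r\preceq CI$ uniformly in $\pp$. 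Taking $\gamma_-:=\inf_{\pp\in\PP}\lambda_{\min}(W_\pp^r)$ and $\gamma_+:=\sup_{\pp\in\PP}\lambda_{\max}(W_\pp^r)$ then yields $c\le\gamma_-\le\gamma_+\le C$ and the asserted bound $\gamma_-I\preceq W_\pp^r\preceq\gamma_+I$. For $\bsalpha_\pp^r$ I would use Cauchy--Schwarz in $\LL_2$ together with the bound just obtained: for any $\bss_r$ with $\|\bss_r\|_2=1$, $|\bss_r^\top\bsalpha_\pp^r|=|(\cB_\pp s_r,S)_{\LL_2}|\le\|\cB_\pp s_r\|_{\LL_2}\|S\|_{\LL_2}\le\sqrt{\gamma_+}\,S_{\max}$, and a supremum over unit $\bss_r$ gives $\|\bsalpha_\pp^r\|_2\le\sqrt{\gamma_+}\,S_{\max}$; the estimate $|\beta_\pp|=\|S\|^2_{\LL_2}\le S_{\max}^2$ is then immediate.

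The last thing to pin down is the $\pp$-independent source bound $S_{\max}$. For diffusion, $S=[\pp\nabla w-z+f_1,f_2]$ with $w,z$ (or their FE surrogates $w_h,z_h$) fixed, so the triangle inequality and the uniform ellipticity bound $\|\pp\|_{L^\infty}\le\beta$ from Theorem \ref{thm:FELSPoisson} give $\|S\|_{\LL_2}\le\beta\|\nabla w\|_{L^2}+\|z\|_{L^2}+\|f_1\|_{L^2}+\|f_2\|_{L^2}=:S_{\max}$, and the elasticity source in \eqref{1stweak} is treated identically using the uniform bounds on $\cC_\pp^{\pm 1/2}$. I do not expect a genuine obstacle here: the argument is essentially a bookkeeping translation resting on (i) the $\HH$-orthonormality of the POD basis, which makes the Euclidean and $\HH$-norms agree on $\HH_r$, and (ii) the already-available $\pp$-uniform FOSLS stability. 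The only point deserving a little care is making sure the mixed term between $\cB_\pp s_r$ and $S$ is absorbed into $\bsalpha_\pp^r$ and not into $W_\pp^r$; this is transparent once $\|\cB_\pp s_r-S\|^2_{\LL_2}$ is expanded as $\|\cB_\pp s_r\|^2_{\LL_2}-2(\cB_\pp s_r,S)_{\LL_2}+\|S\|^2_{\LL_2}$.
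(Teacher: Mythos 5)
Your proof is correct and follows essentially the same route as the paper: identify the quadratic, linear, and constant parts of the reduced loss with $\|\cB_\pp s_r\|^2_{\LL_2}$, $-(\cB_\pp s_r,S)_{\LL_2}$, and $\|S\|^2_{\LL_2}$, then combine the $\HH$-orthonormality of the POD basis (equivalently $\Pi_r^\top X_h\Pi_r=I_r$) with the $\pp$-uniform FOSLS stability and Cauchy--Schwarz. The only cosmetic difference is that you argue at the level of functions $s_r\in\HH_r$ whereas the paper phrases the same steps via the FE Gram matrices $W_\pp^\circ$ and $X_h$; your added verification of a concrete $S_{\max}$ for the diffusion source is harmless extra bookkeeping.
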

\begin{proof}
Since $W_\pp^r=\Pi_r^\top W_\pp^\circ \,\Pi_r$, for any $\bss_r\in\R^r$ we have
$\bss_r^\top W_\pp^r \bss_r \,=\, (\Pi_r \bss_r)^\top W_\pp^\circ (\Pi_r \bss_r).$
Uniform FOSLS stability (see Lemma~\ref{lem:PoissonNormEquiv}) implies there exist $0<\gamma_-\le\gamma_+ <\infty$ such that the stability estimates \eqref{Hnormeq} hold in FE spaces $\Sigma_h \times \U_h \subset \Sigma \times \U$ with constants $c \le \gamma_-$ and $\gamma_+ \le C$, i.e., for all FE coefficient vectors $\bss_h\in\R^{N_h^s}$,
\[
 \gamma_-\, \bss_h^\top X_h \bss_h \;\le\; \bss_h^\top W_\pp^\circ \bss_h \;\le\; \gamma_+\, \bss_h^\top X_h \bss_h,\qquad \pp\in\PP.
\]
Therefore, by setting $\bss_h = \Pi_r \bss_r$, there holds
\[
 \gamma_-\, \bss_r^\top (\Pi_r^\top X_h \Pi_r) \bss_r \;\le\; \bss_r^\top W_\pp^r \bss_r \;\le\; \gamma_+\, \bss_r^\top (\Pi_r^\top X_h \Pi_r) \bss_r.
\]
By the ${X_h}$-orthonormality of $\Pi_r$, $\Pi_r^\top X_h \Pi_r = I_r$, hence
$\gamma_-\,\|\bss_r\|_2^2 \le \bss_r^\top W_\pp^r \bss_r \le \gamma_+\,\|\bss_r\|_2^2$, proving the spectral bounds.
Next, from the quadratic expansion $\cL(\bss_h;\pp)= \bss_h^\top W_\pp^\circ \bss_h + 2 \,\bss_h^\top \bsalpha_\pp + \beta_\pp$, the linear term obeys, for any FE coefficient vector $\bss_h$ with corresponding FE functions $s_h$,
\[
 |\bss_h^\top \bsalpha_\pp| \;=\; (\cB_\pp s_h, S(\pp))\;\le\;  \|\cB_\pp s_h\|_{\LL^2} \, \|S(\pp)\|_{\LL^2}\;\le\; \sqrt{\gamma_+}\,\|S(\pp)\|_{\LL^2}\, (\bss_h^\top X_h \bss_h)^{1/2},
\]
where we have used Cauchy--Schwarz in the first inequality and the uniform stability bound in the second. Taking $\bss_h=\Pi_r \bss_r$ with $\|\bss_r\|_2=1$ and using $\bss_r^\top (\Pi_r^\top X_h \Pi_r) \bss_r=1$, yields
$\|\bsalpha_\pp^r\|_2 = \sup_{\|\bss_r\|_2=1} (\Pi_r \bss_r)^\top \bsalpha_\pp \le \sqrt{\gamma_+}\,\|S(\pp)\|_{\LL^2} \le \sqrt{\gamma_+}\,S_{\max}.$
Finally, $\beta_\pp=\|S(\pp)\|_{\LL^2}^2 \le S_{\max}^2$. Uniformity follows from the hypotheses.
\end{proof}

\subsection{{Learning the} reduced basis neural operator}
Recall that the elements of the hypothesis class $\mathcal{H}_r(\Theta)$, defined in \eqref{hypo}, belong for each $\pp\in\PP$, as functions of $x\in\Omega$, to the $r$--dimensional RB space constructed in~\cref{ssec:errors}. As indicated in~\cref{sec:conceptual_background_and_orientation}, learning a parametric map will be based on minimizing an empirical risk of the type 
\begin{equation}\label{eq:empirical-loss}
  \min_{\theta\in\Theta} \hat R_{\widehat\PP}(\theta),\quad \mbox{where}\quad \hat R_{\widehat\PP}(\theta):= \hat R_{\widehat\PP}(s_r(\cdot;\theta)):=\frac{1}{\#{\widehat\PP}}\sum_{\pp\in{\widehat\PP}}\cL\big(s_r(\pp;\theta);\pp\big).
\end{equation}
In what follows 
$$
{\widehat\PP} = \{\pp_1,\ldots,\pp_N\}\subset \PP,
$$
stands for a collection of finite i.i.d.\ random samples from $\PP$. In computations, the evaluation of the RB loss $\cL(s_r(\pp;\theta);\pp)$ is always based on the right hand side expression \eqref{eq:lossPoissonReduced} for $\bss_r(\pp) = \bss_r(\pp;\theta)$. The coefficients in $\bss_r(\pp;\theta)$ need to approximate for every $\pp\in\PP$ the coefficient vector $\bss_r(\pp)$ of function $s_r(\pp)$. Recall that those are obtained when minimizing the ``ideal continuous'' loss which we also abbreviate, for convenience, as follows: for any $\theta\in\Theta$ we write $s_r(\cdot;\theta)=[\sc_r(\cdot;\theta),\uc_r(\cdot;\theta)]\in \mathcal{H}_r(\Theta)$ and set
$$
R(s_r(\cdot;\theta))=:R(\theta):= \EE_{\pp\sim\mu}\Big[\cL(s_r(\pp;\theta);\pp)\Big].
$$
We refer to functions $s_r(\cdot;\theta)=[\sc_r(\cdot; \theta),\uc_r(\cdot;\theta)]\in \mathcal{H}_r(\Theta)$ as RBNO approximations to the solution of the RB-FOSLS problem.

In order to estimate the accuracy of empirical loss minimizers with respect to the Bochner norm $\|w\|^2_{\X}= \int_\PP \|w(\pp)\|^2_\HH d\mu(\pp)= \EE_{\pp\sim\mu}\big[\|w(\pp)\|^2_\HH\big]$, we combine the previous error bounds with   (rather standard) concepts from Learning Theory. Here we are content with a relatively simple version that may not be the best possible. Nevertheless, their applicability imposes some conditions on the hypothesis class $\mathcal{H}_r(\Theta)$ that need to be ensured. \\[2mm]
{\em Boundedness:} Since $s_r(\pp)$ minimizes the (ideal) loss over $\HH_r$ and
hence solves the normal equations in  $\HH_r$, uniform ellipticity of the normal equations
implies boundedness of the solution by the data, which in our case means, on account of 
the orthogonality of the reduced POD basis $\Phi_r$, that
$$
\|\bss_r(\pp)\|_{\ell_2}= \| [\sc_r(\pp),\uc_r(\pp)]\|_{\HH}\le \sqrt{\gamma_+}S_{\max},\quad \pp\in \PP,
$$
provided that $\PP$ remains bounded in a suitable domain. Therefore, restraining the neural network approximations to the $\pp$-dependent coefficients to remain bounded as well will not impede their expressivity.

\begin{assumption}\label{ass:range-capacity}
There exists a uniform bound $B>0$ such that the neural network output $\|\bss_r(\pp;\theta)\|_{\ell_2}\le B$ for all $(\pp,\theta)\in\PP\times\Theta$ (enforced, e.g., by range clipping or weight penalties). 
\end{assumption}

The second important property is the uniform bound and Lipschitz continuity of the loss $\cL(t_r;\pp)$ with respect to the first argument $t_r \in \mathcal{H}_r(\Theta)$.
\begin{lemma}
\label{lem:Lip}
Assume that Assumption \ref{ass:range-capacity} is valid. Then, there exist constants   $M,L<\infty$ such that
\begin{equation}
\label{bounded}
\cL(t_r;\pp)\le M,\quad (t_r,\pp)\in \mathcal{H}_r(\Theta)\times \PP,
\end{equation}
and
\begin{equation}
\label{Lip}
\big| \cL(t_r;\pp)- \cL(t_r';\pp)\big| \le L \|t_r- t_r'\|_{\HH},\quad t_r, t_r' \in \mathcal{H}_r(\Theta), \,\pp\in \PP.
\end{equation}
\end{lemma}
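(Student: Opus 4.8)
The plan is to reduce everything to elementary estimates on the reduced coefficient vectors, exploiting the $\HH$-orthonormality of the POD basis $\Phi_r$ together with the uniform bounds already established. Write a generic hypothesis element $t_r\in\mathcal{H}_r(\Theta)$ through its coefficient vector $\bss_r=\bss_r(\pp;\theta)\in\R^r$, so that $t_r=\Phi_r\bss_r$; since $\phi_1,\dots,\phi_r$ are $\HH$-orthonormal, one has the isometry $\|t_r-t_r'\|_{\HH}=\|\bss_r-\bss_r'\|_{\ell_2}$ for all $t_r,t_r'\in\mathcal{H}_r(\Theta)$. Hence it suffices to bound $\cL(\cdot;\pp)$ and its increments in the $\ell_2$-coefficient metric, for which we use the quadratic representation $\cL(t_r;\pp)=\bss_r^\top W_\pp^r\bss_r+2\bss_r^\top\bsalpha_\pp^r+\beta_\pp$ from \eqref{eq:lossPoissonReduced}, the uniform bounds $W_\pp^r\preceq\gamma_+ I$, $\|\bsalpha_\pp^r\|_2\le\sqrt{\gamma_+}\,S_{\max}$, $|\beta_\pp|\le S_{\max}^2$ of Proposition~\ref{prop:uniform-bounds}, and the range bound $\|\bss_r(\pp;\theta)\|_{\ell_2}\le B$ of Assumption~\ref{ass:range-capacity}.

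For \eqref{bounded}, I would simply insert these bounds, obtaining $\cL(t_r;\pp)\le\gamma_+B^2+2\sqrt{\gamma_+}\,B\,S_{\max}+S_{\max}^2=\bigl(\sqrt{\gamma_+}\,B+S_{\max}\bigr)^2$, so the claim holds with $M:=(\sqrt{\gamma_+}\,B+S_{\max})^2$, uniformly over $(t_r,\pp)\in\mathcal{H}_r(\Theta)\times\PP$. For \eqref{Lip}, subtract the two quadratic forms; using the symmetry of $W_\pp^r$ one gets $\cL(t_r;\pp)-\cL(t_r';\pp)=(\bss_r-\bss_r')^\top W_\pp^r(\bss_r+\bss_r')+2(\bss_r-\bss_r')^\top\bsalpha_\pp^r$. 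By Cauchy--Schwarz together with $\|W_\pp^r\|\le\gamma_+$, $\|\bss_r+\bss_r'\|_2\le 2B$, and $\|\bsalpha_\pp^r\|_2\le\sqrt{\gamma_+}S_{\max}$, this is at most $\bigl(2\gamma_+B+2\sqrt{\gamma_+}S_{\max}\bigr)\|\bss_r-\bss_r'\|_{\ell_2}$; the isometry then yields \eqref{Lip} with $L:=2\sqrt{\gamma_+}\bigl(\sqrt{\gamma_+}B+S_{\max}\bigr)=2\sqrt{\gamma_+ M}$.

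An equivalent, more intrinsic route avoids the explicit matrices: since $\cL(t_r;\pp)=\|\cB_\pp t_r-S(\pp)\|_{\LL_2}^2$, one has $\cL(t_r;\pp)-\cL(t_r';\pp)=\bigl\langle\cB_\pp(t_r-t_r'),\,(\cB_\pp t_r-S)+(\cB_\pp t_r'-S)\bigr\rangle_{\LL_2}$, which is at most $\|\cB_\pp(t_r-t_r')\|_{\LL_2}\bigl(\sqrt{\cL(t_r;\pp)}+\sqrt{\cL(t_r';\pp)}\bigr)\le 2\sqrt{M}\,\|\cB_\pp(t_r-t_r')\|_{\LL_2}$, and the FOSLS continuity bound in \eqref{stability} (with the uniform upper constant over $\PP$) converts $\|\cB_\pp(t_r-t_r')\|_{\LL_2}$ into a constant multiple of $\|t_r-t_r'\|_{\HH}$; boundedness follows similarly from $\cL(t_r;\pp)\le(\|\cB_\pp t_r\|_{\LL_2}+S_{\max})^2$ and the same continuity bound plus Assumption~\ref{ass:range-capacity}. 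The elasticity case is handled identically, replacing $X_h$ by the $\cC_{\bar\pp}$-weighted norm \eqref{eq:X-norm-elasticity}, $\cB_\pp$ and $S$ by the elasticity operator and source, and using the elasticity analogue of Proposition~\ref{prop:uniform-bounds}. There is no genuine obstacle here; the only points requiring a little care are the reduction from the $\HH$-norm to the $\ell_2$-coefficient norm via POD $\HH$-orthonormality, and bookkeeping the explicit constants $M,L$ in terms of $\gamma_+$, $B$, and $S_{\max}$.
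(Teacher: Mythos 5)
Your proof is correct, and your second ``intrinsic'' route is essentially the paper's own argument: the paper factors $|\cL(t_r;\pp)-\cL(t_r';\pp)|=\bigl|\|\cB_\pp t_r-S\|_{\LL_2}^2-\|\cB_\pp t_r'-S\|_{\LL_2}^2\bigr|$ as a product of sum and difference, bounds the sum by $2(\sqrt{\gamma_+}B+S_{\max})$ using FOSLS continuity, POD $\HH$-orthonormality and Assumption~\ref{ass:range-capacity}, and controls the difference by $\|\cB_\pp(t_r-t_r')\|_{\LL_2}\le\sqrt{\gamma_+}\|t_r-t_r'\|_{\HH}$; the boundedness bound $M=(\sqrt{\gamma_+}B+S_{\max})^2$ is identical. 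Your primary route via the reduced quadratic form $\bss_r^\top W_\pp^r\bss_r+2\bss_r^\top\bsalpha_\pp^r+\beta_\pp$ and Proposition~\ref{prop:uniform-bounds} is an equivalent coefficient-level reformulation; its only practical payoff is a slightly sharper Lipschitz constant, $L=2\sqrt{\gamma_+}(\sqrt{\gamma_+}B+S_{\max})$ versus the paper's $L=4\sqrt{\gamma_+}(\sqrt{\gamma_+}B+S_{\max})$ (the paper's factor $4$ is simply not tight). Both routes rely on the same three ingredients you correctly identify: the isometry $\|t_r\|_{\HH}=\|\bss_r\|_{\ell_2}$, the uniform continuity constant $\gamma_+$, and the range bound $B$.
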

\begin{proof}
By definition,
\begin{align*}
\cL(t_r;\pp)^{\frac 12}&= \|\cB_\pp t_r - S\|_{\LL_2}\le \|\cB_\pp t_r\|_{\LL_2}+ S_{\max} 
 \le \sqrt{\gamma_+}\|t_r\|_{\HH}+S_{\max}.
\end{align*}
Since $t_r\in \mathcal{H}_r(\Theta)$ is of the form $t_r(x;\pp)= \Phi_r \bst_r(\pp;\theta)$, orthogonality of the reduced bases $\Phi_r$, gives $\|t_r\|_{\HH}= \|\bst_r(\pp;\theta)\|_{\ell_2}\le B$, $(\pp,\theta)\in \PP\times \Theta$, by Assumption \ref{ass:range-capacity},
confirming \eqref{bounded} with
\begin{equation}
\label{M}
M\le \Big(\sqrt{\gamma_+}B + S_{\max}\Big)^2.
\end{equation}

Similarly,
\begin{align*}
\big|\cL(t_r;\pp)- \cL(t_r';\pp)\big| &= \big|\|\cB_\pp t_r-S\|^2_{\LL_2}- \|\cB_\pp t_r'-S\|^2_{\LL_2}\big|\\
&=  \big|\|\cB_\pp t_r-S\|_{\LL_2}+ \|\cB_\pp t_r'-S\|_{\LL_2}\big|\,
\big|\|\cB_\pp t_r-S\|_{\LL_2}- \|\cB_\pp t_r'-S\|_{\LL_2}\big|\\
& \le 2\Big\{\sqrt{\gamma_+}B + S_{\max}\Big\}\|\cB_\pp t_r- \cB_\pp t_r'\|_{\LL_2}\\
&\le 4 \Big\{\sqrt{\gamma_+}B + S_{\max}\Big\}\sqrt{\gamma_+} \|t_r-t_r'\|_{\HH},
\end{align*}
which finishes the proof with $L=4 \Big\{\sqrt{\gamma_+}B + S_{\max}\Big\}\sqrt{\gamma_+}$.
\end{proof}

As a final prerequisite, we recall the notion of {\em pseudo-dimension} of a function class
$\cF$. The pseudo-dimension is the VC-dimension of the set of their epi-graphs. For precise definitions, see, for instance, \cite[Chapter 3]{mohri2018foundations}.

\begin{remark}
\label{rem:VC}
When $\cH$ is a linear space, one has ${\rm Pdim}\,\cH= {\rm dim}\,\cH +1$.
When $\cH$ is a fully connected neural network of   depth $D$, it has
been shown in \cite{bartlett2019nearly} that ${\rm Pdim}\,\cH \lesssim \#\Theta \,D \log(\#\Theta)$.
\end{remark}
\bigskip

To highlight the roles of the involved constituents in our particular setting, we present the following uniform convergence result in high probability.

\begin{theorem}\label{thm:final-H-error}
We assume the validity of Assumption \ref{ass:range-capacity}.  For any $\pp \in \PP$, let $s(\pp) = [\sc(\pp), \uc(\pp)]\in\HH$ be the solution of the FOSLS problem \eqref{fosls} and let $s_r(\pp, \hat \theta) = [\sc_r,\uc_r](\pp;\hat \theta) \in \HH_r$ denote the RBNO approximation, obtained by minimizing the empirical risk \eqref{eq:empirical-loss} over $\Theta$. More precisely, $s_r(\pp, \hat \theta)$ satisfies
\begin{equation}
  \label{empmin}
\hat R_{\widehat\PP}(\hat\theta) \le \inf_{\theta\in\Theta}\hat R_{\widehat\PP}(\theta)+ \varepsilon_{\rm opt},
\end{equation}
where $\varepsilon_{\rm opt}\ge 0$ accounts for a possible optimization error.
Then,  for any $\delta\in(0,1)$, with probability at least $1-\delta$,  there holds
\begin{align}
\label{mainest}
 \EE_{\pp\sim\mu}\!\left[\,\|s(\pp)-s_r(\pp;\hat\theta)\|^2_{\HH}\right]
 \;\le c^{-1}\inf_{\theta\in \Theta}\EE_{\pp\sim\mu}\big[\cL(s_r(\pp;\theta);\pp)\big] + M\sqrt{\frac{\log \frac{1}{\delta}}{2N}} + C L   \sqrt{\frac{2P\log\frac{eN}{P}}{N}}
 + \varepsilon_{\mathrm{opt}},
 \end{align}
 where $c$ is the constant from \eqref{Hnormeq} or \eqref{elasteq}, $C$ is an absolute constant,   $P$ is the   pseudo-dimension of the hypothesis class $\cH_r(\Theta)$,
 and $M,L$ are from Lemma \ref{lem:Lip}.
 Moreover, there exist constants $C_1$ and $C_2$, depending on FE discretization and uniform FOSLS stability, respectively, such that the approximation error can be decomposed
as follows
\begin{equation}
\label{apprerr}
 \inf_{\theta\in \Theta}\EE_{\pp\sim\mu}\big[\cL(s_r(\pp;\theta);\pp)\big]\le C_1 \Big\{
  h^{2\eta} 
 +
 \sum_{k>r}\lambda_k\Big\}
\,\,\, 
 + 
 C_2\inf_{\theta\in\Theta}\EE_{\pp\sim\mu}\Big[\|s_r(\pp)- s_r(\pp;\theta)\|^2_{\HH}\Big].
 \end{equation}
Here the exponent $\eta$ depends on the regularity of the exact FOSLS solution $s(\pp)$, see \ref{sec:FE-Error-Estimate}.
\end{theorem}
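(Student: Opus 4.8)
The plan is to combine three ingredients: (i) uniform FOSLS stability, to convert the $\HH$-error of the RBNO approximation into the population residual risk $R(\hat\theta):=\EE_{\pp\sim\mu}[\cL(s_r(\pp;\hat\theta);\pp)]$; (ii) a standard statistical-learning decomposition of $R(\hat\theta)$ into approximation error, optimization error, and a uniform generalization gap; and (iii) the loss-decomposition results of \cref{prop:reduced_loss_error_equivalence} together with the finite-element and POD error bounds, to split the approximation error into its $h^{2\eta}$, RB-truncation, and network-approximation parts.

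First, the lower bound in the fiber error–residual equivalence \eqref{Hnormeq} (resp.\ \eqref{elasteq}) gives $c\,\|s(\pp)-s_r(\pp;\hat\theta)\|_\HH^2\le \cL(s_r(\pp;\hat\theta);\pp)$ for every $\pp\in\PP$; integrating against $\mu$ reduces the claim to an upper bound on $R(\hat\theta)$. Fix a (near-)minimizer $\theta^\star\in\argmin_{\theta\in\Theta}R(\theta)$ and split
\begin{equation*}
R(\hat\theta)=\big(R(\hat\theta)-\hat R_{\widehat\PP}(\hat\theta)\big)+\big(\hat R_{\widehat\PP}(\hat\theta)-\hat R_{\widehat\PP}(\theta^\star)\big)+\big(\hat R_{\widehat\PP}(\theta^\star)-R(\theta^\star)\big)+R(\theta^\star).
\end{equation*}
The second bracket is at most $\varepsilon_{\mathrm{opt}}$ by \eqref{empmin}; the fourth equals $\inf_{\theta\in\Theta}\EE_{\pp\sim\mu}[\cL(s_r(\pp;\theta);\pp)]$, the approximation term; and the first and third brackets are both dominated by $\sup_{\theta\in\Theta}|R(\theta)-\hat R_{\widehat\PP}(\theta)|$. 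To control this supremum I would invoke \cref{lem:Lip}: the loss family $\mathcal{G}:=\{\pp\mapsto\cL(s_r(\pp;\theta);\pp):\theta\in\Theta\}$ consists of $[0,M]$-valued functions that, for each fixed $\pp$, depend $L$-Lipschitz-continuously on the hypothesis argument (the $\pp$-dependent data $W_\pp^r,\bsalpha_\pp^r,\beta_\pp$ being uniformly bounded by \cref{prop:uniform-bounds}). Symmetrization combined with a (vector) contraction inequality bounds the expected supremum by $CL$ times the empirical Rademacher complexity of $\cH_r(\Theta)$, which via Dudley's entropy integral and the standard covering-number estimate for classes of pseudo-dimension $P$ is $\le CL\sqrt{2P\log(eN/P)/N}$; a bounded-difference (McDiarmid) argument using the uniform bound $M$ then upgrades this to hold with probability at least $1-\delta$ at the cost of the additive term $M\sqrt{\log(1/\delta)/(2N)}$. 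Collecting brackets and absorbing the factor $c^{-1}$ and universal constants yields \eqref{mainest}.

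For the decomposition \eqref{apprerr}, apply the reduced fiber equivalence \eqref{eq:RB-fiber-equivalence} with $s_r=s_r(\pp;\theta)$ and take $\EE_{\pp\sim\mu}$ and then $\inf_{\theta\in\Theta}$ (the RB-FOSLS minimizer $s_r(\pp)$ not depending on $\theta$), obtaining
\begin{equation*}
\inf_{\theta\in\Theta}\EE_{\pp\sim\mu}\big[\cL(s_r(\pp;\theta);\pp)\big]\ \lesssim\ \inf_{\theta\in\Theta}\EE_{\pp\sim\mu}\big[\|s_r(\pp)-s_r(\pp;\theta)\|_\HH^2\big]+\EE_{\pp\sim\mu}\big[\cL(s_r(\pp);\pp)\big].
\end{equation*}
The first summand is the network-approximation term on the right of \eqref{apprerr}. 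For the second, \eqref{eq:RB-fiber-equivalence-t_r} with $s_r=s_r(\pp)$ gives $\EE_{\pp\sim\mu}[\cL(s_r(\pp);\pp)]\eqsim\EE_{\pp\sim\mu}[\|s_h(\pp)-s_r(\pp)\|_\HH^2]+\EE_{\pp\sim\mu}[\cL(s_h(\pp);\pp)]$; by the quasi-optimality \eqref{eq:RB-best-approx-t_r} and the POD optimality \eqref{eq:low_rank_approximation_via_trailing_eigenvalues} the first piece is $\lesssim\sum_{k>r}\lambda_k$, while by FOSLS stability and the FE error estimates of \ref{app:proofFELSPoisson}/\ref{app:proofFELSElasticity} the second is $\eqsim\inf_{t_h\in\HH_h}\|t_h-s(\pp)\|_\HH^2\lesssim h^{2\eta}$ (this is the chain already used in the proof of \cref{cor:err}), with $\eta$ as in \cref{thm:FELSPoisson} and \ref{sec:FE-Error-Estimate}. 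This gives \eqref{apprerr} with $C_1$ depending on the FE/POD constants and $C_2$ on the uniform FOSLS stability constants.

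The step I expect to be the main obstacle is the uniform generalization bound: one must pass correctly from the pseudo-dimension of the \emph{vector-valued} hypothesis class $\cH_r(\Theta)$ to a covering-number estimate for the \emph{scalar}, \emph{parameter-dependent} loss class $\mathcal{G}$ — whose members depend on $\pp$ both through the network output and through the data-dependent quadratic coefficients — and then carry out the chaining so that the final rate is $\sqrt{P\log(eN/P)/N}$ with the Lipschitz constant $L$ (rather than the coarser bound $M$) as prefactor. The uniform bounds on $W_\pp^r,\bsalpha_\pp^r,\beta_\pp$ from \cref{prop:uniform-bounds} and the Lipschitz estimate \eqref{Lip} are exactly the ingredients that make this bookkeeping go through; the remaining steps are invocations of already-established results or routine.
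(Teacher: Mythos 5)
Your proposal is correct and follows essentially the same route as the paper: reduce to bounding the population risk $R(\hat\theta)$ via uniform FOSLS stability, decompose the excess risk into approximation, optimization, and generalization parts, control the uniform deviation by McDiarmid plus symmetrization, the Lipschitz contraction from Lemma \ref{lem:Lip}, and the pseudo-dimension bound on the Rademacher complexity, and finally split the approximation term exactly as in Corollary \ref{cor:err} (which the paper simply invokes where you re-derive its chain through Theorem \ref{prop:reduced_loss_error_equivalence}, quasi-optimality, the POD tail bound, and the FE estimates). Your use of the two-sided supremum $\sup_\theta|R(\theta)-\hat R_{\widehat\PP}(\theta)|$ is a harmless (arguably cleaner) variant of the paper's one-sided bound.
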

Before turning to the proof, a few comments are in order. \eqref{apprerr} says that the first term on the right hand side of \eqref{mainest} could be reduced to an approximation error within the subspace $\HH_r$ by choosing a sufficiently fine FE discretization and a 
correspondingly accurate reduced basis. The second group of terms in \eqref{mainest}, representing the estimation error, is only meaningful for $N> P$, hence in an {\em under-parametrized} regime, which is in accordance with the fact that resulting surrogates are supposed to act as reduced models.
For large $N$, the estimation error behaves like $O(N^{-1/2})$, which is a slow rate.
Faster rates would also require variance information, which is not required here.

Bounds of the above type are, in essence, standard. For the convenience of the reader
we devote the remainder of this section to highlighting the
main ingredients of the proof.

\begin{proof} 
Note first that in the present case, the minimal risk 
$$R^*= \min_{s\in L_{\mu}^2(\PP;\HH)}
\EE_{\pp\sim \mu}\big[\cL(s;\pp)\big]=0.$$ 
Thus, by uniform FOSLS stability
$$
\EE_\mu\! \left[\|s(\pp)-s_r(\pp;\hat\theta)\|^2_{\HH}\right] \eqsim \EE_{\pp\sim\PP}\big[\cL(s_r(\pp;\hat\theta);\pp)\big]= R(\hat\theta)=R(\hat\theta)-R^*,
$$
so that, in the above terms, our task is to bound $R(\hat\theta)=R(s_r(\cdot;\hat\theta))$.
We follow the first standard lines and define
$$
s_r^*\in \argmin_{s_r\in \cH_r(\Theta)} R(s_r),
$$
and decompose the excess risk
\begin{equation}
\label{deco1}
R(\hat\theta) = R(\hat\theta)- R(s_r^*) + R(s_r^*).
\end{equation}
By FOSLS-stability, 
$$
R(s_r^*)\eqsim \min_{s_r\in \cH_r(\Theta)}\EE_{\pp\sim\mu}
\big[\|s(\pp)- s_r(\pp)\|^2_{\HH}\big]
$$
represents the {\em best-approximation error} from the hypothesis  class $\cH_r(\Theta)$, which is a deterministic quantity. Instead, the first summand, $R(\hat\theta)- R(s_r^*)$, is usually referred to as {\em estimation error}, a stochastic quantity, which we further have to
analyze. We employ  a further decomposition to obtain
\begin{align*}
R(\hat\theta)- R(s_r^*)&= \{R(\hat\theta)- \hat R_{\widehat\PP}(\hat\theta)\} + \{\hat R_{\widehat\PP}(\hat\theta)- \hat R_{\widehat\PP}(s_r^*)\} + \{\hat R_{\widehat\PP}(s_r^*)-  R(s_r^*)\}
\nonumber\\
&\le 2 \sup_{s_r\in \cH_r(\Theta)}\big\{R(s_r)- \hat R_{\widehat\PP}(s_r)\big\} + \varepsilon_{\rm opt},  
\end{align*}
where we have used \eqref{empmin}. This gives
\begin{equation}
\label{deco2}
R(\hat\theta)\le  2 \sup_{s_r\in \cH_r(\Theta)}\big\{R(s_r)- \hat R_{\widehat\PP}(s_r)\big\} + R(s_r^*) + \varepsilon_{\rm opt},
\end{equation}
To further estimate the only stochastic quantity on the right hand side of \eqref{deco2},
note that the estimation error
\begin{equation}
\label{esterror}
E(\pp_1,\ldots,\pp_N):= \sup_{s_r\in \cH_r(\Theta)}\big\{R(s_r)- \hat R_{\widehat\PP}(s_r)\big\},
\end{equation}
satisfies, on account of Lemma \ref{lem:Lip}, \eqref{bounded},
$$
\big| E(\pp_1,\ldots,\pp_{i-1},\pp_i,\pp_{i+1},\ldots,\pp_N)-E(\pp_1,\ldots,\pp_{i-1},\pp'_i,\pp_{i+1},\ldots,\pp_N)\big| \le \frac{M}N, \quad \pp_i,\,\pp'_i\in\PP.
$$
Applying Mc'Diarmid's inequality (see \cite[Propositio 1.3]{FBach}), yields
\begin{align}
\label{McDia}
\prob\Big\{ \big| E(\pp_1,\ldots,\pp_N)- \EE_{{\widehat\PP}\sim \mu^N}\big[E(\pp_1,\ldots,\pp_N)\big]\big|&\ge t\Big\}\le 2 \exp\Big\{-2t^2/ (N( M/N)^2)\Big\}\nonumber\\
&= 2\exp\Big\{-\frac{2Nt^2}{M^2}\Big\}.
\end{align}
Hence, for any $\delta\in (0,1)$, with probability at least $1-\delta$, one has
\begin{equation}
\label{inprob}
\sup_{s_r\in \cH_r(\Theta)}\big\{R(s_r)- \hat R_{\widehat\PP}(s_r)\big\} \le \EE_{{\widehat\PP}\sim \mu^N}\Big[\sup_{s_r\in \cH_r(\Theta)}\big\{R(s_r)- \hat R_{\widehat\PP}(s_r)\big\}\Big]   + M \sqrt{\frac{\log \frac{1}{\delta}}{2N}}.
\end{equation}
It remains to bound expectations of the above type on the right of \eqref{inprob}.
To that end, one can resort to
the so-called (empirical) {\em Rademacher-Complexity}, defined as follows (see e.g. \cite[Definition 3.1]{mohri2018foundations}). For ${\widehat\PP}$ as above
let $\cF$ denote a class of functions mapping $\PP$ into $[0,M]$.  The empirical Rademacher Complexity of $\cF$ is defined by
$$
\hat\mR_{\widehat\PP}(\cF):= \EE_\epsilon\Big[\sup_{f\in \cF}\frac 1N\sum_{i=1}^N \epsilon_i f(\pp_i)\Big],
$$
where $\epsilon\in \{-1,1\}^N$ are independent random variables taking values $\pm 1$ with
equal probability (Rademacher variables). 
The expectation of $\hat\mR_{\widehat\PP}(\cF)$
$$
\mR_N(\cF):= \EE_{{\widehat\PP}\sim \mu^N}\Big[\hat\mR_{\widehat\PP}(\cF)\Big]
$$
is referred to as {\em Rademacher Complexity}.
Specifically
consider functions of the form
$$
f(\pp;\theta):= \cL(s_r(\pp;\theta);\pp), 
$$
and let the class $\cF$ be comprised of all such functions obtained when 
$s_r$ ranges over $\cH_r(\Theta)$, and $\pp\in \PP,\,\theta\in \Theta$.
In view of Assumption \ref{ass:range-capacity} and Lemma \ref{lem:Lip},
each $f\in\cF$ maps 
$ \PP$ into $[0,M]$. Notice next that, in these terms \eqref{esterror}
can be rewritten as
$$
\sup_{s_r\in \cH_r}\big\{R(s_r)- \hat R_{\widehat\PP}(s_r)\big\} = \sup_{f\in\cF}\Big\{\EE_{\pp\sim\mu}[f(\pp)]- \frac 1N \sum_{\pp\in {\widehat\PP}} f(\pp)\Big\}.
$$ 
It is well-known (see for instance, \cite{BartlettMendel} or \cite[Proposition 4.3]{FBach}) that the expectation of this quantity can be bounded in terms of the Rademacher Complexity as
\begin{equation}
\label{Radbound}
\EE_{{\widehat\PP}\sim\mu^N}\Big[ \sup_{f\in\cF}\Big|\EE_{\pp\sim\mu}[f(\pp)]- \frac 1N \sum_{\pp\in {\widehat\PP}} f(\pp)\Big|\Big]\le 4\mR_N(\cF).
\end{equation}
Moreover, since Lemma \ref{lem:Lip}, \eqref{Lip}, applies to the class $\cF$, so that 
the ``contraction principle'' yields (see \cite[Proposition 4.3.]{FBach})
\begin{equation}
\label{contract}
\mR_N(\cF)\le L \mR_N(\cH_r(\Theta)),
\end{equation}
where $L$ is the Lipschitz constant. 
Combining this with \eqref{inprob} and \eqref{Radbound}, provides that with probability
at least $1-\delta$ we have
\begin{equation}
\label{RadboundH}
\sup_{s_r\in \cH_r}\big\{R(s_r)- \hat R_{\widehat\PP}(s_r)\big\} \le 4L \mR_N(\cH_r(\Theta)) 
+ M \sqrt{\frac{\log \frac{1}{\delta}}{2N}},
\end{equation}
which now shows a dependence on the complexity of the hypothesis class $\cH_r(\Theta)$.
This can be alternatively described in terms of the  {pseudo-dimension} of 
$\cH_r(\Theta)$. 
Relating Rademacher Complexity to covering numbers and using Dudley's integral, it can eventually be shown (see \cite{BartlettMendel,AnthonyBartlett}) that
\begin{equation}
\label{VCbound}
\mR_N(\cH_r(\Theta))\le C \sqrt{\frac{2P\log\frac{eN}{P}}{N}},
\end{equation}
where $P={\rm Pdim\,\cH_r(\Theta)}$ is the pseudo-dimension of $\cH_r(\Theta)$ and $C$ is an absolute constant.
Substituting this bound into \eqref{RadboundH}, combining the result with \eqref{deco2},
we conclude that with probability at least $1-\delta$
\begin{equation}
\label{collect}
R(\hat\theta) \le R(s_r^*)+  4L C \sqrt{\frac{2P\log\frac{eN}{P}}{N}} +   M \sqrt{\frac{\log \frac{1}{\delta}}{2N}}.
\end{equation} 
The assertion follows now from bounding the approximation error $R(s_r^*)$ with the aid of  Corollary \ref{cor:err}.\end{proof}

\section{Numerical experiments}
\label{sec:numerical_experiments}

All computations were run on a Linux x86\_64 workstation equipped with dual AMD EPYC 9334 CPUs 
and an NVIDIA L40S GPU. For all finite element solves and weight-matrix assembly we used Python~3.13 with the libraries \href{https://github.com/FEniCS/dolfinx/}{DOLFINx~0.9.0}~\cite{Baratta_DOLFINx_the_next_2023} and \href{https://github.com/scientificcomputing/scifem}{scifem~0.7.0}, together with \href{https://hippylib.github.io/}{hIPPYlib~3.1.0}~\cite{villa2021hippylib} (built on legacy \href{https://fenicsproject.org/}{FEniCS~2019.1.0}~\cite{logg2012automated}) to generate samples of the Gaussian random parameter fields. Neural networks were implemented and trained in \href{https://docs.pytorch.org/docs/stable/user_guide/index.html}{PyTorch~2.6.0+cu124}~\cite{paszke2019pytorch}. 

\subsection{Problem setup}
We consider two stationary diffusion problems---a heat conduction model with a piecewise-constant conductivity field and a Darcy flow model with a lognormal random permeability---and one linear elasticity problem describing a clamped beam under traction. Together, these examples cover both smooth and rough random parameter fields, as well as scalar-valued (diffusion) and vector-valued (elasticity) PDEs, and include mixed Dirichlet–Neumann boundary conditions typical of the three applications.

\subsubsection{Heat conduction}  
We consider a steady-state heat conduction problem with a piecewise-constant conductivity field and a nonzero external heat source, motivated by electronics thermal management. The conductivity field $\pp(x)$ is assumed to be constant over a $4\times 4$ arrangement of mini-squares uniformly spread across the domain $\Omega=(0,1)\times (0,1)$, that is,
\begin{align*}
    \pp(x) = \mathbf{1}_{\Omega\setminus (\cup \bar{\Omega}_i)}(x) + \sum_{i = 1}^{16} 10^{\mu_i} \mathbf{1}_{\bar{\Omega}_i}(x), 
\end{align*}
where $\mathbf{1}_{\bar{\Omega}_i}(x)$ denotes the indicator function of subset $\bar{\Omega}_i\subset \Omega$. Each $\bar{\Omega}_i$ is defined as  
\begin{align*}
    \bar{\Omega}_i = \Big[\frac{m}{8}-\frac{1}{16}, \frac{m}{8}+\frac{1}{16}\Big] \times \Big[\frac{n}{8}-\frac{1}{16}, \frac{n}{8}+\frac{1}{16}\Big], \qquad m, n \in \{1,3,5,7\},
\end{align*}
so that the 16 pairs $(m,n)$ define 16 mini-squares. Each $\mu_i\in\bR$ is independently sampled from the uniform distribution $\mathcal{U}(-1, 1)$, so that the parameter vector $\bm\mu=(\mu_1,\dots,\mu_{16})$ lies in $\mathbb{R}^{16}$. The boundary data are prescribed as 
\begin{align*}
    u_0(x)&=0.1(1-x_1)\sin(4\pi x_2), \qq{$x\in\Gamma_{\text{left}}\cup\Gamma_{\text{right}}=:\Gamma_D$,} \\
    g(x)  &= 0.1(1-x_2)\cos(2\pi x_1), \qq{$x\in \Gamma_{\text{top}}\cup\Gamma_{\text{bottom}}:=\Gamma_N$,}
\end{align*}
where  $\Gamma_{\text{left}}, \Gamma_{\text{right}}, \Gamma_{\text{top}}, \Gamma_{\text{bottom}}$ denote the left, right, top and bottom boundary of the domain $\Omega$. The source term $f$ is defined by its action on any test function $v\in H^{1}_{0, \Gamma_D}(\Omega)$ 
\begin{align*}
     \langle f, v\rangle = (\bbf_1, \grad{v})_{L^2} + (f_2, v)_{L^2}, 
 \end{align*}
where 
\begin{align*}
 \bbf_1(x) = (0.5\times\mathbf{1}_{\cup\bar{\Omega}_i}(x), -0.5\times\mathbf{1}_{\cup\bar{\Omega}_i}(x))^{\top} \text{\;and\;} f_2(x) = 1.
\end{align*}
This representation fits the flux-free decomposition of the source used in the loss formulation. Note that  $f\in H^{-1}(\Omega)\subset (H^1_{0,\Gamma_D}(\Omega))'$, while $f\notin L^2(\Omega)$, in accordance with our setup \eqref{flfree}. The low regularity of the source term, combined with the piecewise discontinuous conductivity field, results in limited solution regularity, see Figure \ref{fig:visualization_parameter_solution} (top row) for an illustration of one parameter-solution pair at a random parameter sample. In fact,  the finite element solution $[\uc_h, \sc_h]$ (with elements RT$_1^\circ\times$CG$_2^\circ$ on a mesh of size $128\times128$, where the edges of the mini-squares are aligned with the mesh edges) exhibits steep gradients around the mini-squares.

\begin{figure}[!htb]
    \centering
    \includegraphics[width=0.235\linewidth]{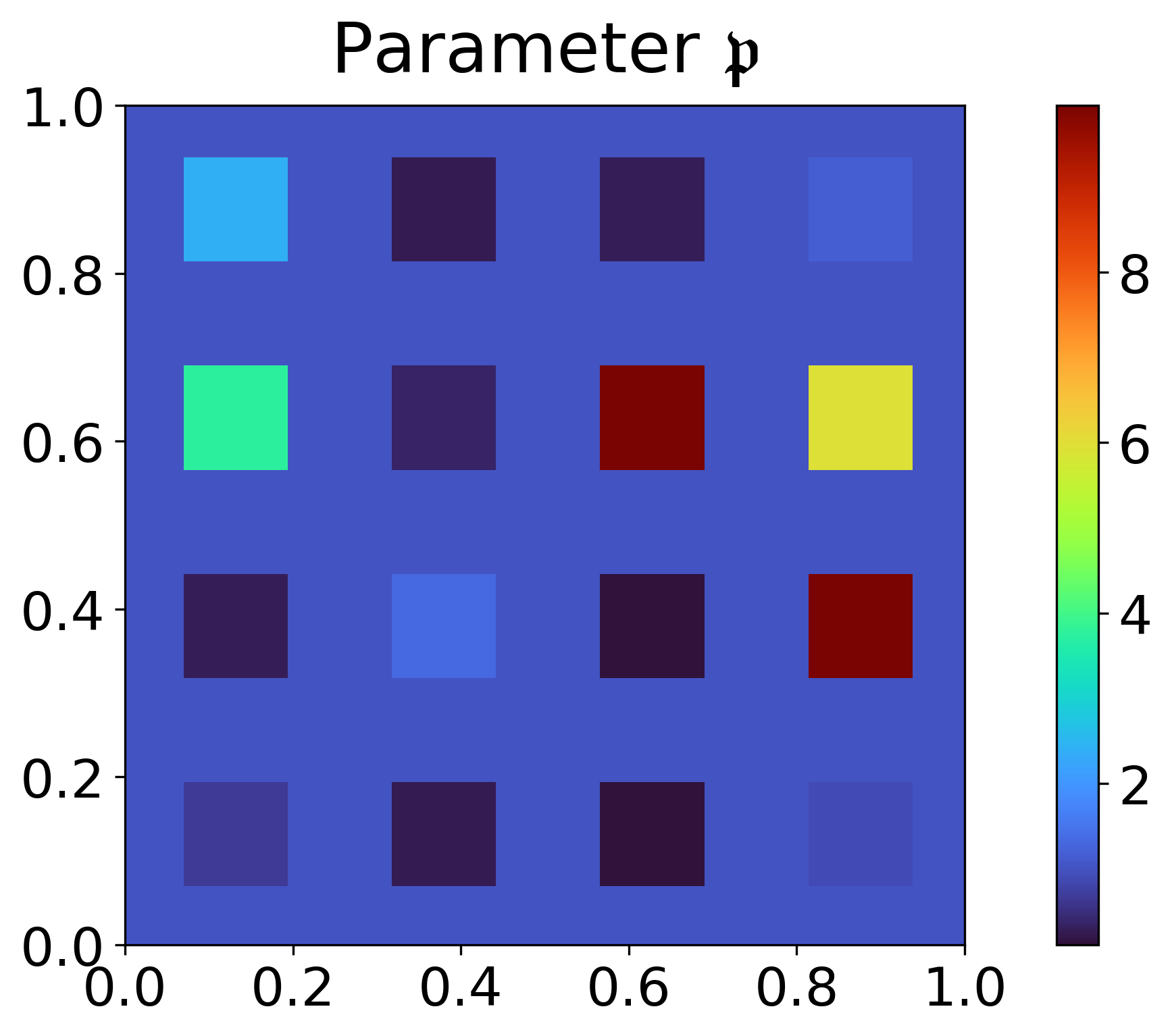}
    \includegraphics[width=0.24\linewidth]{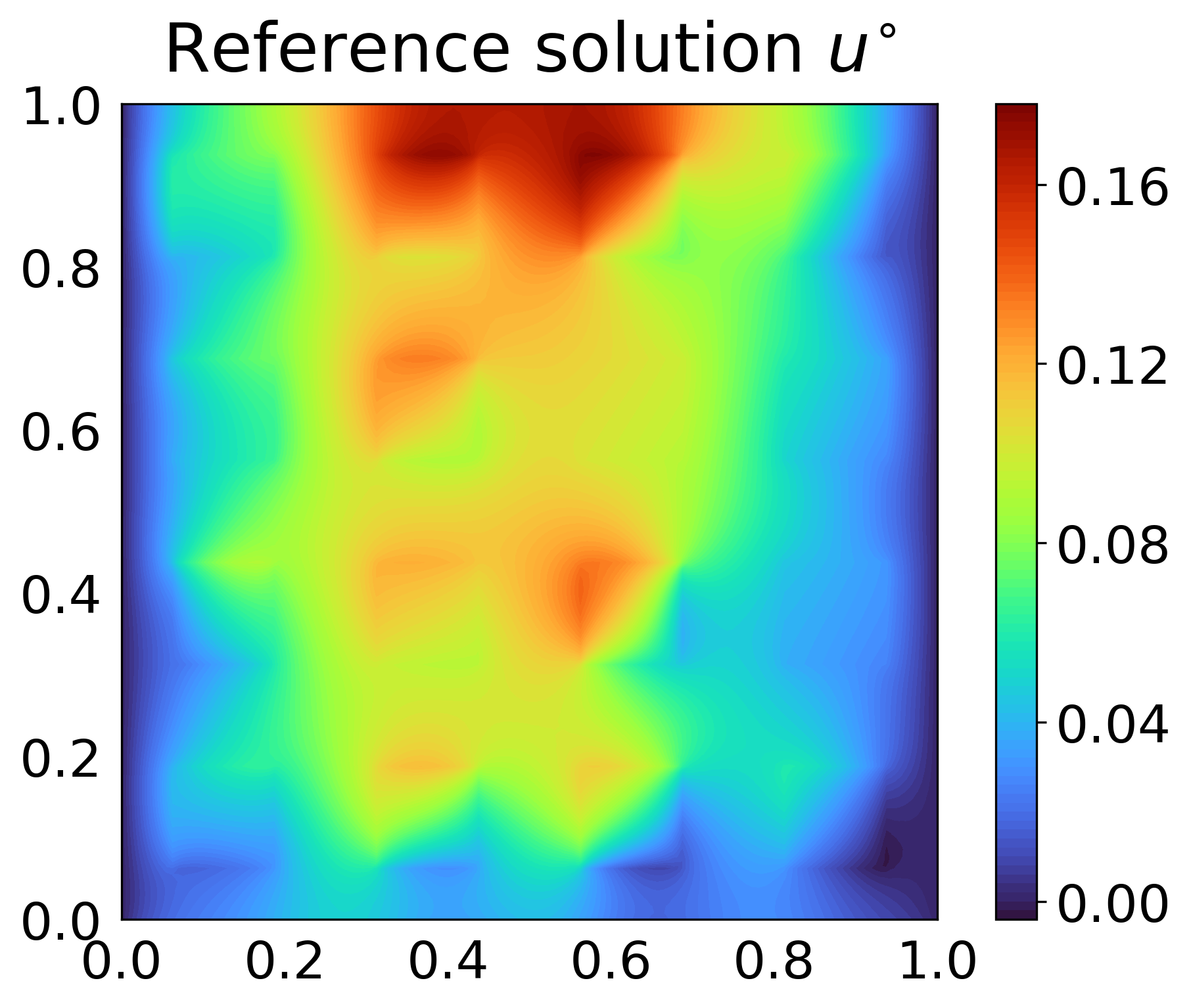}
    \includegraphics[width=0.24\linewidth]{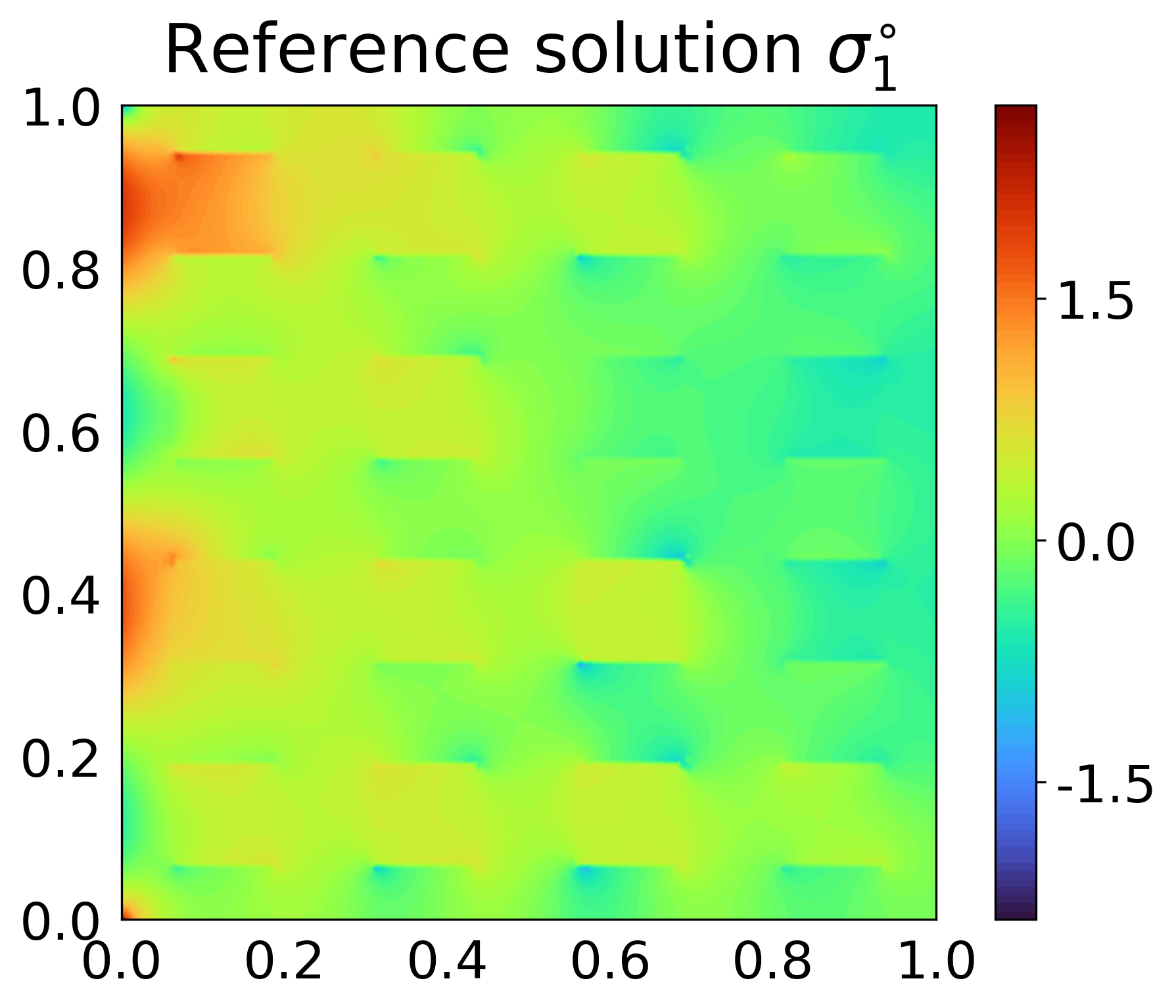}
    \includegraphics[width=0.24\linewidth]{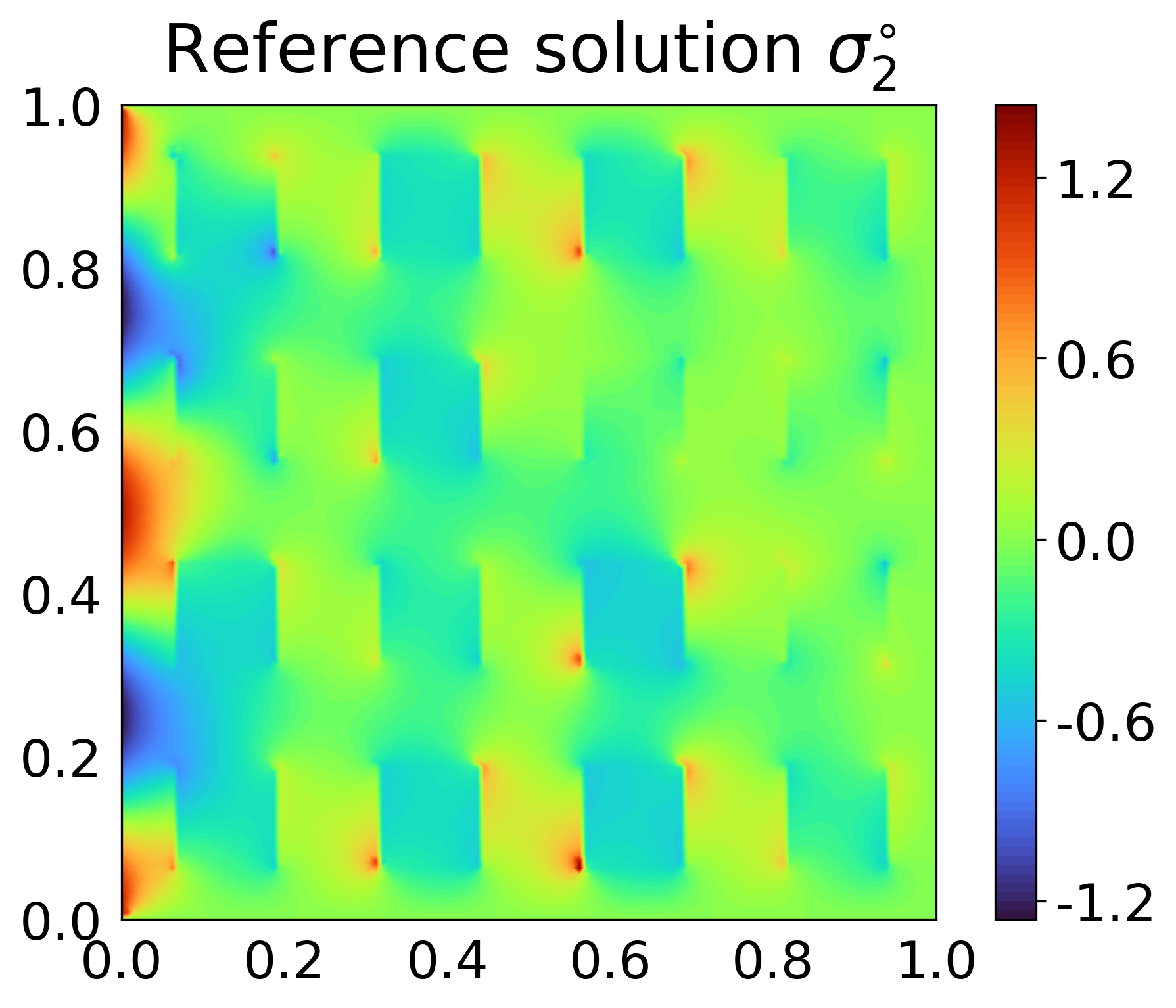}

    \vspace{1em}
    \includegraphics[width=0.24\linewidth]{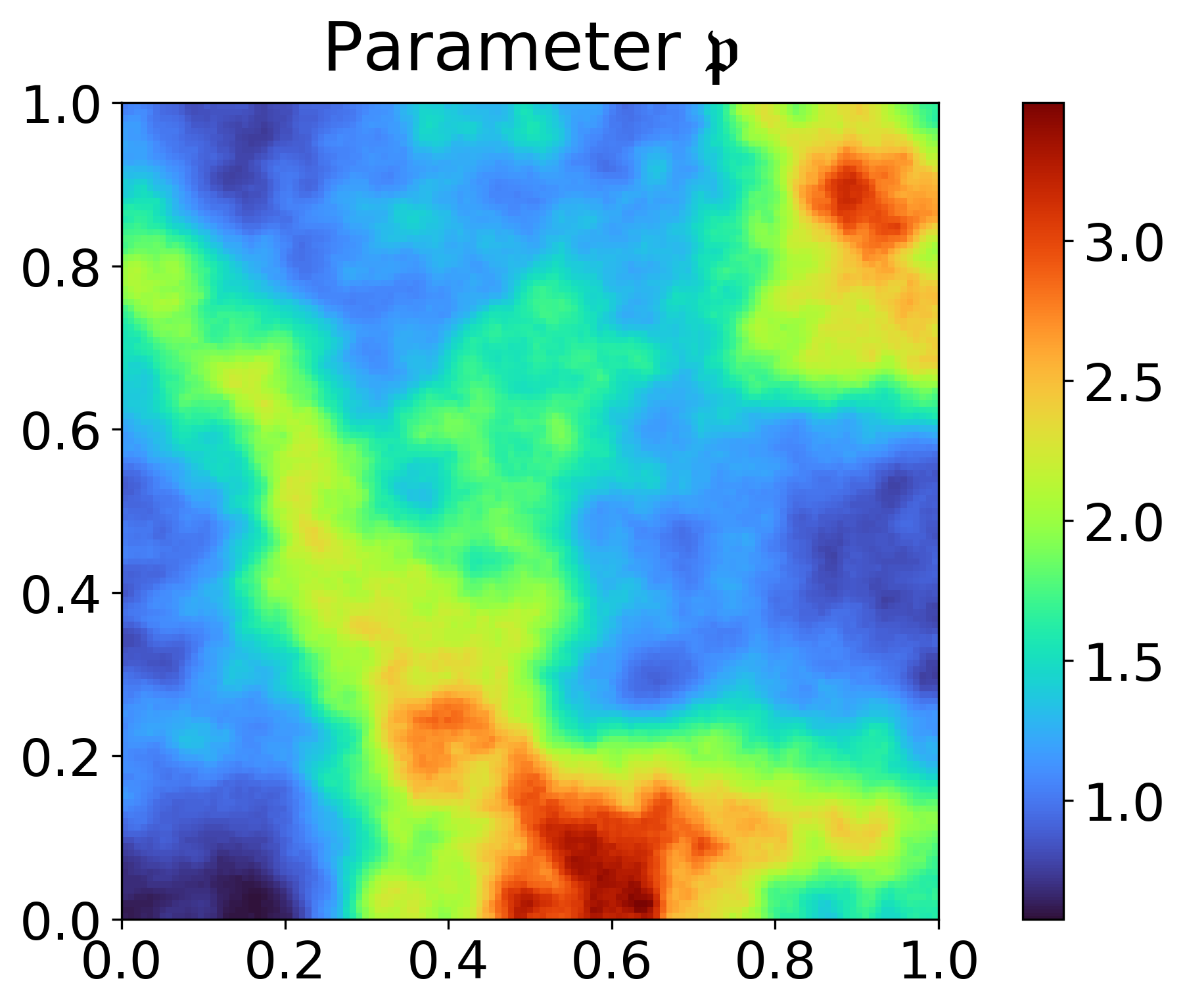}
    \includegraphics[width=0.24\linewidth]{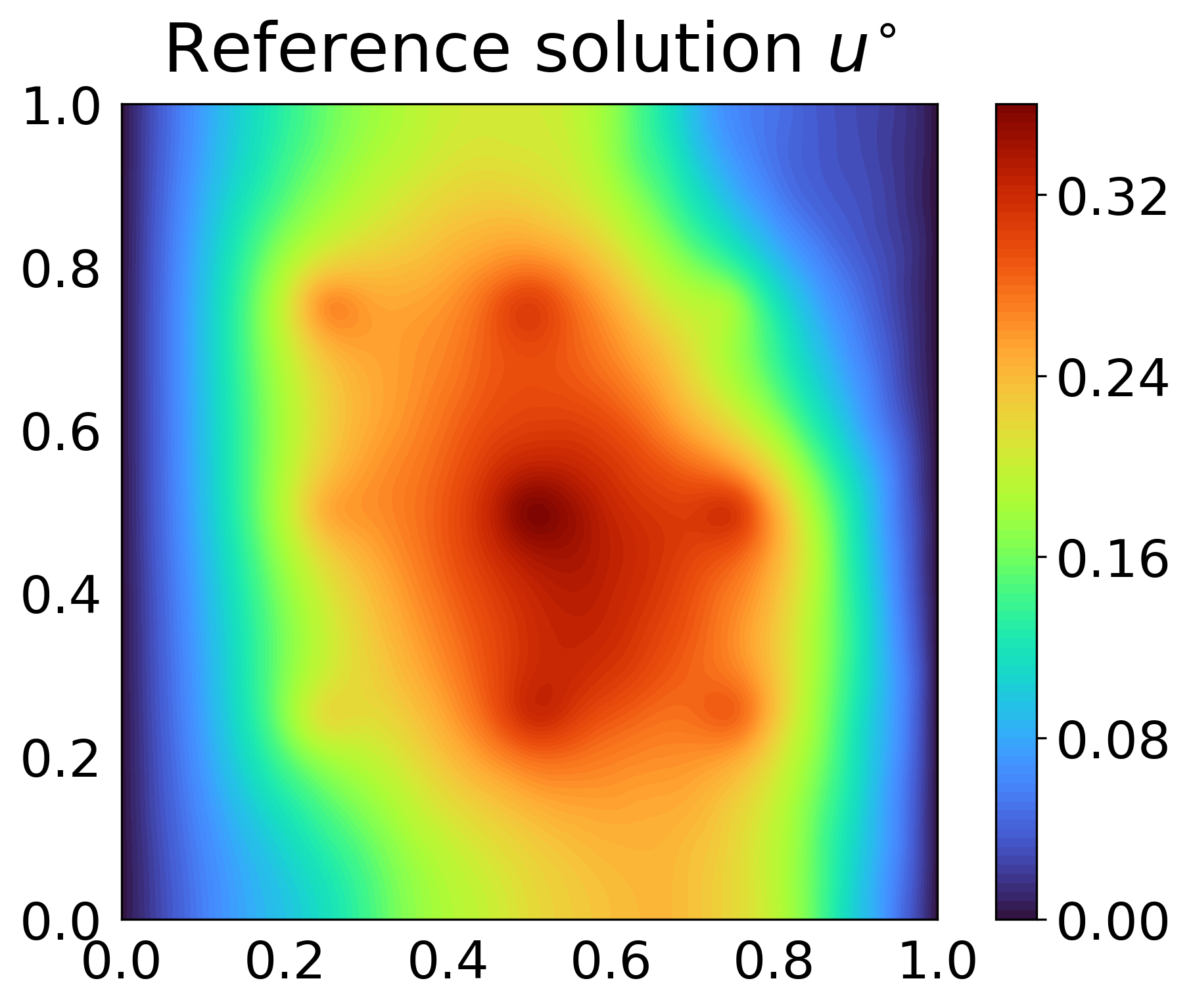}
    \includegraphics[width=0.24\linewidth]{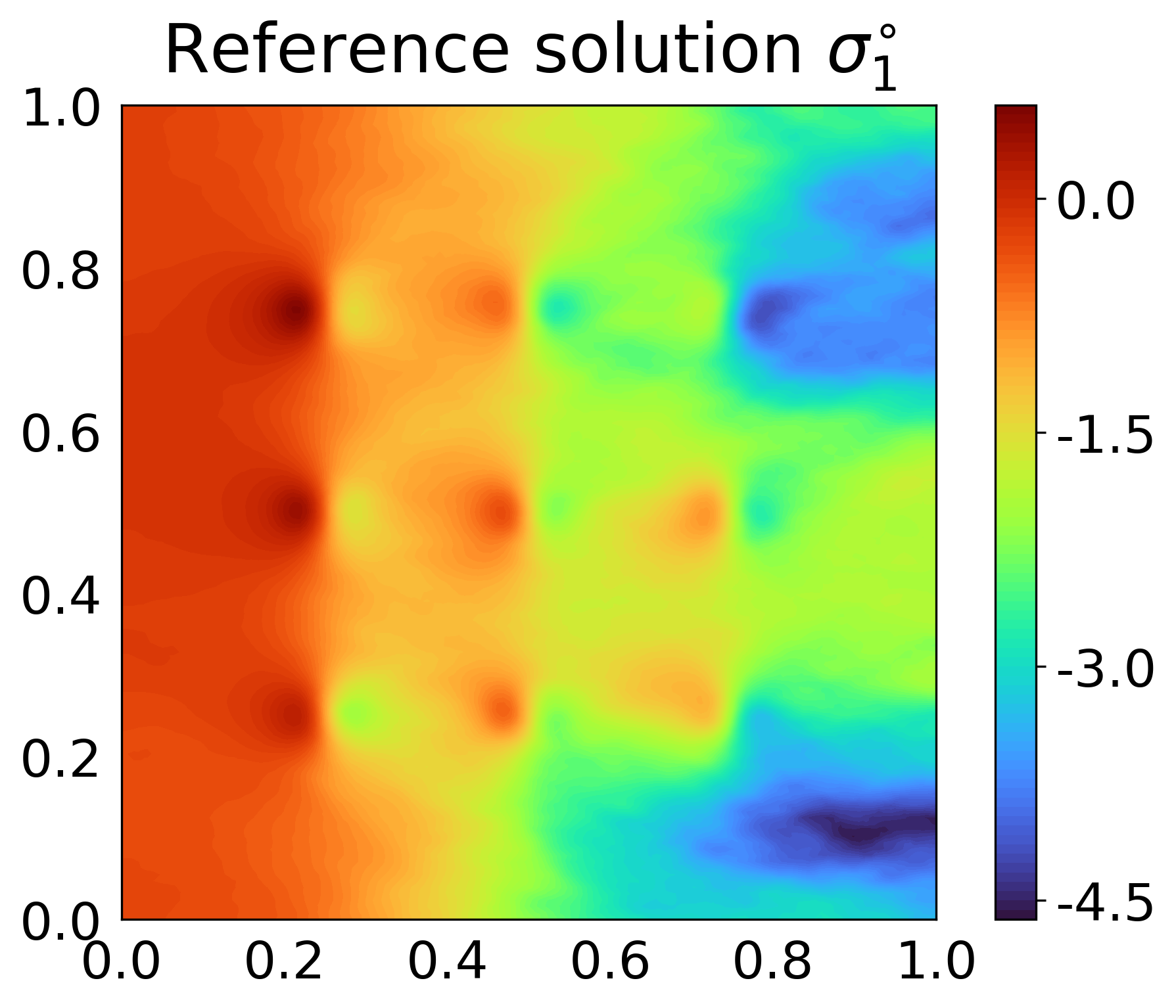}
    \includegraphics[width=0.24\linewidth]{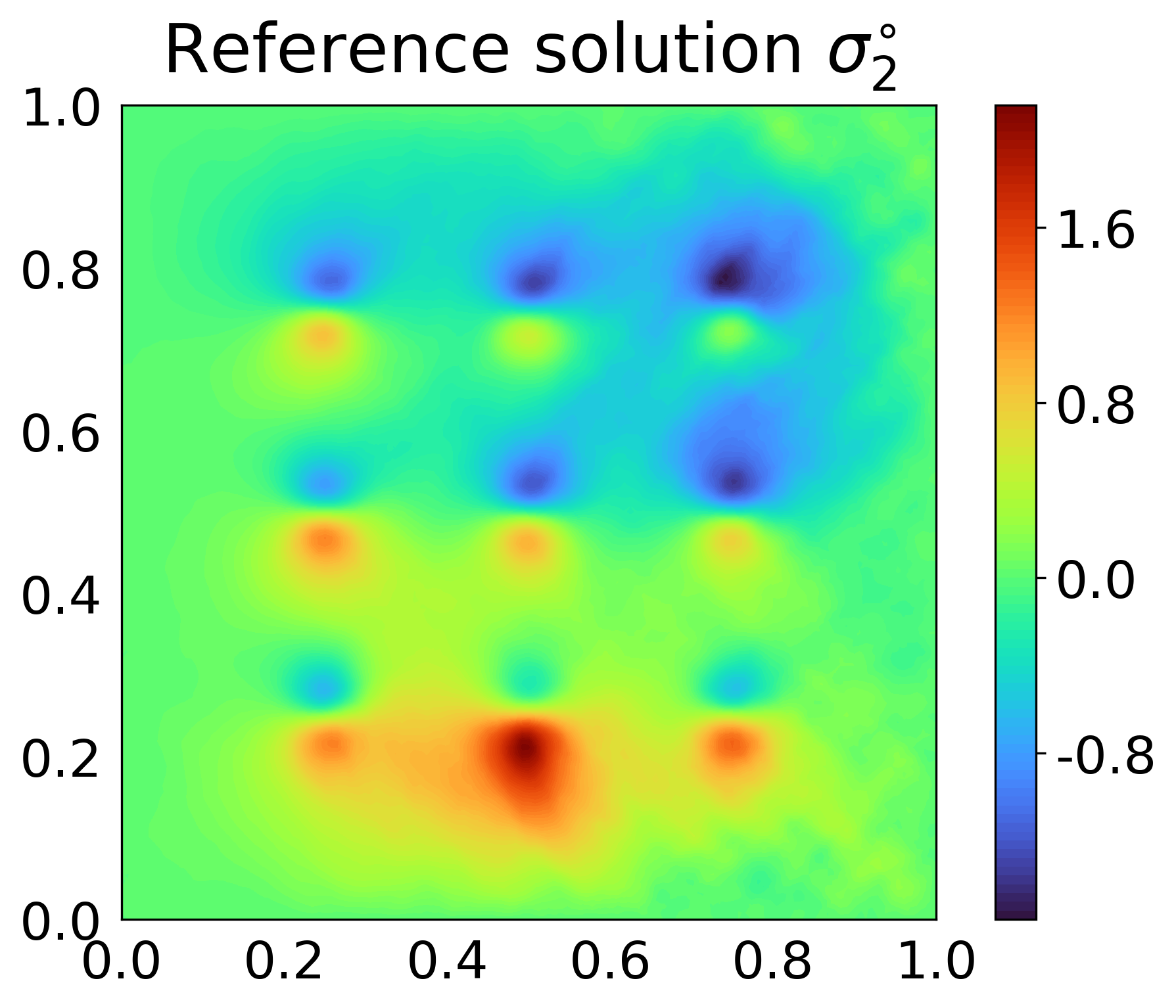}

    \vspace{1em}
    \includegraphics[width=0.32\linewidth]{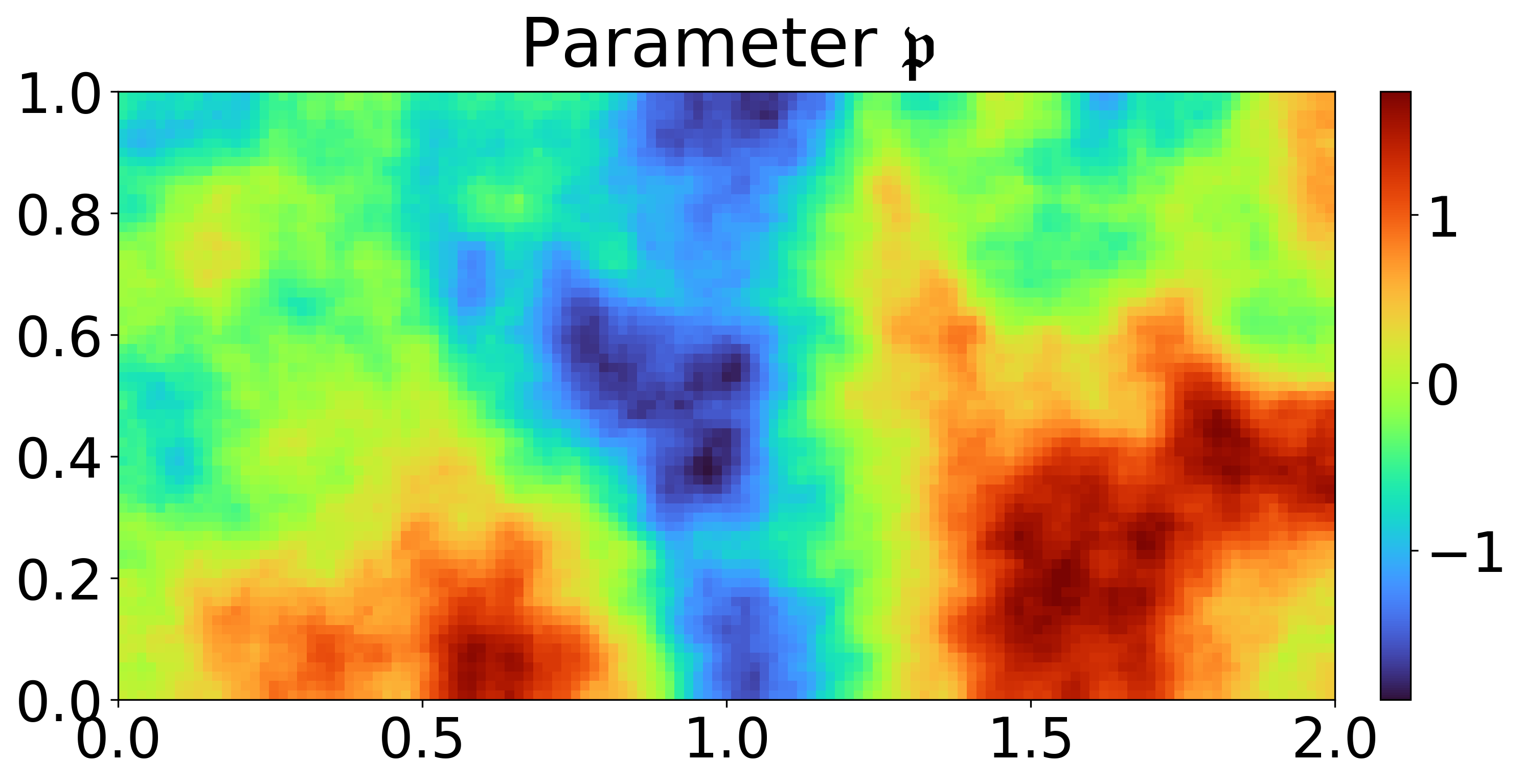}
    \includegraphics[width=0.32\linewidth]{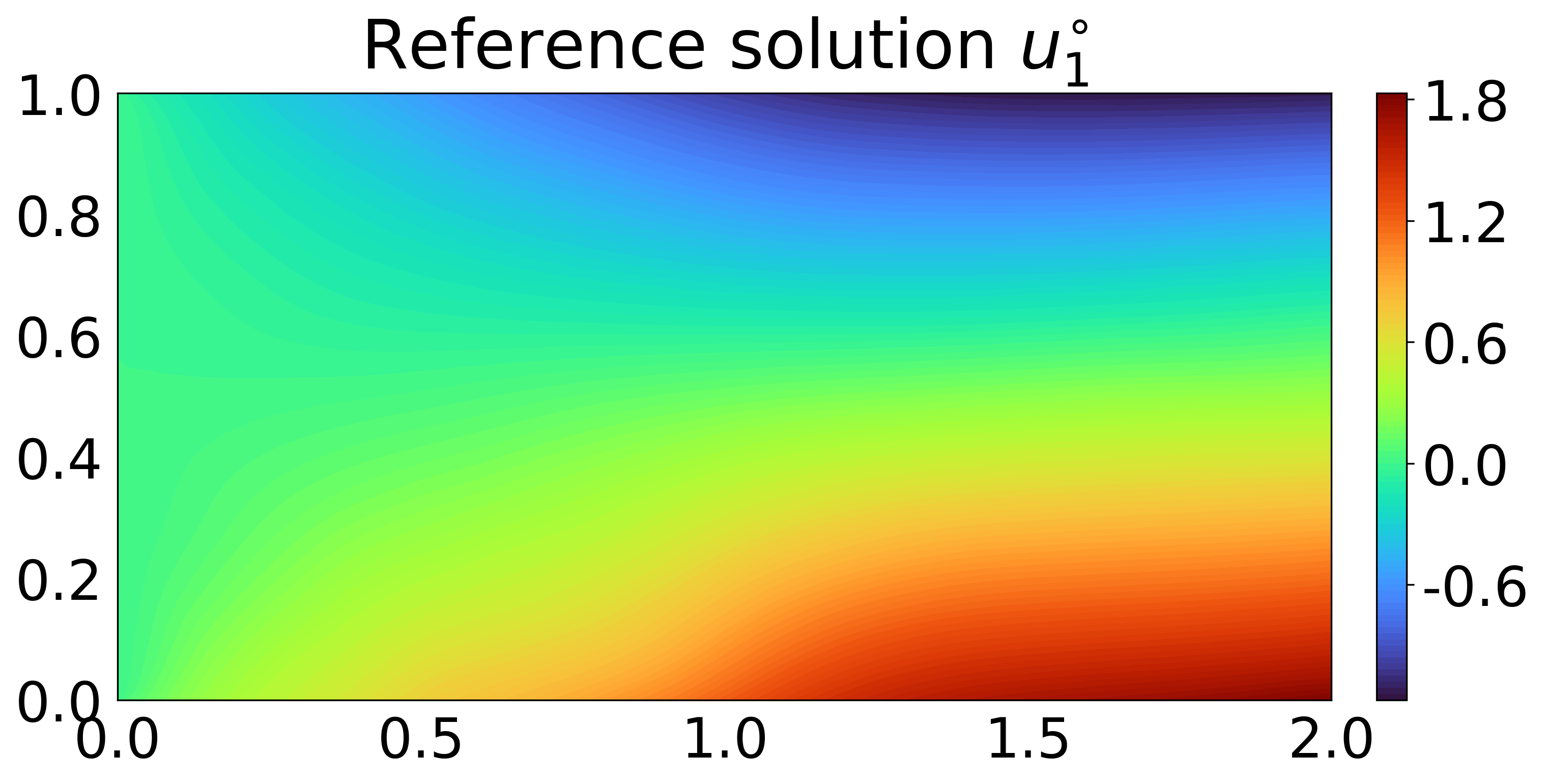}
    \includegraphics[width=0.32\linewidth]{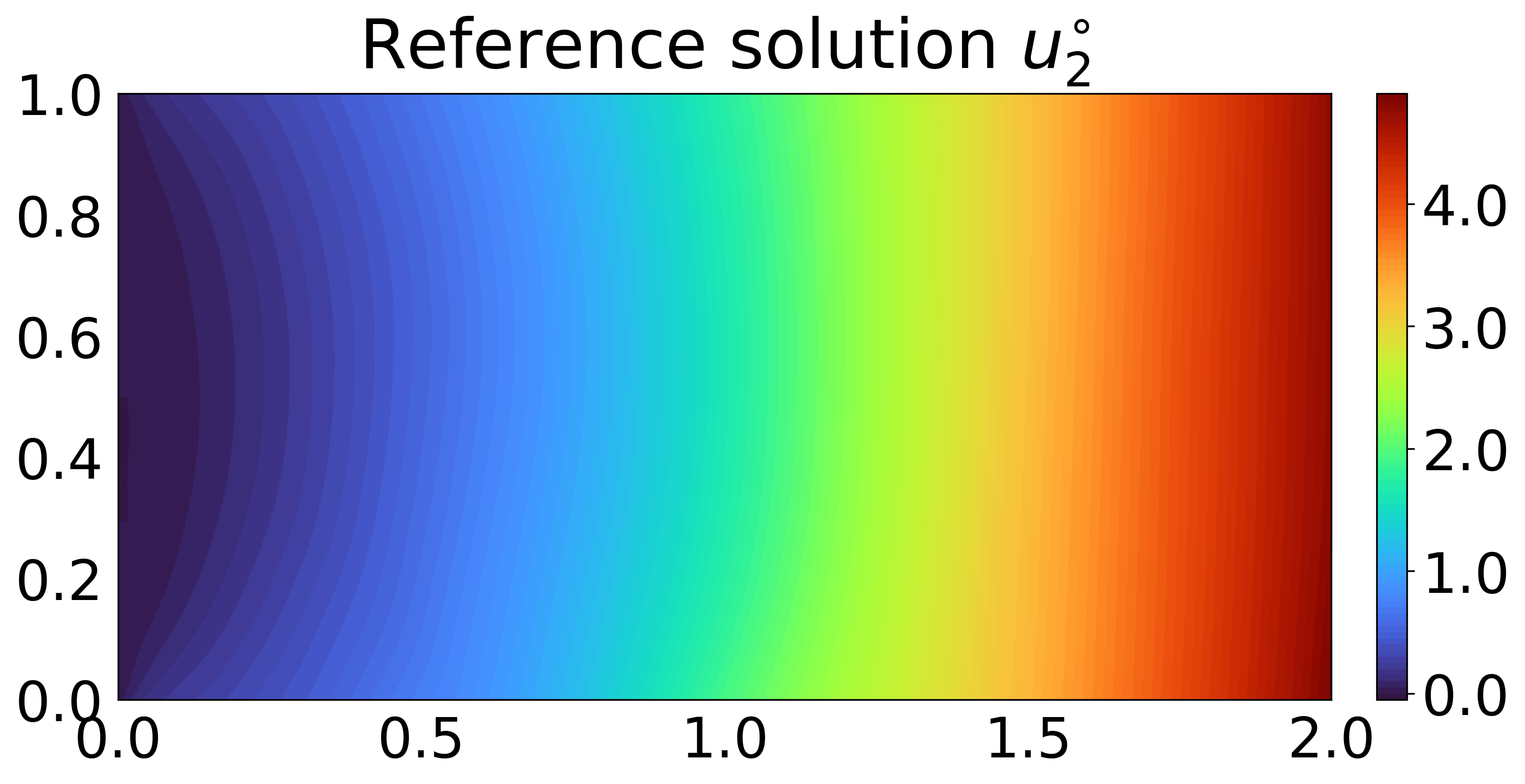}

    \vspace{1em}
    \includegraphics[width=0.24\linewidth]{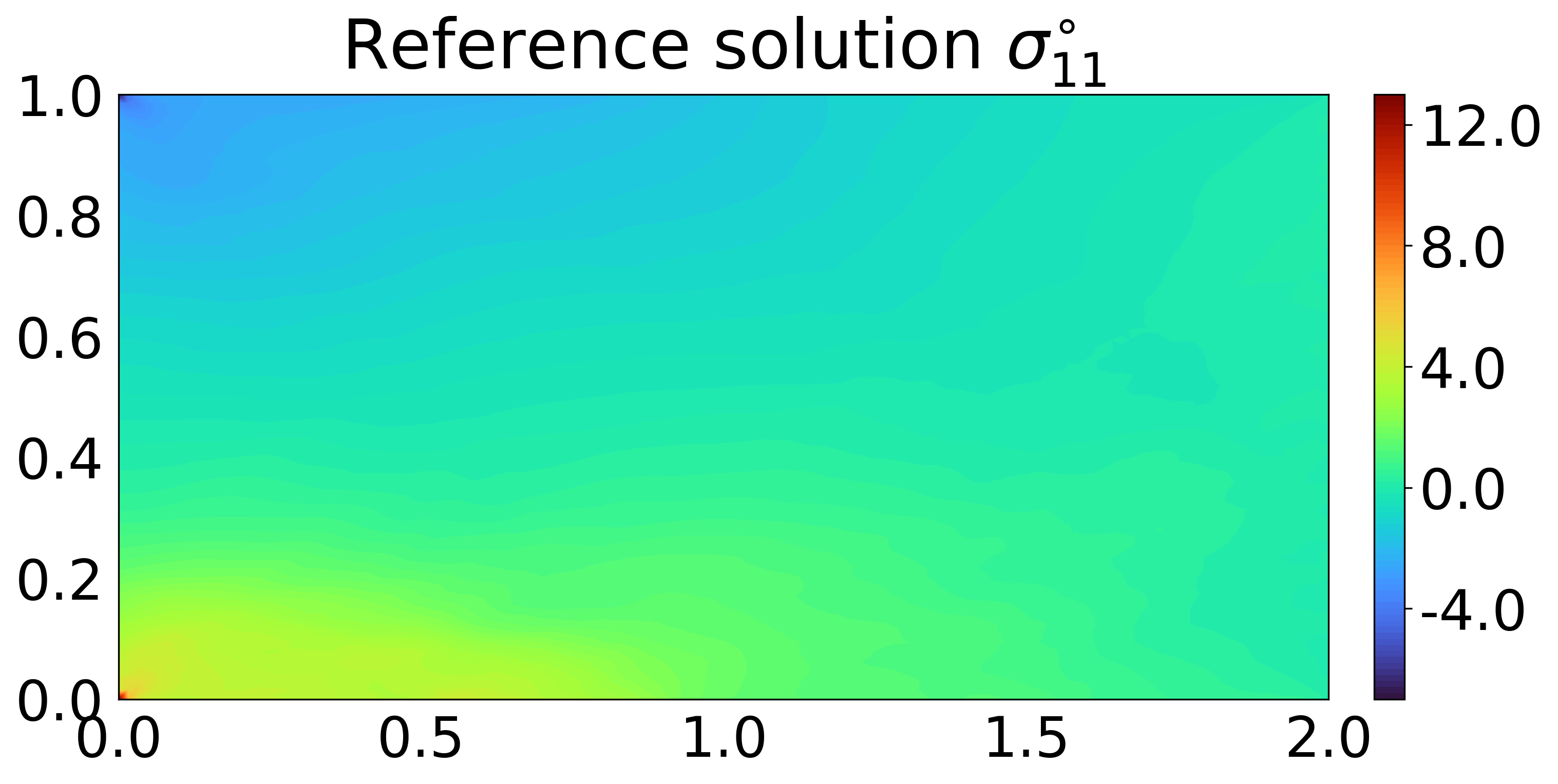}
    \includegraphics[width=0.24\linewidth]{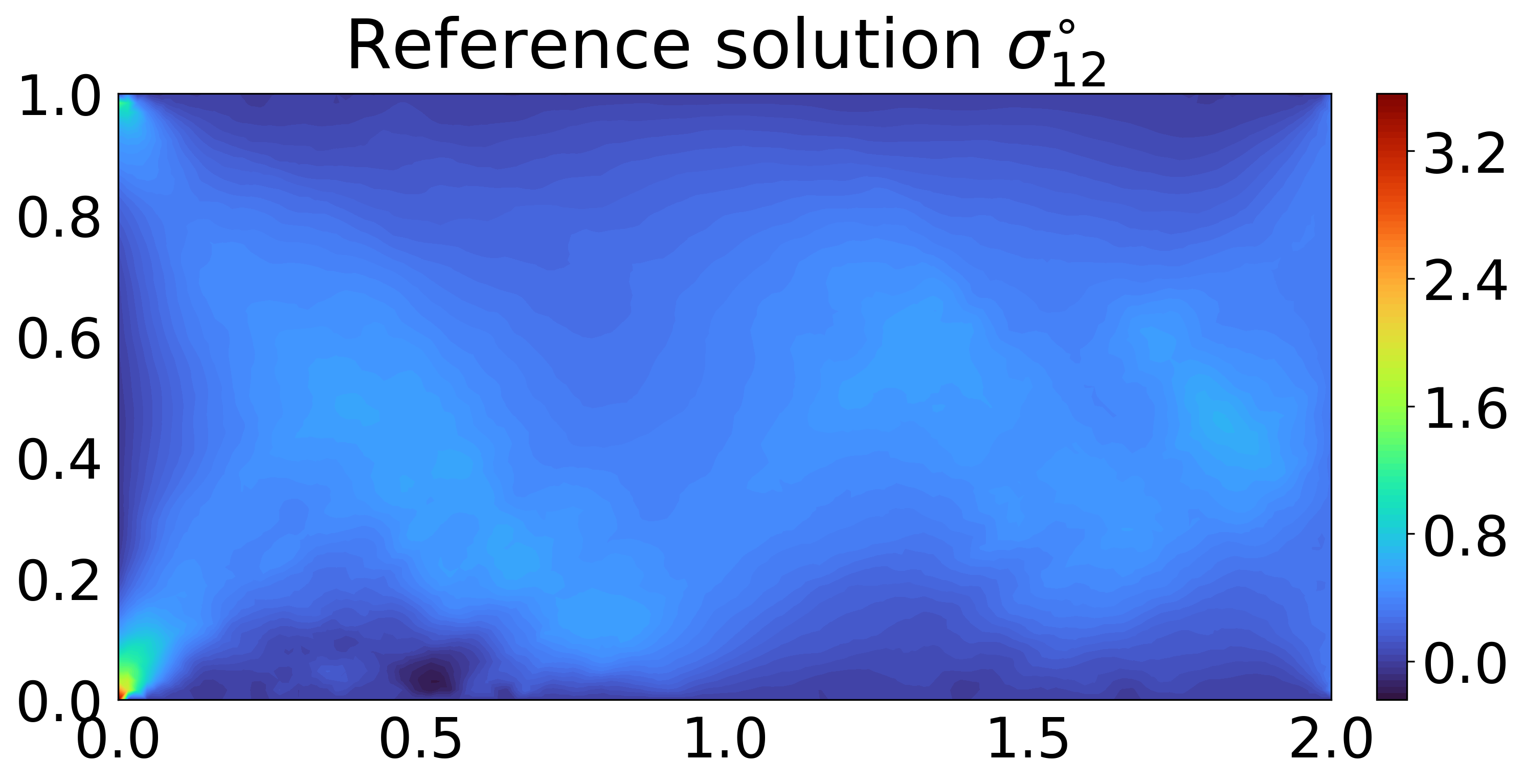}
    \includegraphics[width=0.24\linewidth]{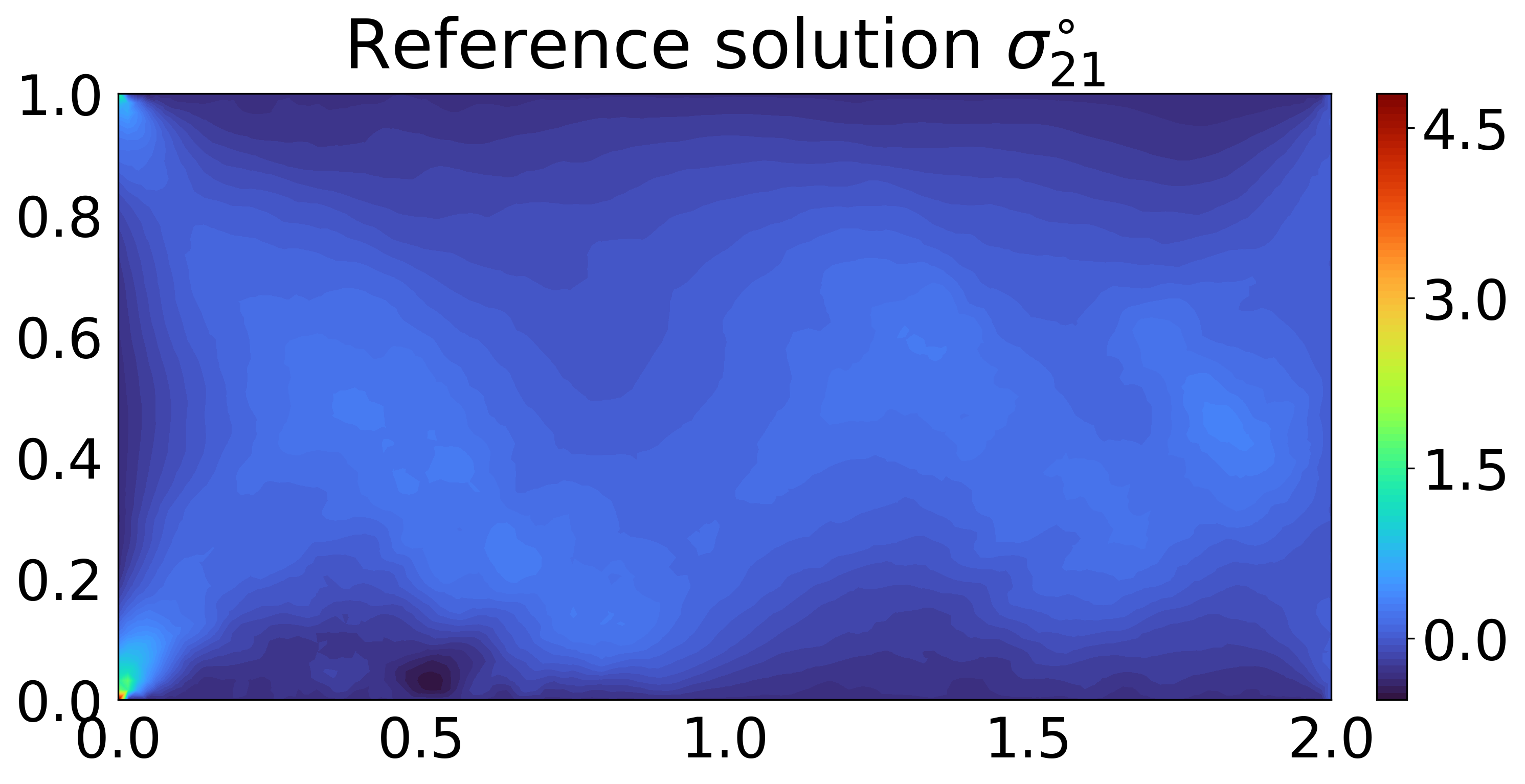}
    \includegraphics[width=0.24\linewidth]{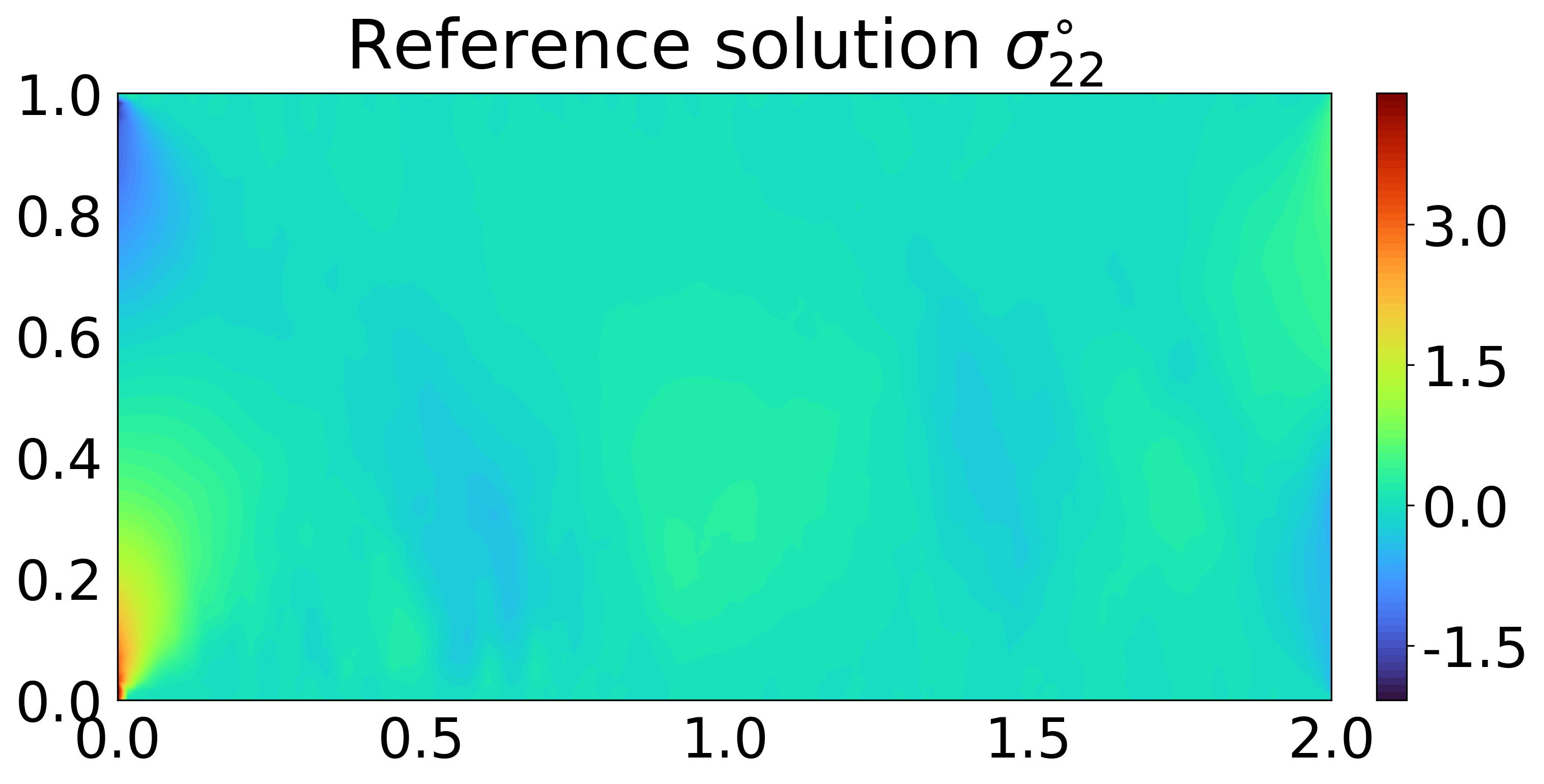}

    \caption{Visualization of parameter-to-solution map $\pp_h \mapsto [u_h^{\circ}(\pp_h), \sigma_h^{\circ}(\pp_h)]$ at a random parameter sample $\pp_h$ (left) for the heat conduction (first row),  Darcy flow (second row), and linear elasticity setup (third and fourth rows). 
    }
    \label{fig:visualization_parameter_solution}
\end{figure}

\subsubsection{Darcy flow} In this case, we consider a Darcy flow problem with a lognormal diffusion field to model subsurface flow with a random permeability field. Specifically, we assume $\Omega=(0,1)\times (0,1)$ and 
\begin{equation}
    \pp(x) = 0.01 + \exp(m(x)), 
\end{equation}
where $m \sim \mathcal{N}(\bar{m}, \mathcal{C})$ with mean $\bar{m} = 0$ and covariance operator $\cC :=(\delta I -\gamma \Delta)^{-\alpha}$. The parameters $\delta, \gamma, \alpha > 0$ collectively control the correlation, variance, and smoothness of the random field. For demonstration, we set $\delta=1.5$, $\gamma=0.15$, and $\alpha=2$. To sample the Gaussian random field $m$ we use the hIPPYlib implementation~\cite{villa2021hippylib}, which realizes the covariance operator $\cC$ via an auxiliary elliptic problem with Robin boundary conditions. 

The boundary data for the Darcy solution are specified as 
\begin{align*}
    u_0(x) &= 1-x_1, \qq{$x\in\Gamma_{\text{left}}\cup\Gamma_{\text{right}}=:\Gamma_D$,} \\
    g(x) &= 0, 
    \qq{$x\in \Gamma_{\text{top}}\cup\Gamma_{\text{bottom}}:=\Gamma_N$.}
\end{align*}
The source term, representing water extraction at nine wells, is defined as  
\begin{equation}
    f(x) = \sum_{i = 1}^9 100 \exp\left(-\left( \frac{\|x - c_i\|_2}{w}\right)^2\right), 
\end{equation}
where the centers are given by $c_i = (m/4, n/4)$ for $m,n = 1, 2, 3$ and the width is $w = 1/32$. The second row of \cref{fig:visualization_parameter_solution} displays one parameter-solution pair at a random parameter sample, with the finite element solution $[\uc_h, \sc_h]$ computed with elements RT$_1^\circ\times$CG$_2^\circ$ on a mesh of size $128\times128$, and the finite element parameter sample computed using CG$_1$ elements on the same mesh. 

\subsubsection{Linear elasticity} 
We consider a rectangular elastic body clamped along its left edge, with an upward force applied to its right edge. In this experiment, the physical domain is set to $\Omega=(0,2)\times (0,1)$. The Gaussian measure for the random parameter field $\pp$ is characterized by $\bar{\pp}=0, \delta=1.5$, $\gamma=0.15$, and $\alpha=2$, and it enters the model through spatial variations of the stiffness tensor. The Poisson ratio is set to $\nu=0.4$. The boundary data are prescribed as  
\begin{align*}
    \tenbar[1] u_0(x) &=(0, 0)^{\top}, \qq{$x\in\Gamma_{\text{left}}$,} \\
    \tenbar[1] t(x) &=
    \begin{cases}
        (0, 0)^{\top}, &\qq{$x\in\Gamma_{\text{top}}\cup\Gamma_{\text{bottom}}$,} \\
        (0.6 \exp(-(x_2-0.5)^2/4), 0.3(1+x_2/10))^{\top}, &\qq{$x\in\Gamma_{\text{right}}$,} 
    \end{cases}
\end{align*}
That is, the left edge is clamped, while on the right edge we apply a horizontal traction given by a Gaussian bump centered at $x_2=0.5$ and a vertical traction that increases linearly with $x_2$; all other boundary segments are traction-free. Equivalently, $\Gamma_D:= \Gamma_{\text{left}}$ and $\Gamma_N:= \Gamma_{\text{top}}\cup\Gamma_{\text{bottom}}
\cup \Gamma_{\text{right}}$. One parameter-solution pair at a random parameter sample is illustrated in the third and fourth rows of \cref{fig:visualization_parameter_solution}, where the finite element solution $[\tenbar[2]{\sigma}^\circ_h,\tenbar[1]{u}^\circ_h]$ is computed with elements $(\text{RT}_1^\circ)^2\times(\text{CG}^\circ_2)^2$ on a mesh of size $256\times128$. The finite element parameter sample is computed using CG$_1$ elements on the same mesh. We can observe a corner singularity of the stress tensor at the clamped left edge, where the boundary conditions change from Dirichlet to Neumann type, yielding low solution regularity.

\subsubsection{Variational lift of boundary data}
The variational lift of the boundary data by harmonic extension to the domain is shown in \cref{fig:auxiliary_variables}. These fields are precomputed by solving \eqref{eq:DiriLiftPoisson} and~\eqref{harm} for the diffusion problems using CG$_m$ elements (matching the order $m$ of $\uc_h$) and by solving~\eqref{eq:DiriLiftElasticity} and \eqref{aux-elast} for the linear elasticity problem using CG$_m^2$ elements. 

\begin{figure}[!ht]
    \centering
        \includegraphics[width=0.24\linewidth]{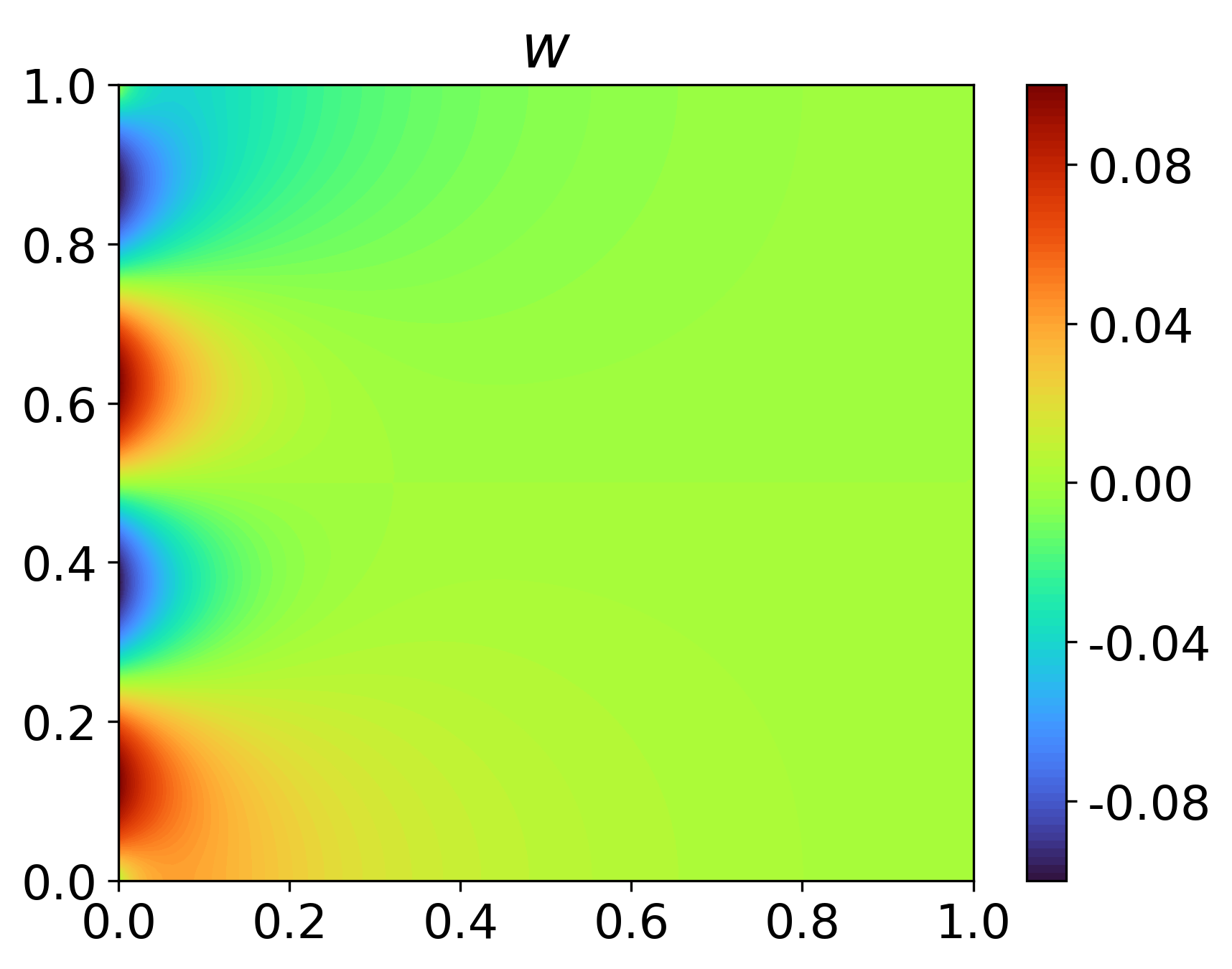}
        \includegraphics[width=0.24\linewidth]{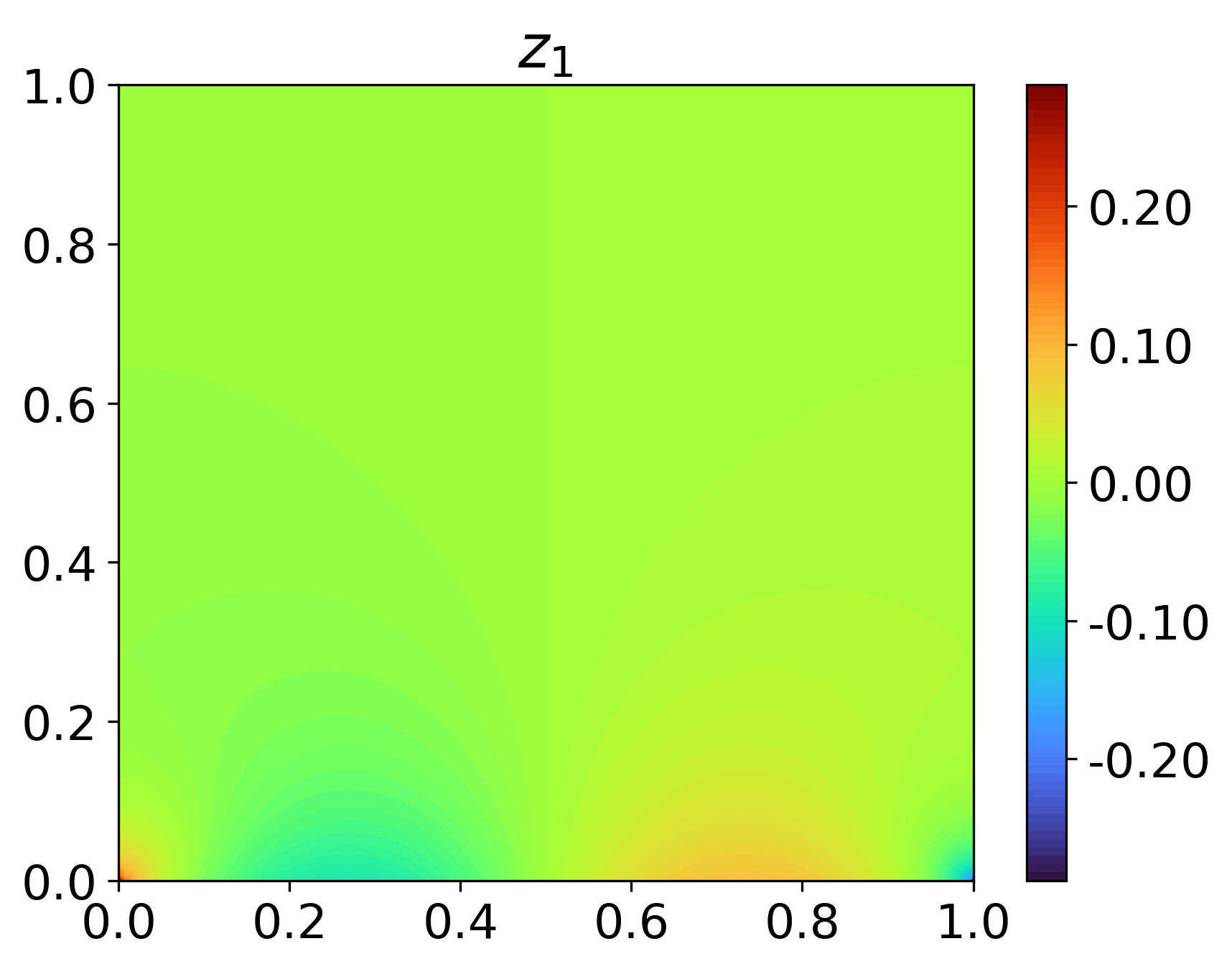}
        \includegraphics[width=0.24\linewidth]{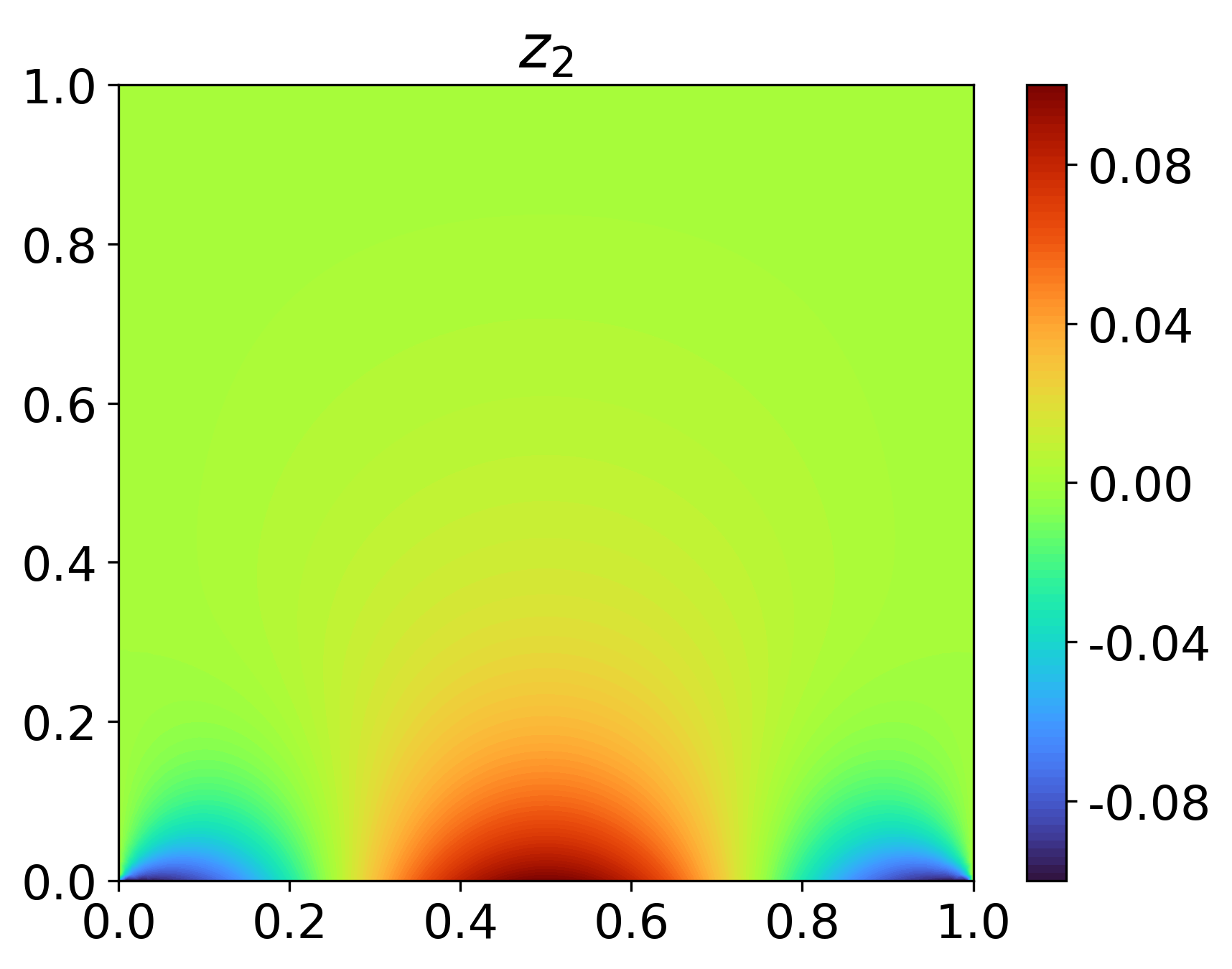}
        \includegraphics[width=0.225\linewidth]{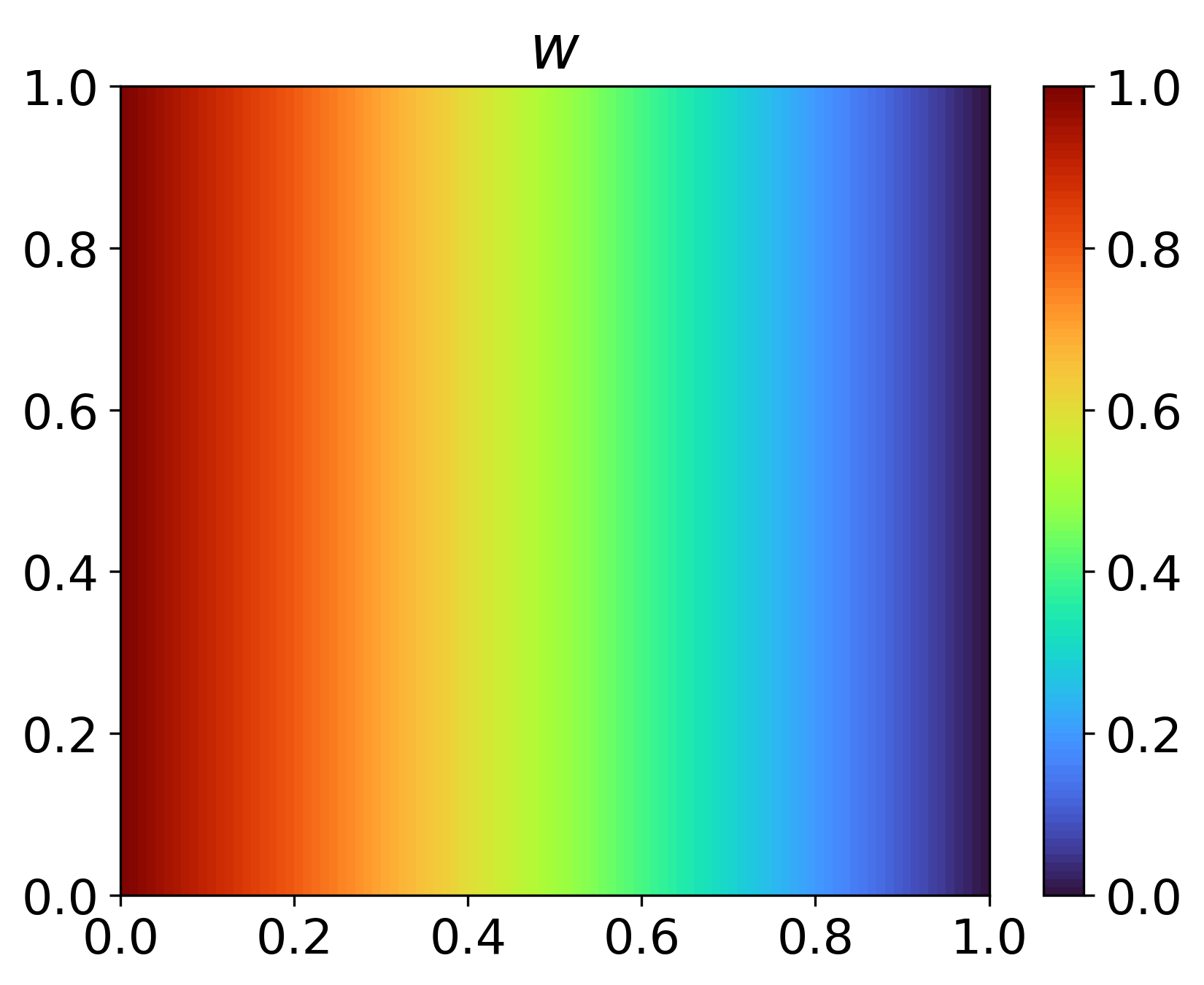}

        \includegraphics[width=0.24\linewidth]{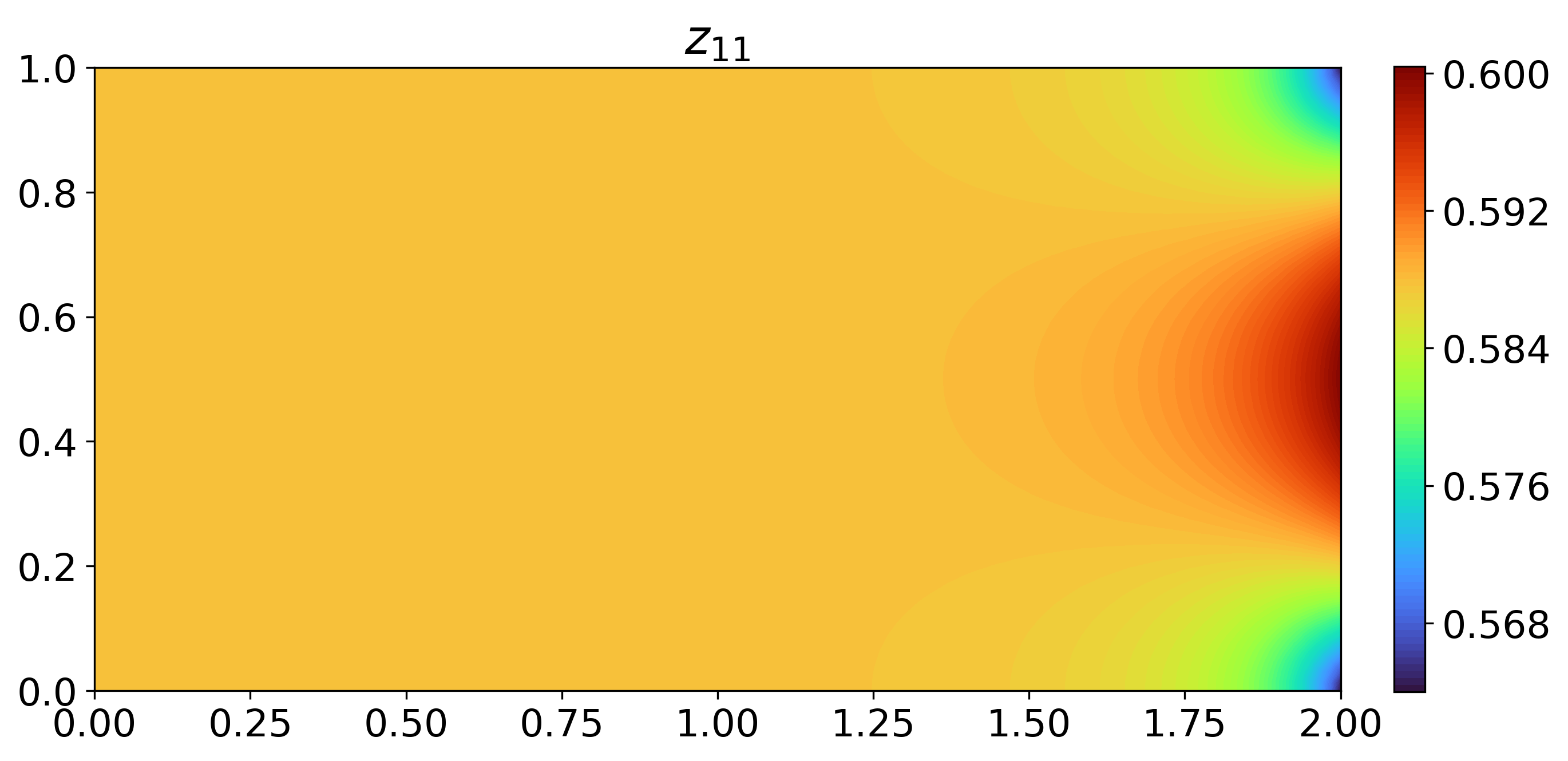}
        \includegraphics[width=0.24\linewidth]{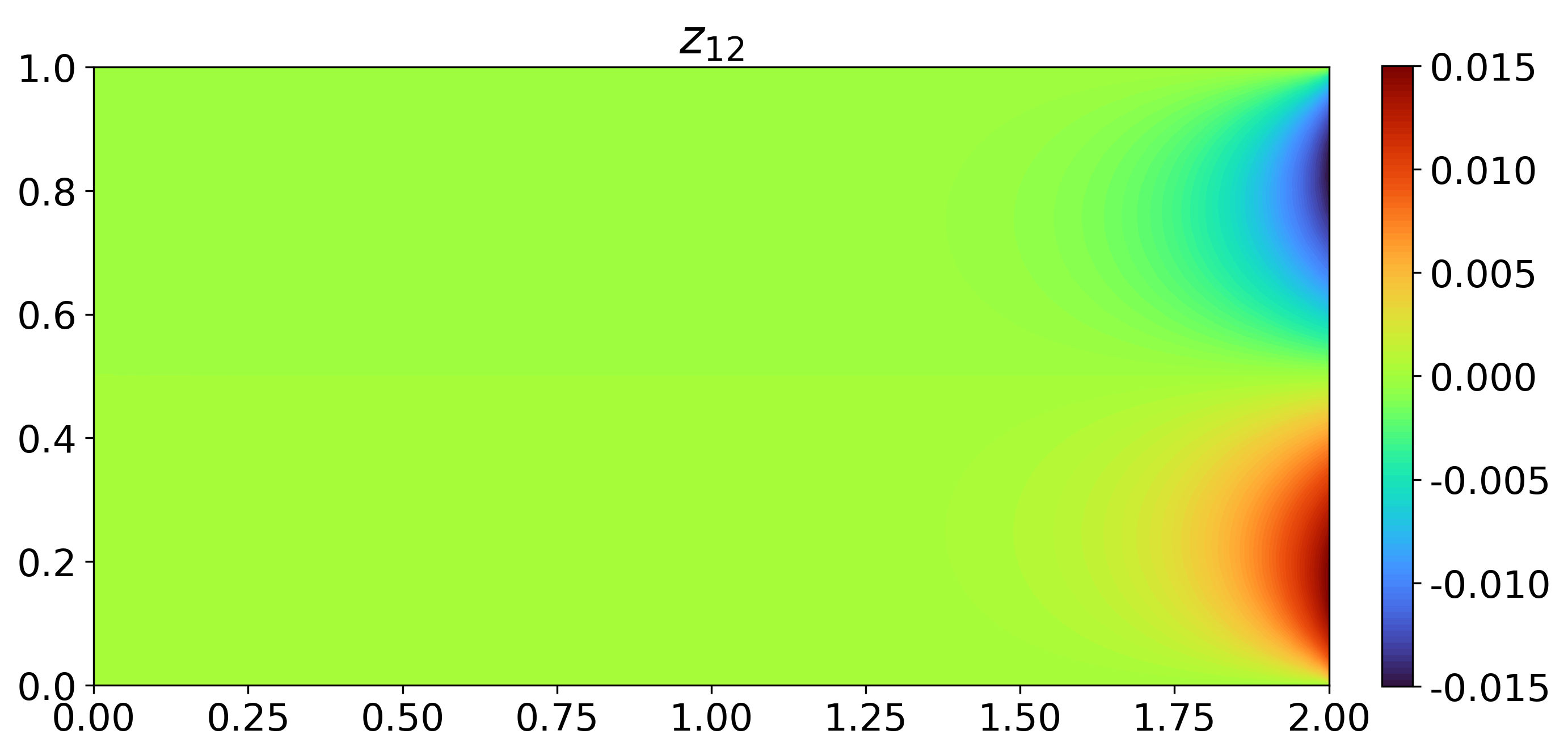}
        \includegraphics[width=0.24\linewidth]{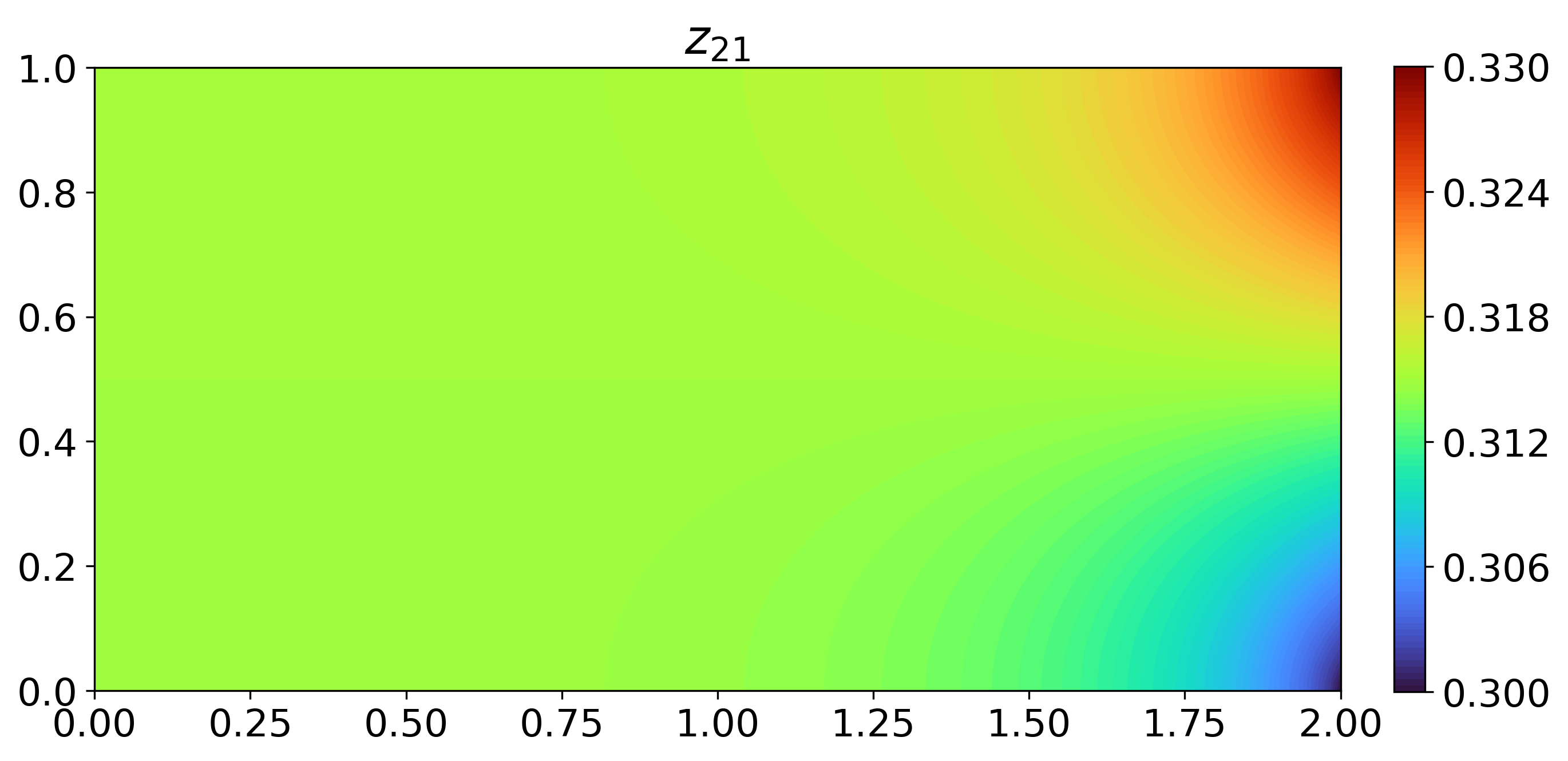}
        \includegraphics[width=0.24\linewidth]{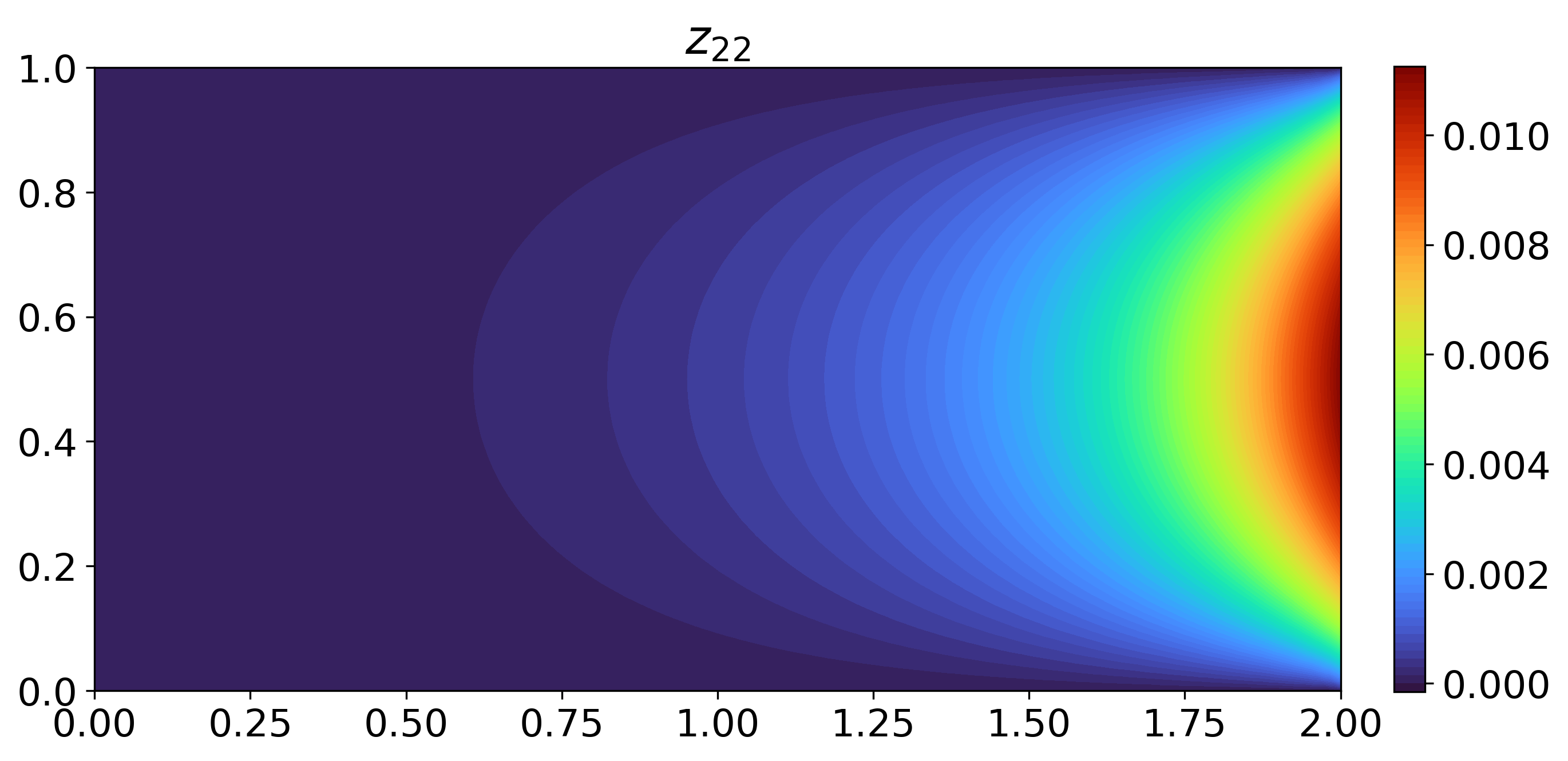} 
    \caption{Auxiliary variables $w$ and $z=(z_1,z_2)$ for heat conduction (top left three plots), $w$ for Darcy flow (top right) with $z=(0,0)$, obtained by solving auxiliary problems \eqref{eq:DiriLiftPoisson} and~\eqref{harm}. Bottom: $\tenbar[2] z = (z_{11}, z_{12}; z_{21}, z_{21})$ by solving \eqref{aux-elast} for linear elasticity with $\tenbar[1] w = (0, 0)$. These variables encode the inhomogeneous Dirichlet and Neumann boundary data.}
    \label{fig:auxiliary_variables}
\end{figure}

\subsection{Conforming finite element approximations}
We verify that the conforming FOSLS FE discretization yields sufficiently small FE loss and exhibits the expected convergence rates as in \cref{thm:FELSPoisson} for the diffusion problem and \cref{thm:FELSElasticity} for the linear elasticity problem. Moreover,   we compare the cost of assembling the loss weights to that of solving the underlying PDEs. 

To assess the FE loss, we solve the FOSLS normal equations \eqref{gal} for the diffusion problem using elements $\Sigma_h \times \U_h = \mathrm{RT}_k^{\circ} \times \mathrm{CG}_{k+1}^{\circ}$ and \eqref{eq:normal_elasticity} for the linear elasticity problem using elements $\Sigma_h \times \U_h = (\mathrm{RT}_k^{\circ})^d \times (\mathrm{CG}_{k+1}^{\circ})^d$, both with different mesh sizes $h$ and orders $k = 0, 1$, for 100 independent parameter samples, and evaluate the resulting FE loss by inserting the FE solutions into the corresponding residual loss. The mean FE loss over the 100 samples, for various mesh sizes and FE spaces, is recorded in~\cref{tab:FE_loss}. 
{For all cases, we also compute the mean FE error (squared error) in the $\HH = H(\rdiv)\times H^1$ norm compared to the reference solutions computed using higher order elements ($k = 2$). The close FE losses and FE errors confirm the error-residual equivalence \eqref{eq:FE-loss-equiv} for the diffusion problem and \eqref{eq:FE-loss-equiv-el} for the linear elasticity problem.} The computational cost of assembling the sparse weight matrix $W_\pp$ (with sparsity pattern shown in Figure~\ref{fig:sparsity_patterns}) scales linearly (especially for the same FE orders) with the number of degrees of freedom (DoFs) $N_h^s$ and is much smaller than the cost of solving the algebraic system of size $N_h^s \times N_h^s$ arising from the FE discretization of the PDE; see the rightmost columns of~\cref{tab:FE_loss}. As expected, refining the mesh and/or increasing the FE order reduces the FE loss and improves the FE approximations, at the expense of larger assembly and solve times. 

\begin{table}[!htb]
\centering
\begin{tabular}{|c|c c r S[table-format=1.2e-2, scientific-notation=true, table-column-width=2cm] S[table-format=1.2e-2, scientific-notation=true, table-column-width=2cm] S[table-format=1.3] S[table-format=2.2]|}
\hline
\textbf{Problem} & \textbf{Mesh} & \textbf{FE space} & \textbf{\# DoFs} & \textbf{FE loss} & \textbf{FE error} & \textbf{$W_\pp$} & \textbf{Solve} \\ \hline
 \multirow{4}{*}{Heat Conduction} 
 & \multirow{2}{*}{64$\times$64}  
   & RT$_0\times$CG$_1$ &  16,641  & 4.86e-03  & 5.83e-03 & 0.015 & 0.10 \\ 
 &  & RT$_1\times$CG$_2$ & 57,857  & 3.52e-04  & 4.55e-04 & 0.051 & 0.38 \\ \cline{2-8}
 & \multirow{2}{*}{128$\times$128} 
   & RT$_0\times$CG$_1$ &  66,049 & 1.55e-03  & 1.93e-03 & 0.04  & 0.45  \\ 
    &  & RT$_1\times$CG$_2$& 230,401 & 1.09e-04  & 1.46e-04 & 0.22  & 1.76\\ \hline
 \multirow{4}{*}{Darcy Flow} 
 & \multirow{2}{*}{128$\times$128}  
   & RT$_0\times$CG$_1$ &  66,049  & 9.54e-01 & 9.55e-01  &  0.27 & 0.52  \\ 
 &  & RT$_1\times$CG$_2$ &  230,401   & 4.00e-03 & 4.01e-03  & 0.49  & 1.88  \\ \cline{2-8}
 & \multirow{2}{*}{256$\times$256} 
   & RT$_0\times$CG$_1$ & 263,169  & 2.40e-01  & 2.40e-01 & 1.03  &  2.38 \\ 
 &  & RT$_1\times$CG$_2$ & 919,553  & 2.53e-04 & 2.53e-04 & 2.10  & 8.36 \\ \hline
\multirow{4}{*}{Elasticity} 
 & \multirow{2}{*}{128$\times$64} 
   & RT$_0^2\times$CG$_1^2$ & 66,306   & 6.51e-03 & 2.77e-02  & 0.20 & 0.63 \\ 
 &  & RT$_1^2\times$CG$_2^2$ & 230,914  & 7.37e-04 & 9.29e-04 & 1.41 & 3.22\\ \cline{2-8}
 & \multirow{2}{*}{256$\times$128} 
   & RT$_0^2\times$CG$_1^2$ & 263,682  & 2.46e-03 & 5.96e-03 & 0.77 & 3.00 \\ 
 &  & RT$_1^2\times$CG$_2^2$ & 920,578  & 3.00e-04 & 3.32e-04 &  5.59 & 14.41 \\ \hline
\end{tabular}
\caption{Mean FE loss and mean FE error (squared) in $\HH$-norm over 100 parameter samples, and wall-clock time (in seconds) to assemble the weight matrix $W_\pp$ in \eqref{eq:FE-loss-Poisson} and to solve the PDE using a direct (LU) solver for the diffusion \eqref{eq:lossPoisson} and linear elasticity \eqref{eq:lossElasticity} problems, with different mesh sizes, FE spaces, and the corresponding number of degrees of freedom (DoFs) $N_h^s$. 
} 
\label{tab:FE_loss}
\end{table}

\begin{figure}[!htb]
    \vspace{-1em}
    \centering
    \includegraphics[width=0.32\linewidth]{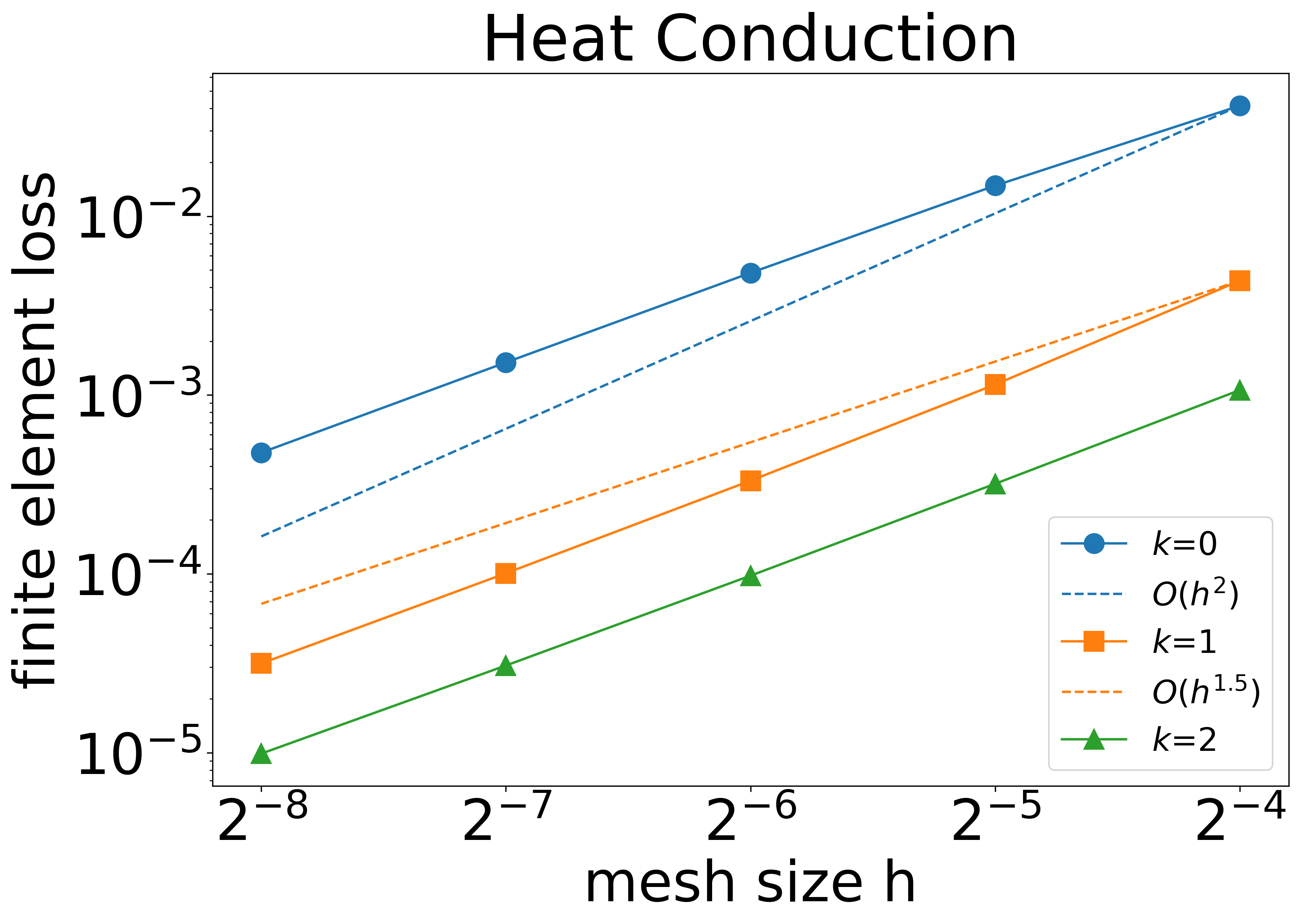}
    \includegraphics[width=0.32\linewidth]{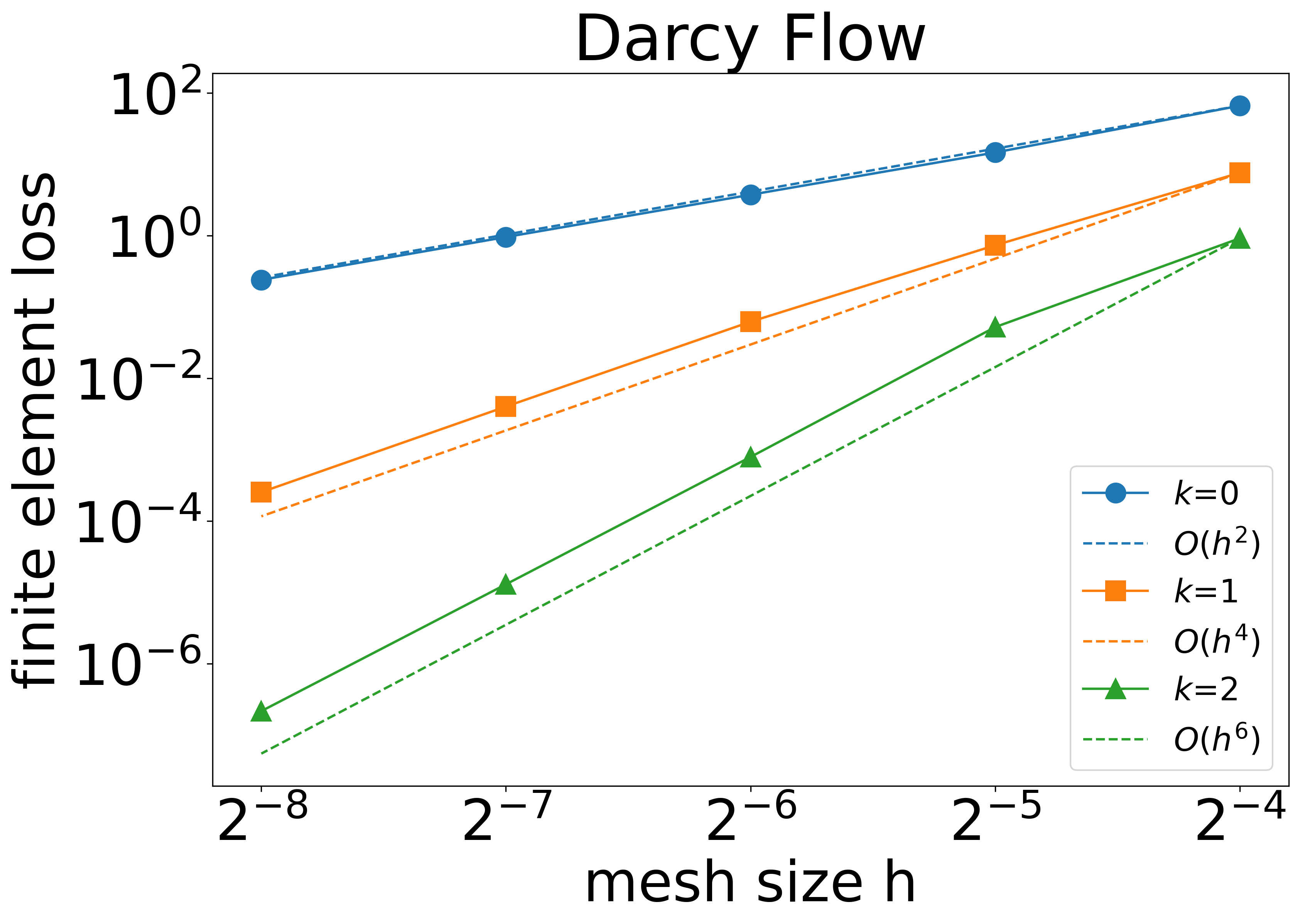}
    \includegraphics[width=0.32\linewidth]{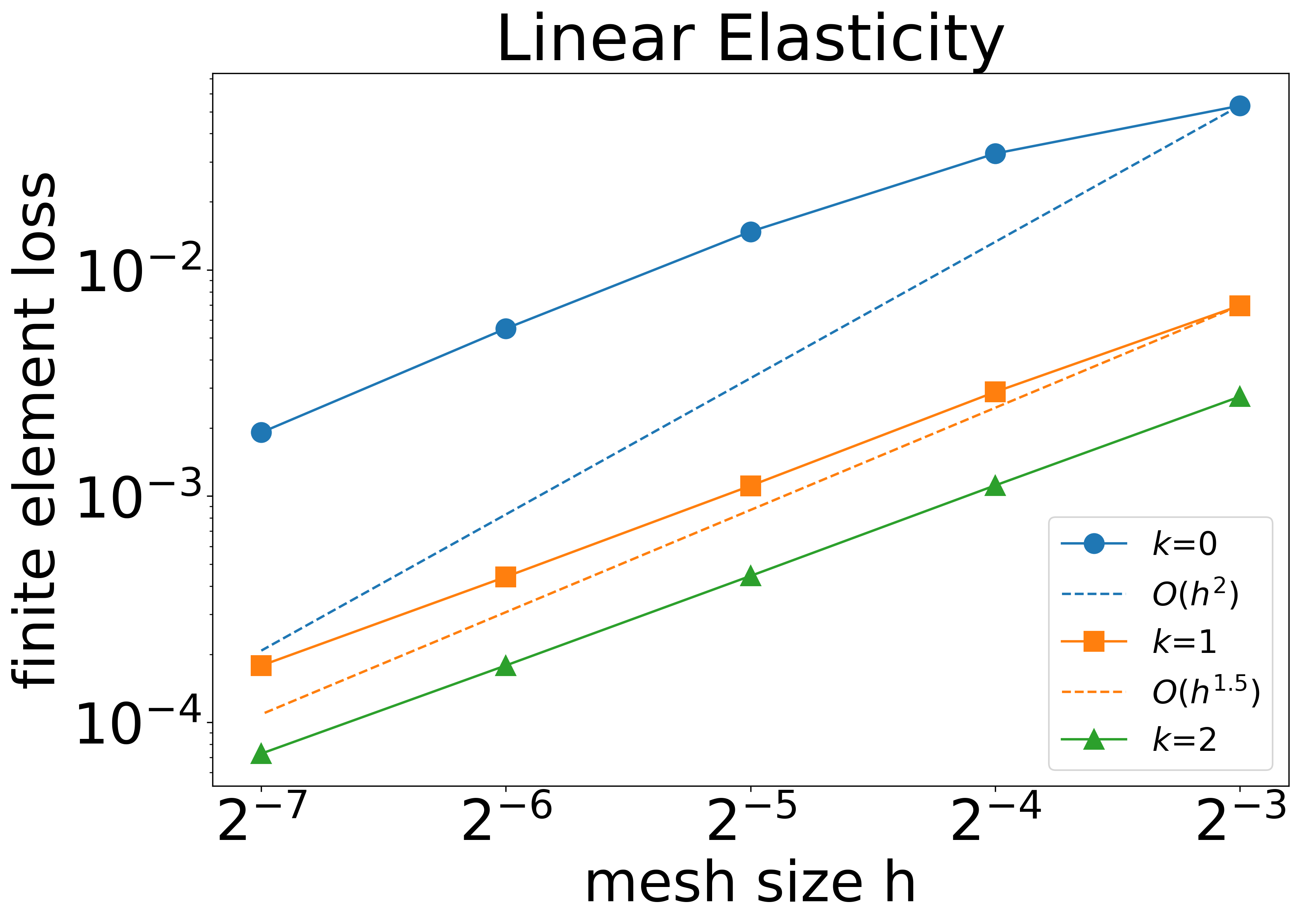}
    \caption{Convergence of the FE loss with respect to mesh size ($h$) and FE order ($k=0,1,2$) for a representative parameter sample. Solid lines show the measured losses and dashed lines the reference rates ($O(h^{2(k+1)})$); Darcy flow follows the predicted asymptotic behavior, while convergence in heat conduction and linear elasticity is limited by reduced solution regularity due to discontinuous coefficients and boundary-induced corner singularities, confirming the analysis in Theorem \ref{thm:FELSPoisson} and \ref{thm:FELSElasticity}.
    }
    \label{fig:finite_element_loss_wrt_mesh_size_and_order}
\end{figure}

\cref{fig:finite_element_loss_wrt_mesh_size_and_order} shows, for a representative random parameter sample,
the convergence of FE loss with respect to the mesh size $h$ and the FE order $k$ across the three problem setups. In all cases, the loss decreases monotonically as the mesh is refined and as the order increases. The measured losses (solid lines) follow closely the expected asymptotic convergence rates $O(h^{2(k+1)})$ (dashed lines) for the Darcy flow, confirming that the FE discretization exhibits the theoretically predicted convergence behavior with respect to both $h$ and $k$ as in \eqref{eq:FELS-bound-Poisson} of Theorem \ref{thm:FELSPoisson}. The observed convergence rates are limited by the lower regularity of the solutions due to the discontinuous conductivity field in heat conduction, and the mixed boundary conditions in the linear elasticity problem that lead to corner singularities. 

\subsection{Reduced basis approximations}
We now investigate the equivalence of the RB loss and errors in \cref{prop:reduced_loss_error_equivalence} and their dependence on the number of RB functions, and how this choice affects computational cost. We first construct the RB spaces as in Section \ref{ssec:errors} from $N_{\text{POD}}=500$ and $1000$ independent parameter samples and the corresponding high-fidelity FE solutions (using RT$_1\times$CG$_2$ elements on a $128\times128$ mesh) for the heat conduction and Darcy flow problem, respectively. For the linear elasticity problem, we use $N_{\text{POD}}=1000$ independent parameter samples and the corresponding high-fidelity FE solutions (using RT$_1^2\times$CG$_2^2$ elements on a $128\times64$ mesh). Then for each of $500$ test random parameter samples, we compute the FE weight matrix $W_\pp$ in the FE loss \eqref{eq:FE-loss-Poisson}, the corresponding RB weight matrix $W_\pp^r$ and vector $\bsalpha_\pp^r$ in the RB loss \eqref{eq:lossPoissonReduced} by projection, and solve the reduced normal equation \eqref{eq:RB_normal} for the coefficient vector $\bss_r(\pp)$ of the RB solution $s_r(\pp)$. We also compute the FE solutions $s_h(\pp)$ and $\bar{s}_h(\pp)$, using the same mesh and elements of RT$_1\times$CG$_2$ and RT$_3\times$CG$_4$, respectively, where we take the latter as the ``ground truth'' solution. Finally, we compute the RB loss \eqref{eq:lossPoissonReduced} of the RB solution and the squared error of the RB solution in $\HH = H(\rdiv)\times H^1$-norm with respect to the corresponding FE solutions. 

By the decomposition of the RB loss \eqref{eq:RB-fiber-equivalence-t_r} in~\cref{prop:reduced_loss_error_equivalence}, we expect the gap between the RB loss and the FE loss to be equivalent (up to problem-dependent constants) to the squared error of the RB solution relative to the FE solution, i.e.,
$\cL(s_r(\pp);\pp) - \cL(s_h(\pp); \pp) \;\eqsim\;
\|s_h(\pp) - s_r(\pp)\|_{\HH}^2$. This relationship is demonstrated in \cref{fig:analysis_loss_diff_and_mse_error}. {We observe this equivalence by comparing an empirical mean squared error approximation to $
\EE_{\pp\sim\mu}\big[||s_r(\pp) - s_h(\pp)||^2_{\HH}\big]$ against the empirical mean loss difference, approximating $\EE_{\pp\sim\mu}\big[\cL(s_r(\pp);\pp) -
\cL(s_h(\pp); \pp)\big]$, for an increasing number of basis functions across the three problems. Both quantities are evaluated via sample average approximation using a shared set of 500 test samples, which was found to be sufficient for low-variance estimation of the mean.}

\begin{figure}[!htb]
    \centering
    \includegraphics[width=0.32\linewidth]{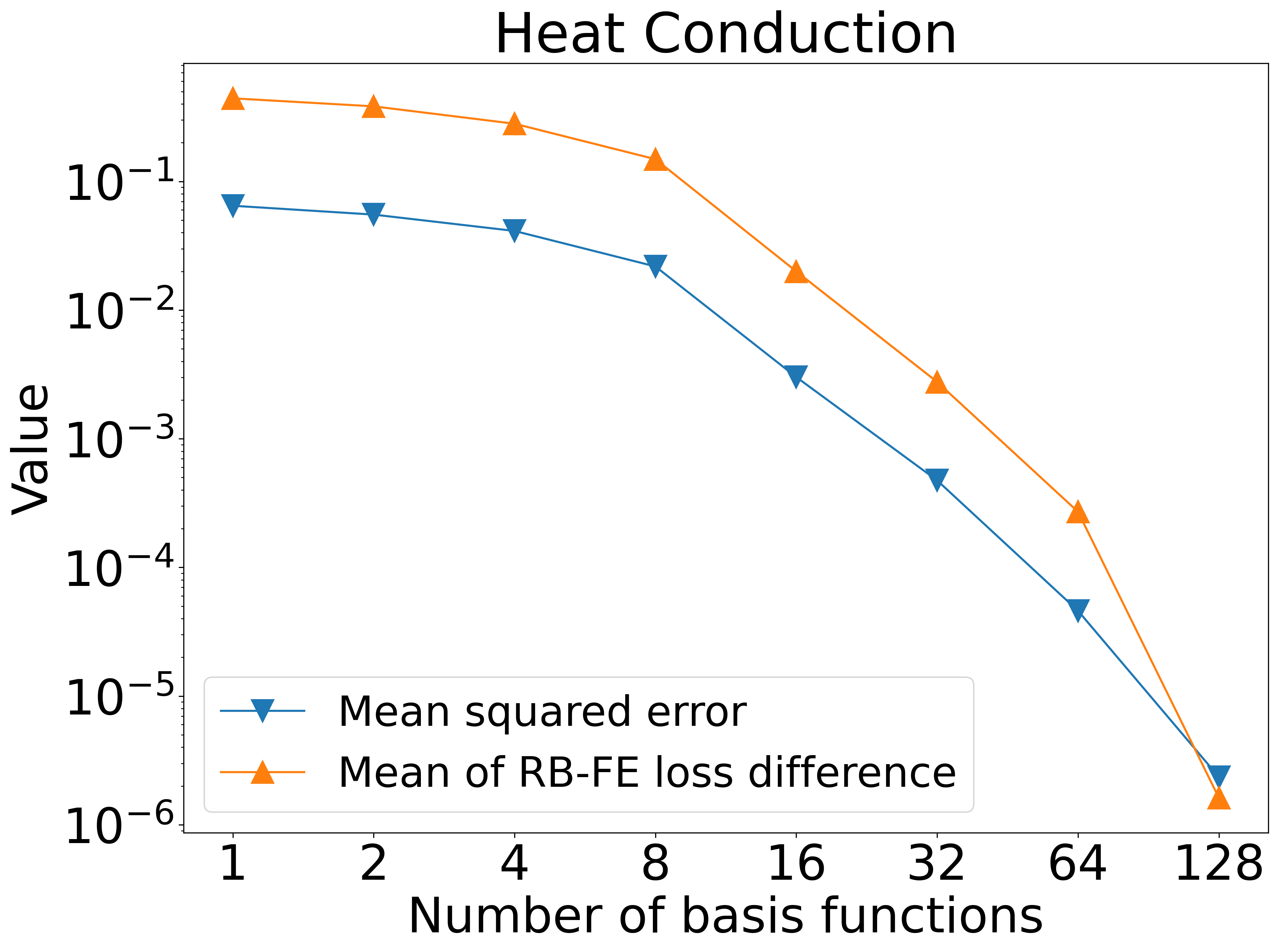}
    \includegraphics[width=0.32\linewidth]{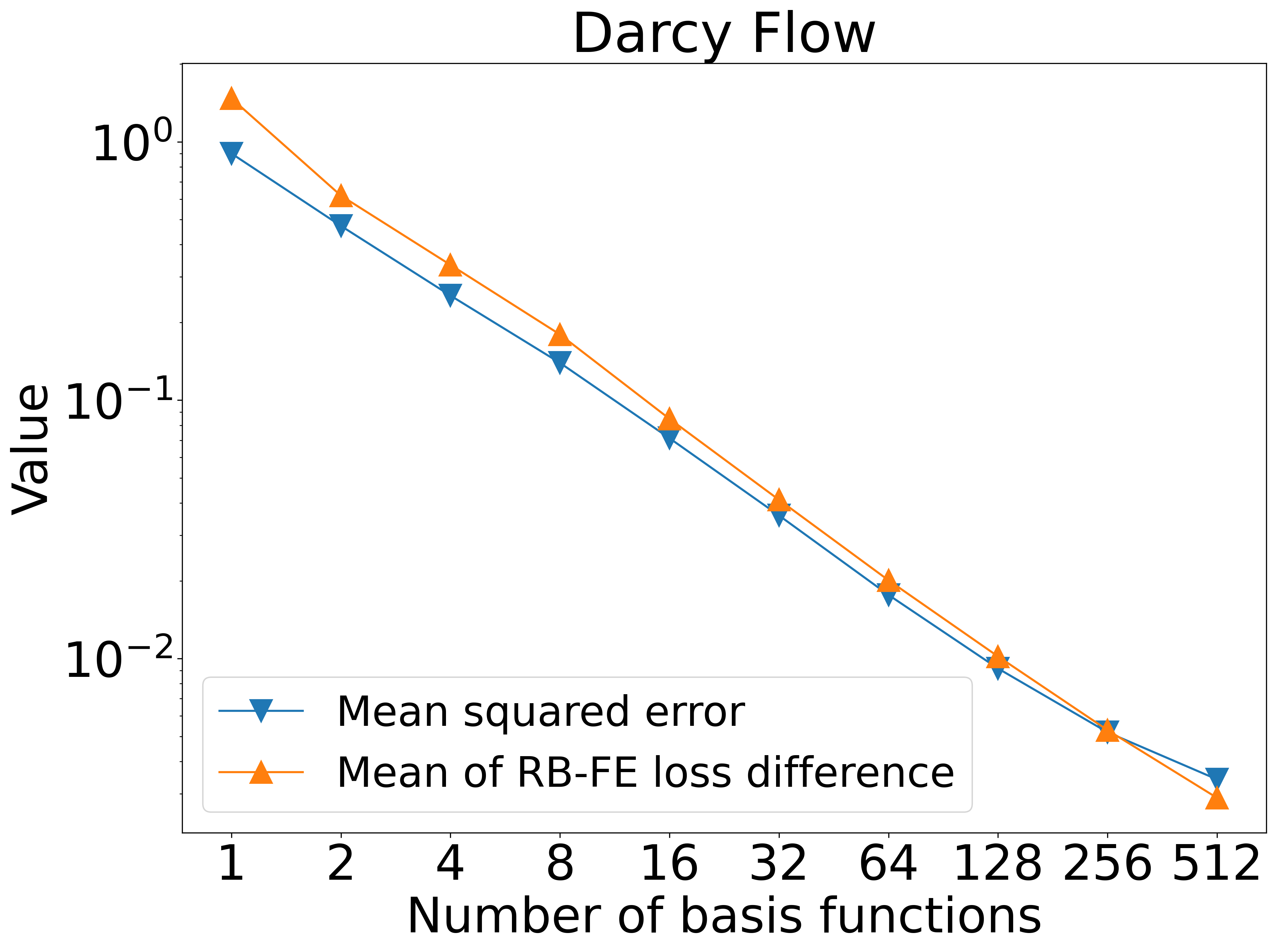}
    \includegraphics[width=0.32\linewidth]{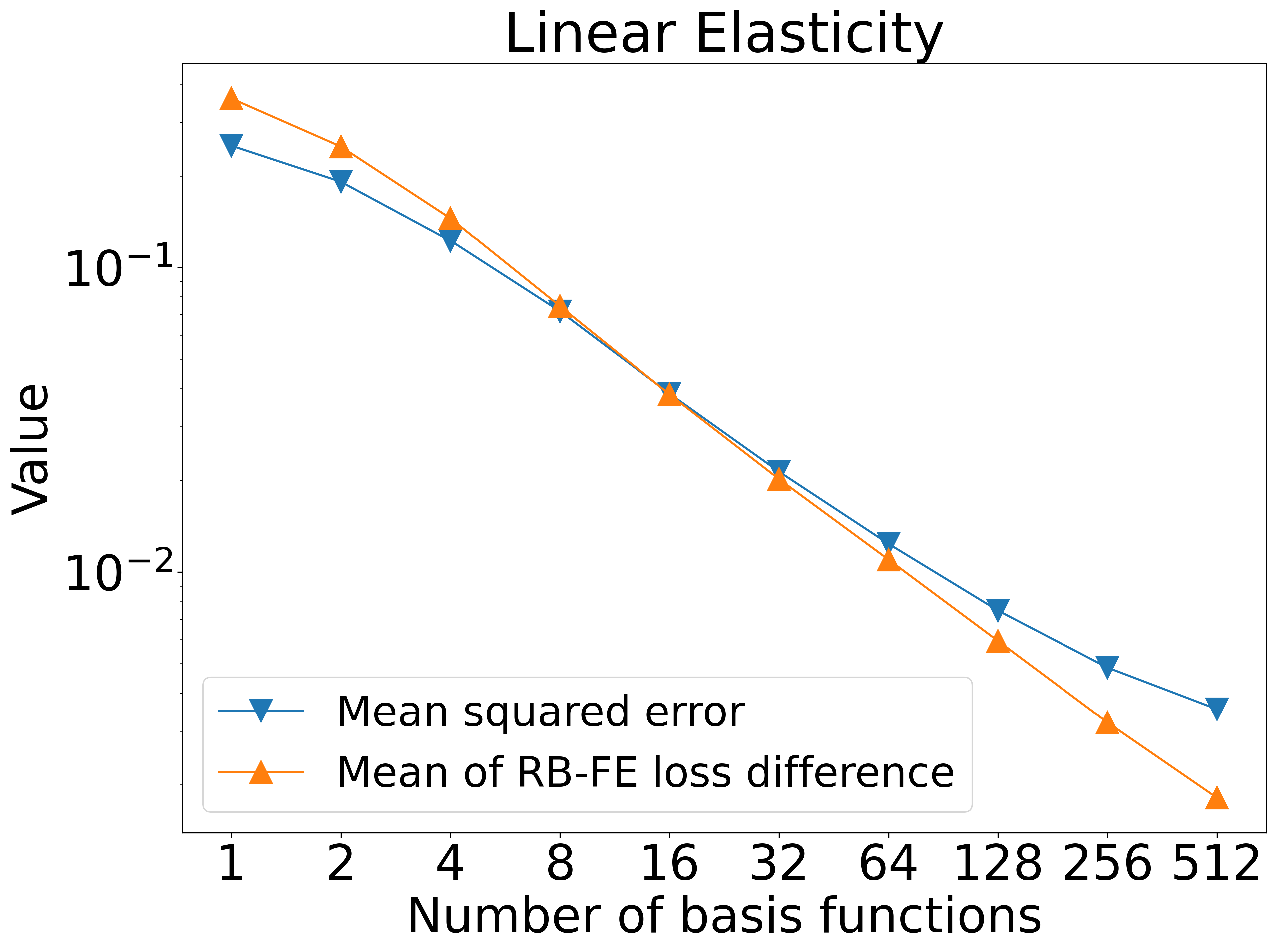}
    \caption{{Comparison between the empirical mean squared error $\EE_{\pp\sim\mu}\big[||s_r(\pp) - s_h(\pp)||^2_{\HH}\big]$ and the empirical mean loss difference $\EE_{\pp\sim\mu}\big[\cL(s_r(\pp);\pp) - \cL(s_h(\pp); \pp)\big]$. Both quantities are estimated over $500$ random samples (using RT$_1\times$CG$_2$ elements for $s_h$), confirming the RB loss decomposition \eqref{eq:RB-fiber-equivalence-t_r} in~\cref{prop:reduced_loss_error_equivalence}.}}
    \label{fig:analysis_loss_diff_and_mse_error}
\end{figure}

The comparison of the mean squared error of the RB solution $s_r$ (compared to the FE solution $s_h$), the $X_h$-projection error of the FE solution $s_h \in \HH_h$ onto the RB space $\HH_r$, and the error estimate by the trailing eigenvalues in \eqref{eq:low_rank_approximation_via_trailing_eigenvalues}, is shown in \cref{fig:error_compared_to_RT1XCG2_solution}, confirming the quasi-optimality of the RB solution in \eqref{eq:RB-best-approx-t_r} in \cref{prop:reduced_loss_error_equivalence} and the tight error estimate. The faster decay of the error estimate by the trailing eigenvalues at a large number of RB basis functions is due to the limited number $N_{\text{POD}}$ of samples in computing the eigenvalues (which leads to inaccurate estimation of small eigenvalues). 

\begin{figure}[!htb]
    \centering
    \includegraphics[width=0.32\linewidth]{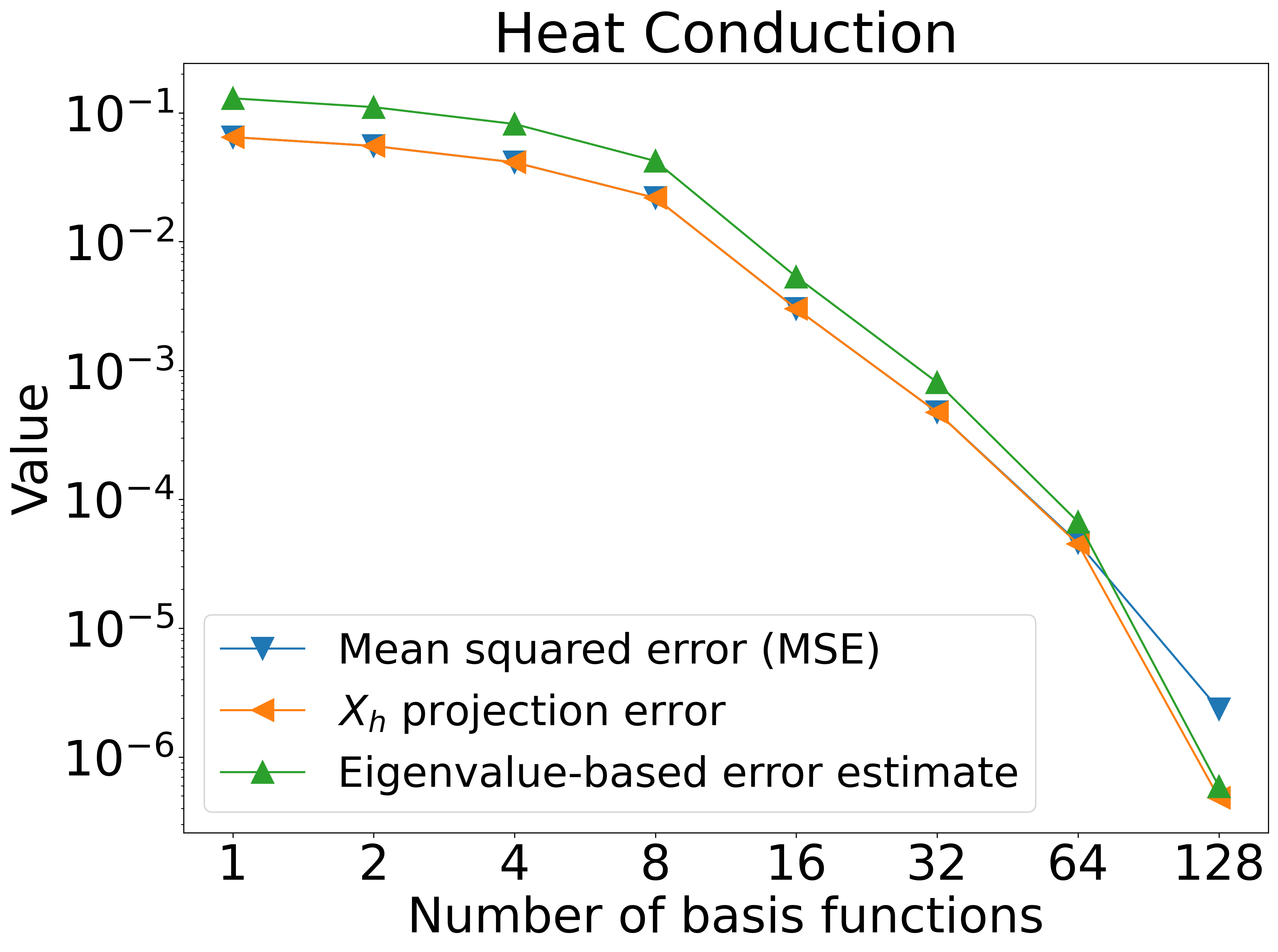}
    \includegraphics[width=0.32\linewidth]{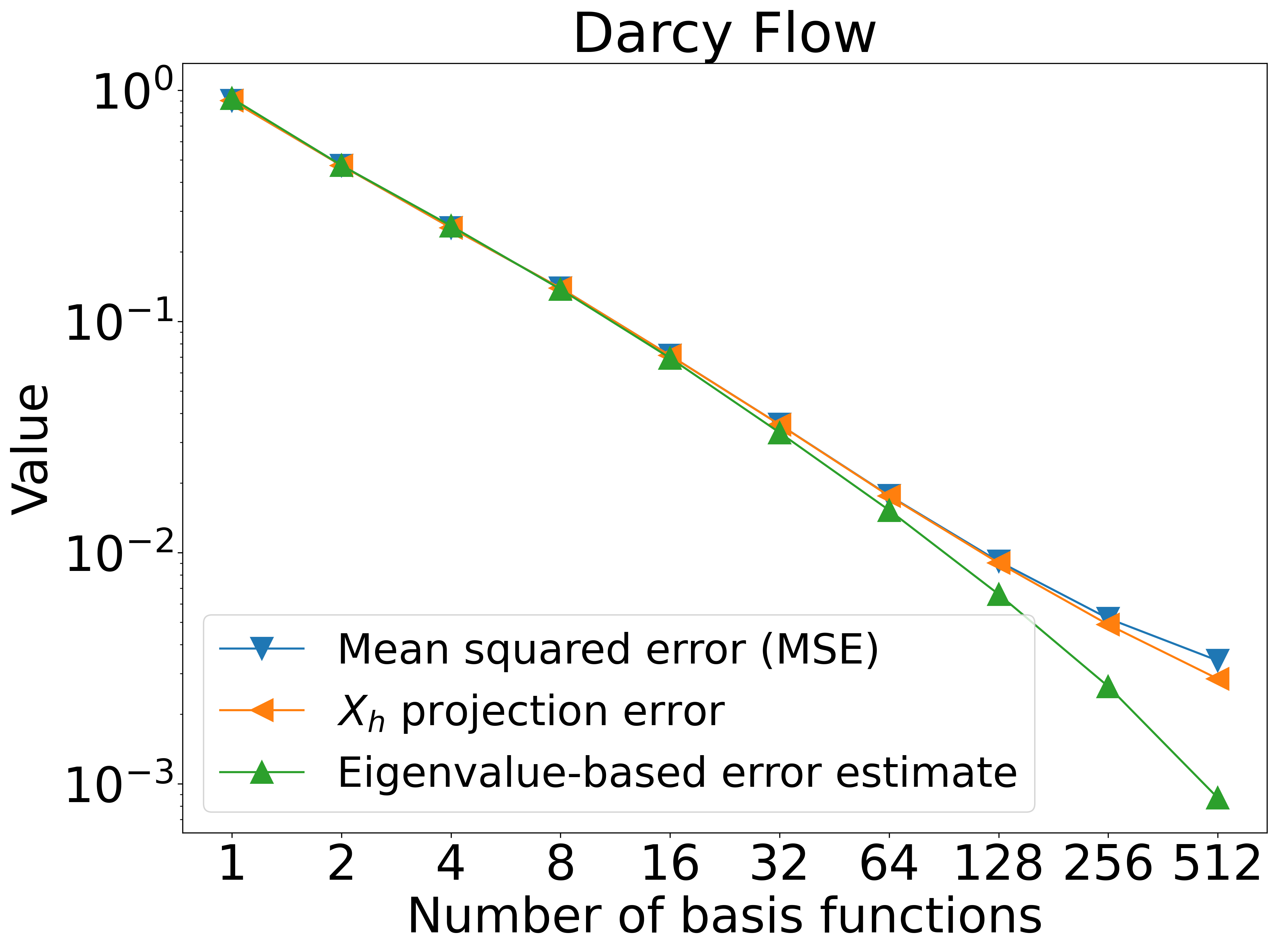}
    \includegraphics[width=0.32\linewidth]{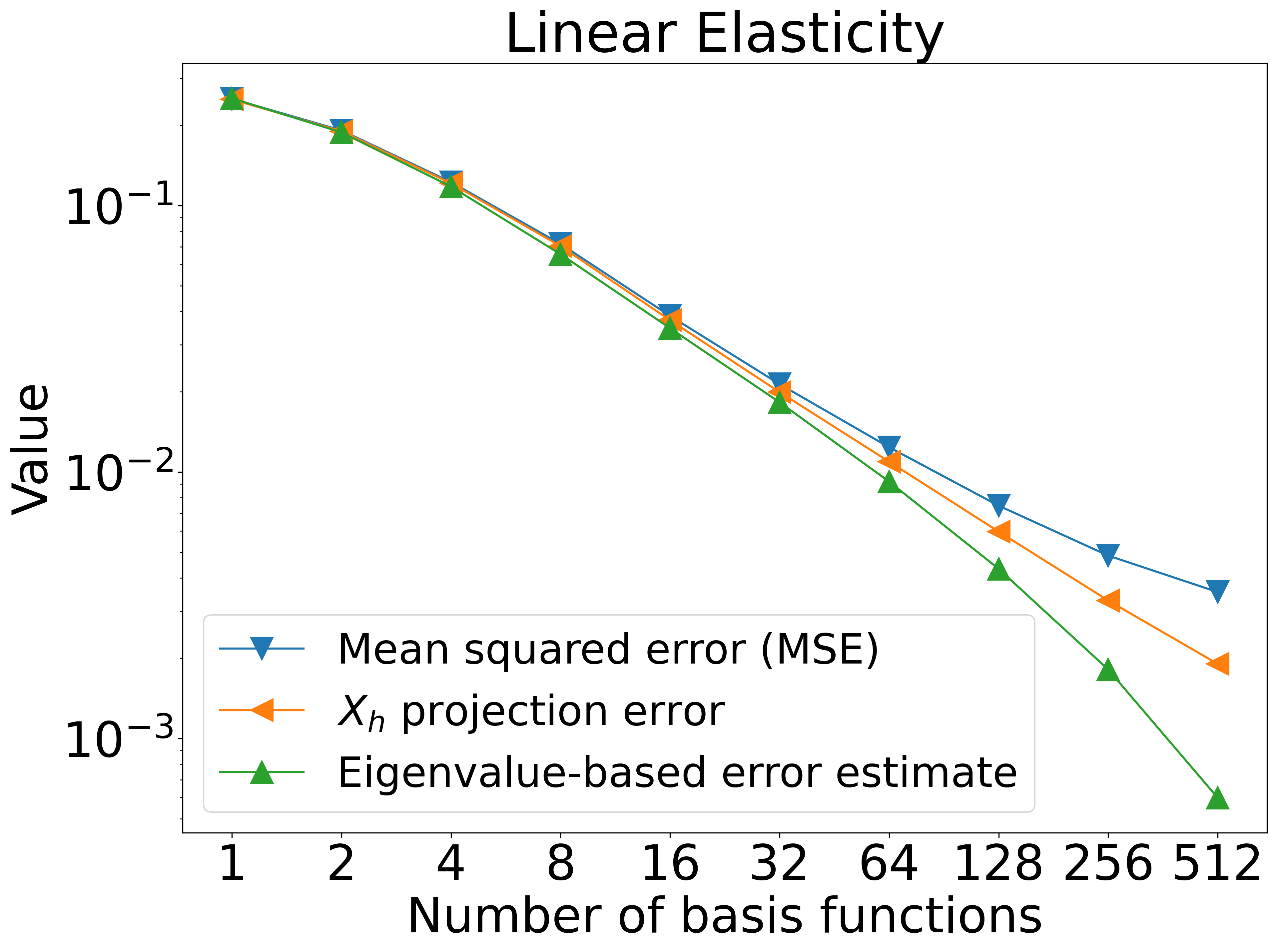}
    \caption{{Comparison between the empirical mean squared error $\EE_{\pp\sim\mu}\big[||s_r(\pp) - s_h(\pp)||^2_{\HH}\big]$ of the RB solution $s_r$ (approximated over 500 samples with respect to the FE solution $s_h$ using RT$_1\times$CG$_2$ elements), the square of the $X_h$-projection error of $s_h$ onto the RB space $\HH_r$, and the eigenvalue-based error estimate \eqref{eq:low_rank_approximation_via_trailing_eigenvalues}. These results illustrate the quasi-optimality \eqref{eq:RB-best-approx-t_r} of the RB approximation in Theorem~\ref{prop:reduced_loss_error_equivalence} and the tightness of the error estimate.}}
    \label{fig:error_compared_to_RT1XCG2_solution}
\end{figure}

Moreover, the comparison of the mean RB loss and the mean squared error of the RB solution $s_r$ (compared to the  ``ground truth'' solution $\bar{s}_h$) with respect to the number of RB functions is shown in \cref{fig:error_compared_to_RT3XCG4_solution}, demonstrating the residual-error equivalence in \eqref{quasiopt} in \cref{prop:reduced_loss_error_equivalence}. The slower decay of the mean squared error of the RB solution for a large number of RB basis functions is due to the FE discretization errors (using RT$_1\times$CG$_2$ elements) compared to the ``ground truth'' (FE discretization using RT$_3\times$CG$_4$ elements). We observe a noticeably faster decay of both loss and error in heat conduction compared to Darcy flow and the linear elasticity problem, reflecting the smaller Kolmogorov $n$-widths of the manifold of the parameter-to-solution map in heat conduction.

\begin{figure}[!htb]
    \centering
    \includegraphics[width=0.32\linewidth]{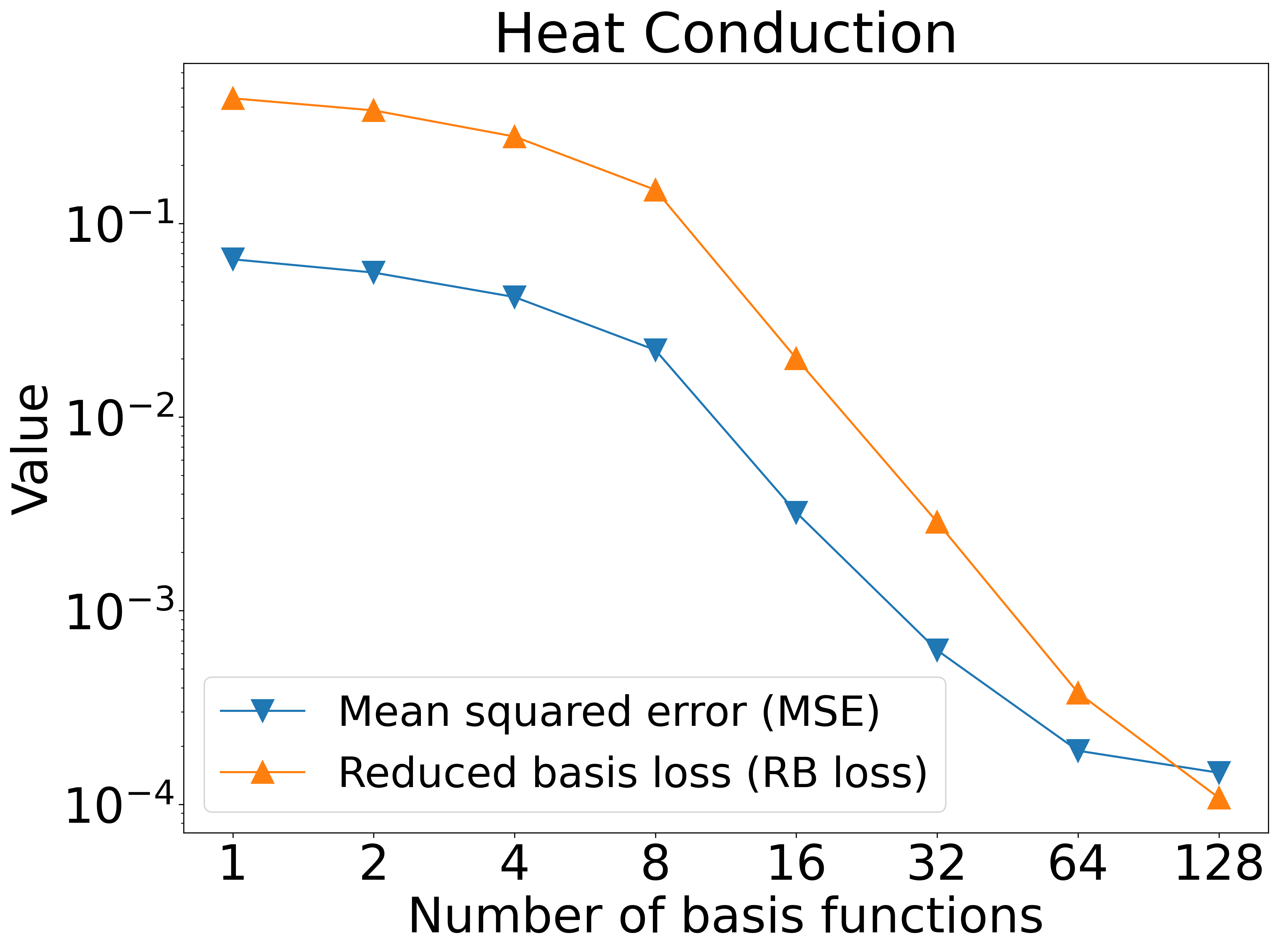}
    \includegraphics[width=0.32\linewidth]{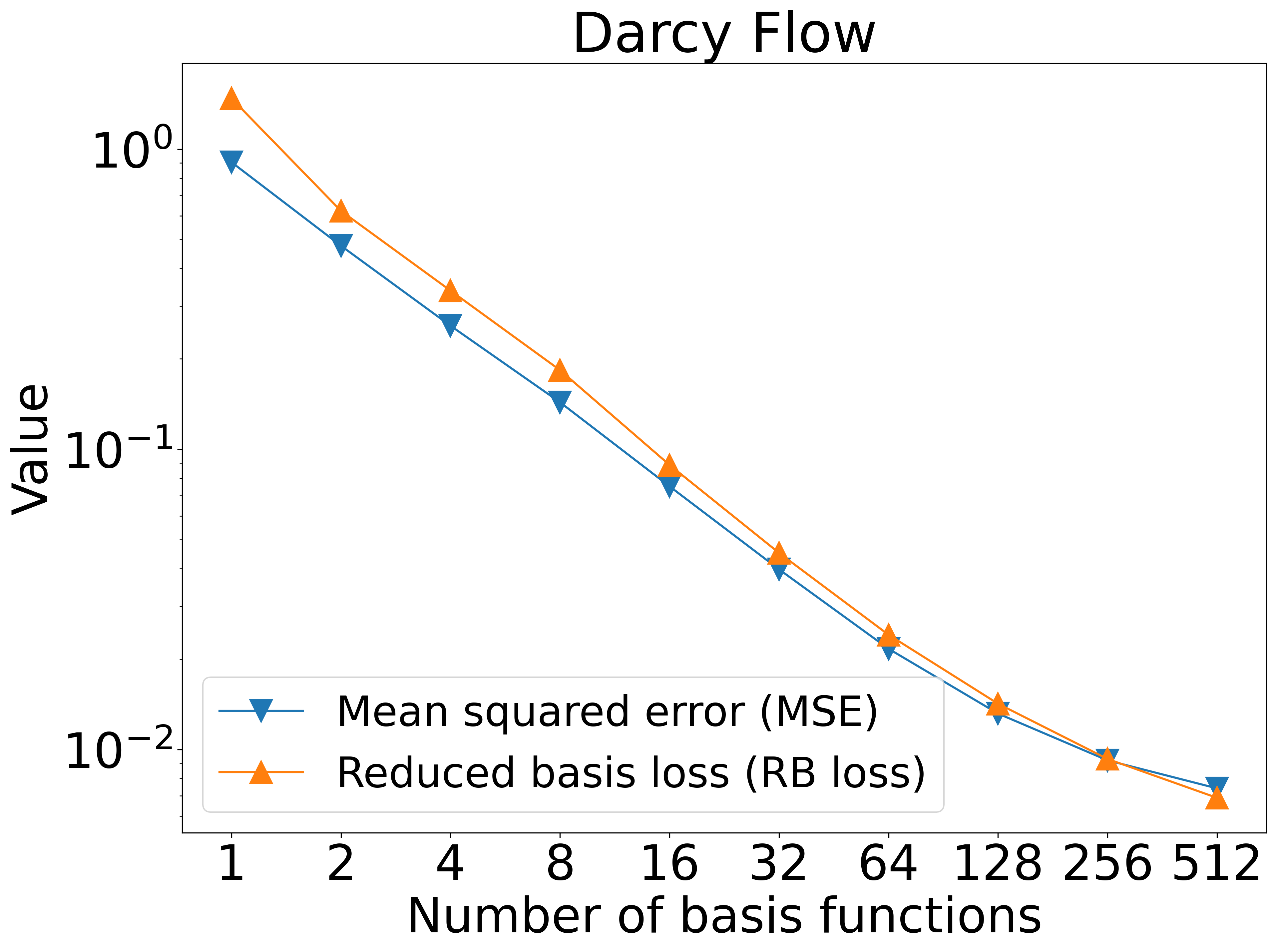}
    \includegraphics[width=0.32\linewidth]{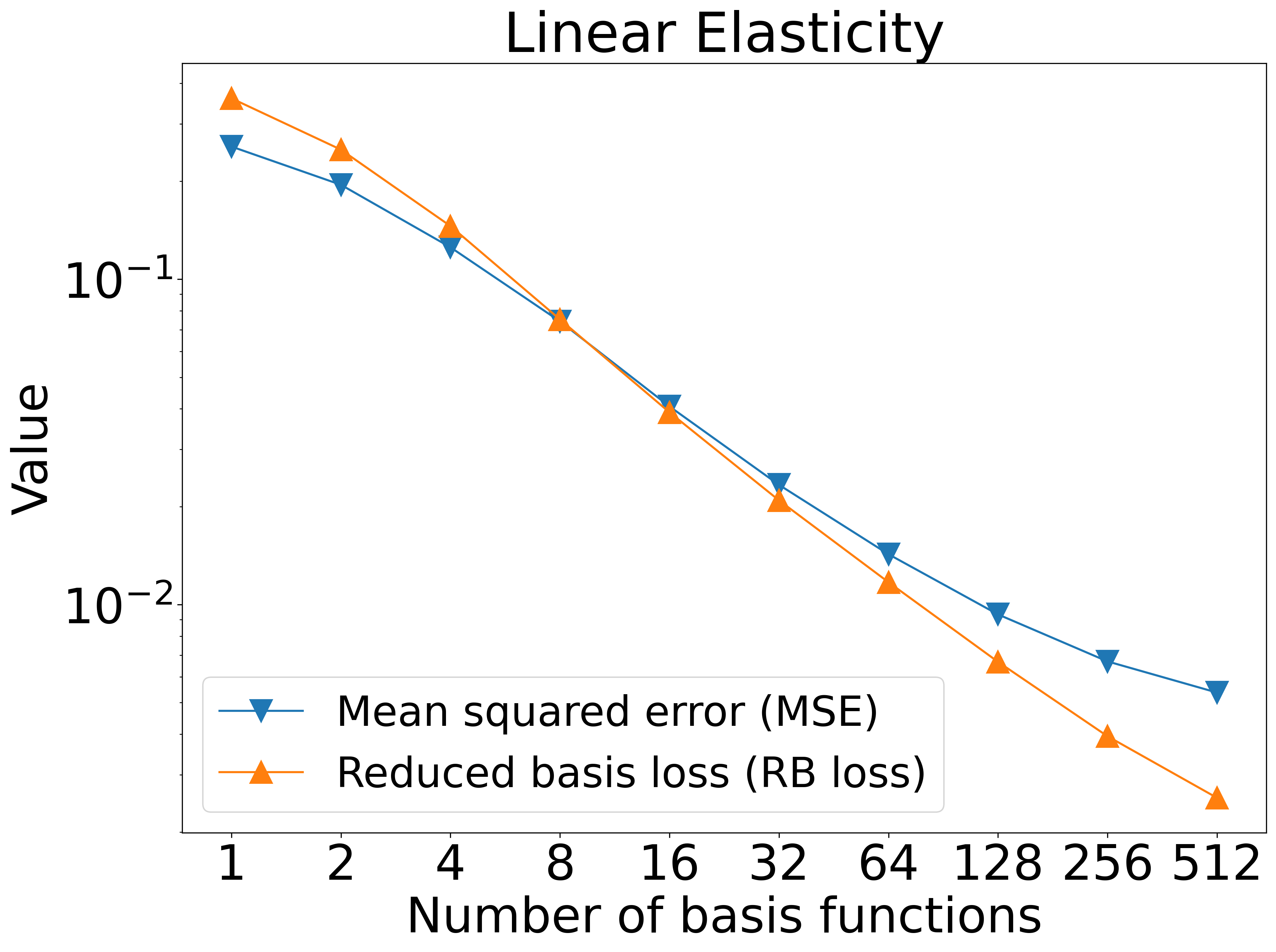}
    \caption{{Comparison between the empirical mean squared error $\EE_{\pp\sim\mu}\big[||s_r(\pp) - \bar{s}_h(\pp)||^2_{\HH}\big]$ of the RB solution $s_r$ (relative to the ``ground truth'' FE solution $\bar{s}_h$ with RT$_3\times$CG$_4$ elements) and the empirical mean RB loss $\EE_{\pp\sim\mu}\big[ \cL(s_r(\pp);\pp) \big]$. Both averages are computed using $500$ random samples, confirming the residual-error equivalence \eqref{quasiopt} in \cref{prop:reduced_loss_error_equivalence}.}}
    \label{fig:error_compared_to_RT3XCG4_solution}
\end{figure}

\subsection{Reduced basis neural operator}

As defined in hypothesis class \eqref{hypo}, the reduced basis neural operator (RBNO) consists of two parts. In the first part, a neural network maps the parameter $\pp$ (represented as a grid-based image of size $\mathbb{R}^{H\times W}$, e.g., $H=W=129$ for the diffusion problem) to the RB coefficients $\bss_r(\pp;\theta) \in \mathbb{R}^r$ of the approximate solution; this stage is computed in single precision (float32). In the second part, the RBNO solution is reconstructed as a linear combination of the precomputed RB basis functions with the predicted RB coefficients, i.e., $s_r(\pp;\theta) = \Phi_r \bss_r(\pp;\theta)$, and this reconstruction is evaluated in double precision (float64). Note that the FE and RB weight matrices are also computed in double precision to ensure accurate loss evaluation. 

We approximate the parameter-to-RB coefficient map by a deep neural network 
\begin{align*}
    \bss_r(\cdot;\theta) = L_2 \circ \sigma \circ L_1\circ \mathrm{Flatten}\circ C_L\circ\cdots\circ C_1: \mathbb{R}^{H\times W} \to \mathbb{R}^r, 
 \end{align*}
where $L_1, L_2$ are fully-connected linear layers, and each $C_l$ is a convolutional block followed by a nonlinear activation function $\sigma$, i.e., $C_l = \sigma \circ \mathrm{Conv}_{l}$. We use the LeakyReLU activation function $\sigma(x)=\max(0, x) + 0.01\min(0, x)$, and initialize all network parameters $\theta$ using the Xavier uniform method~\cite{smith2024uncertainty}. We train the neural network by minimizing the empirical RB loss \eqref{eq:empirical-loss} using the SOAP optimizer~\cite{vyas2024soap}, which we find to outperform the commonly used AdamW optimizer for faster convergence and higher accuracy. More details on the architecture and training can be found in \ref{sec:training-details}. 

The decay of the empirical loss on the training set (with 1000 and 3000 samples, respectively) and validation set (with 500 samples) is shown in \cref{fig:loss_history} for the three problems. During training, we monitor the validation loss and retain the network weights that achieve a minimum validation loss (early stopping), rather than those minimizing the training loss, to prevent overfitting and improve generalization to unseen parameter samples. From 1000 to 3000 training samples, we can observe a larger training loss at the end of training, a smaller validation loss at the minimum values, and a closer gap between them.

\begin{figure}[!ht]
    \centering
    \includegraphics[width=0.32\linewidth]{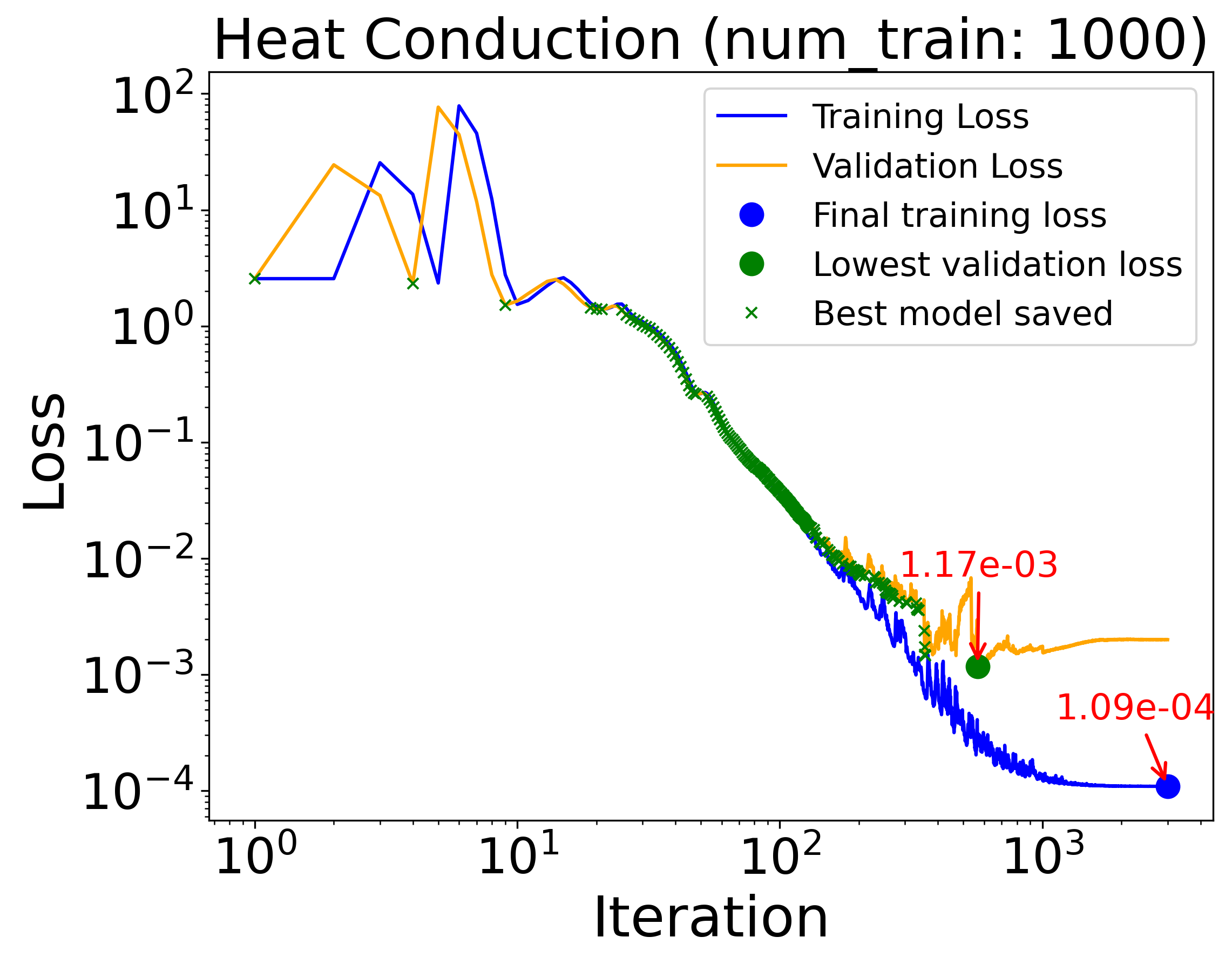}
    \includegraphics[width=0.32\linewidth]{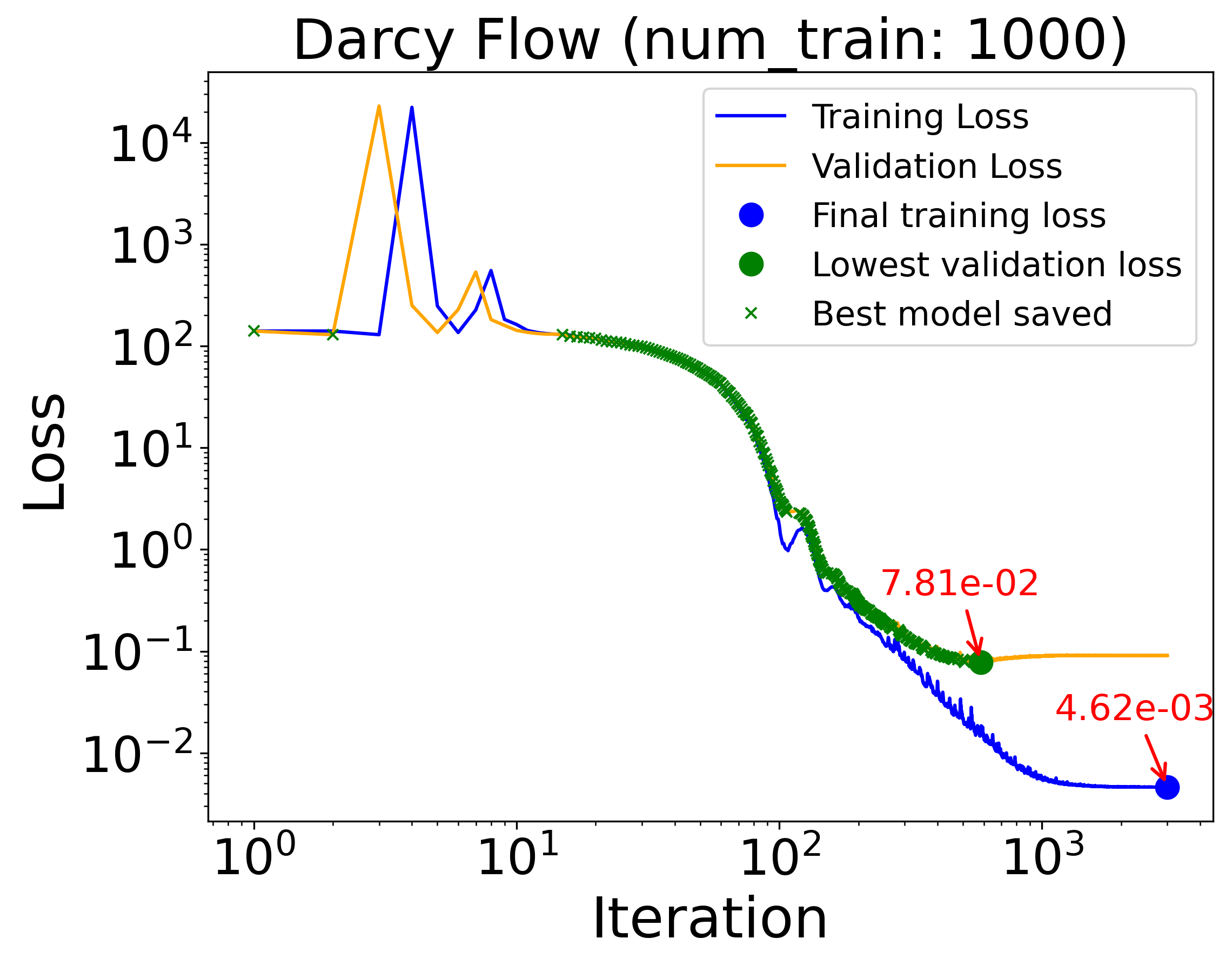}
    \includegraphics[width=0.32\linewidth]{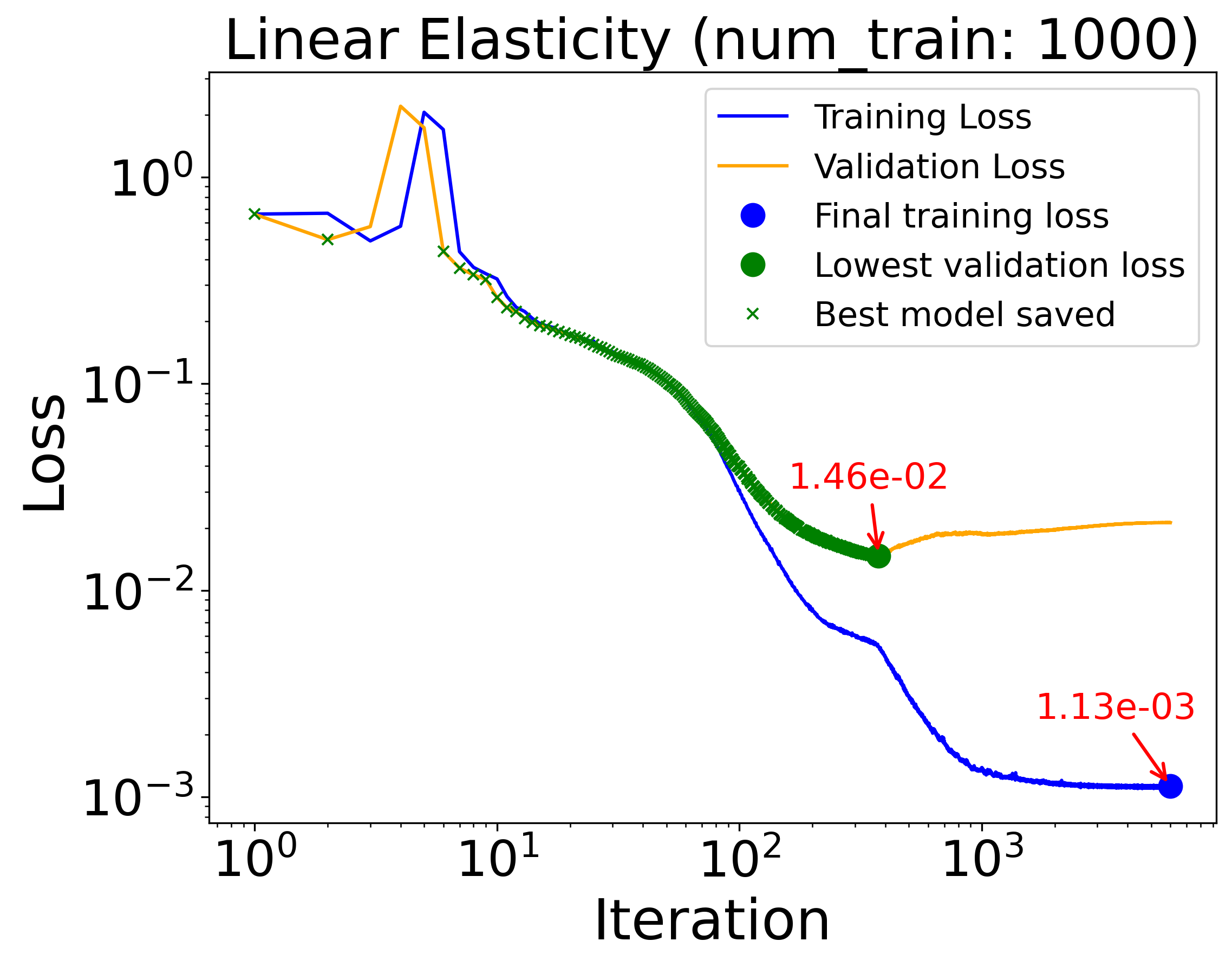}

    \includegraphics[width=0.32\linewidth]{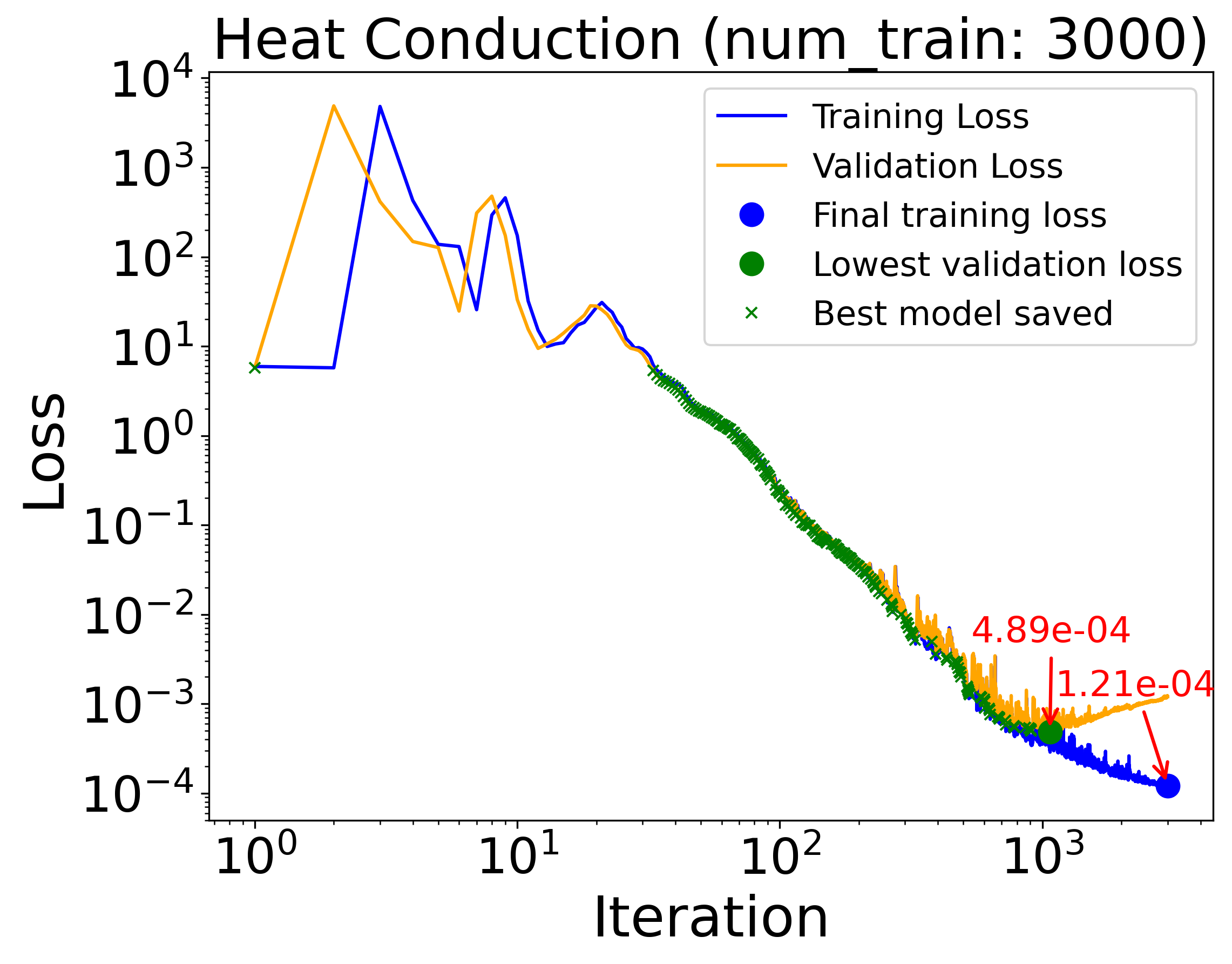}
    \includegraphics[width=0.32\linewidth]{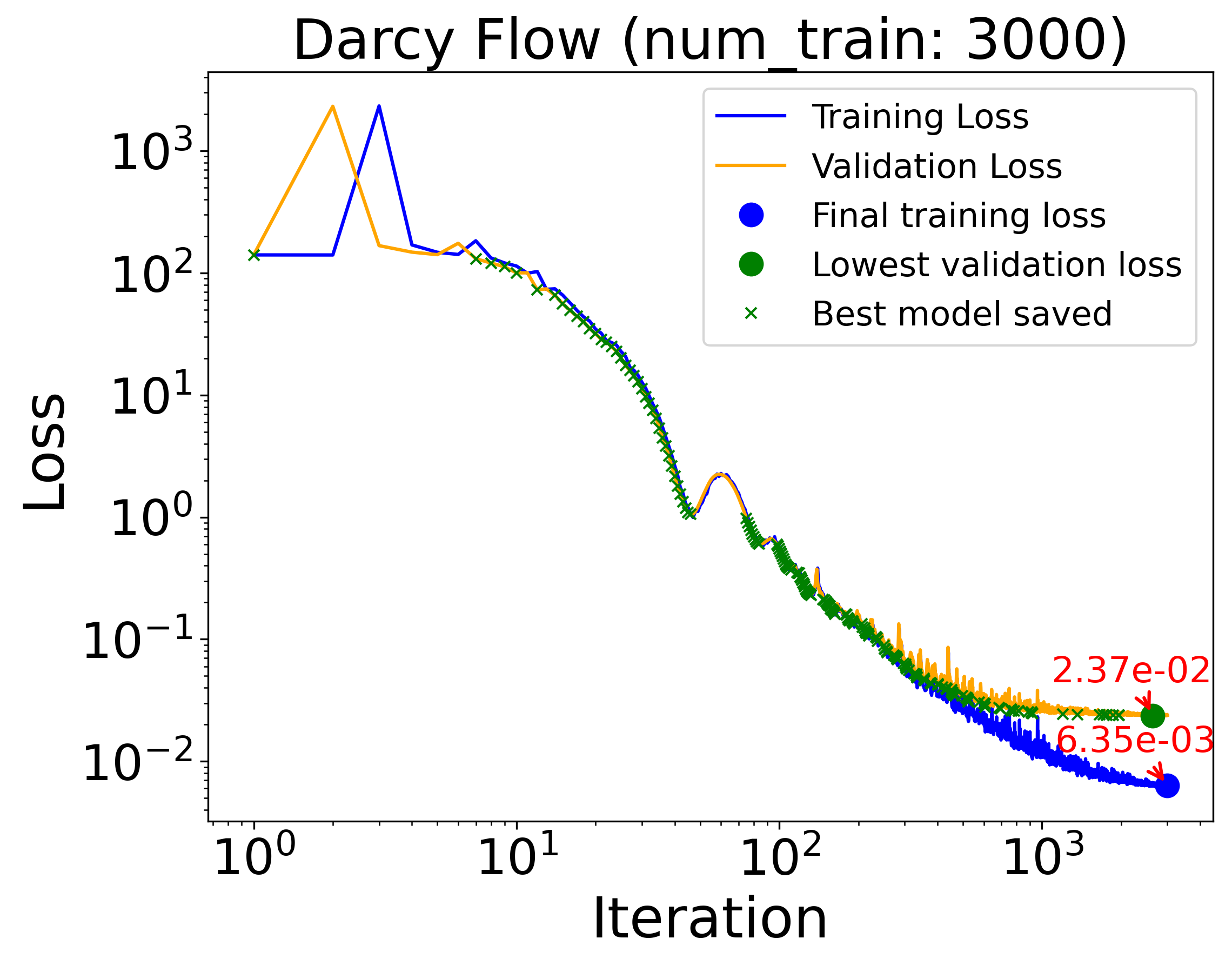}
    \includegraphics[width=0.32\linewidth]{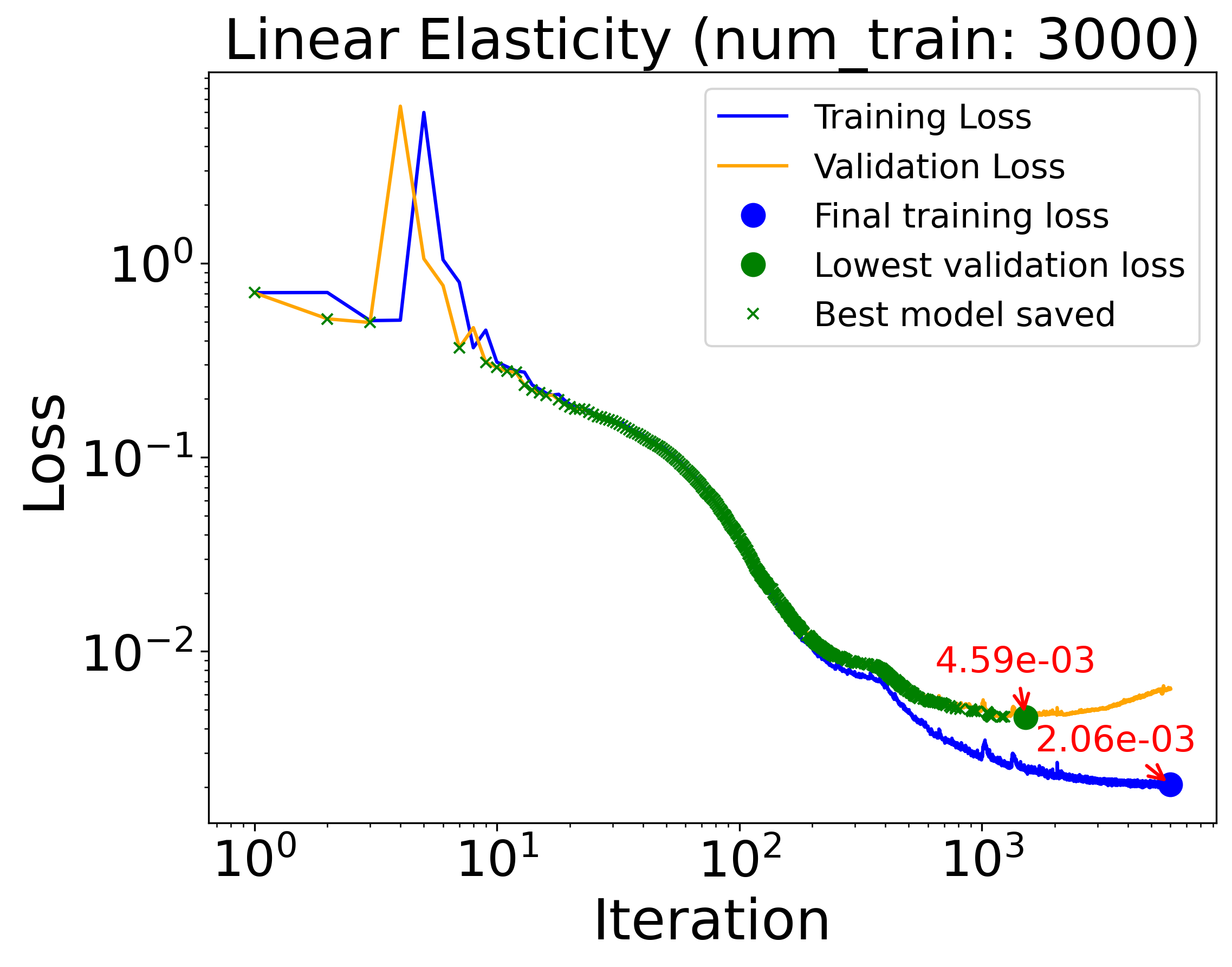}

    \caption{Empirical loss decay over optimization iterations on the training set with 1000 samples (top) and 3000 samples (bottom), and on the validation set with 500 samples, for the three problems. During training, model checkpoints are updated whenever the validation loss decreases, and the final model is selected as the checkpoint with the lowest validation loss.}
    \label{fig:loss_history}
\end{figure}

To demonstrate \cref{thm:final-H-error}, we show in~\cref{fig:error_and_loss_vs_num_train} how three quantities decay with increasing number of training samples from 16 to 64, 256, 1024, and 4096. Specifically, we report (1) the RBNO loss $\EE_{\pp\sim\mu} \big[\mathcal{L}(s_r(\pp;\hat{\theta});\pp)\big]$ at the RBNO solution $s_r(\pp;\hat{\theta})$, (2) the mean square error between the RBNO solution and the RB solution $\EE_{\pp\sim \mu}\big[||s_r(\pp) - s_r(\pp;\hat{\theta})||^2_\HH\big]$, and (3) the mean square error between the RBNO solution and the ``ground truth'' FE solution (with RT$_3\times$CG$_4$ elements) $\EE_{\pp\sim \mu}\big[||\bar{s}_h(\pp) - s_r(\pp;\hat{\theta})||^2_\HH\big]$, 
where the expectation is evaluated with 500 test samples.

\begin{figure}[!htb]
    \centering
    \includegraphics[width=0.32\linewidth]{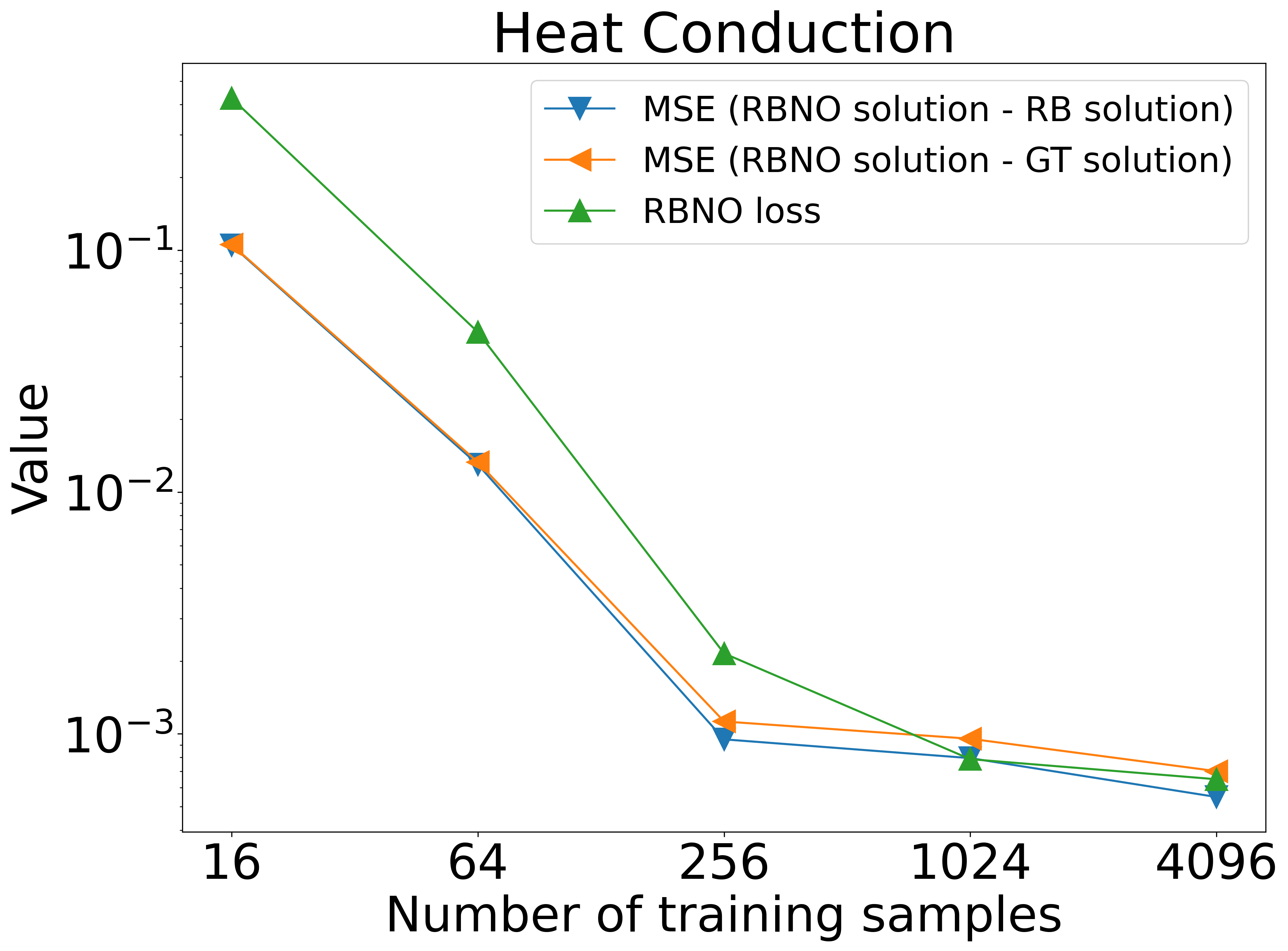}
    \includegraphics[width=0.32\linewidth]{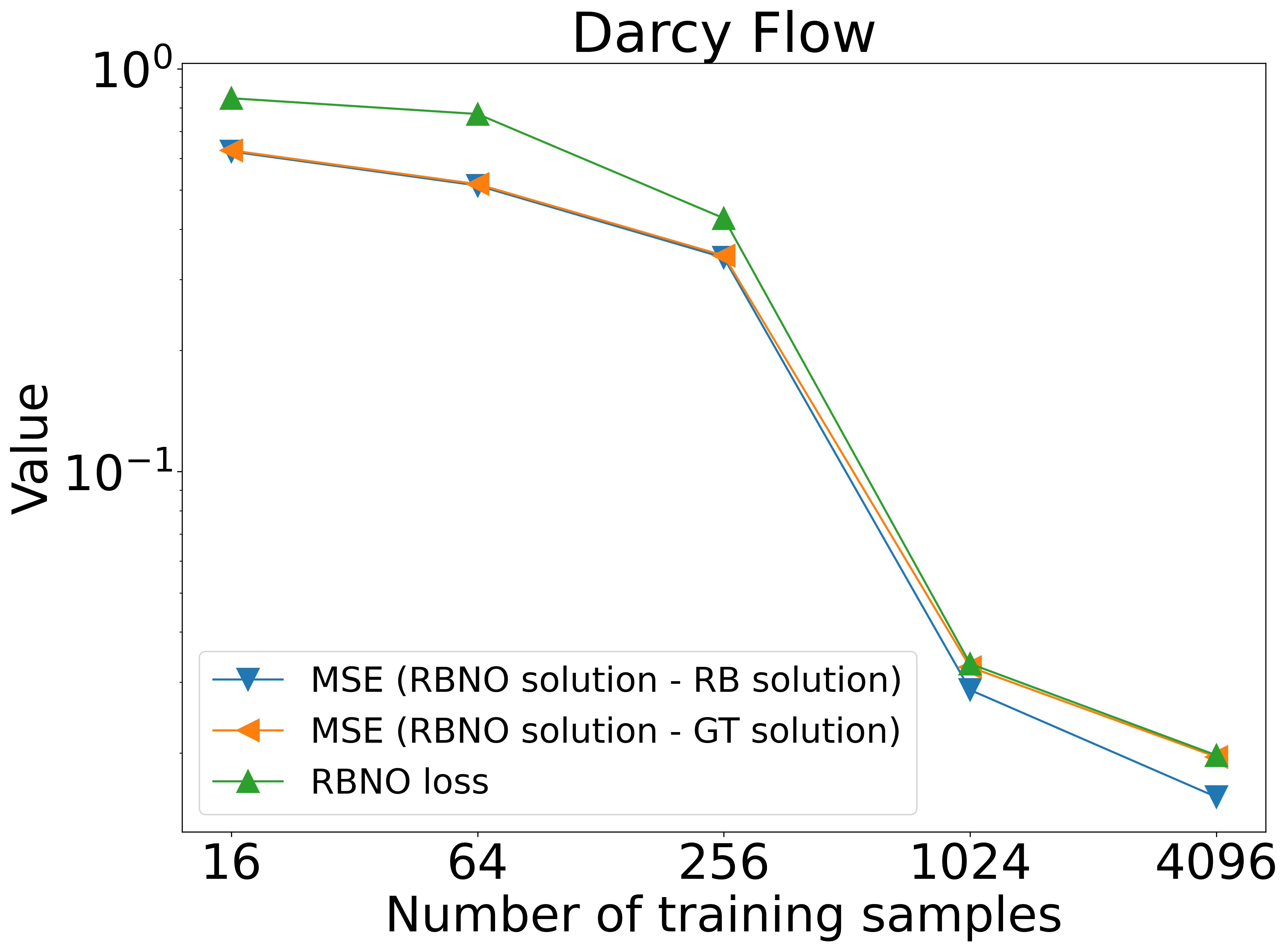}
    \includegraphics[width=0.32\linewidth]{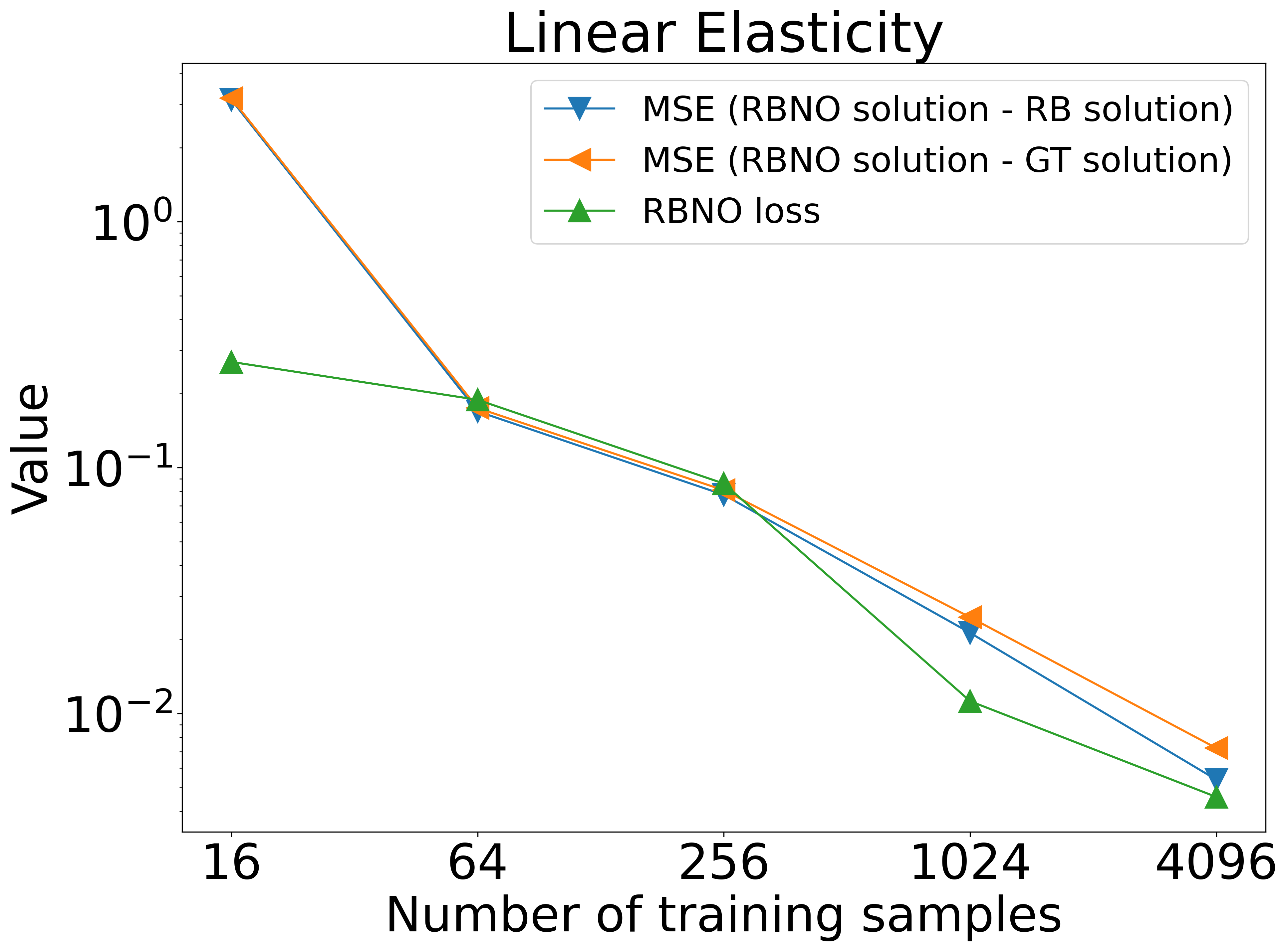}
    \caption{{Decay of the empirical RBNO loss $\EE_{\pp\sim\mu} \big[\mathcal{L}(s_r(\pp;\hat{\theta});\pp)\big]$ at the RBNO solution $s_r(\pp;\hat{\theta})$, the empirical mean square error between the RBNO solution and the RB solution $\EE_{\pp\sim \mu}\big[||s_r(\pp) - s_r(\pp;\hat{\theta})||^2_\HH\big]$,
    and the empirical mean square error between the RBNO solution and the ``ground truth'' FE solution (with RT$_3\times$CG$_4$ elements) $\EE_{\pp\sim \mu}\big[||\bar{s}_h(\pp) - s_r(\pp;\hat{\theta})||^2_\HH\big]$ 
    with increasing number of training samples from 16 to 64, 256, 1024, and 4096, where the expectation is evaluated with 500 test samples.}}
    \label{fig:error_and_loss_vs_num_train}
\end{figure}

In \cref{fig:ratio_comparison_finer_labels}, we show the histograms of the ratios between the RBNO error (compared to the ``ground truth'' FE solution) and the square root of the RBNO residual loss for the three problems with 500 test samples. The ratios cluster near~$1$, indicating that the residual loss is a good estimator of the error.

\begin{figure}[!htb]
        \centering
        \includegraphics[width=0.32\linewidth]{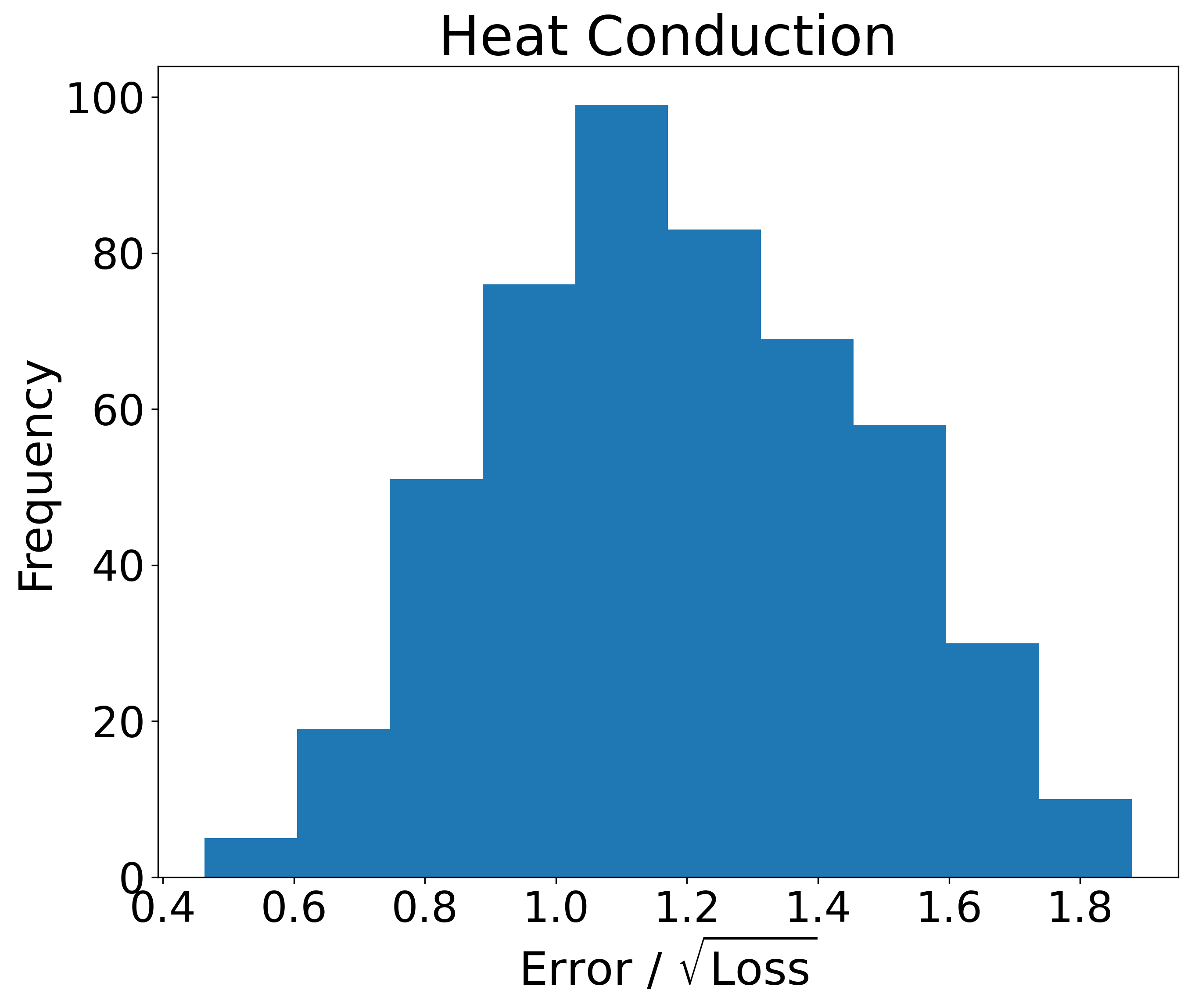}
        \includegraphics[width=0.32\linewidth]{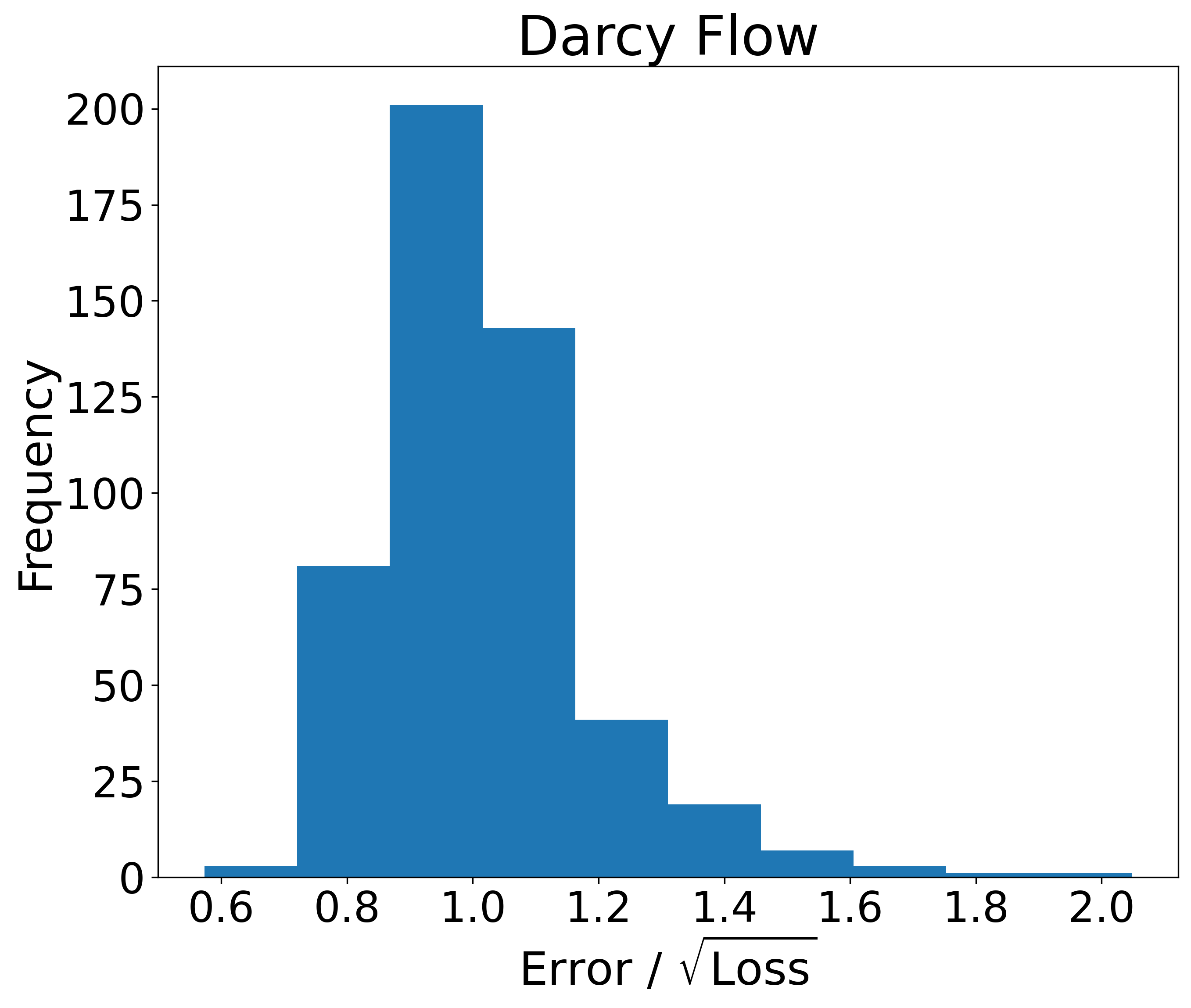}
        \includegraphics[width=0.32\linewidth]{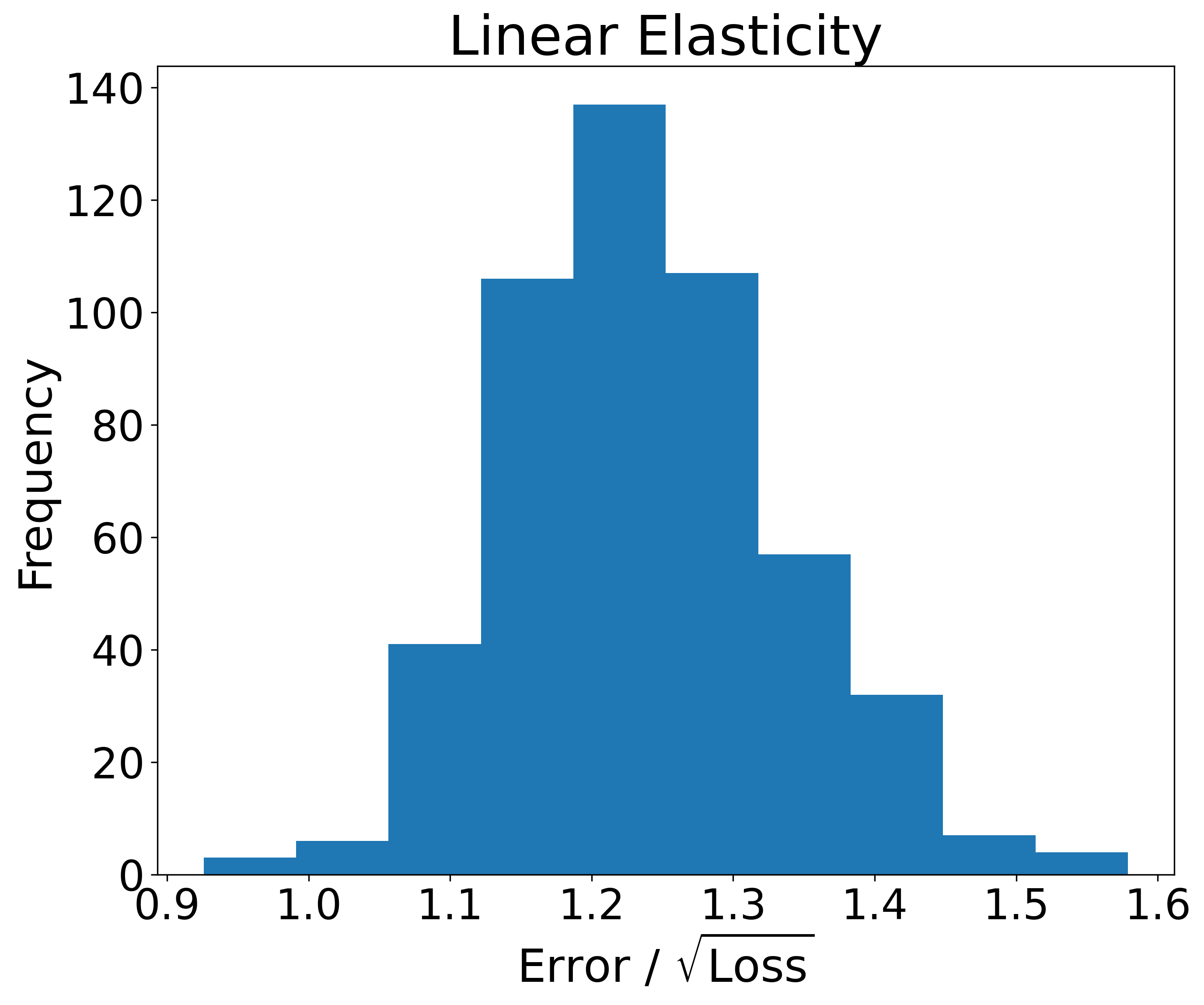}
    \caption{Histograms of the ratio $\text{Error}/\sqrt{\text{Loss}}$, between the RBNO error $||\bar{s}_h(\pp) - s_r(\pp;\hat{\theta})||_\HH$ (with ``ground truth'' FE solution $\bar{s}_h(\pp)$ using RT$_3\times$CG$_4$ elements) and the square root of the RBNO residual loss $\mathcal{L}(s_r(\pp;\hat{\theta});\pp)$, over 500 test samples for the three problems.}
    \label{fig:ratio_comparison_finer_labels}
\end{figure}

\subsection{Comparison to other neural operators}\label{sec:main_results}

{In this section, we compare RBNO with two popular neural operators, detailed below. To place these comparisons in proper perspective, we note a key distinction in objectives. The central goal of RBNO is to directly output the physically relevant quantities, both $u$ and $\sigma$, e.g., temperature/pressure and thermal/flow flux in stationary diffusion or displacement and stress in linear elasticity. In many scenarios, these are the variables of primary interest, and their errors in PDE-compliant norms can be directly measured by the FOSLS residual loss. In contrast, the neural operators discussed here typically provide pointwise evaluations of the solution $u$ at discrete sites within the computational domain. Consequently, any comparison requires post-processing the neural operator outputs, an additional step that may influence the final solution quality (see detailed investigations in \ref{sec:projection-details}). To ensure a fair and rigorous comparison, we apply these neural operators to the solution components $u$ and $\sigma$ of the FOSLS formulation---a step rarely taken in standard practice---to promote variational correctness.}

The first neural operator is the PCA-based neural network (PCA-Net)~\cite{bhattacharya2021model}, which projects both the parameter field and the solution onto low-dimensional PCA bases and then learns a map between their PCA coefficients using a multilayer perceptron (MLP) with a mean squared error loss. The PCA bases are computed as the eigenvectors of empirical covariance of the pointwise evaluation of the training data $\pp \mapsto [\sc_h(\pp), \uc_h(\pp)]$ as in \cite{bhattacharya2021model}.
The second is the Fourier Neural Operator (FNO)~\cite{li2020fourier}, which lifts the input field to a higher-dimensional feature space and applies a sequence of spectral convolution layers: each layer takes a Fourier transform in space, multiplies a fixed number of retained modes by trainable complex weights, and applies an inverse transform followed by a nonlinearity. This architecture learns the operator in frequency space and can be evaluated on arbitrary discretizations. In our experiments, FNO is trained with a relative $L_2$ loss on the pointwise finite element solution $[\sc_h, \uc_h]$ at the mesh grid points. Details of their architectures and training setups are given in \ref{sec:training-details}. To compare them with RBNO in terms of relative errors in $\LL = L_2\times L_2$ and $\HH = H(\rdiv)\times H^1$ norms, we first construct piecewise linear (CG$_1\times$ CG$_1$) FE functions from their output values, which are predictions at the grid points, on the same mesh. The errors are then computed with respect to the corresponding CG$_1\times$ CG$_1$ FE representations of the high-fidelity solutions, which are obtained by evaluating the solutions at the grid points and mapping them one-to-one to the CG$_1$ degrees of freedom. Residual losses are computed using a standard FE assembly. The results are shown in \cref{tab:approximation_relative_error}, with the smallest error or loss for each problem highlighted in bold. We observe that RBNO achieves both the smallest errors in $\HH$-norm and the smallest residual loss, while FNO attains the smallest error in $\LL$-norm. Notably, RBNO, despite not being trained on pointwise evaluation data, still yields a relatively small error in $\LL$-norm. In contrast, both PCA-Net and FNO, which are trained on pointwise evaluation data, produce large relative errors in the $\HH$-norm and violate the PDE residual with significantly large values. Constructing RT$_1\times$CG$_2$ FE representations from the pointwise outputs by projection in the $\HH$-norm does not reduce the error or loss. A more detailed investigation of the large errors and losses, as well as the performance of PCA-Net and RBNO trained using the same RB coefficients, is provided in \ref{sec:projection-details} and \ref{sec:ablation_study}.

\begin{table}[!htb]
\footnotesize
\centering
\begin{tabular}{|l|
  S[table-format=1.2e-2]@{(}S[table-format=1.2e-2]@{\!)\;\;}|
  S[table-format=1.2e-2]@{(}S[table-format=1.2e-2]@{\!)\;\;}|
  S[table-format=1.2e-2]@{(}S[table-format=1.2e-2]@{\!)\;\;}|}
\hline
\textbf{Method} &
\multicolumn{2}{c|}{\textbf{Relative error in $\LL$-norm}} &
\multicolumn{2}{c|}{\textbf{Relative error in $\HH$-norm}} &
\multicolumn{2}{c|}{\textbf{Residual loss}} \\ \hline

\multicolumn{7}{|c|}{\textbf{Heat Conduction}} \\ \hline
PCA-Net & 1.27e-01 & 2.52e-02 & 3.32e-01 & 5.75e-02 & 1.84e+01 & 3.01e+00 \\
FNO & \bmnum{1.82e-02} & \bmnum{1.66e-03} & 1.46e-01 & 1.53e-02 & 1.90e+01 & 4.12e+00 \\
RBNO & 2.43e-02 & 9.35e-03 & \bmnum{1.86e-02} & \bmnum{6.36e-03} & \bmnum{5.33e-04} & \bmnum{1.44e-03} \\
\hline

\multicolumn{7}{|c|}{\textbf{Darcy Flow}} \\ \hline
PCA-Net  & 1.79e-01 & 7.53e-02 & 1.13e-01 & 2.35e-02 & 2.59e+00 & 2.03e+00 \\
FNO      & \bmnum{3.30e-02} & \bmnum{1.18e-02} & 1.60e-01 & 3.81e-02 & 5.10e+00 & 2.34e+00 \\
RBNO     & 6.39e-02 & 2.26e-02 & \bmnum{1.23e-02} & \bmnum{3.64e-03} & \bmnum{2.72e-02} & \bmnum{2.61e-02} \\
\hline

\multicolumn{7}{|c|}{\textbf{Linear Elasticity}} \\ \hline
PCA-Net & 1.34e-01 & 1.03e-01 & 3.27e-01 & 1.14e-01 & 2.46e+01 & 6.11e+00 \\
FNO     & \bmnum{1.17e-02} & \bmnum{3.34e-03} & 4.30e-01 & 6.11e-02 & 2.82e+01 & 6.45e+00 \\
RBNO    & 2.49e-02 & 5.89e-03 & \bmnum{3.91e-02} & \bmnum{8.68e-03} & \bmnum{4.87e-03} & \bmnum{3.56e-03} \\
\hline
\end{tabular}
\caption{Comparison on the mean (and standard deviation, estimated over 500 test samples) of the relative errors in $\LL_2 = L_2 \times L_2$-norm and $\HH = H(\rdiv)\times H^1$-norm, and residual loss for three neural networks on three problems.}
\label{tab:approximation_relative_error}
\end{table}

\section{Conclusions, limitations, and future work}
\label{sec:conclusions_limitations_and_future_work}

This work presented a variationally correct operator learning framework for constructing residual-based loss functions to train accurate surrogates for parameter-to-solution maps of stationary diffusion and linear elasticity models. Our central design principle is \emph{variational correctness}: the residual loss is equivalent (up to constants) to the solution error in the model-induced norm. We achieved this by utilizing FOSLS formulations and incorporating mixed Dirichlet–Neumann boundary conditions via variational lifts, thereby avoiding boundary penalty terms that typically degrade norm equivalence.

Computationally, we bridged the gap between continuous theory and practical implementation by establishing a rigorous link between the discrete loss and the associated discretization errors. We proved that the total error is bounded by a sum of distinct components: finite element discretization bias, reduced basis projection error, neural network approximation error, statistical estimation error, and optimization error. To strictly enforce the function space conformity required by the FOSLS objective while mitigating high-dimensional output effects, we employed the RBNO architecture trained using a computationally efficient RB loss function. Numerical experiments validated this error decomposition and convergence analysis, confirming that the variationally correct residual serves as a tight, computable \emph{a posteriori} error estimator.

While less reliant on data than purely data-driven regression, our method still requires a moderate number of high-fidelity solutions to construct the POD reduced basis. For this strategy to be effective, the reduced dimension must grow slowly as target accuracy increases, a requirement tied to the robust decay of Kolmogorov $n$-widths. While such decay is expected for the dissipative models treated here , our experiments indicate that the required dimension may vary substantially under less benign parameter representations. Furthermore, this favorable decay does not extend to dispersive or transport-dominated problems (e.g., Helmholtz or wave-type regimes), which will require alternative approximation concepts.

Handling boundary conditions remains a delicate issue in scientific machine learning. We addressed this by incorporating mixed boundary conditions via auxiliary lifts, preserving variational correctness without ``wrong penalty terms.'' This comes at the cost of solving two parameter-independent auxiliary boundary value problems, assuming parameter-independent boundary data. For parameter-dependent boundary data, this approach must be adapted; a promising direction is to jointly approximate (or train) the auxiliary lift solutions alongside the main surrogate while maintaining stability.

While we restricted our attention to stationary models, extending this framework to time-dependent PDEs appears viable for parabolic problems, where Kolmogorov $n$-widths decay in a parameter-robust manner and space–time least-squares formulations are available~\cite{KF1,GS}. In contrast, while analogous formulations exist for wave equations~\cite{KF2}, their dispersive nature typically precludes the existence of comparable low-dimensional bases, necessitating new hybrid representations and stability-aware learning objectives.

Finally, we aim to further develop RBNO as a surrogate model equipped with rigorous a posteriori error estimates for downstream tasks such as inverse problems~\cite{stuart2010inverse, cao2025derivative}, data assimilation~\cite{Si2025LatentEnSF,xiao2024ld}, optimization under uncertainty~\cite{chen2023performance,luo2025efficient,yao2025derivative}, and optimal experimental design~\cite{wu2023large, go2025accurate, go2025sequential}. Since success in these domains often relies on derivative or sensitivity information~\cite{o2024derivative,qiu2024derivative,yao2025derivative}, a critical direction for future work is the development of RBNO surrogates to provide accurate derivative information.

\section*{Acknowledgments} 

This work has been supported in part by NSF grants DMS-2245111, DMS-2245097, DMS-2038080, as well as by the German Research Foundation grant CRC 1481.

During the preparation of this work, the authors used ChatGPT and Google Gemini in order to polish the writing. After using this tool/service, the authors reviewed and edited the content as needed and take full responsibility for the content of the published article.

\bibliographystyle{elsarticle-num}
\bibliography{references}

\appendix 

\section{On the Riesz representation of $f \in \U'$}
\label{app:decomposition}
When admitting in \eqref{eq:poisson_strong_primal} a general functional $f$ in $\U'$ we have to ensure that
it does not interfere with the specific Neumann boundary condition on $\Gamma_N$. Indeed, in a weak formulation Neumann boundary conditions appear as functionals in $\U'$. We have therefore restricted
$f$ in \eqref{eq:poisson_strong_primal} to be of the form $f= f_2 + \div\,f_1$ where $f_2\in L_2(\Omega)$
and $f_1\in L_2(\Omega;\R^d)$ is ``flux-free'', in the sense that,
\begin{equation}
\label{fluxfree}
(\div\,f_1,v)_\Omega = -(f_1,\nabla v)_\Omega,\quad f(v) = (f_2,v)_\Omega + (\div\,f_1)(v),\quad \forall\, v\in\U.
\end{equation}

To explain why such a requirement is reasonable we briefly describe how a (generally non-unique) decomposition \eqref{fluxfree}
may come about.

 To that end, we'll make use of the following Weyl-type
decomposition of a general $ F\in L_2:= L_2(\Omega;\R^d)$:
\begin{equation}
\label{Weyl}
 F = \nabla p \oplus \zeta\quad \mbox{for some}\quad p\in\U,\,\,\zeta \in \Sigma_0,
\end{equation}
where we define 
\begin{equation}
\label{Sigma}
\Sigma:= \{\eta\in H(\div;\Omega): \eta \cdot n|_{\Gamma_N} =0\},\quad \Sigma_0:= \{\zeta\in \Sigma: \zeta \cdot n|_{\Gamma_N}=0, \, \div\,\zeta =0\}.
\end{equation}
In particular, we then have, by our assumptions on $\zeta$ in \eqref{Weyl},
\begin{align}
\label{Ftrace}
( F,\nabla v)_\Omega&= (\nabla p,\nabla v)_\Omega + (\zeta,\nabla v)_\Omega = (\nabla p,\nabla v)_\Omega\nonumber\\
&=- (\div\nabla p)(v) + \langle \nabla p \cdot n,v\rangle_{\Gamma_N}\nonumber\\
&= - (\div F)(v)+ \langle \nabla p \cdot n,v\rangle_{\Gamma_N},\quad v\in\U.
\end{align}
In other words, we can see the ``trace-effect'' when taking a weak divergence of an $L_2$-field. To suppress the flux term, consider the subspace 
$$
\U_{\Gamma_N} := \{ v\in \U: \nabla v \cdot n|_{\Gamma_N}=0\}\subset \U.
$$
Thus, whenever, $ F\in L_2$ has a decomposition \eqref{Weyl} with $p\in \U_{\Gamma_N}$, then
\eqref{Ftrace} says that
\begin{equation}
\label{without}
(F,\nabla v)_\Omega = -(\div\,F)(v),\quad v\in \U,
\end{equation}
which explains the notion {\em flux-free} in \eqref{fluxfree}.

To arrive at the desired decomposition of $f$ in \eqref{eq:poisson_strong_primal},  consider the {\em Riesz-lift}
of $f$ with respect to the full $H^1$-inner product on $\U$:
  find $r_f\in \U$ such that 
\begin{equation}
\label{RiesL}   
(\nabla r_f,\nabla v)_\Omega +(r_f,v)_\Omega = -f(v) 
,\quad v\in \U.
\end{equation}
which has a unique solution $r_f\in \U$. 
Since 
\begin{equation}
\label{divf}
f(v)= (\div \nabla r_f)(v) - (r_f,v)_\Omega - \langle  \nabla r_f \cdot n,v\rangle_{\Gamma_N}  ,\quad v\in \U,
\end{equation}
upon setting
$$
f_2:= -r_f\in L_2(\Omega),\quad  f_1:=\nabla r_f\in L_2(\Omega;\R^d),
$$
we can see that the decomposition \eqref{divf} does give rise to the desired property \eqref{fluxfree}, provided that $r_f\in \U_{\Gamma_N}$, and how the implied relation $\langle f_1 \cdot n,v\rangle_{\Gamma_N}=0$, $v\in \U$, is then to be understood.   

\section{Stability and norm equivalence} 
\label{sec:appendix}

\subsection{Stationary diffusion}
\label{app:PoissonNormEquiv}

\begin{lemma}
\label{lem:PoissonNormEquiv}
Assume that $0<\alpha\le \pp(x)\le \beta<\infty$ holds for fixed $\alpha,\beta$. Then
the mapping $\cB_\pp([\tau,v])= {\tau -\pp\nabla v\choose -\div\,\tau}$ is an isomorphism
from $\HH:= H_{0,\Gamma_N}(\div;\Omega)\times H^1_{0,\Gamma_D}(\Omega)$ onto $\LL_2:= L_2(\Omega;\R^{d+1})$. In particular, the norm equivalence
\begin{equation}
\label{Hnormeq}
c\|[\tau,v]\|_{H(\div;\Omega)\times H^1(\Omega)} \le \|\cB_\pp([\tau,v])\|_{\LL_2}\le C \|[\tau,v]\|_{H(\div;\Omega)\times H^1(\Omega)},\quad [\tau,v]\in \HH,
\end{equation}
holds for constants $c,C$ depending only on $\alpha,\beta$ and the Poincar\'{e} constant $C_P$ in $\|  v\|_{L_2(\Omega)}\le C_P\|\nabla v\|_{L_2(\Omega)}$, $v\in \U:= H^1_{0,\Gamma_D}(\Omega)$. In fact, denoting by $\kappa(\cB_\pp):= \|\cB_\pp\|_{\HH\to\LL_2}
\|\cB_\pp^{-1}\|_{\LL_2\to\HH}$ the condition number of $\cB_\pp$ we have $\kappa(\cB_\pp)\le
C\beta/\alpha$ where $C$ is a constant depending only on $C_P$. In these terms
\begin{equation}
\label{cond}
c\ge c_1\big(\sup_{\pp\in\PP}\kappa(\cB_\pp))^{-1} \quad \mbox{while}\quad C\le (2+\beta^2)^{1/2},
\end{equation}
where $c_1$ depends only on $C_P$. In other words, the FOSLS bilinear form
\begin{equation}
\label{FOSLSbl}
   ([\tau,v],[\eta,s])_{\X_\pp} := (\tau-\pp\nabla v,\eta-\pp\nabla s)_{L^2(\Omega)}
                              + (\div\tau,\div\eta)_{L^2(\Omega)},
\end{equation}
is an equivalent inner product on $\HH$, inducing the   norm $\|[\tau,v]\|^2_{\X_\pp} := ([\tau,v],[\tau,v])_{\X_\pp} = \|\cB_\pp[\tau,v]\|^2_{L^2(\Omega;\R^{d+1})}$.
\end{lemma}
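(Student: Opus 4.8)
The plan is to establish the isomorphism property of $\cB_\pp$ by proving the two-sided bound \eqref{Hnormeq}, from which everything else---the condition number estimate, the constants in \eqref{cond}, and the fact that \eqref{FOSLSbl} is an equivalent inner product---follows essentially by unwinding definitions. The upper bound is the routine direction: for $[\tau,v]\in\HH$ we compute
\[
\|\cB_\pp[\tau,v]\|_{\LL_2}^2 = \|\tau-\pp\nabla v\|_{L_2}^2 + \|\div\tau\|_{L_2}^2 \le 2\|\tau\|_{L_2}^2 + 2\beta^2\|\nabla v\|_{L_2}^2 + \|\div\tau\|_{L_2}^2,
\]
using $\|\pp\nabla v\|_{L_2}\le\beta\|\nabla v\|_{L_2}$ pointwise, and this is bounded by $(2+\beta^2)(\|\tau\|_{H(\div)}^2 + \|\nabla v\|_{L_2}^2)$, which together with the Poincar\'e inequality on $\U$ gives $C\le(2+\beta^2)^{1/2}$ up to the norm-equivalence of $\|\nabla\cdot\|_{L_2}$ and $\|\cdot\|_{H^1}$ on $\U$.

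The substantive direction is the lower bound (coercivity of the FOSLS form). First I would control $v$: testing the second component, note that $\pp\nabla v = \tau - (\tau-\pp\nabla v)$, so $\|\pp\nabla v\|_{L_2}\le\|\tau\|_{L_2}+\|\cB_\pp[\tau,v]\|_{\LL_2}$; combined with the lower ellipticity bound $\alpha\|\nabla v\|_{L_2}^2\le(\pp\nabla v,\nabla v)_\Omega$ and an application of integration by parts $(\pp\nabla v,\nabla v)_\Omega = (\tau,\nabla v)_\Omega - (\tau-\pp\nabla v,\nabla v)_\Omega = -(\div\tau,v)_\Omega - (\tau-\pp\nabla v,\nabla v)_\Omega$ (the boundary term vanishes since $v|_{\Gamma_D}=0$ and $\tau\cdot n|_{\Gamma_N}=0$), one gets $\alpha\|\nabla v\|_{L_2}^2 \le \|\div\tau\|_{L_2}\|v\|_{L_2} + \|\tau-\pp\nabla v\|_{L_2}\|\nabla v\|_{L_2} \le (C_P\|\div\tau\|_{L_2} + \|\cB_\pp[\tau,v]\|_{\LL_2})\|\nabla v\|_{L_2}$. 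Dividing gives $\|\nabla v\|_{L_2}\lesssim \alpha^{-1}\|\cB_\pp[\tau,v]\|_{\LL_2}$ after absorbing $\|\div\tau\|_{L_2}\le\|\cB_\pp[\tau,v]\|_{\LL_2}$. Then $\|\tau\|_{L_2}\le\|\tau-\pp\nabla v\|_{L_2}+\beta\|\nabla v\|_{L_2}\lesssim (1+\beta/\alpha)\|\cB_\pp[\tau,v]\|_{\LL_2}$, and $\|\div\tau\|_{L_2}\le\|\cB_\pp[\tau,v]\|_{\LL_2}$ is immediate. Collecting these three estimates and using Poincar\'e to pass from $\|\nabla v\|_{L_2}$ to $\|v\|_{H^1}$ yields $\|[\tau,v]\|_{\HH}\lesssim (\beta/\alpha)\|\cB_\pp[\tau,v]\|_{\LL_2}$, i.e.\ $c\gtrsim\alpha/\beta$ with a constant depending only on $C_P$; this is exactly $c\ge c_1(\sup_\pp\kappa(\cB_\pp))^{-1}$ once we record $\kappa(\cB_\pp) = \|\cB_\pp\|\,\|\cB_\pp^{-1}\|\le (2+\beta^2)^{1/2}\cdot c^{-1}\lesssim\beta/\alpha$.

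The hard part is really just the bookkeeping in the coercivity step---making sure the integration-by-parts identity is justified for $\tau\in H(\div;\Omega)$ with vanishing normal trace on $\Gamma_N$ and $v\in H^1_{0,\Gamma_D}(\Omega)$ (this is the standard Gauss--Green formula in $H(\div)$, valid because the pairing $\langle\tau\cdot n,v\rangle_{\partial\Omega}$ splits into a piece on $\Gamma_D$ where $v$ vanishes and a piece on $\Gamma_N$ where $\tau\cdot n$ vanishes), and tracking the dependence of all constants on $\alpha,\beta,C_P$ so that the claimed form of \eqref{cond} comes out cleanly. Surjectivity of $\cB_\pp$ onto $\LL_2$ follows from the a priori bound by a standard argument (closed range plus density, or simply invoking the Lax--Milgram/Babu\v{s}ka--Ne\v{c}as theorem for the symmetric bilinear form \eqref{FOSLSbl}, whose coercivity and boundedness on $\HH$ are precisely \eqref{Hnormeq}); I would cite \cite{bochev2009least,FOSLS} for the non-parametric case and note that the only $\pp$-dependence enters through $\alpha,\beta$. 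Finally, the statement that \eqref{FOSLSbl} is an equivalent inner product inducing $\|\cB_\pp[\tau,v]\|_{\LL_2}^2$ is immediate from its definition and \eqref{Hnormeq}.
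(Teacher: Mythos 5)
Your proposal is correct and follows essentially the same route as the paper's proof in \ref{app:PoissonNormEquiv}: the upper bound via the triangle inequality and $\|\pp\nabla v\|_{L_2}\le\beta\|\nabla v\|_{L_2}$, the lower bound via $\alpha\|\nabla v\|_{L_2}^2\le(\pp\nabla v,\nabla v)_\Omega$ combined with the Gauss--Green identity (justified by $v|_{\Gamma_D}=0$ and $\tau\cdot n|_{\Gamma_N}=0$) and Poincar\'e, and surjectivity from the a priori bound plus the normal equations. The only point worth tightening is that invoking Lax--Milgram for the FOSLS form by itself yields well-posedness of the normal equations, not surjectivity of $\cB_\pp$ onto $\LL_2$; one still needs the closed-range/trivial-orthogonal-complement step you mention parenthetically, which is exactly the contradiction argument the paper spells out.
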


\begin{proof}
    Proofs of this result can, in essence,  be found in the literature, see \cite{FOSLS,bochev2009least}. For the convenience of the reader, we present a short version with an eye to the parameter dependence.
    \paragraph{Upper bound.} Simply use triangle inequality and Cauchy-Schwarz to obtain 
    \begin{align*}
    \|\cB_\pp([\tau,v])\|_{\LL_2}&\le \|\tau\|_{L_2(\Omega;\R^d)}+ \|\div\,\tau\|_{L_2(\Omega)}
    + \|\pp\nabla v\|_{L_2(\Omega)}\le\sqrt{2}\|\tau\|_{H(\div;\Omega)}  +\beta \|v\|_{H^1(\Omega)}\\
    & \le (2+\beta^2)^{1/2}\|[\tau,v]\|_{H(\div;\Omega)\times H^1(\Omega)},
    \end{align*}
    confirming the second part in \eqref{cond}. 
    
    \paragraph{Lower bound.} Using integration by parts (since $\tau\cdot n|_{\Gamma_N}=0$ and $v|_{\Gamma_D}=0$), and the fact that, by assumption on $\PP$, $\alpha\,\|\nabla v\|_{L^2}^2\le (\pp\nabla v,\nabla v)$, we obtain
    \begin{align*}
      \|\nabla v\|_{L^2}^2&\le \alpha^{-1} (\pp\nabla v,\nabla v)  = -\alpha^{-1}\big(\big(\tau-\pp\nabla v,\nabla v\big)+ (\div\tau, v)\big) \\
     &\le \alpha^{-1}\Big\{\|\tau-\pp\nabla v\|_{L^2}\,\|\nabla v\|_{L^2}+\|\div\tau\|_{L^2}\,\|v\|_{L^2}\Big\} \\
     &\le\alpha^{-1} \|\cB_\pp([\tau,v])\|_{\LL_2}\|v\|_{H^1(\Omega)}\le \alpha^{-1}(1+C_P^2)^{1/2}\|\cB_\pp([\tau,v])\|_{\LL_2}\|\nabla v\|_{L_2},
    \end{align*}
    so that, in particular, 
    \begin{equation}
    \label{nablav}
    \|\nabla v\|_{L_2}\le \alpha^{-1}(1+C_P^2)^{1/2}\|\cB_\pp([\tau,v])\|_{\LL_2}.
    \end{equation}
    Thus, by Poincar\'{e}'s inequality,
    \begin{equation}
    \label{P2}
    \|v\|^2_{H^1}\le (1+C_P^2)\|\nabla v\|^2_{L_2}\le \frac{(1+C_P^2)^2}{\alpha^2}\|\cB_\pp([\tau,v])\|^2_{\LL_2}.
    \end{equation}
     Moreover, again by \eqref{nablav} and using $\|\pp\nabla v\|_{L_2}\le \beta \|\nabla v\|_{L_2}$, 
    \begin{align}
    \label{A1}
    \|\tau\|_{H(\div)}&\le  \|\tau\|_{L_2}+ \|\div\,\tau\|_{L_2} \le \|\tau-\pp\nabla v\|_{L_2}+\|\pp\nabla v\|_{L_2} +  \|\div\,\tau\|_{L_2}\nonumber\\
    &\le (\sqrt{2}+A)\|\cB_\pp([\tau,v])\| _{\LL_2}   
    ,\quad \mbox{where }\, A:=   \frac{\beta(1+C_P^2)^{1/2}}{\alpha} .
     \end{align}
    Combining \eqref{P2} and \eqref{A1} yields the lower bound with 
    $$
    c= \Big(\Big(\sqrt{2}+ \frac{\beta(1+C_P^2)^{1/2}}{\alpha}\Big)^2 + \frac{(1+C_P^2)^2}{\alpha^2}
    \Big)^{-1/2},
    $$
    from which \eqref{Hnormeq} and the first inequality in \eqref{cond} follow.
    
    To complete the proof, note that \eqref{Hnormeq} already implies boundedness and injectivity of $\cB_\pp$, so that the remainder of the assertion follows from the Open Mapping Theorem, once we have confirmed surjectivity of $\cB_\pp$. To that end, note that we have incidentally shown that the bilinear form $B(\cdot,\cdot):\HH\times \HH\to \R$, defined by $B_\pp([\sigma,u],[\tau;v])
    := (\cB_\pp([\sigma,u]),\cB_\pp([\tau,v])_\Omega$ is coercive. Since $\cB_\pp$ is bounded, $B(\cdot,\cdot)$
    is also bounded (with constant $\|\cB_\pp\|^2_{\HH\to\LL_2}\le 2+\beta^2$.  Lax-Milgram shows that the normal equations: for any $(y_1,y_2)\in \LL_2$, find $[\sigma,u]\in \HH$ such that
    $$
     ([\sigma,u],[\tau,v])_{\X_\pp}= ([y_1,y_2], \cB_\pp([\tau,v]))_\Omega\quad \forall\, [\tau,v]\in\HH,
    $$
    is well-posed and stable. Now suppose $\cB_\pp$ is not surjective. Since $\cB_\pp$ is closed, its range 
    is then a closed subspace of $\LL_2$. Then there exists a $y=[y_1,y_2]\in \LL_2\setminus \{0\}$ such that 
    $(\cB_\pp [\tau,v],y)_\Omega =0$ for all $[\tau,v]\in \HH$. On the other hand, we know from the above
    observation that there exists a unique $0\neq [\sigma_y,u_y]\in\HH$ such that
    $$
    (\cB_\pp[(\sigma_y,u_y],\cB_\pp[\eta,w])_\Omega = (y,\cB_\pp([\eta,w])_\Omega,\quad [\eta,w]\in\HH.
    $$
    By assumption, the right hand side vanishes for all $[\eta,w]\in\HH$, while for $[\eta,w]=[\sigma_y,u_y]$, we have $(\cB_\pp[\sigma_y,u_y],\cB_\pp[\sigma_y,u_y])_\Omega
    = \|\cB_\pp [\sigma_y,u_y]\|^2_{\LL_2}\ge c^2 \|[\sigma_y,u_y]\|^2_{H(\div;\Omega)\times H^1(\Omega)}>0$, a contradiction.   This contradiction confirms surjectivity and completes the proof. 
\end{proof}

\subsection{Linear elasticity}
\label{sec:equivalence-elasticity}

Assume is $\Omega$ is Lipschitz and $\Gamma_D$ has positive measure. Define for $[\tenbar[2]{\tau},\tenbar[1]{v}]\in \HH:= H_{0,\Gamma_N}(\rdiv;\Omega;\R^{d\times d})\times H^1_{0,\Gamma_D}(\Omega;\R^d)$
\begin{equation*}
  \cB_\pp[\tenbar[2]{\tau},\tenbar[1]{v}]
  := {\cC_\pp^{-1/2}(\tenbar[2]{\tau}- \cC_\pp\tenbar[2]{\e}(\tenbar[1]{v}))\choose -\tenbar[1]{\rdiv}\,\tenbar[2]{\tau}}
  \in L_2(\Omega;\R^{d\times d})\times L_2(\Omega;\R^d)=:\LL_2.
\end{equation*}
As before, suppressing at times the reference to domain and range in $L_2$-, $H^1$-, and $H(\div)$-norms, we introduce the elasticity FOSLS bilinear form 
\begin{equation*}
  ([\tenbar[2]{\tau},\tenbar[1]{v}],[\tenbar[2]{\eta},\tenbar[1]{s}])_{\X_\pp}
  := (\cC_\pp^{-1/2}(\tenbar[2]{\tau}- \cC_\pp\tenbar[2]{\e}(\tenbar[1]{v})),\,\cC_\pp^{-1/2}(\tenbar[2]{\eta}- \cC_\pp\tenbar[2]{\e}(\tenbar[1]{s})))_{\Omega}
   + (\tenbar[1]{\rdiv}\,\tenbar[2]{\tau},\,\tenbar[1]{\rdiv}\,\tenbar[2]{\eta})_{L_2(\Omega)},
\end{equation*}
and set $\|[\tenbar[2]{\tau},\tenbar[1]{v}]\|_{\X_\pp}^2 := ([\tenbar[2]{\tau},\tenbar[1]{v}],[\tenbar[2]{\tau},\tenbar[1]{v}])_{\X_\pp} = \|\cB_\pp[\tenbar[2]{\tau},\tenbar[1]{v}]\|^2_{\LL_2}$.

\begin{lemma}
Assume the stiffness tensor $\cC_\pp$ is uniformly symmetric positive definite (SPD) with bounds independent of $\pp\in\PP$, i.e., there exist positive constants $c_0,c_1$ such that
\begin{equation*}
  c_0\,\tenbar[2]{\e}(\tenbar[1]{w}):\tenbar[2]{\e}(\tenbar[1]{w})
  \le \tenbar[2]{\e}(\tenbar[1]{w}):\cC_\pp\,\tenbar[2]{\e}(\tenbar[1]{w})
  \le c_1\,\tenbar[2]{\e}(\tenbar[1]{w}):\tenbar[2]{\e}(\tenbar[1]{w})\quad \forall\,\tenbar[1]{w}\in H^1(\Omega;\R^d).
\end{equation*}
Then for all $[\tenbar[2]{\tau},\tenbar[1]{v}]\in H_{0,\Gamma_N}(\rdiv;\Omega;\R^{d\times d})\times H^1_{0,\Gamma_D}(\Omega;\R^d)$,
\begin{equation}
\label{elasteq}
  c\,\big(\|\tenbar[2]{\tau}\|_{H(\rdiv)}^2 + \|\tenbar[1]{v}\|_{H^1}^2\big)
  \le \|[\tenbar[2]{\tau},\tenbar[1]{v}]\|_{\X_\pp}^2
  \le C\,\big(\|\tenbar[2]{\tau}\|_{H(\rdiv)}^2 + \|\tenbar[1]{v}\|_{H^1}^2\big),
\end{equation}
with constants $c,C>0$ depending only on $c_0,c_1$ and the domain. Moreover, $\cB_\pp$ is an isomorphism from $\HH=H_{0,\Gamma_N}(\rdiv;\Omega;\R^{d\times d})\times H^1_{0,\Gamma_D}(\Omega;\R^d)$ onto $\LL_2=L_2(\Omega;\R^{d\times d})\times L_2(\Omega;\R^d)$.
\end{lemma}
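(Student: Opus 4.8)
The plan is to mirror the proof of Lemma~\ref{lem:PoissonNormEquiv}, with two modifications dictated by the elasticity structure: Poincar\'e's inequality must be supplemented by \emph{Korn's} inequality on $H^1_{0,\Gamma_D}(\Omega;\R^d)$ (available since $\Gamma_D$ has positive measure), and the non-symmetry of the trial tensor $\tenbar[2]{\tau}$ must be tracked through the integration-by-parts step. I would begin by recording that, under the stated parameter range, the explicit form of $\cC_\pp$ in fact extends to a uniformly SPD operator on all of $\R^{d\times d}$ (it acts on the skew-symmetric part as multiplication by $2\mu$, with $\mu$ bounded above and below); hence, after possibly enlarging the interval $[c_0,c_1]$, we may assume $c_0|\tenbar[2]{A}|^2\le\tenbar[2]{A}:\cC_\pp\tenbar[2]{A}\le c_1|\tenbar[2]{A}|^2$ for every $\tenbar[2]{A}\in\R^{d\times d}$, so that $\cC_\pp^{\pm1/2}$ are well defined, self-adjoint, mutually inverse, and preserve the orthogonal decomposition into symmetric and skew-symmetric parts, with norms controlled by $c_0^{\pm1/2}$ and $c_1^{\pm1/2}$. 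The upper bound in \eqref{elasteq} is then immediate from the triangle inequality and Cauchy--Schwarz, using $\|\tenbar[2]{\e}(\tenbar[1]{v})\|_{L_2}\le\|\tenbar[2]{\rgrad}\tenbar[1]{v}\|_{L_2}\le\|\tenbar[1]{v}\|_{H^1}$ and $\|\tenbar[1]{\rdiv}\,\tenbar[2]{\tau}\|_{L_2}\le\|\tenbar[2]{\tau}\|_{H(\rdiv)}$, exactly as in the diffusion case.

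For the lower bound, set $R:=\|\cB_\pp[\tenbar[2]{\tau},\tenbar[1]{v}]\|_{\LL_2}$ and $\tenbar[2]{\rho}:=\tenbar[2]{\tau}-\cC_\pp\tenbar[2]{\e}(\tenbar[1]{v})$, so that $\|\cC_\pp^{-1/2}\tenbar[2]{\rho}\|_{L_2}\le R$, $\|\tenbar[1]{\rdiv}\,\tenbar[2]{\tau}\|_{L_2}\le R$, and $\|\tenbar[2]{\rho}\|_{L_2}\le c_1^{1/2}R$. Since $\cC_\pp\tenbar[2]{\e}(\tenbar[1]{v})$ is symmetric, the skew-symmetric part $\tenbar[2]{\tau}_{\mathrm{skew}}$ coincides with that of $\tenbar[2]{\rho}$, whence $\|\tenbar[2]{\tau}_{\mathrm{skew}}\|_{L_2}\le c_1^{1/2}R$. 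Starting from coercivity on strains,
\begin{equation*}
  c_0\|\tenbar[2]{\e}(\tenbar[1]{v})\|_{L_2}^2 \le (\cC_\pp\tenbar[2]{\e}(\tenbar[1]{v}),\tenbar[2]{\e}(\tenbar[1]{v}))_\Omega = (\tenbar[2]{\tau},\tenbar[2]{\e}(\tenbar[1]{v}))_\Omega - (\tenbar[2]{\rho},\tenbar[2]{\e}(\tenbar[1]{v}))_\Omega ,
\end{equation*}
I would bound the last term by $|(\cC_\pp^{-1/2}\tenbar[2]{\rho},\cC_\pp^{1/2}\tenbar[2]{\e}(\tenbar[1]{v}))_\Omega|\le c_1^{1/2}R\,\|\tenbar[2]{\e}(\tenbar[1]{v})\|_{L_2}$, and rewrite the first term by splitting $\tenbar[2]{\e}(\tenbar[1]{v})=\tenbar[2]{\rgrad}\tenbar[1]{v}-(\tenbar[2]{\rgrad}\tenbar[1]{v}-\tenbar[2]{\e}(\tenbar[1]{v}))$ and integrating by parts (the boundary term vanishing because $\tenbar[2]{\tau}\cdot\tenbar[1]{n}|_{\Gamma_N}=\tenbar[1]{0}$ and $\tenbar[1]{v}|_{\Gamma_D}=\tenbar[1]{0}$, cf.\ \eqref{Green}):
\begin{equation*}
  (\tenbar[2]{\tau},\tenbar[2]{\e}(\tenbar[1]{v}))_\Omega = -(\tenbar[1]{\rdiv}\,\tenbar[2]{\tau},\tenbar[1]{v})_\Omega - (\tenbar[2]{\tau}_{\mathrm{skew}},\tenbar[2]{\rgrad}\tenbar[1]{v}-\tenbar[2]{\e}(\tenbar[1]{v}))_\Omega .
\end{equation*}
Using $\|\tenbar[1]{\rdiv}\,\tenbar[2]{\tau}\|_{L_2}\le R$, Poincar\'e $\|\tenbar[1]{v}\|_{L_2}\le C_P\|\tenbar[2]{\rgrad}\tenbar[1]{v}\|_{L_2}$, Korn $\|\tenbar[2]{\rgrad}\tenbar[1]{v}\|_{L_2}\le C_K\|\tenbar[2]{\e}(\tenbar[1]{v})\|_{L_2}$, and $\|\tenbar[2]{\tau}_{\mathrm{skew}}\|_{L_2}\le c_1^{1/2}R$, I obtain $c_0\|\tenbar[2]{\e}(\tenbar[1]{v})\|_{L_2}^2\le(C_PC_K+c_1^{1/2}C_K+c_1^{1/2})R\,\|\tenbar[2]{\e}(\tenbar[1]{v})\|_{L_2}$, hence $\|\tenbar[2]{\e}(\tenbar[1]{v})\|_{L_2}\lesssim R$ with constant depending only on $c_0,c_1,C_P,C_K$, and then $\|\tenbar[1]{v}\|_{H^1}\le(1+C_P^2)^{1/2}C_K\|\tenbar[2]{\e}(\tenbar[1]{v})\|_{L_2}\lesssim R$. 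Finally $\|\tenbar[2]{\tau}\|_{L_2}\le\|\tenbar[2]{\rho}\|_{L_2}+c_1\|\tenbar[2]{\e}(\tenbar[1]{v})\|_{L_2}\lesssim R$ together with $\|\tenbar[1]{\rdiv}\,\tenbar[2]{\tau}\|_{L_2}\le R$ gives $\|\tenbar[2]{\tau}\|_{H(\rdiv)}\lesssim R$; adding the two estimates yields the lower bound in \eqref{elasteq}.

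It remains to upgrade \eqref{elasteq} to the isomorphism property, and here I would argue exactly as at the end of the proof of Lemma~\ref{lem:PoissonNormEquiv}. The norm equivalence already gives boundedness and injectivity of $\cB_\pp$, so only surjectivity is in question. The bilinear form $B_\pp([\tenbar[2]{\sigma},\tenbar[1]{u}],[\tenbar[2]{\tau},\tenbar[1]{v}]):=(\cB_\pp[\tenbar[2]{\sigma},\tenbar[1]{u}],\cB_\pp[\tenbar[2]{\tau},\tenbar[1]{v}])_\Omega$ is bounded and coercive on $\HH\times\HH$ by the two bounds just proved, so Lax--Milgram makes the normal equations $([\tenbar[2]{\sigma},\tenbar[1]{u}],[\tenbar[2]{\tau},\tenbar[1]{v}])_{\X_\pp}=(\tenbar[2]{y},\cB_\pp[\tenbar[2]{\tau},\tenbar[1]{v}])_\Omega$ for all $[\tenbar[2]{\tau},\tenbar[1]{v}]\in\HH$ uniquely solvable for every $\tenbar[2]{y}\in\LL_2$; since the range of $\cB_\pp$ is closed (again by \eqref{elasteq}), an element of $\LL_2$ orthogonal to it would have to vanish, so $\cB_\pp$ is onto and the Open Mapping Theorem finishes the proof.

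I expect the main obstacle to be the appearance of the skew-symmetric part of $\tenbar[2]{\tau}$ in the integration-by-parts identity, since $(\tenbar[2]{\tau},\tenbar[2]{\e}(\tenbar[1]{v}))_\Omega\ne(\tenbar[2]{\tau},\tenbar[2]{\rgrad}\tenbar[1]{v})_\Omega$ when $\tenbar[2]{\tau}$ is not symmetric. The observation that defuses it is that the FOSLS residual already controls $\|\tenbar[2]{\tau}_{\mathrm{skew}}\|_{L_2}$ — because $\cC_\pp^{-1/2}$ respects the symmetric/skew decomposition and $\cC_\pp\tenbar[2]{\e}(\tenbar[1]{v})$ is symmetric — so the stray skew term contributes only a controllable multiple of $R\,\|\tenbar[2]{\rgrad}\tenbar[1]{v}\|_{L_2}$. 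The second, more routine but essential, new ingredient is Korn's inequality, which converts control of $\|\tenbar[2]{\e}(\tenbar[1]{v})\|_{L_2}$ into control of the full $H^1$-norm of $\tenbar[1]{v}$, playing the role that Poincar\'e's inequality alone played in the diffusion case.
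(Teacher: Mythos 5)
Your proof is correct and follows essentially the same route as the paper's, which itself argues ``along the same lines'' as the diffusion case: triangle/Cauchy--Schwarz for the upper bound, coercivity plus integration by parts, Poincar\'e and Korn for the lower bound, and the Lax--Milgram/closed-range argument for surjectivity. Your explicit treatment of the skew-symmetric part of $\tenbar[2]{\tau}$ (observing that it equals the skew part of the residual $\tenbar[2]{\rho}$ and is therefore controlled by $R$, so the discrepancy between $(\tenbar[2]{\tau},\tenbar[2]{\e}(\tenbar[1]{v}))_\Omega$ and $(\tenbar[2]{\tau},\tenbar[2]{\rgrad}\tenbar[1]{v})_\Omega$ is harmless) correctly supplies a detail that the paper's sketch leaves implicit.
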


\begin{proof}
    Let $c_0,c_1>0$ be the uniform SPD bounds for $\cC_\pp$ and let $C_P=C_P(\Omega,\Gamma_D)$ and $C_K=C_K(\Omega,\Gamma_D)$ be the Poincar\'e and Korn constants so that $\|\tenbar[1]{v}\|_{L^2}\le C_P\|\nabla \tenbar[1]{v}\|_{L^2}$ and $\|\nabla \tenbar[1]{v}\|_{L^2}\le C_K\|\tenbar[2]{\e}(\tenbar[1]{v})\|_{L^2}$ for all $\tenbar[1]{v}\in H^1_{0,\Gamma_D}(\Omega;\R^d)$ (e.g., \cite{boffi2013mixed}).
    The following arguments are analogous to the diffusion case, beginning with \eqref{elasteq}.

    \paragraph{Upper bound.} Triangle- and Cauchy Schwarz inequalities provide as before
    \begin{align*}
     \|\cB_\pp[\tenbar[2]{\tau},\tenbar[1]{v}]\|_{\LL_2}
     &\le \|\cC_\pp^{-1/2}\tenbar[2]{\tau}\|_{L_2}+ \|\cC^{1/2}_\pp\tenbar[2]{\e}(\tenbar[1]{v}))\|_{L_2}
       + \|\tenbar[1]{\rdiv}\,\tenbar[2]{\tau}\|_{L_2}  
      \le c_0^{-1}\sqrt{2} \|\tenbar[2]{\tau}\|_{H(\div)} + c_1\|\tenbar[2]{\rgrad}\tenbar[1]{v})\|_{L_2}\\
     &\le (2 c_0  + c_1 )^{1/2}\|[\tenbar[2]{\tau},\tenbar[1]{v}]\|_{H(\div)\times H^1},
    \end{align*}
    confirming the upper bound in \eqref{elasteq}.
    
    \paragraph{Lower bound.} Along the same lines as in the previous section, one can prove
    $$
    \|\tenbar[2]{\e}(\tenbar[1]{v})\|_{L_2}\lesssim  \frac{C_P}{c_0^{1/2}}  \|\cB_\pp[\tenbar[2]{\tau},\tenbar[1]{v}]\|_{\LL_2},
    $$
    (and likewise for $\|\tenbar[2]{\rgrad}{(v)}\|_{L_2}$) with an absolute proportionality constant to conclude further
    \begin{equation*}
        \label{H1elast}
    \|\tenbar[1]{v}\|_{H^1}\lesssim \frac{C_KC_P}{c_0^{1/2}}   \|\cB_\pp[\tenbar[2]{\tau},\tenbar[1]{v}]\|_{\LL_2},\quad \tenbar[1]{v}\in \U:=H^1_{0,\Gamma_D}(\Omega;\R^d),
    \end{equation*}
    as well as 
    $$
    \|\tenbar[2]{\tau}\|_{H(\div)}\lesssim c_1^{1/2} \Big(1+\frac{c_1^{1/2}C_K C_P}{c_0^{1/2}}\Big)
     \|\cB_\pp[\tenbar[2]{\tau},\tenbar[1]{v}]\|_{\LL_2}.
    $$
    Assembling these bounds confirms the lower bound as well with an analogous dependence of $c$ on 
    $(c_1/c_0)$, describing the square of the condition number of $\cB_\pp$.
    
    While \eqref{elasteq} so far establishes bijectivity of $\cB_\pp$ as a mapping from $\HH$ to its range
    in $\LL_2$, the same argument as in the previous section shows that this range is indeed all of $\LL_2$,
    which by the Inverse Mapping Theorem completes the proof.
\end{proof}

\section{Error estimates for finite element approximations}
\label{sec:FE-Error-Estimate}

\subsection{Proof of \cref{thm:FELSPoisson} for the stationary diffusion problem}
\label{app:proofFELSPoisson}
\begin{proof}
    Again, for the convenience of the reader, we recall some relevant facts that can be found, for instance,
    in \cite{boffi2013mixed} concerning the FE error estimate for the first-order system least-squares (FOSLS) formulation of the diffusion problem. Recall the fiber-residual loss from \eqref{eq:lossPoisson}
    \begin{equation*}
    \mathcal{L}([\tilde{\sigma}^\circ,\tilde u^\circ];\pp)
      := \|\tilde{\sigma}^\circ - ( \pp\nabla \tilde u^\circ + \pp\nabla w - z + f_1) \|^2_{L^2(\Omega)}
       + \|\div \tilde{\sigma}^\circ+ f_2\|^2_{L^2(\Omega)}.
    \end{equation*}
    Let $S:=[\pp\nabla w - z + f_1,\, f_2]$ denote the source vector. Then
    \begin{equation*}
    \mathcal{L}([\tilde{\sigma}^\circ,\tilde u^\circ];\pp) = \|\cB_\pp[\tilde{\sigma}^\circ,\tilde u^\circ] - S\|_{L^2(\Omega;\R^{d+1})}^2
      = \|\tilde{\sigma}^\circ - ( \pp\nabla \tilde u^\circ + \pp\nabla w - z + f_1) \|^2_{L^2(\Omega)}
       + \|\div \tilde{\sigma}^\circ+ f_2\|^2_{L^2(\Omega)}.
    \end{equation*}
    Let $[\sc,\uc]=[\sc(\pp),\uc(\pp)]$ denote the exact fiber solution, which satisfies $\cB_\pp[\sc,\uc]=S$ so that $\mathcal{L}([\sc,\uc];\pp)=0$ (hence is the unique minimizer of the loss). For conforming FE spaces $\Sigma_h\subset H_{0,\Gamma_N}(\div;\Omega)$ and $\U_h\subset H^1_{0,\Gamma_D}(\Omega)$, the Galerkin FOSLS solution $[\sc_h,\uc_h]=[\sc_h(\pp),\uc_h(\pp)]\in\Sigma_h\times\U_h$ minimizes for each $\pp\in\PP$,  $\cL(\cdot;\pp)$ over $\Sigma_h\times\U_h$  and satisfies the corresponding normal equations in $\Sigma_h
    \times \U_h$.
    
    Since $[\sc,\uc]$ solves the normal equation, 
    one also has for any   $[\tilde{\sigma}^\circ,\tilde u^\circ]\in \HH$  
    \begin{equation*}
      \cL([\tilde{\sigma}^\circ,\tilde u^\circ];\pp)
      = \|\cB_\pp[\tilde{\sigma}^\circ,\tilde u^\circ]-\cB_\pp[\sc,\uc]\|_{\LL_2}^2
      = \|\cB_\pp[\tilde{\sigma}^\circ-\sc,\tilde u^\circ-\uc]\|_{\LL_2}^2
      = \|[\tilde{\sigma}^\circ-\sc,\tilde u^\circ-\uc]\|^2_{\X_\pp}.
    \end{equation*}
    Since $[\sc_h,\uc_h]$ solves the normal equations in $\Sigma_h\times \U_h$, Galerkin orthogonality with
    respect to the inner product $(\cdot,\cdot)_{\X_\pp}$ and the resulting  best-approximation property
    yields 
    \begin{equation*}
      \|[\sc-\sc_h,\uc-\uc_h]\|_{\X_\pp} = \min_{[\tau_h,v_h]\in\Sigma_h\times\U_h}\|[\sc-\tau_h,\uc-v_h]\|_{\X_\pp}.
    \end{equation*}
    In what follows, to simplify notation, we occasionally write $L_2$ without specifying the domain and range, when this is clear from the context.
    Let $\Pi^{\rm RT}_k: H(\div;\Omega)\to\Sigma_h$ denote the canonical Raviart–Thomas interpolant, 
    (see \cite{boffi2013mixed}, section 2.5.2, page 109)
    and $I_m: H^1(\Omega)\to\U_h$ an $H^1$ quasi-interpolant. 
    Choosing $[\tau_h,v_h]=[\Pi^{\rm RT}_k\sc, I_m\uc]$ and expanding the $\X_\pp$-norm, yields
    \begin{align*}
     \|[\sc-\sc_h,\uc-\uc_h]\|_{\X_\pp}
     &\le \|[\sc-\Pi^{\rm RT}_k\sc,\uc-I_m\uc]\|_{\X_\pp} \\
     &\le \|\sc-\Pi^{\rm RT}_k\sc\|_{L_2}
          + \beta\|\nabla(\uc-I_m\uc)\|_{L_2}
          + \|\div(\sc-\Pi^{\rm RT}_k\sc)\|_{L_2}.
    \end{align*}
    Standard RT and $H^1$ interpolation estimates on shape-regular meshes (see, e.g., \cite[Proposition 2.5.4]{boffi2013mixed}) give, assuming   regularity-orders $s_{\sigma},s_{\div}\ge 0$ and $s_u\ge1$ for $\sc,\div\,\sc, \uc$,
    respectively,
    \begin{align*}
      \|\sc-\Pi^{\rm RT}_k\sc\|_{L_2} &\lesssim h^{\min(k+1,s_{\sigma})}\,\|\sc\|_{H^{s_{\sigma}}(\Omega)}, \\
      \|\div(\sc-\Pi^{\rm RT}_k\sc)\|_{L_2 } &\lesssim h^{\min(k+1,s_{\div})}\,\|\div\sc\|_{H^{s_{\div}}(\Omega)}, \\
      \|\nabla(\uc-I_m\uc)\|_{L_2} &\lesssim h^{\min(m,s_u-1)}\,\|\uc\|_{H^{s_u}(\Omega)}.
    \end{align*}
    where, generally $s_\div= s_{\sigma}-1$. Note that these estimates are therefore relevant when $ s_{\sigma}-1$ is at least $k+1$, which takes advantage of the particular properties of the Raviart-Thomas finite element spaces.
     
    Combining these bounds, using $\|\cdot\|_{\X_\pp}\eqsim \|\cdot\|_{H(\div)\times H^1}$, we obtain the a priori bound stated in \cref{thm:FELSPoisson}:
    \begin{align*}
      \cL([\sc_h,\uc_h];\pp)
      &= \|[\sc-\sc_h,\uc-\uc_h]\|_{\X_\pp}^2 \\
      &\lesssim h^{2\min(k+1,s_{\sigma})}\,\|\sc\|_{H^{s_{\sigma}}}^2
            + h^{2\min(k+1,s_{\div})}\,\|\div\sc\|_{H^{s_{\div}}}^2
            + h^{2\min(m,s_u-1)}\,\|\uc\|_{H^{s_u}}^2.
    \end{align*}
    In particular, when $s_{\sigma}, s_{\div}\geq k+1$,  and $s_u\geq m+1$, it reduces to
    \(\cL([\sc_h,\uc_h];\pp)\lesssim h^{2(k+1)} + h^{2m}\). Thus, choosing $m=k+1$ balances the contributions, yielding $\cL([\sc_h,\uc_h];\pp)\lesssim h^{2(k+1)}$.   
\end{proof}

\begin{remark}[approximate lifts consistency]
In practice, in the evaluation of the FE loss $\cL([\sc_h,\uc_h];\pp)$, we replace the variational lifts $(w,z)$ by their $\text{CG}_m$ approximations $(w_h, z_h)$, leading to the approximate FE loss $\cL_h([\sc_h,\uc_h];\pp)$. It is straightforward to show (by $(a+b)^2 \leq 2a^2 + 2b^2$) that 
$$
\cL_h([\sc_h,\uc_h];\pp) \leq 2 \cL([\sc_h,\uc_h];\pp) + 2 ||\delta||_{L^2}^2 \text{ and } \cL([\sc_h,\uc_h];\pp) \leq 2 \cL_h([\sc_h,\uc_h];\pp) + 2 ||\delta||_{L^2}^2,
$$
where the error $\delta := \pp\nabla(w-w_h) - (z-z_h)$ satisfies $\|\delta\|_{L^2}^2 \lesssim h^{2m}$~\cite[Proposition 2.5.4]{boffi2013mixed}) as the lifts are harmonic. This confirms that the approximate lifts do not degrade the asymptotic accuracy with $m \geq k+1$. 
\end{remark}

\subsection{Error estimates of FE approximations for linear elasticity}
\label{app:proofFELSElasticity}
\begin{theorem}\label{thm:FELSElasticity}
Assume $\Omega$ is Lipschitz and $\cC_\pp$ is uniformly bounded with bounds independent of $\pp\in\PP$. Let $\Sigma_h=(\mathrm{RT}_k^\circ)^d$ and $\U_h=(\mathrm{CG}_m^\circ)^d$ on a shape-regular mesh of size $h$, with $k\ge0$, $m\ge1$. Let $[\tenbar[2]{\sigma}^\circ,\tenbar[1]{u}^\circ]$ be the exact  solution and $[\tenbar[2]{\sigma}^\circ_h,\tenbar[1]{u}^\circ_h]$ the Galerkin solution in $\Sigma_h\times\U_h$. Then, for each fixed $\pp\in\PP$, and $[\tenbar[2]{\sigma}^\circ(\pp),\tenbar[1]{u}^\circ(\pp)]=[\tenbar[2]{\sigma}^\circ,\tenbar[1]{u}^\circ]$, $[\tenbar[2]{\sigma}^\circ_h(\pp),\tenbar[1]{u}^\circ_h(\pp)]=[\tenbar[2]{\sigma}^\circ_h,\tenbar[1]{u}^\circ_h]$,
one has the error–residual equivalence
\begin{equation}\label{eq:FE-loss-equiv-el}
  \mathcal{L}([\tenbar[2]{\sigma}^\circ_h,\tenbar[1]{u}^\circ_h];\pp)
  \eqsim \|\tenbar[2]{\sigma}^\circ-\tenbar[2]{\sigma}^\circ_h\|_{H(\rdiv;\Omega;\R^{d\times d})}^2
        + \|\tenbar[1]{u}^\circ-\tenbar[1]{u}^\circ_h\|_{H^1(\Omega;\R^d)}^2,
\end{equation}
with equivalence constants, depending only on the uniform SPD bounds for $\cC_\pp$ and the domain.

Moreover, if $\tenbar[2]{\sigma}^\circ\in H^{s_{\sigma}}(\Omega;\R^{d\times d})$, $\tenbar[1]{\rdiv}\,\tenbar[2]{\sigma}^\circ\in H^{s_{\div}}(\Omega;\R^d)$ with $s_{\sigma},s_{\div}\ge0$, and $\tenbar[1]{u}^\circ\in H^{s_u}(\Omega;\R^d)$ with $s_u\ge1$, then
\begin{equation}\label{eq:FELS-bound-Elasticity}
  \mathcal{L}([\tenbar[2]{\sigma}^\circ_h,\tenbar[1]{u}^\circ_h];\pp)
  \lesssim h^{2\min(k+1,s_{\sigma})}\,\|\tenbar[2]{\sigma}^\circ\|_{H^{s_{\sigma}}}^2
        + h^{2\min(k+1,s_{\div})}\,\|\tenbar[1]{\rdiv}\,\tenbar[2]{\sigma}^\circ\|_{H^{s_{\div}}}^2
        + h^{2\min(m,s_u-1)}\,\|\tenbar[1]{u}^\circ\|_{H^{s_u}}^2.
\end{equation}
In particular, if $\tenbar[2]{\sigma}^\circ\in H^{k+1}$, $\tenbar[1]{\rdiv}\,\tenbar[2]{\sigma}^\circ\in H^{k+1}$, and $\tenbar[1]{u}^\circ\in H^{m+1}$, then
\(\mathcal{L}([\tenbar[2]{\sigma}^\circ_h,\tenbar[1]{u}^\circ_h];\pp)\lesssim h^{2(k+1)} + h^{2m}\), and the balanced choice $m=k+1$ yields the optimal scaling
\(\mathcal{L}([\tenbar[2]{\sigma}^\circ_h,\tenbar[1]{u}^\circ_h];\pp)\lesssim h^{2(k+1)}\).
\end{theorem}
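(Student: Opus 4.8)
The plan is to mirror exactly the proof of \cref{thm:FELSPoisson} given in \ref{app:proofFELSPoisson}, replacing scalar objects by their vector/tensor analogues and the diffusion FOSLS stability by the elasticity stability from \ref{sec:equivalence-elasticity}. First I would recall the lifted fiber equation \eqref{restate} and its first-order form \eqref{eq:elasticity_strong_first_order}, so that the exact fiber solution $[\tenbar[2]{\sigma}^\circ,\tenbar[1]{u}^\circ]$ satisfies $\cB_\pp[\tenbar[2]{\sigma}^\circ,\tenbar[1]{u}^\circ]=S$ with source $S:=[\cC_\pp^{1/2}\tenbar[2]{\e}(\tenbar[1]{w})-\cC_\pp^{-1/2}\tenbar[2]{z},\,\tenbar[1]{f}]$, hence $\cL([\tenbar[2]{\sigma}^\circ,\tenbar[1]{u}^\circ];\pp)=0$. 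Then, exactly as in \cref{lem:gal} (which is stated abstractly for any closed subspace $\tilde\HH\subset\HH$), the Galerkin FOSLS solution $[\tenbar[2]{\sigma}^\circ_h,\tenbar[1]{u}^\circ_h]$ over $\Sigma_h\times\U_h$ is the $(\cdot,\cdot)_{\X_\pp}$-orthogonal projection of $[\tenbar[2]{\sigma}^\circ,\tenbar[1]{u}^\circ]$, and therefore
\begin{equation*}
  \cL([\tenbar[2]{\sigma}^\circ_h,\tenbar[1]{u}^\circ_h];\pp)
  = \|[\tenbar[2]{\sigma}^\circ-\tenbar[2]{\sigma}^\circ_h,\tenbar[1]{u}^\circ-\tenbar[1]{u}^\circ_h]\|_{\X_\pp}^2
  = \min_{[\tenbar[2]{\tau}_h,\tenbar[1]{v}_h]\in\Sigma_h\times\U_h}\|[\tenbar[2]{\sigma}^\circ-\tenbar[2]{\tau}_h,\tenbar[1]{u}^\circ-\tenbar[1]{v}_h]\|_{\X_\pp}^2 .
\end{equation*}
Invoking the elasticity norm equivalence \eqref{elasteq} (the $\X_\pp$-norm is equivalent, uniformly in $\pp\in\PP$, to the $H(\rdiv)\times H^1$ norm) immediately yields \eqref{eq:FE-loss-equiv-el}.

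Next I would estimate the best-approximation term. Since the FE pair is $(\mathrm{RT}_k^\circ)^d\times(\mathrm{CG}_m^\circ)^d$, I choose $[\tenbar[2]{\tau}_h,\tenbar[1]{v}_h]=[(\Pi^{\rm RT}_k)^d\tenbar[2]{\sigma}^\circ,\,(I_m)^d\tenbar[1]{u}^\circ]$, i.e.\ the componentwise Raviart--Thomas interpolant applied row-by-row to the tensor $\tenbar[2]{\sigma}^\circ$ (each row lying in $H(\div;\Omega;\R^d)$) and the componentwise $H^1$ quasi-interpolant applied to $\tenbar[1]{u}^\circ$. Expanding the $\X_\pp$-norm with the triangle inequality and using the uniform bounds on $\cC_\pp^{\pm1/2}$ gives
\begin{equation*}
  \|[\tenbar[2]{\sigma}^\circ-\tenbar[2]{\tau}_h,\tenbar[1]{u}^\circ-\tenbar[1]{v}_h]\|_{\X_\pp}
  \lesssim \|\tenbar[2]{\sigma}^\circ-\tenbar[2]{\tau}_h\|_{L_2}
          + \|\tenbar[1]{\rdiv}(\tenbar[2]{\sigma}^\circ-\tenbar[2]{\tau}_h)\|_{L_2}
          + \|\tenbar[2]{\rgrad}(\tenbar[1]{u}^\circ-\tenbar[1]{v}_h)\|_{L_2},
\end{equation*}
and then the standard RT and $H^1$ interpolation estimates on shape-regular meshes (applied componentwise) give the three powers of $h$ in \eqref{eq:FELS-bound-Elasticity}, using that the divergence of the RT interpolant commutes with interpolation of the divergence. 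Squaring produces \eqref{eq:FELS-bound-Elasticity}; specializing to $s_\sigma,s_\div\ge k+1$, $s_u\ge m+1$ and then $m=k+1$ gives the stated $h^{2(k+1)}$ rate. I would also note, as in the diffusion remark, that using the discrete lifts $(\tenbar[1]{w}_h,\tenbar[2]{z}_h)$ in place of $(\tenbar[1]{w},\tenbar[2]{z})$ perturbs the loss by at most $\|\cC_\pp^{1/2}\tenbar[2]{\e}(\tenbar[1]{w}-\tenbar[1]{w}_h)-\cC_\pp^{-1/2}(\tenbar[2]{z}-\tenbar[2]{z}_h)\|_{L_2}^2\lesssim h^{2m}$ by the same harmonic-lift argument.

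The only genuinely non-routine point, and the one I would be most careful about, is the tensor-valued $H(\rdiv)$ conformity and the correct interpolation operator: the Cauchy stress $\tenbar[2]{\sigma}$ is symmetric but $\tenbar[2]{\sigma}^\circ_h\in(\mathrm{RT}_k^\circ)^d$ need not be, and the space $H_{0,\Gamma_N}(\rdiv;\Omega;\R^{d\times d})$ is the row-wise product of $H(\div;\Omega;\R^d)$ spaces, so one must verify that the row-wise RT interpolant lands in $\Sigma_h$, respects the homogeneous normal-trace boundary condition on $\Gamma_N$, and satisfies the commuting-diagram property $\tenbar[1]{\rdiv}(\Pi^{\rm RT}_k)^d=P_h\,\tenbar[1]{\rdiv}$ needed for the divergence estimate. (The weak-symmetry penalty mentioned after \eqref{eq:FELSElasticity} is dropped here, consistent with the statement.) Everything else is a direct transcription of the diffusion proof, so I would keep the exposition brief and refer to \ref{app:proofFELSPoisson} for the details that carry over verbatim.
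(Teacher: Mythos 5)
Your proposal is correct and follows essentially the same route as the paper's own proof in \ref{app:proofFELSElasticity}: Galerkin orthogonality in the $(\cdot,\cdot)_{\X_\pp}$ inner product to identify the loss with the squared best-approximation error, the elasticity stability lemma of \ref{sec:equivalence-elasticity} for the norm equivalence, and componentwise Raviart--Thomas and Lagrange interpolation estimates for the rates. In fact you supply more detail than the paper (which compresses the argument to a reference to the diffusion case and to \cite[Ch.~2]{boffi2013mixed}), and your closing caveats about the row-wise $H(\rdiv)$ conformity, the commuting-diagram property, and the dropped symmetry of $\tenbar[2]{\sigma}^\circ_h$ correctly flag exactly the points the paper handles only implicitly.
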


\begin{proof}
    By the same orthogonal projection argument in $(\cdot,\cdot)_{\X_\pp}$ defined in \ref{sec:equivalence-elasticity}, the Galerkin solution $[\tenbar[2]{\sigma}^\circ_h,\tenbar[1]{u}^\circ_h]$ satisfies the best-approximation property
    \begin{equation*}
     \|[\tenbar[2]{\sigma}^\circ-\tenbar[2]{\sigma}^\circ_h,\tenbar[1]{u}^\circ-\tenbar[1]{u}^\circ_h]\|_{\X_\PP}
     = \min_{[\tenbar[2]{\tau}_h,\tenbar[1]{v}_h]\in\Sigma_h\times\U_h}
        \|[\tenbar[2]{\sigma}^\circ-\tenbar[2]{\tau}_h,\tenbar[1]{u}^\circ-\tenbar[1]{v}_h]\|_{\X_\pp}.
    \end{equation*}
    Choosing interpolation operators $\Pi_h$ for $H(\rdiv)$-conforming tensor fields and $I_h$ for $H^1$-conforming vector fields with standard approximation properties, and using the stability lemma, we obtain the a priori bound \eqref{eq:FELS-bound-Elasticity} whenever $\Sigma_h$ affords order-$k+1$ approximation in $L_2$ for tensors and their divergences, and $\U_h$ affords order-$m$ in $H^1$. This holds, for instance, for componentwise RT$_k$ spaces for stresses and CG$_m$ for displacements; see \cite[Ch.~2]{boffi2013mixed}. Balancing $m=k+1$ yields the optimal rate $\cL([\tenbar[2]{\sigma}_h,\tenbar[1]{u}^\circ_h];\pp)\lesssim h^{2(k+1)}$.  
\end{proof}

\section{Experimental details}

\subsection{Visualization of the sparsity pattern of weight}
In~\cref{fig:sparsity_patterns}, we mark by blue dots the entries of the weight $W_{\pp}$ whose magnitudes exceed $10^{-10}$. We observe that the nonzero entries are predominantly concentrated along and near the main diagonal. In addition to this banded structure, finer diagonal patterns offset from the main diagonal are visible. These arise because each finite element contributes a structured local stencil, resulting in repeated small dense sub-blocks in the global matrix.

\begin{figure}[!htbp]
\centering

\begin{subfigure}[b]{0.28\textwidth}
  \centering
  \includegraphics[width=\linewidth]{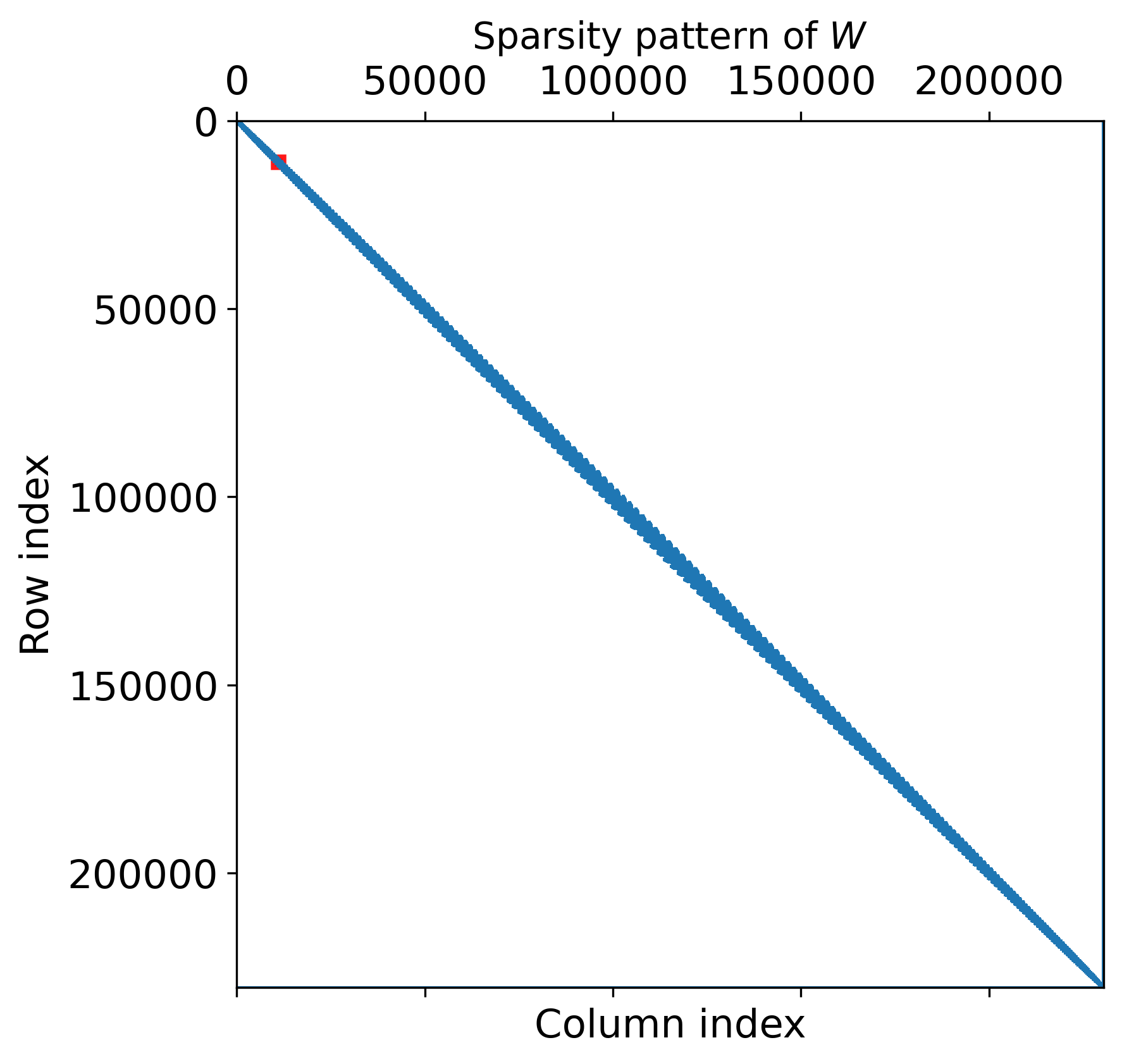}
  \subcaption{Heat conduction: full}
  \label{fig:poisson_full}
\end{subfigure}
\hspace{1.5em}
\begin{subfigure}[b]{0.27\textwidth}
  \centering
  \includegraphics[width=\linewidth]{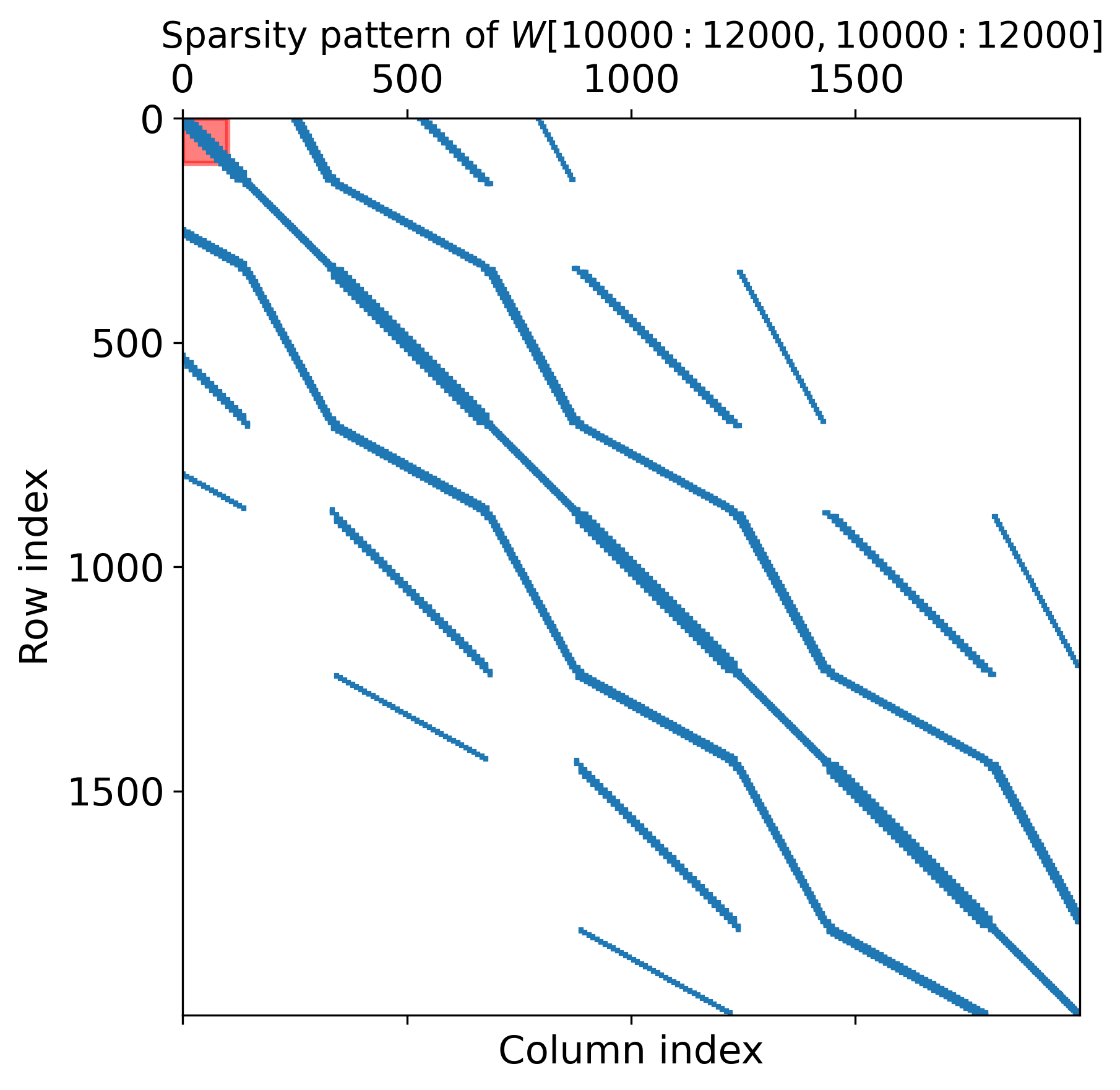}
  \subcaption{Heat conduction: 10{,}000–12{,}000}
  \label{fig:poisson_10k_12k}
\end{subfigure}
\hspace{1.5em}
\begin{subfigure}[b]{0.26\textwidth}
  \centering
  \includegraphics[width=\linewidth]{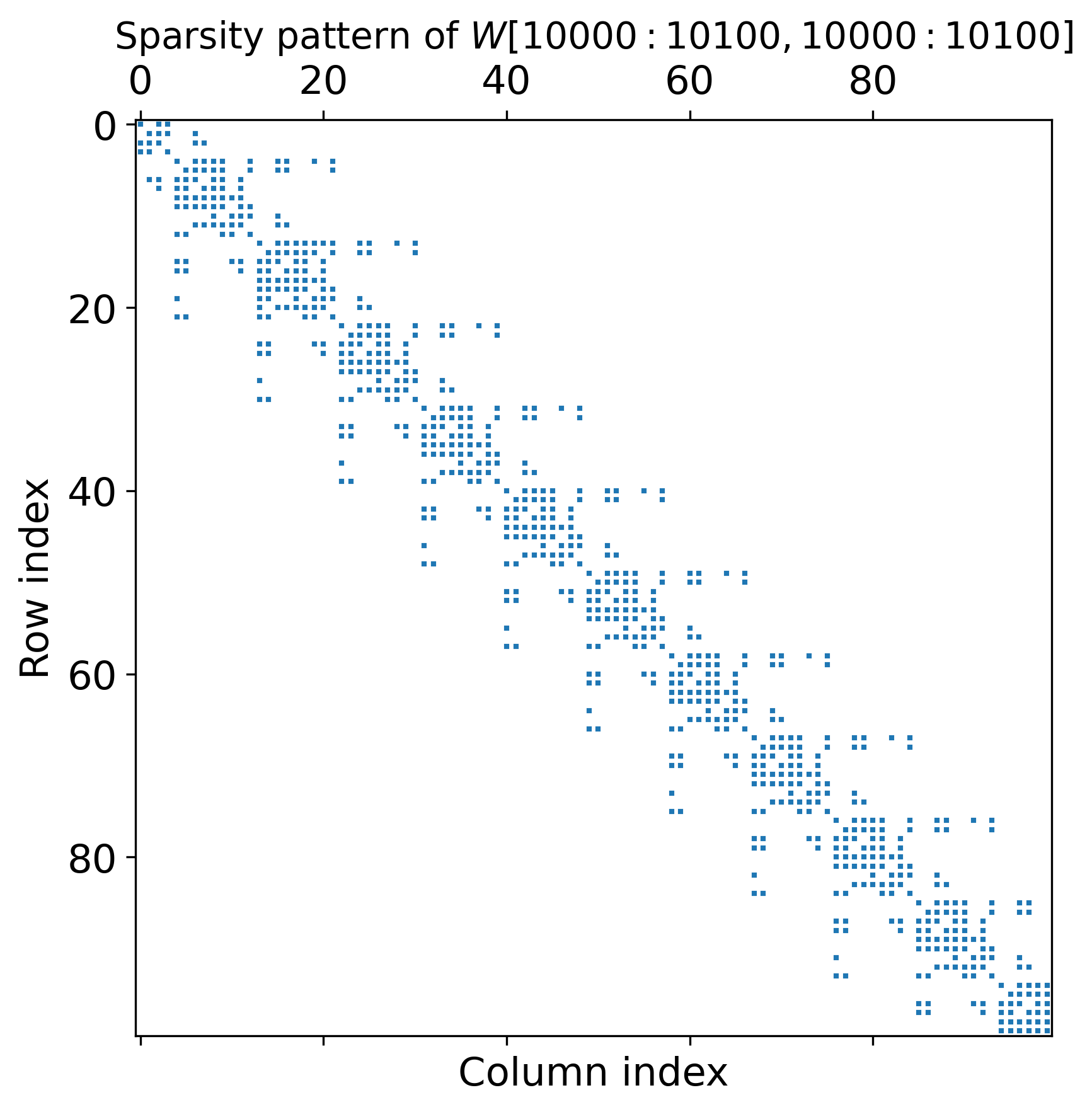}
  \subcaption{\tiny Heat conduction: 10{,}000–10{,}100}
  \label{fig:poisson_10k_10k1}
\end{subfigure}

\par\medskip

\begin{subfigure}[b]{0.28\textwidth}
  \centering
  \includegraphics[width=\linewidth]{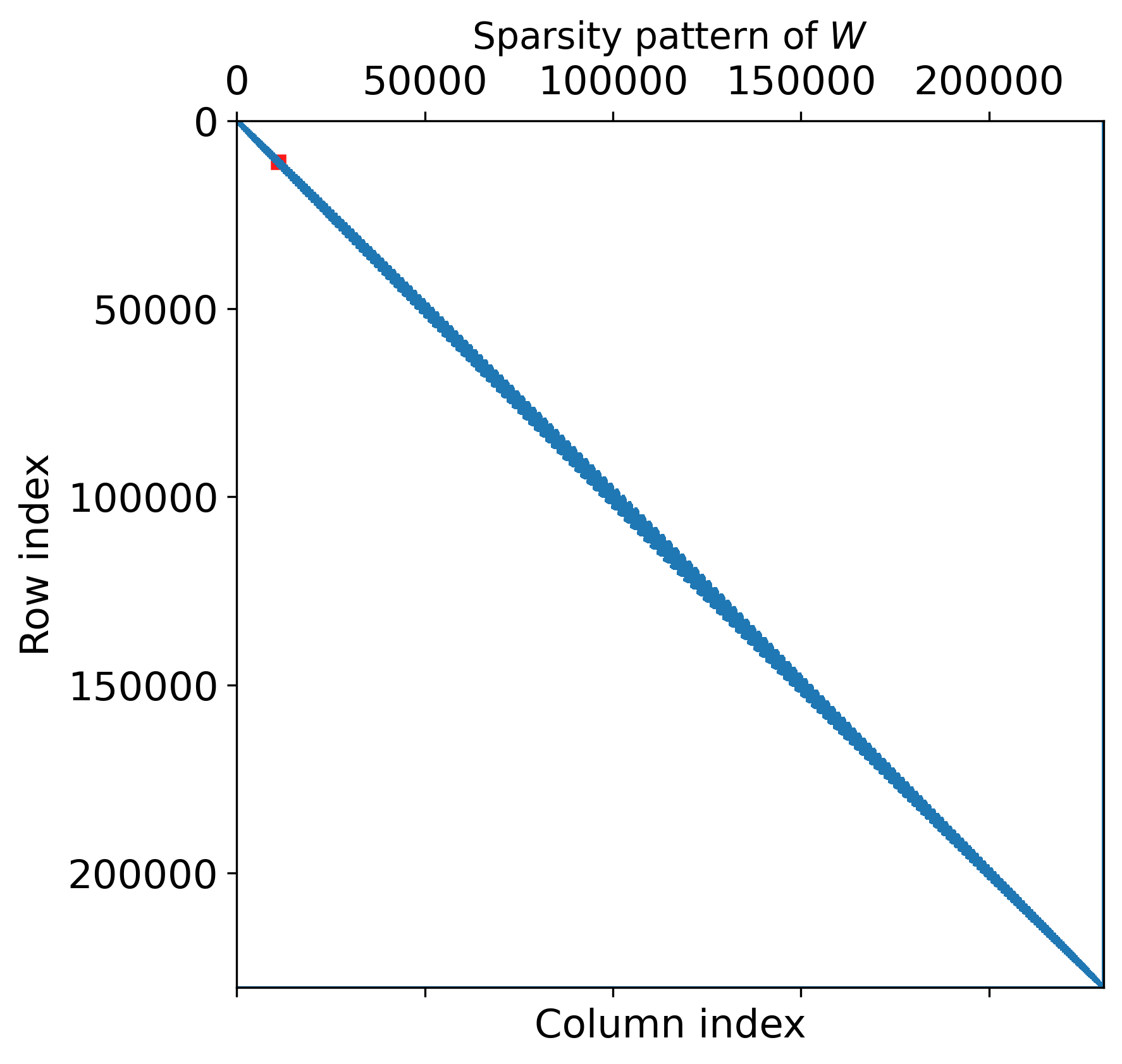}
  \subcaption{Darcy flow: full}
  \label{fig:poisson_full}
\end{subfigure}
\hspace{1.5em}
\begin{subfigure}[b]{0.27\textwidth}
  \centering
  \includegraphics[width=\linewidth]{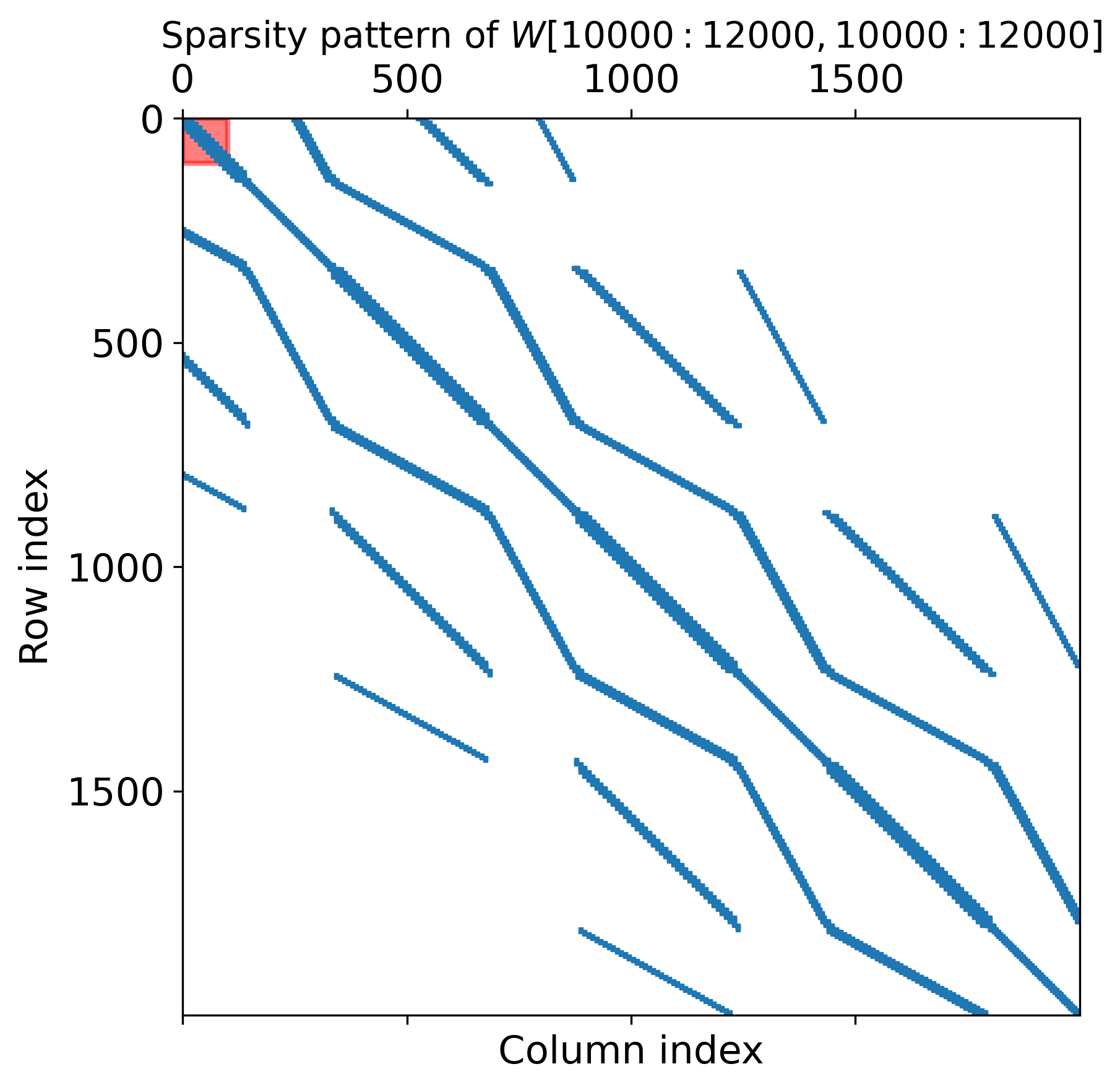}
  \subcaption{Darcy flow: 10{,}000–12{,}000}
  \label{fig:poisson_10k_12k}
\end{subfigure}
\hspace{1.5em}
\begin{subfigure}[b]{0.26\textwidth}
  \centering
  \includegraphics[width=\linewidth]{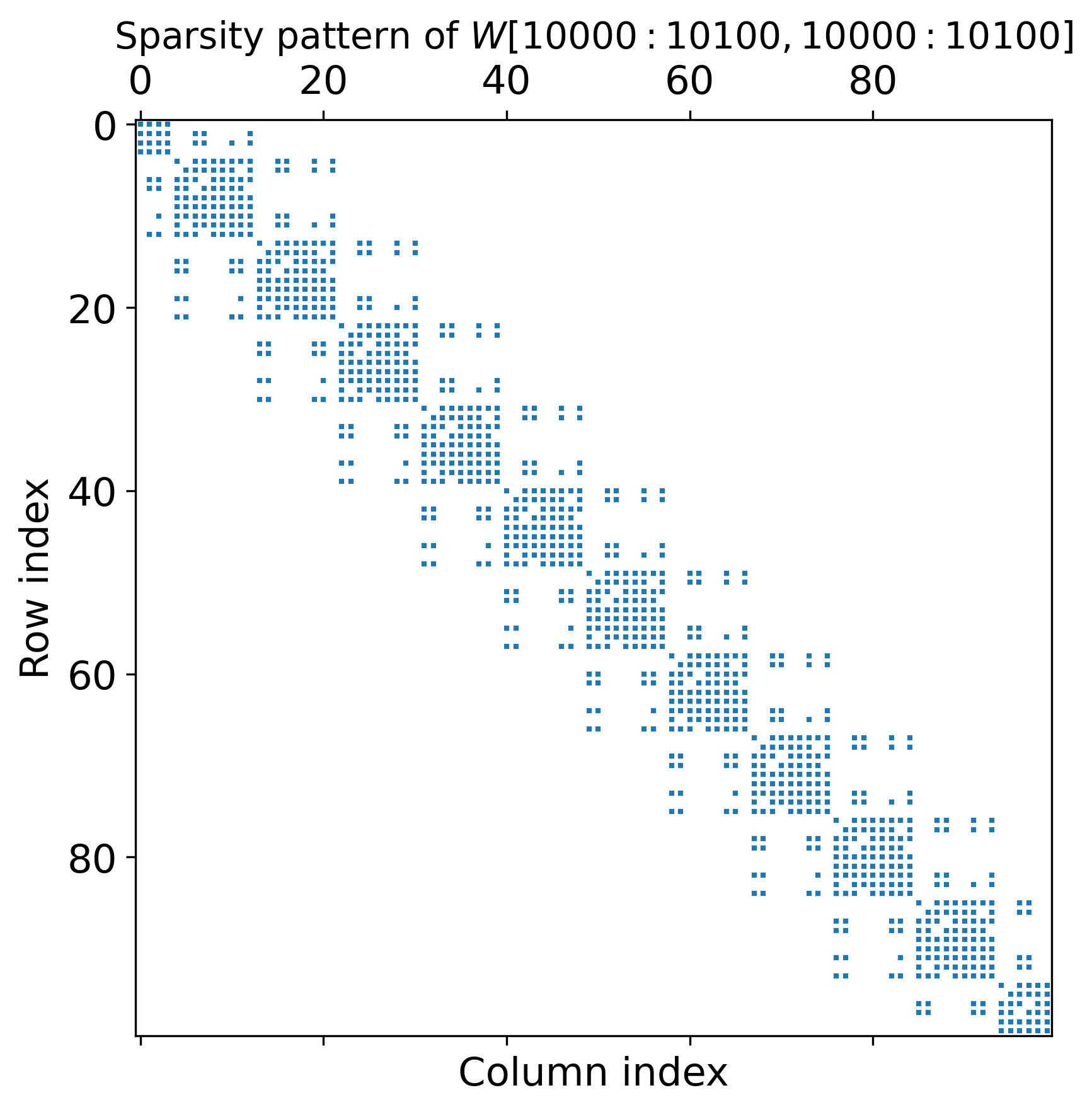}
  \subcaption{Darcy flow: 10{,}000–10{,}100}
  \label{fig:poisson_10k_10k1}
\end{subfigure}

\par\medskip

\begin{subfigure}[b]{0.28\textwidth}
  \centering
  \includegraphics[width=\linewidth]{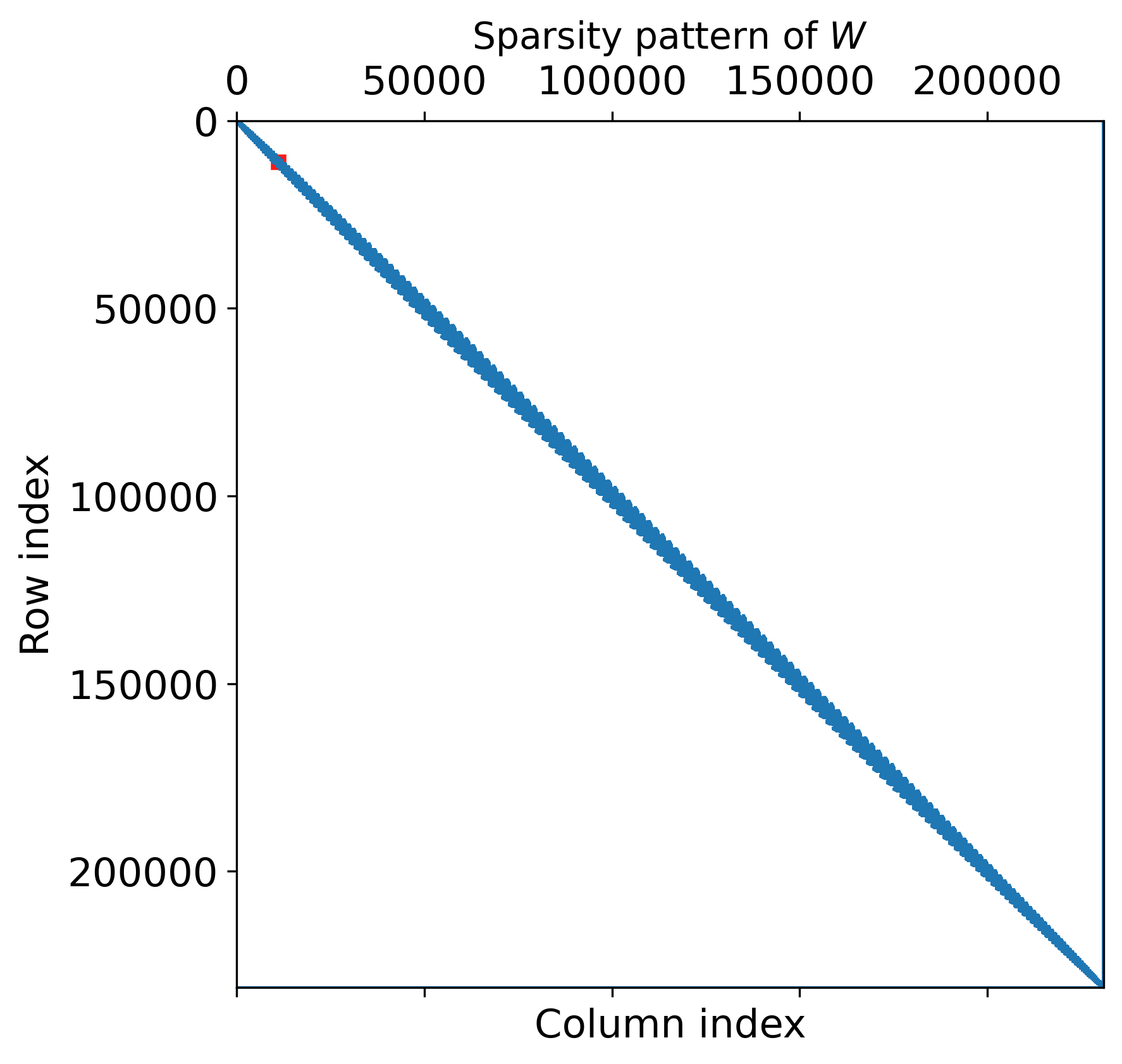}
  \subcaption{Linear elasticity: full}
  \label{fig:elasticity_full}
\end{subfigure}
\hspace{1.5em}
\begin{subfigure}[b]{0.27\textwidth}
  \centering
  \includegraphics[width=\linewidth]{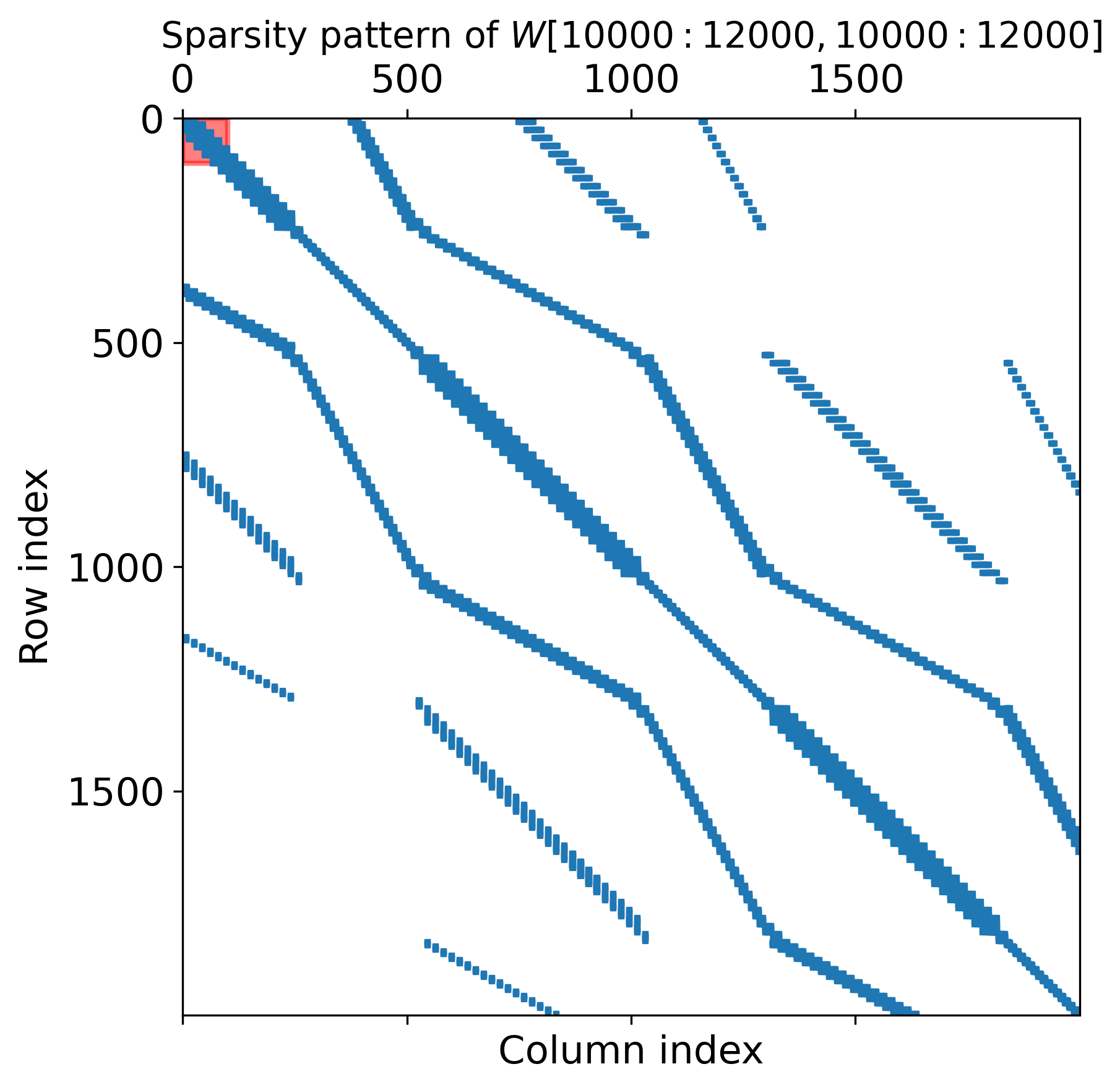}
  \subcaption{Linear elasticity: 10{,}000–12{,}000}
  \label{fig:elasticity_10k_12k}
\end{subfigure}
\hspace{1.5em}
\begin{subfigure}[b]{0.26\textwidth}
  \centering
  \includegraphics[width=\linewidth]{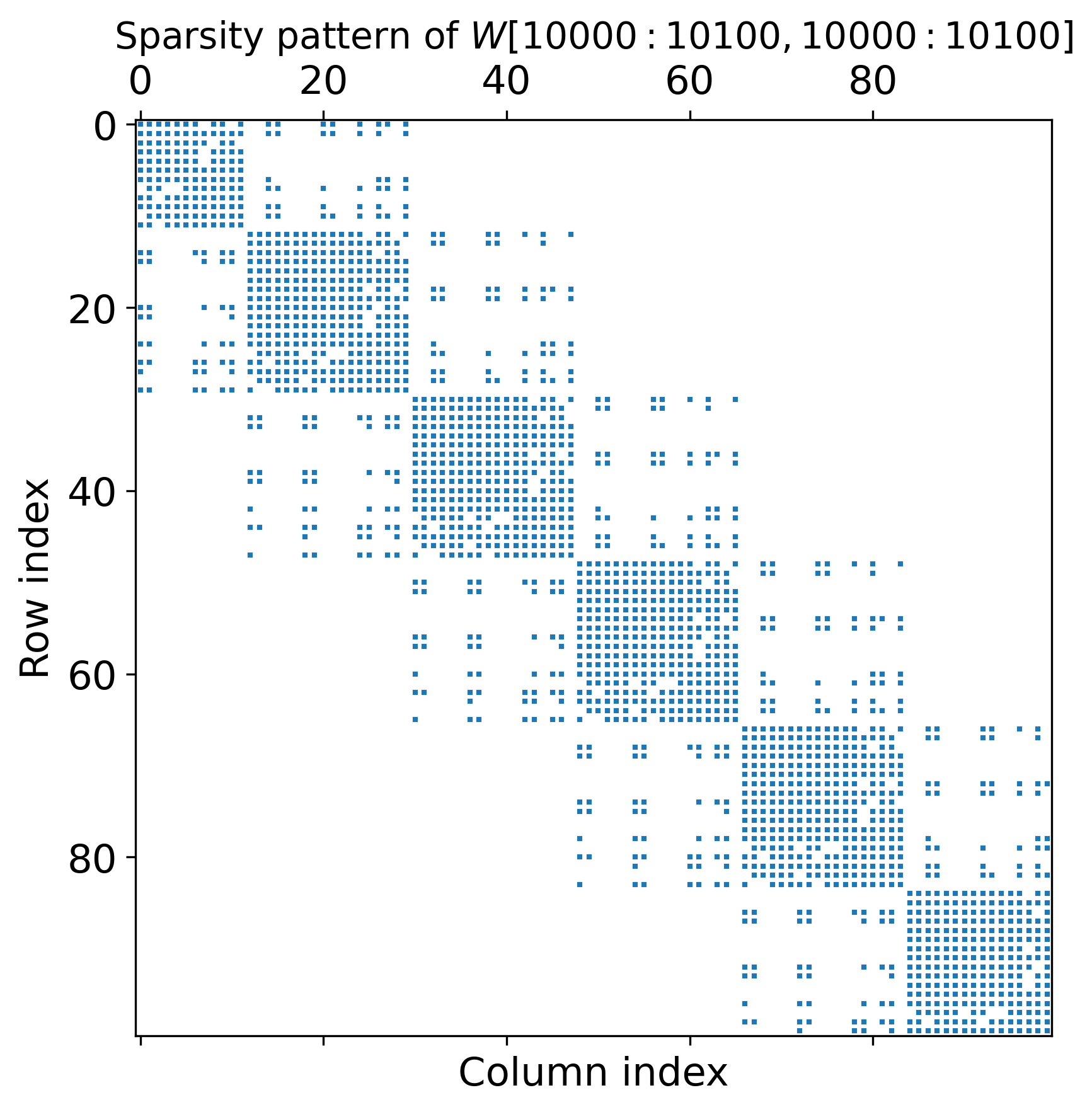}
  \subcaption{Linear elasticity: 10{,}000–10{,}100}
  \label{fig:elasticity_10k_10k1}
\end{subfigure}

\caption{Sparsity patterns of the weights $W_{\pp}$ (threshold at $10^{-10}$) shown at three zoom levels: first column -- full matrix $W_{\pp}$; second column -- submatrix $W_{\pp}[10000:12000, 10000:12000]$ (highlighted by the red patch in the first column); third column -- submatrix $W_{\pp}[10000:10100, 10000:10100]$ (highlighted by the red patch in the second column). Results are presented for three problem setups: heat conduction (top row), Darcy flow (middle row), and linear elasticity (bottom row).} 
\label{fig:sparsity_patterns}
\end{figure}

\clearpage
\subsection{Visualization of reduced weight}
In~\cref{fig:reduced_weight_grid}, we visualize the entries of a representative reduced weights $W_{\pp}^r$ for the three problem setups. The number of POD basis functions used is $128$ for heat conduction,  $512$ for Darcy flow, and $512$ for the elasticity problem.
\begin{figure}[!htbp]
\centering

\begin{subfigure}[b]{0.33\textwidth}
  \centering
  \includegraphics[width=\linewidth]{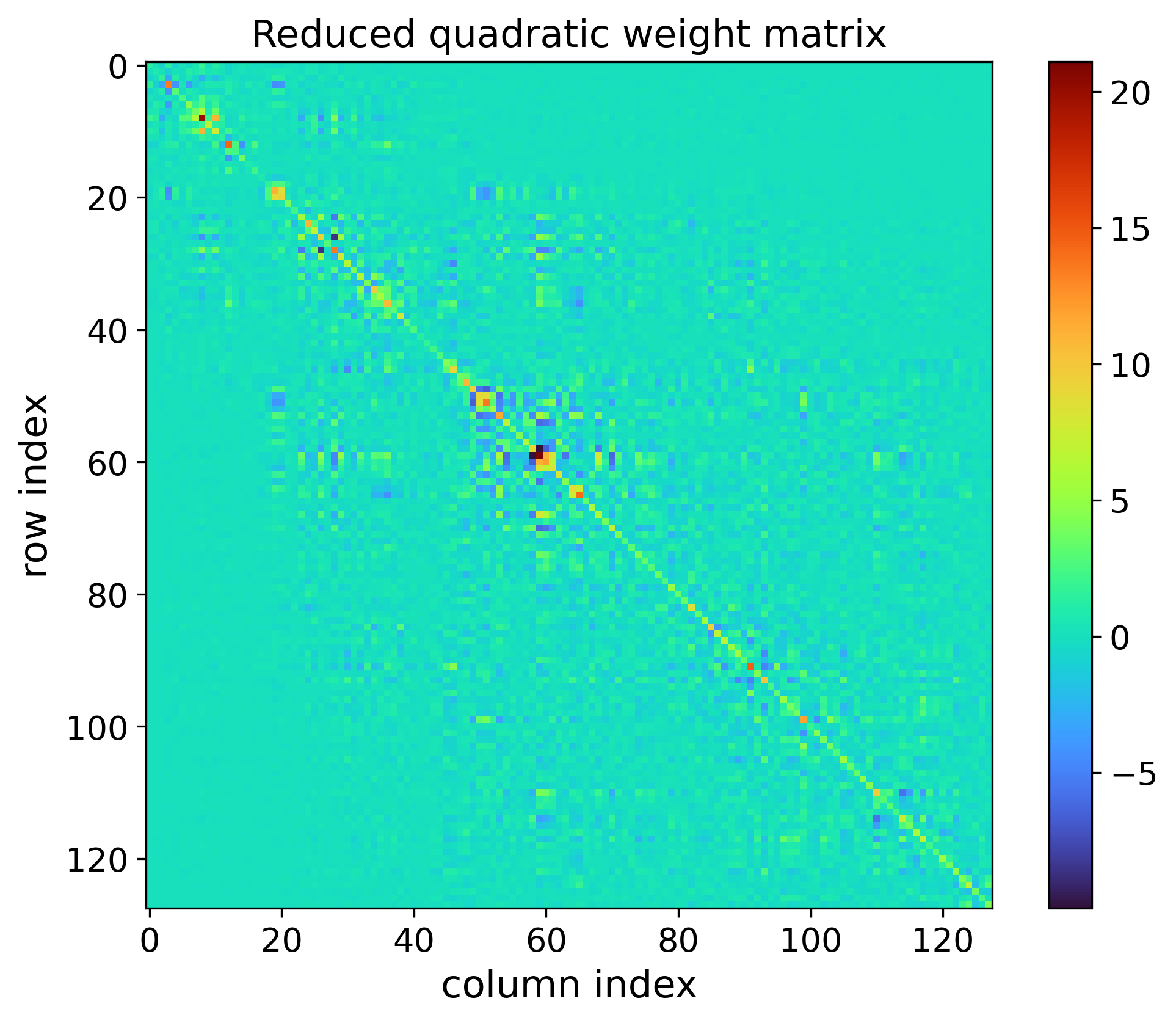}
  \subcaption{Heat conduction: full}
  \label{fig:reduced_weight_poisson_full}
\end{subfigure}
\hfill
\begin{subfigure}[b]{0.32\textwidth}
  \centering
  \includegraphics[width=\linewidth]{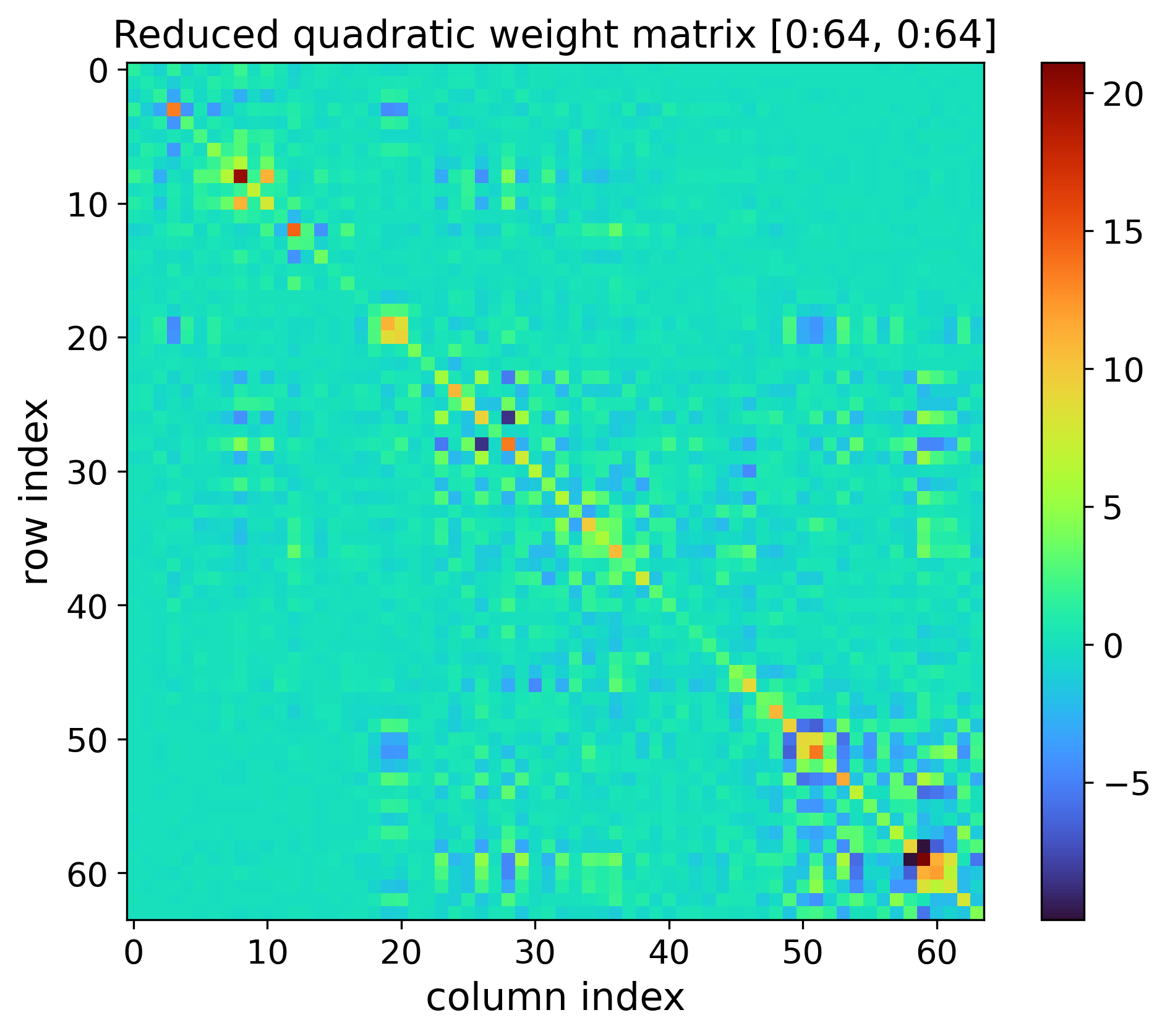}
  \subcaption{Heat conduction: 0–64}
  \label{fig:reduced_weight_poisson_0_64}
\end{subfigure}
\hfill
\begin{subfigure}[b]{0.32\textwidth}
  \centering
  \includegraphics[width=\linewidth]{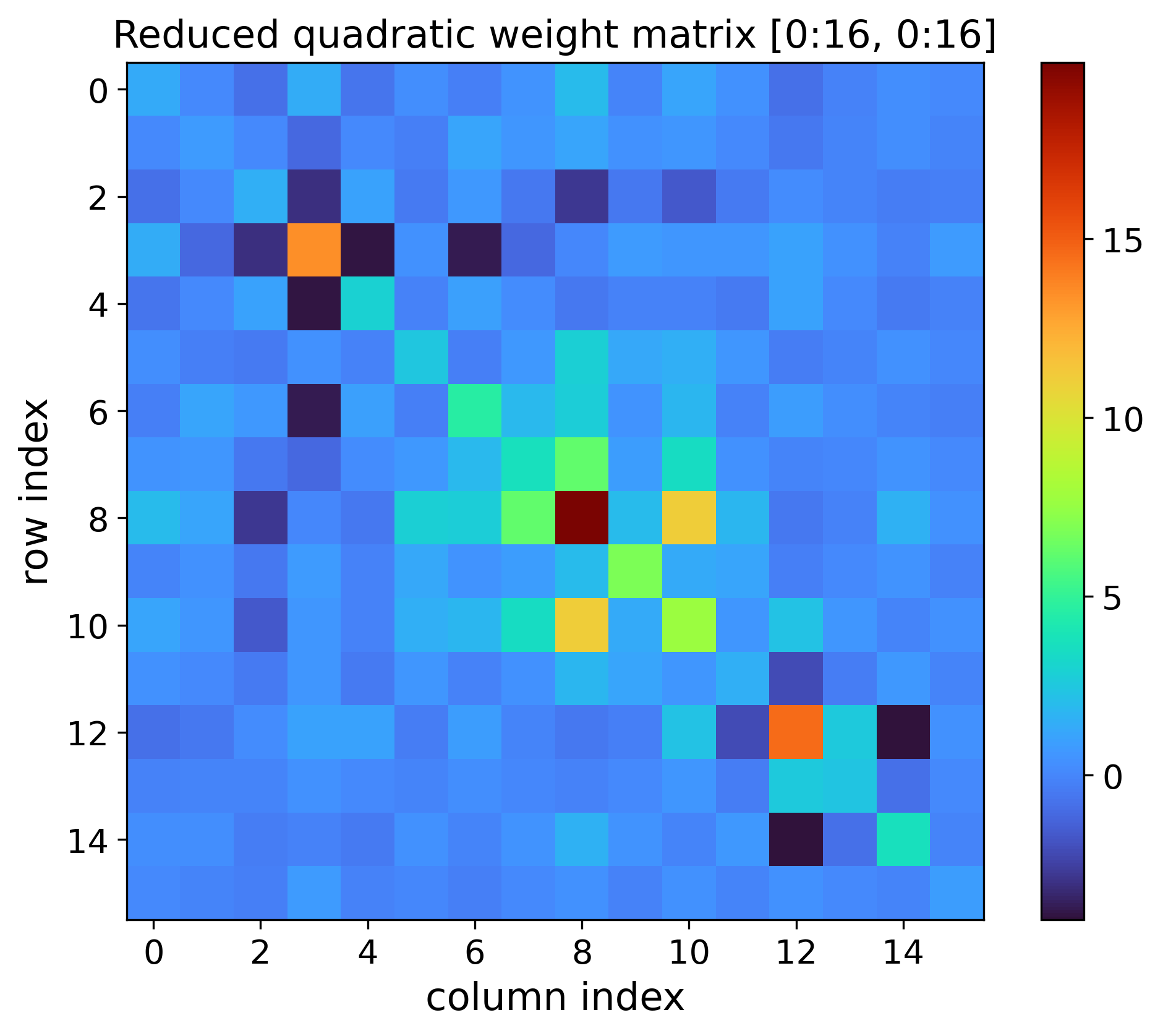}
  \subcaption{Heat conduction: 0–16}
  \label{fig:reduced_weight_poisson_0_16}
\end{subfigure}

\par\medskip

\begin{subfigure}[b]{0.33\textwidth}
  \centering
  \includegraphics[width=\linewidth]{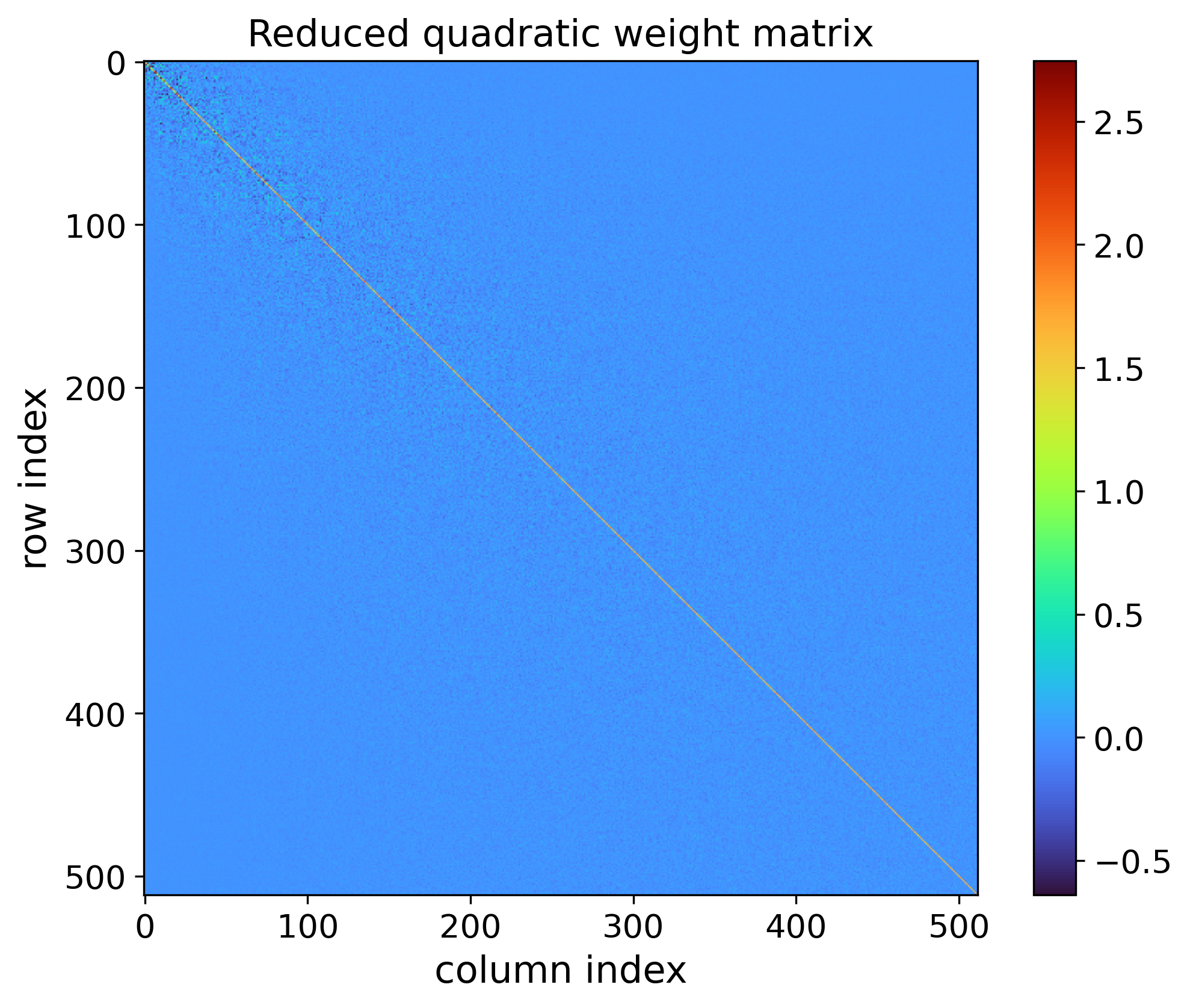}
  \subcaption{Darcy flow: full}
  \label{fig:reduced_weight_poisson_full}
\end{subfigure}
\hfill
\begin{subfigure}[b]{0.32\textwidth}
  \centering
  \includegraphics[width=\linewidth]{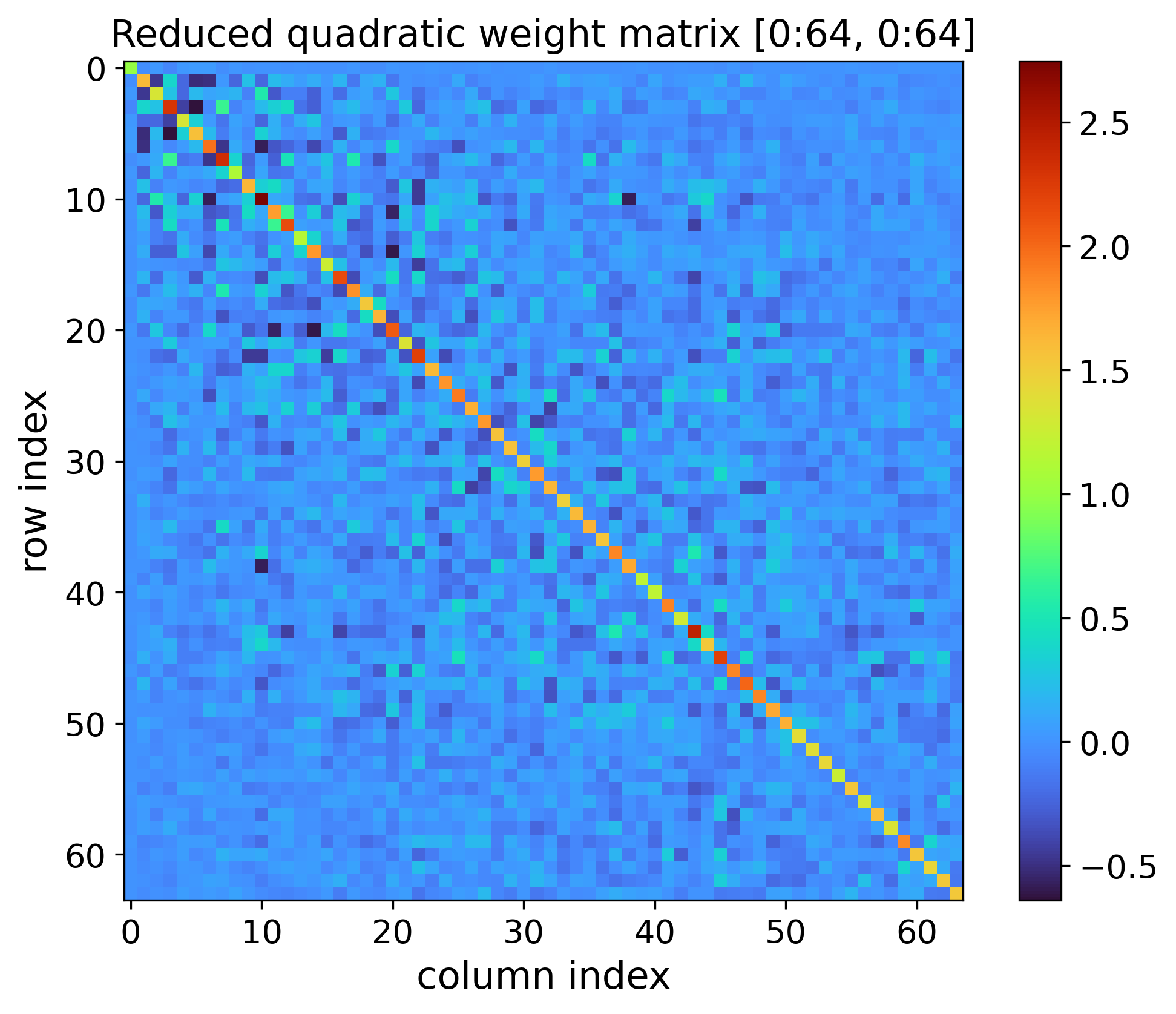}
  \subcaption{Darcy flow: 0–64}
  \label{fig:reduced_weight_poisson_0_64}
\end{subfigure}
\hfill
\begin{subfigure}[b]{0.32\textwidth}
  \centering
  \includegraphics[width=\linewidth]{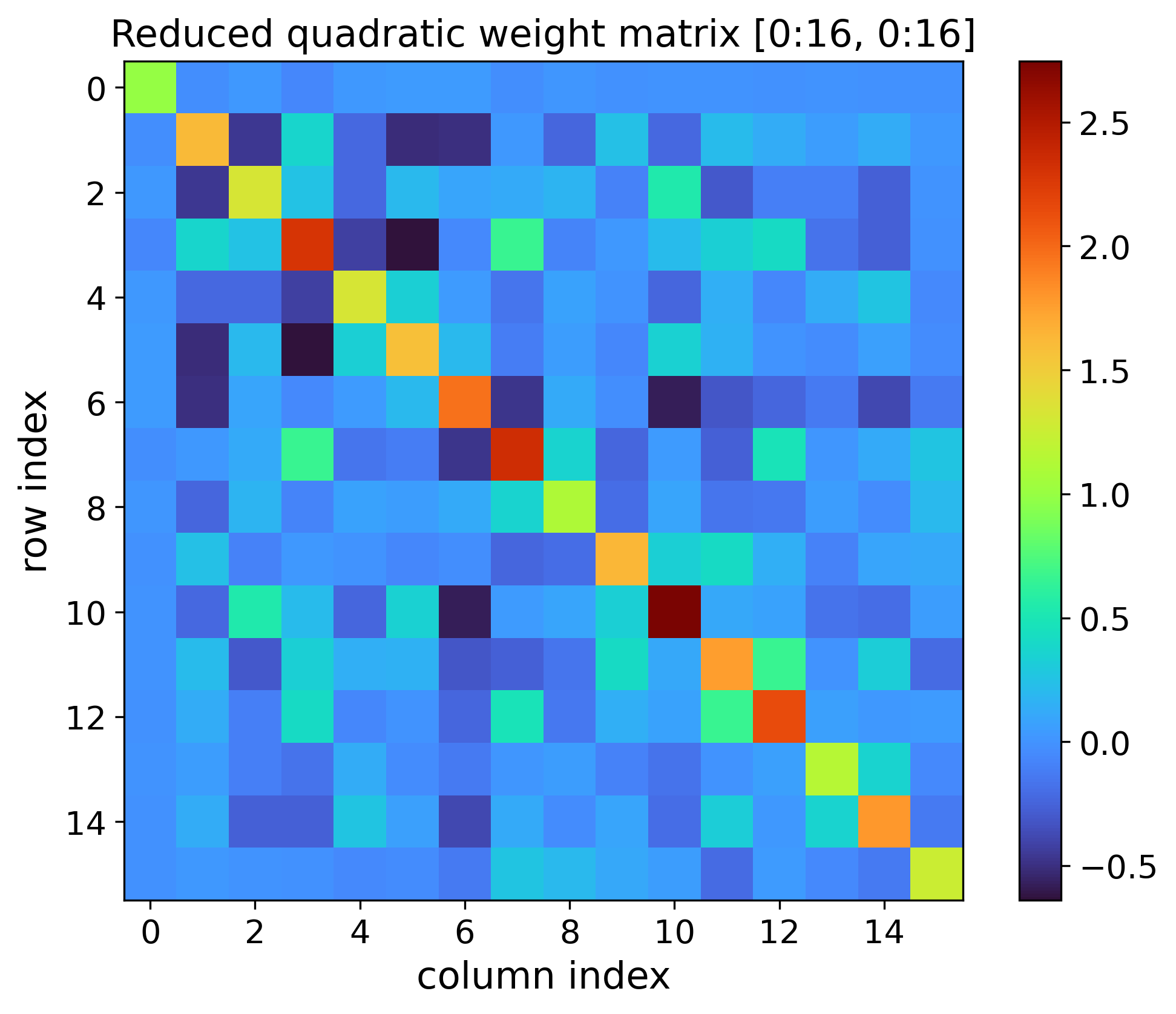}
  \subcaption{Darcy flow: 0–16}
  \label{fig:reduced_weight_poisson_0_16}
\end{subfigure}

\par\medskip

\begin{subfigure}[b]{0.33\textwidth}
  \centering
  \includegraphics[width=\linewidth]{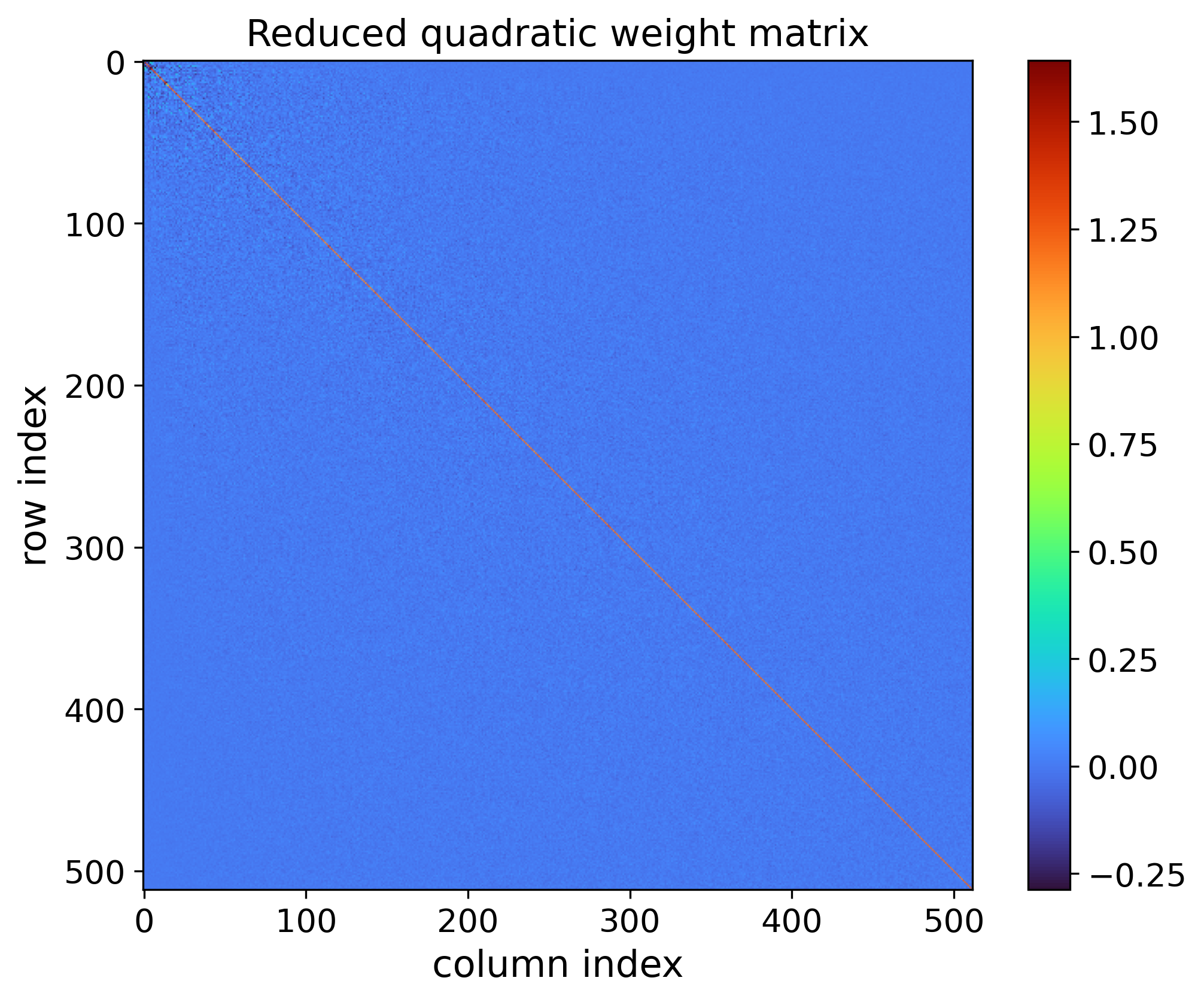}
  \subcaption{Linear elasticity: full}
  \label{fig:reduced_weight_elasticity_full}
\end{subfigure}
\hfill
\begin{subfigure}[b]{0.32\textwidth}
  \centering
  \includegraphics[width=\linewidth]{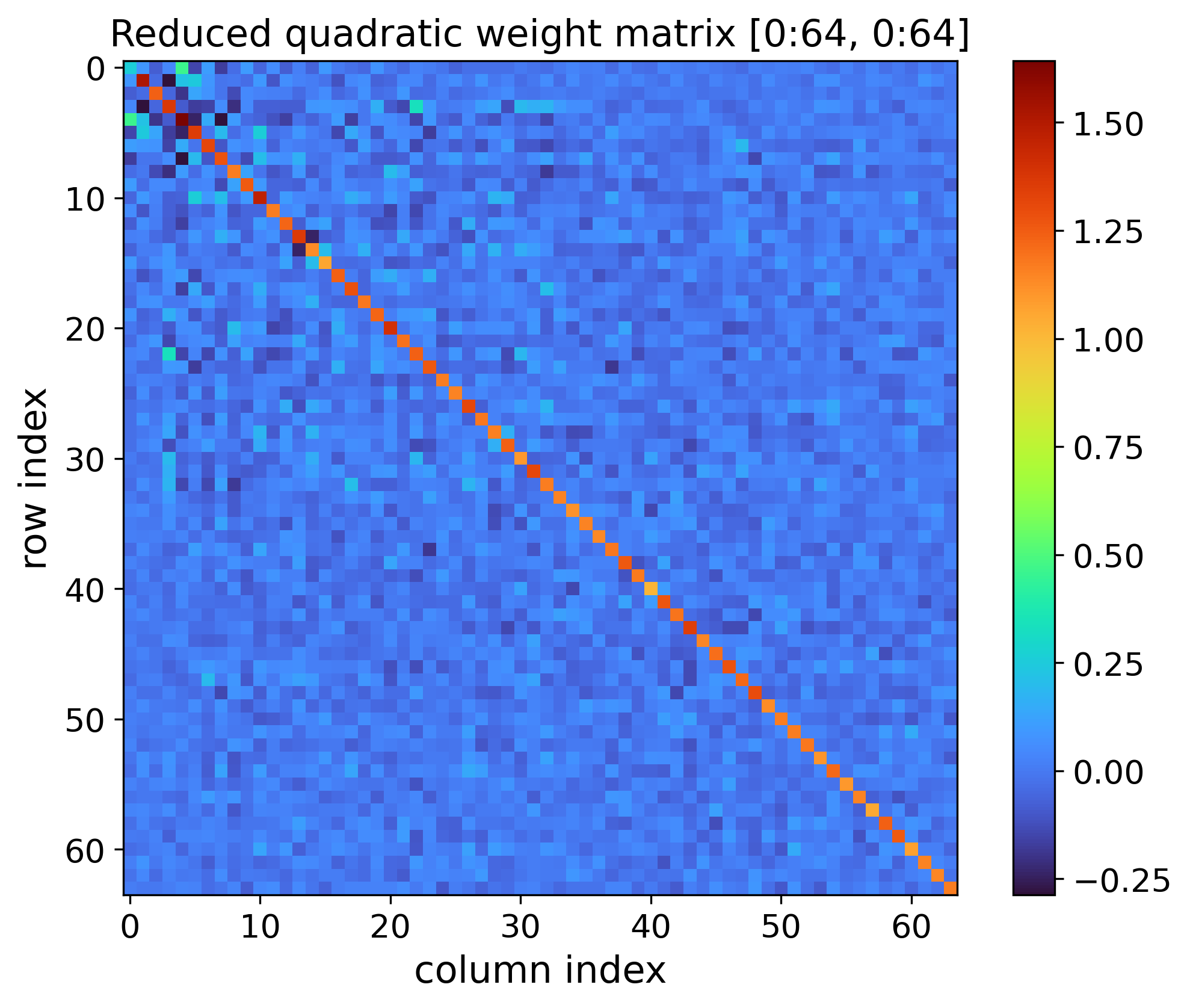}
  \subcaption{Linear elasticity: 0–64}
  \label{fig:reduced_weight_elasticity_0_64}
\end{subfigure}
\hfill
\begin{subfigure}[b]{0.32\textwidth}
  \centering
  \includegraphics[width=\linewidth]{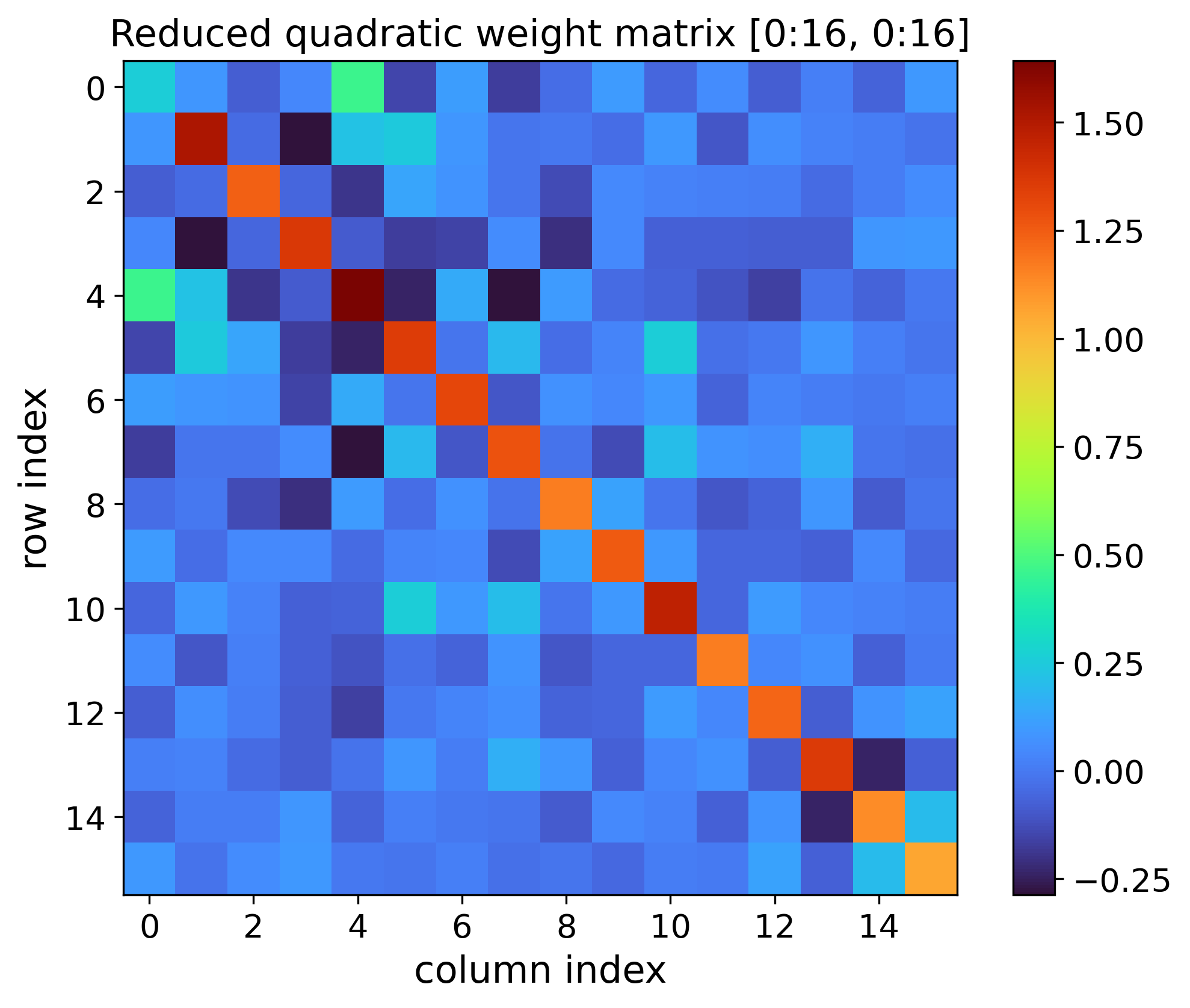}
  \subcaption{Linear elasticity: 0–16}
  \label{fig:reduced_weight_elasticity_0_16}
\end{subfigure}

\caption{Reduced weights $W_\pp^r$ in \eqref{eq:lossPoissonReduced} of heat conduction (top row), Darcy flow (middle row), and linear elasticity (bottom row) problems at three zoom levels, indicating dense matrix with dominant diagonal entries.}
\label{fig:reduced_weight_grid}
\end{figure}

\clearpage
\subsection{Architecture and training details}
\label{sec:training-details}
We report the configurations of network architectures and training details of RBNO, PCA-Net and FNO in~\cref{tab:architecture_training_details_rbno,tab:architecture_training_details_pcanet,tab:architecture_training_details_pcanet,tab:architecture_training_details_fno} along with training time and inference time in~\cref{tab:train_inference_time}. For the PCA-Net and FNO, pointwise evaluations on grid points of high-fidelity solutions are used as training labels: $500$ solutions for heat conduction, and $1000$ for Darcy flow and linear elasticity. These numbers match those used to construct the POD basis in RBNO, which is the only stage where RBNO accesses high-fidelity solutions. All models are trained for $3000$ iterations for diffusion and $6000$ iterations for elasticity, with a batch size that fits in GPU memory. As in RBNO, training and validation losses are monitored at each iteration, and the model weights corresponding to the lowest validation loss are saved.

\vspace{3em}

\begin{table}[!ht]
\centering
\begin{small}
\begin{tabular}{@{}ccc@{}}
\toprule
\textbf{Layer Type} & \textbf{Configuration} & \textbf{Output Shape (C, H, W)} \\
\midrule
\multicolumn{3}{c}{\textbf{Stationary Diffusion}} \vspace{0.5em} \\
Input & 1 channel image & (1, 129, 129) \\
Conv2d + act. & kernel=5, stride=2, out\_ch=64 & (64, 63, 63) \\
Conv2d + act. & kernel=5, stride=2, out\_ch=128 & (128, 30, 30) \\
Conv2d + act. & kernel=5, stride=3, out\_ch=256 & (256, 9, 9) \\
Conv2d + act. & kernel=3, stride=1, out\_ch=512 & (512, 7, 7) \\
Flatten & - & (512$\times$7$\times$7) \\
Linear + act. & 512$\times$7$\times$7 $\rightarrow$ 512 & (512) \\
Linear & 512 $\rightarrow$ output\_dim & (output\_dim) \\
\# trainable parameters & \multicolumn{2}{c}{15.1 M} \\
\midrule
\multicolumn{3}{c}{\textbf{Linear Elasticity}} \vspace{0.5em} \\
Input & 1 channel image & (1, 65, 129) \\
Conv2d + act. & kernel=5, stride=2, out\_ch=64 & (64, 31, 63) \\
Conv2d + act. & kernel=5, stride=2, out\_ch=128 & (128, 14, 30) \\
Conv2d + act. & kernel=5, stride=3, out\_ch=256 & (256, 4, 9) \\
Conv2d + act. & kernel=3, stride=1, out\_ch=512 & (512, 2, 7) \\
Flatten & - & (512$\times$2$\times$7) \\
Linear + act. & 512$\times$2$\times$7 $\rightarrow$ 512 & (512) \\
Linear & 512 $\rightarrow$ output\_dim & (output\_dim) \\
\# trainable parameters & \multicolumn{2}{c}{6.1 M} \\
\midrule 
initialization & \multicolumn{2}{c}{xavier\_uniform} \\
activation & \multicolumn{2}{c}{LeakyReLU (negative\_slope=0.01)} \\
\midrule
\multicolumn{3}{c}{\textbf{Training details}} \vspace{0.5em} \\
batch size & \multicolumn{2}{c}{1000 (diffusion), 500 (elasticity)} \\
optimizer & \multicolumn{2}{c}{SOAP~\cite{vyas2024soap} (lr=5e-3, betas=(.95, .95), weight\_decay=.01, precondition\_frequency=5)} \\
lr scheduler & \multicolumn{2}{c}{StepLR (step\_size=50, gamma=0.95 (heat conduction), 0.9 (Darcy flow \& elasticity))} \\
\bottomrule
\end{tabular}
\end{small}
\caption{Architecture and training details for RBNO}
\label{tab:architecture_training_details_rbno}
\end{table}

\begin{table}[!ht]
\begin{center}
\begin{small}
    \begin{tabular}{@{}cccc@{}}
    \toprule
    \multirow{2}{*}{\( \text{} \)} & \multicolumn{3}{c}{\( \text{Dataset} \)} \\
    \cmidrule(l){2-4}
                    & Heat Conduction  & Darcy Flow &  Linear Elasticity\\
    \midrule    
    input dim     & $20$     & $512$  & $512$ \\
    output dim      & $128$    &   $512$        & $512$      \\
    hidden dim     & $(1024, 2048, 1024)$      &    $(1024, 2048, 1024)$   & $(1024, 2048, 1024)$     \\
    activation function        & SELU    &    SELU      & SELU      \\
    \# trainable parameters  & 4.4 M    &    5.2 M      & 5.2 M      \\
    \midrule
    \multicolumn{4}{c}{\textbf{Training details}} \vspace{0.5em} \\
    batch size               & $500$       &  $1000$    & $500$   \\ 
    AdamW~\cite{loshchilov2017decoupled} (lr, weight decay)  & $(10^{-3}, 10^{-2})$     &     $(10^{-3}, 10^{-2})$             & $(10^{-4},10^{-2})$  \\
    StepLR (gamma, step size)  & $(0.9,50)$               &     $(0.9,50)$          & $(0.99, 100)$  \\
    \bottomrule
    \end{tabular}
\end{small}        
\end{center}
\caption{Architecture and training details for PCA-Net}
\label{tab:architecture_training_details_pcanet}
\end{table}

\begin{table}[!ht]
\begin{center}
\begin{small}
    \begin{tabular}{@{}ccc@{}}
    \toprule
    \multirow{2}{*}{\( \text{} \)} & \multicolumn{2}{c}{\( \text{Dataset} \)} \\
    \cmidrule(l){2-3}
                    & Stationary Diffusion  & Linear Elasticity\\
    \midrule    
    number of modes           & $$(32,32)$$       & $(32,32)$ \\
    in channels      & $3$              & $3$      \\
    out channels     & $3$              & $6$      \\
    hidden channels  & $128$             & $128$     \\
    number of layers        & $4$              & $4$      \\
    lifting channel ratio   & $2$              & $2$    \\
    projection channel ratio & $2$             & $2$    \\
    activation function      & GELU            & GELU    \\
    \# trainable parameters &  35.9 M            & 35.9 M    \\
    \midrule
    \multicolumn{3}{c}{\textbf{Training details}} \vspace{0.5em} \\
    batch size               & $10$            & $10$   \\ 
    AdamW~\cite{loshchilov2017decoupled} (lr, weight decay)  & $(10^{-3}, 10^{-2})$                       & $(10^{-3},10^{-2})$  \\
    StepLR (gamma, step size)  & $(0.9,50)$                              & $(0.9,50)$  \\
    \bottomrule
    \end{tabular}
\end{small}        
\end{center}
\caption{Architecture and training details for FNO}
\label{tab:architecture_training_details_fno}
\end{table}

\begin{table}[h!]
\centering
\footnotesize
\begin{tabular}{|c|l|S[table-format=2.1]|S[table-format=2.1]|}
\hline
\textbf{Problem} & \textbf{Method} & {\textbf{Training time (min)}} & {\textbf{Inference time/sample (ms)}} \\ \hline
\multirow{5}{*}{Heat Conduction} 
& PCA-Net & 0.2 & 0.2 \\ 
& FNO & 15 & 3.2 \\ 
& RBNO (RB coef.) & 7 & 2.6 \\ 
& RBNO (residual) & 15 & 2.6 \\ 
& RBNO (both) & 19 & 2.6 \\ \hline
\multirow{5}{*}{Darcy Flow} 
& PCA-Net & 0.2 & 1.0 \\ 
& FNO & 15 & 3.2 \\ 
& RBNO (RB coef.) & 8 & 6.0 \\ 
& RBNO (residual) & 14 & 6.0 \\ 
& RBNO (both) & 19 & 6.0 \\ \hline
\multirow{5}{*}{Linear Elasticity} 
& PCA-Net & 1.8 & 1.0 \\ 
& FNO & 16 & 1.6 \\ 
& RBNO (RB coef.) & 10 & 6.0 \\ 
& RBNO (residual) & 18 & 6.0 \\ 
& RBNO (both) & 23 & 6.0 \\ \hline
\end{tabular}
\caption{Comparisons of training and inference time for different methods.}
\label{tab:train_inference_time}
\end{table}

\clearpage
\subsection{$H(\rdiv)\times H^1$ reconstruction from CG$_1\times$CG$_1$ representations}
\label{sec:projection-details}

Starting from a pointwise evaluation of the approximate solution $(\hat{\sigma}^{\circ}, \hat{u}^{\circ})$ and its FE representation with CG$_1\times$CG$_1$ elements, we construct an $H(\mathrm{div})\times H^1$-conforming approximation by projecting into the space $\HH_h=$RT$^\circ_1\times$CG$^\circ_2$ in the $H(\mathrm{div})\times H^1$ norm, enforcing the appropriate homogeneous boundary conditions. 

For the diffusion problems, we solve for $(\sigma^{\circ},u^{\circ}) \in \HH_h$:
\begin{align*}
    &\quad (\sigma^{\circ}, \tau)_{\Omega} + (\div{\sigma^{\circ}}, \div{\tau})_{\Omega} + (u^{\circ}, v)_{\Omega} + (\rgrad{u^{\circ}}, \rgrad{v})_{\Omega} \\
    &=
    (\hat{\sigma}^{\circ}, \tau)_{\Omega} + (\div{\hat{\sigma}^{\circ}}, \div{\tau})_{\Omega} + (\hat{u}^{\circ}, v)_{\Omega} + (\rgrad{\hat{u}^{\circ}}, \rgrad{v})_{\Omega}, \quad (\tau, v)\in \HH_h, 
\end{align*}
with $\sigma^{\circ}\cdot n = 0$ on $\Gamma_N$ and $u^{\circ}=0$ on $\Gamma_D$. 

For the linear elasticity problem, we solve for $(\tenbar[2]{\sigma^{\circ}}, \tenbar[1]{u^{\circ}}) \in \HH_h =(\text{RT}^\circ_1)^2\times(\text{CG}^\circ_2)^2$:
\begin{align*}
    &\quad (\tenbar[1]{\rdiv} \tenbar[2]{\sigma^{\circ}}, \tenbar[1]{\rdiv} \tenbar[2]{\tau^{\circ}})_{\Omega} + (\tenbar[2]{\sigma^{\circ}}, \cC_{\bar{\pp}}^{-1} \tenbar[2]{\tau^{\circ}})_{\Omega} + 
    (\tenbar[2]{\e}(\tenbar[1]{u^\circ}), \cC_{\bar{\pp}}\tenbar[2]{\e}(\tenbar[1]{ v^\circ}))_{\Omega} \\
    &= (\tenbar[1]{\rdiv} \tenbar[2]{\hat{\sigma}^{\circ}}, \tenbar[1]{\rdiv} \tenbar[2]{\tau^{\circ}})_{\Omega} + (\tenbar[2]{\hat{\sigma}^{\circ}}, \cC_{\bar{\pp}}^{-1} \tenbar[2]{\tau^{\circ}})_{\Omega} + 
    (\tenbar[2]{\e}(\tenbar[1]{\hat{u}^\circ}), \cC_{\bar{\pp}}\tenbar[2]{\e}(\tenbar[1]{ v^\circ}))_{\Omega}, \quad (\tenbar[2]{\tau^{\circ}}, \tenbar[1]{ v^\circ})\in \HH_h,
\end{align*}
with $\tenbar[2]{\sigma}^{\circ}\cdot \tenbar[1]{n} = (0, 0)^{\top}$ on $\Gamma_N$ and $\tenbar[1]{u^\circ}=(0, 0)^{\top}$ on $\Gamma_D$.

We then compare the residual loss of the original FE solution with RT$_1\times$CG$_2$ elements, its CG$_1\times$CG$_1$ representation, and the reconstructed RT$_1\times$CG$_2$ solution obtained via this projection. We also compare their relative errors in $\LL = L_2\times L_2$-norm, $L_2 \times H^1$-norm, and $\HH = H(\mathrm{div})\times H^1$-norm. We report the corresponding results in~\cref{tab:evaluation_reconstruction_error_merged}.

\begin{table}[!ht]
\centering
\begin{tabular}{|l|S[table-format=1.2e-2, scientific-notation=true]|S[table-format=1.2e-2, scientific-notation=true]|S[table-format=1.2e-2, scientific-notation=true]|}
\hline
\textbf{Metric} & \textbf{Original RT$_1\times $CG$_2$} & \textbf{CG$_1\times $CG$_1$} & \textbf{Recons. RT$_1\times $CG$_2$} \\ \hline

\multicolumn{4}{|c|}{\textbf{Heat Conduction}} \\ \hline
$\| \sigma^{\circ} - ( \pp\nabla u^{\circ} + \pp\nabla w - \bz + \bbf_1) \|^2_{L^2}$  & 1.12e-04 & 8.26e-03 & 8.26e-03 \\ 
$\|\div  \sigma^{\circ} + f_2\|^2_{L^2}$                                             & 7.78e-10 & 2.07e+01 & 2.07e+01 \\ 
Relative error (L2-L2)                                                            & {-}      & 1.71e-01 & 1.71e-01 \\ 
Relative error (L2-H1)                                                      & {-}      & 1.39e-01 & 1.39e-01 \\ 
Relative error (Hdiv-H1)                                                      & {-}      & 3.81e+00 & 3.81e+00 \\ \hline

\multicolumn{4}{|c|}{\textbf{Darcy Flow}} \\ \hline
$\| \sigma^{\circ} - ( \pp\nabla u^{\circ} + \pp\nabla w - \bz + \bbf_1) \|^2_{L^2}$      & 6.13e-06 & 1.19e-03 & 1.19e-03 \\ 
$\|\div  \sigma^{\circ}+ f_2\|^2_{L^2}$                                                 & 4.00e-03 & 2.80e+00 & 2.80e+00 \\ 
Relative error (L2-L2)                                                                            & {-}      & 2.12e-03 & 2.12e-03 \\ 
Relative error (L2-H1)                                                                       & {-}      & 1.05e-02 & 1.05e-02 \\ 
Relative error (Hdiv-H1)                                                                       & {-}      & 1.40e-01 & 1.40e-01 \\ \hline

\multicolumn{4}{|c|}{\textbf{Linear Elasticity}} \\ \hline
$\|\cC^{-1/2}(\tenbar[2]{\sigma^{\circ}} + \tenbar[2]{z}) - \cC^{1/2}(\tenbar[2]{\varepsilon}(\tenbar[1]{u}^{\circ} + \tenbar[1]{w}))\|_{{L^2}}^2$   & 7.75e-04 & 6.44e-03 & 7.48e-03 \\ 
$\| \tenbar[1]{\rdiv} \tenbar[2]{\sigma^{\circ}} + \tenbar[1]{f} \|_{{L^2}}^2 $                                                                      & 6.77e-06 & 2.88e+01 & 2.88e+01 \\ 
Relative error (L2-L2)                                                                                                                        & {-}      & 8.99e-03 & 8.49e-03 \\ 
Relative error (L2-H1)                                                                                                                   & {-}      & 4.55e-02 & 4.58e-02 \\
Relative error (Hdiv-H1)                                                                                                                   & {-}      & 2.99e+00 & 2.99e+00 \\ \hline
\end{tabular}
\caption{Reconstruction quality for one test sample on the two residual loss terms and relative errors for the original RT$_1\times$CG$_2$ solution, the intermediate CG$_1\times$CG$_1$ representation, and the reconstructed RT$_1\times$CG$_2$ from CG$_1\times$CG$_1$ representation.}
\label{tab:evaluation_reconstruction_error_merged}
\end{table}

Across all three problems, we observe that the CG$_1\times$CG$_1$ representation violates the divergence-related loss terms (second row in \cref{tab:evaluation_reconstruction_error_merged} for each problem).
 Projecting this representation into the RT$_1\times$CG$_2$ space does not reduce the residual loss or the errors in $\HH$-norm (rigth column), even though the corresponding vectors of degrees of freedom remain close in the Euclidean sense, as visualized in~\cref{fig:dof_comparison}. 
We have also tested DOLFINx's \texttt{interpolate} function and mesh refinement for transferring CG$_1\times$CG$_1$ representations into RT$_1\times$CG$_2$; the qualitative conclusions are unchanged.

\begin{figure}[!ht]
    \centering
    \begin{subfigure}[b]{0.42\linewidth}
        \centering
        \includegraphics[width=\linewidth]{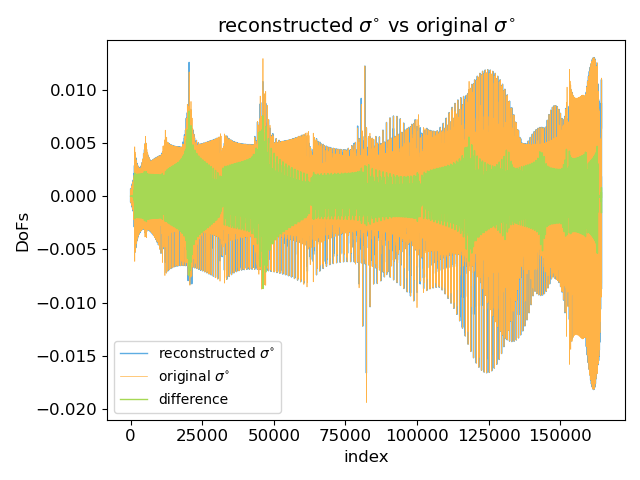}
        \caption{DoFs of $\sigma^{\circ}$ (heat conduction)}
        \label{fig:dof_comparison_sigma_poisson1}
    \end{subfigure}
    \hspace{1.5em}
    \begin{subfigure}[b]{0.42\linewidth}
        \centering
        \includegraphics[width=\linewidth]{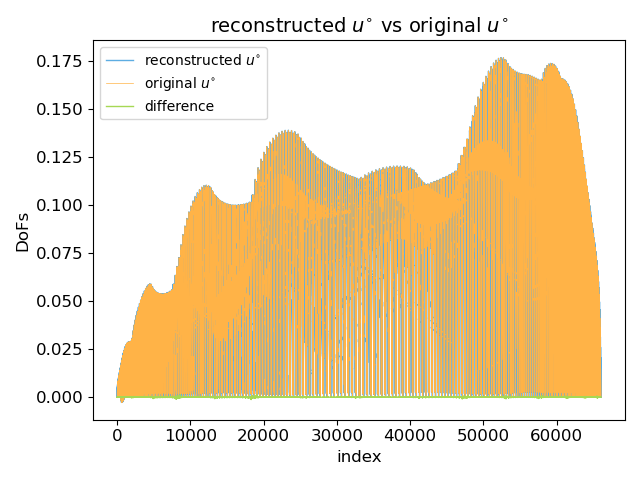}
        \caption{DoFs of $u^{\circ}$ (heat conduction)}
        \label{fig:dof_comparison_u_poisson1}
    \end{subfigure}

    \vspace{2mm} 

    \begin{subfigure}[b]{0.42\linewidth}
        \centering
        \includegraphics[width=\linewidth]{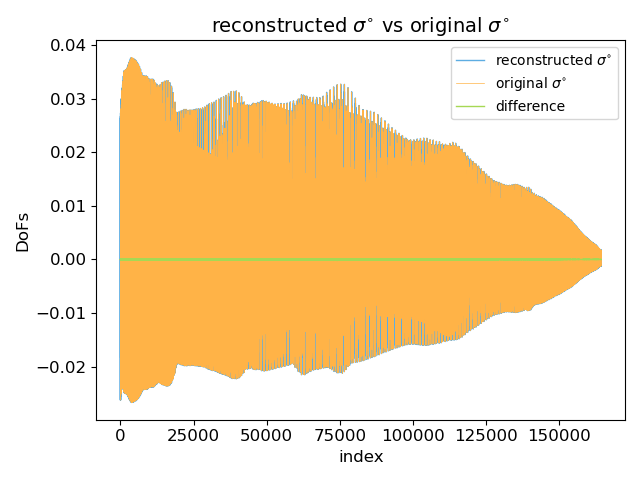}
        \caption{DoFs of $\sigma^{\circ}$ (Darcy flow)}
        \label{fig:dof_comparison_sigma_poisson2}
    \end{subfigure}
    \hspace{1.5em}
    \begin{subfigure}[b]{0.42\linewidth}
        \centering
        \includegraphics[width=\linewidth]{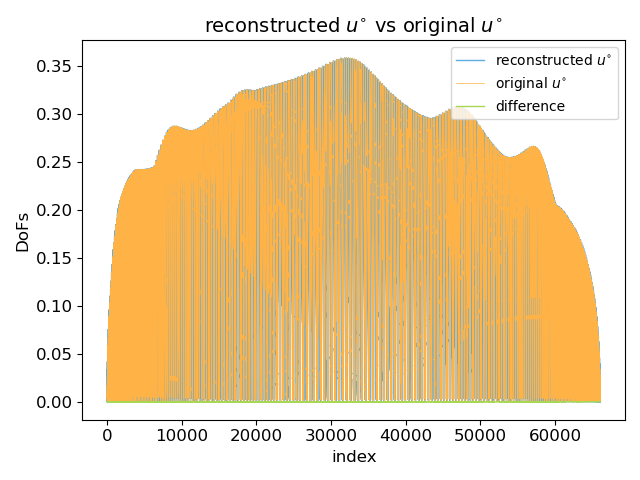}
        \caption{DoFs of $u^{\circ}$ (Darcy flow)}
        \label{fig:dof_comparison_u_poisson2}
    \end{subfigure}

    \vspace{2mm} 

    \begin{subfigure}[b]{0.42\linewidth}
        \centering
        \includegraphics[width=\linewidth]{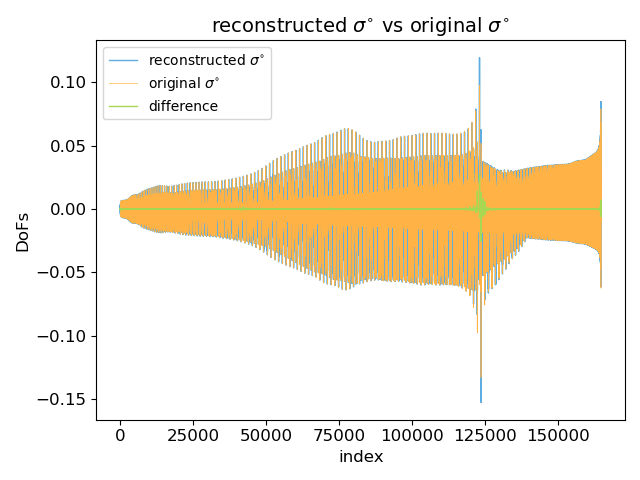}
        \caption{DoFs of $\sigma^{\circ}$ (Linear Elasticity)}
        \label{fig:dof_comparison_sigma_elasticity}
    \end{subfigure}
    \hspace{1.5em}
    \begin{subfigure}[b]{0.42\linewidth}
        \centering
        \includegraphics[width=\linewidth]{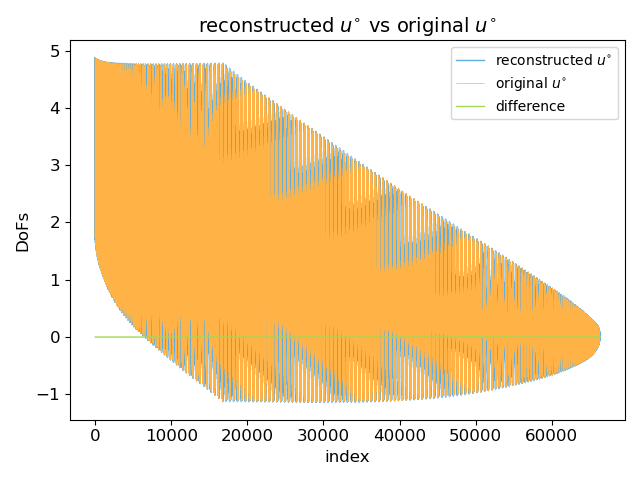}
        \caption{DoFs of $u^{\circ}$ (Linear Elasticity)}
        \label{fig:dof_comparison_u_elasticity}
    \end{subfigure}

    \caption{Comparison of DOFs and their reconstructed versions for the diffusion and elasticity problems.}
    \label{fig:dof_comparison}
\end{figure}

\subsection{Additional results (expansion of \cref{tab:approximation_relative_error})}
\label{sec:ablation_study}
In addition to the results shown in ~\cref{tab:approximation_relative_error}, we consider training with an MSE loss that matches the neural network predictions of the RB coefficients to their corresponding labels; this approach is denoted as RBNO (RB coef.). We also investigate whether adding this loss to the original RBNO training loss (the residual loss) during training can further reduce the error and loss over the test samples; this variant is denoted as RBNO (both).

For PCA-Net and FNO, the networks are trained to match pointwise evaluations at grid points to the corresponding labels and output pointwise approximations at test time. To compare these approximations with the high-fidelity solution, we propose two approaches. First, we construct a CG$_1\times$CG$_1$ representation of the approximation by mapping the pointwise evaluations to the degrees of freedom of the CG$_1\times$CG$_1$ function. The first comparison method then evaluates this approximation against the CG$_1\times$CG$_1$ representation of the high-fidelity solution (obtained by evaluating the solution on grid points and mapping to CG$_1\times$CG$_1$ degrees of freedom). In the second method, we project the CG$_1\times$CG$_1$ approximation into the RT$_1\times$CG$_2$ space in the $\HH = H(\rdiv)\times H^1$-norm and compare it with the high-fidelity solution (in the RT$_1\times$CG$_2$ FE space). We indicate which reference function space is used, [CG$_1\times$CG$_1$] or [RT$_1\times$CG$_2$], in the results. The results for PCA-Net and FNO in table~\cref{tab:approximation_relative_error} are now denoted as PCA-Net [CG$_1\times$CG$_1$] and FNO [CG$_1\times$CG$_1$]. The residual losses are evaluated using the corresponding FE representations of the approximations. In addition, we train PCA-Net to match the RB coefficients, rather than the PCA coefficients computed from solution evaluations at grid points. In this variant, we use the same basis as in RBNO to approximate the solution; this approach is denoted as PCA-Net (RB coef.).

We can see that RBNO with residual loss provides the best overall performance in terms of approximation accuracy and residual loss across all three problems. Adding the RB-coefficient MSE loss yields slight improvements for the diffusion problem, but in the elasticity problem, it even degrades performance. Standard data-driven models based on pointwise evaluations (FNO, PCA-Net) perform reasonably in $\LL$-norm, but struggle with $\HH$-norm and physics consistency (indicated by residual loss). While the PCA-Net trained on RB coefficient data produces a slightly larger $\LL$-norm error than its pointwise counterpart (except for the heat conduction problem, where it is even lower), it generally achieves lower residual loss and $\HH$-norm error (except for the $\HH$-norm error in the linear elasticity problem); nevertheless, it is still outperformed by~RBNO. 

\begin{table}[!ht]
\footnotesize
\centering
\begin{tabular}{|l|
  S[table-format=1.2e-2]@{(}S[table-format=1.2e-2]@{\!)\;\;}|
  S[table-format=1.2e-2]@{(}S[table-format=1.2e-2]@{\!)\;\;}|
  S[table-format=1.2e-2]@{(}S[table-format=1.2e-2]@{\!)\;\;}|}
\hline
\textbf{Method} &
\multicolumn{2}{c|}{\textbf{Relative error in $\LL$-norm}} &
\multicolumn{2}{c|}{\textbf{Relative error in $\HH$-norm}} &
\multicolumn{2}{c|}{\textbf{Residual loss}} \\ \hline

\multicolumn{7}{|c|}{\textbf{Heat Conduction}} \\ \hline
PCA-Net [CG$_1\times$CG$_1$] & 1.27e-01 & 2.52e-02 & 3.32e-01 & 5.75e-02 & 1.84e+01 & 3.01e+00 \\
PCA-Net [RT$_1\times$CG$_2$] & 2.09e-01 & 1.99e-02 & 3.58e+00 & 2.95e-01 & 1.84e+01 & 3.01e+00 \\
PCA-Net (RB coef.) & 1.23e-01 & 2.52e-02 & 8.39e-02 & 1.65e-02 & 3.38e-02 & 2.66e-02 \\
FNO [CG$_1\times$CG$_1$] & \bmnum{1.82e-02} & \bmnum{1.66e-03} & 1.46e-01 & 1.53e-02 & 1.90e+01 & 4.12e+00 \\
FNO [RT$_1\times$CG$_2$] & 1.70e-01 & 1.57e-02 & 3.64e+00 & 3.98e-01 & 1.90e+01 & 4.12e+00 \\
RBNO (RB coef.) & 3.25e-02  & 1.98e-02 & 2.01e-02 & 1.25e-02 & 3.27e-03 & 2.50e-02 \\
RBNO (residual) & 2.43e-02 & 9.35e-03 & 1.86e-02 & 6.36e-03 & 5.33e-04 & 1.44e-03 \\
RBNO (both) & 2.25e-02 & 7.69e-03 & \bmnum{1.71e-02} & \bmnum{5.38e-03} & \bmnum{4.55e-04} & \bmnum{2.67e-04} \\ \hline

\multicolumn{7}{|c|}{\textbf{Darcy Flow}} \\ \hline
PCA-Net [CG$_1\times$CG$_1$]  & 1.79e-01 & 7.53e-02 & 1.13e-01 & 2.35e-02 & 2.59e+00 & 2.03e+00 \\
PCA-Net [RT$_1\times$CG$_2$]  & 1.79e-01 & 7.53e-02 & 1.25e-01 & 1.31e-02 & 2.59e+00 & 2.03e+00 \\
PCA-Net (RB coef.) & 2.63e-01 & 1.10e-01 & 5.64e-02 & 2.01e-02 & 6.20e-01 & 9.40e-01 \\
FNO [CG$_1\times$CG$_1$]      & 3.30e-02 & 1.18e-02 & 1.60e-01 & 3.81e-02 & 5.10e+00 & 2.34e+00 \\
FNO [RT$_1\times$CG$_2$]      & \bmnum{3.25e-02} & \bmnum{1.18e-02} & 1.89e-01 & 3.55e-02 & 5.10e+00 & 2.34e+00 \\
RBNO (RB coef.)     & 8.33e-02  & 3.39e-02 & 1.65e-02 & 5.59e-03 & 5.08e-02  & 6.36e-02 \\
RBNO (residual)     & 6.39e-02 & 2.26e-02 & 1.23e-02 & 3.64e-03 & 2.72e-02 & 2.61e-02 \\
RBNO (both)  & 6.11e-02 & 2.01e-02 & \bmnum{1.22e-02} & \bmnum{3.72e-03} & \bmnum{2.66e-02} & \bmnum{2.38e-02} \\ \hline

\multicolumn{7}{|c|}{\textbf{Linear Elasticity}} \\ \hline
PCA-Net [CG$_1^2\times$CG$_1^2$]     & 1.34e-01 & 1.03e-01 & 3.27e-01 & 1.14e-01 & 2.46e+01 & 6.11e+00 \\
PCA-Net [RT$_1^2\times$CG$_2^2$]     & 1.34e-01 & 1.03e-01 & 2.62e+00 & 3.27e-01 & 2.46e+01 & 6.11e+00 \\
PCA-Net (RB coef.) & 2.90e-01 & 3.87e-02 & 6.12e-01 & 6.93e-02 & 1.51e+00  & 4.64e-01 \\
FNO [CG$_1^2\times$CG$_1^2$]        & \bmnum{1.17e-02} & \bmnum{3.34e-03} & 4.30e-01 & 6.11e-02 & 2.82e+01 & 6.45e+00 \\
FNO [RT$_1^2\times$CG$_2^2$]        & 2.22e-02 & 7.08e-03 & 2.78e+00 & 3.08e-01 & 2.81e+01 & 6.45e+00 \\
RBNO (RB coef.)              & 2.95e-02 & 6.56e-03 & 5.32e-02 & 8.93e-03 & 1.02e-02 & 3.62e-03 \\
RBNO (residual)            & 2.49e-02 & 5.89e-03 & \bmnum{3.91e-02} & \bmnum{8.68e-03} & \bmnum{4.87e-03} & \bmnum{3.56e-03} \\
RBNO (both)               & 3.43e-02 & 8.74e-03 & 4.86e-02 & 1.69e-02 & 6.62e-03 & 2.01e-02 \\ \hline
\end{tabular}
\caption{Comparison on the mean (and standard deviation, estimated over 500 test samples) of the relative errors in $\LL_2 = L_2 \times L_2$-norm and $\HH = H(\rdiv)\times H^1$-norm, and residual loss for PCA-Net, FNO and RBNO. }
\label{tab:approximation_relative_error_merged}
\end{table}

\clearpage
\subsection{Visualization of reference-prediction-difference}
\label{sec:visualization}

We visualize the reference–prediction difference for a random test sample in~\cref{fig:ref_pred_diff_poisson_setup1} (heat conduction), ~\cref{fig:ref_pred_diff_poisson_setup2} (Darcy flow), and~\cref{fig:ref_pred_diff_elasticity_u,fig:ref_pred_diff_elasticity_sigma} (elasticity). All functions are evaluated on a uniform grid, so the figures should be interpreted as showing the CG$_1\times$CG$_1$ representation of the solutions.

\begin{figure}[!h]
    \centering
    \includegraphics[width=\linewidth, valign=t]{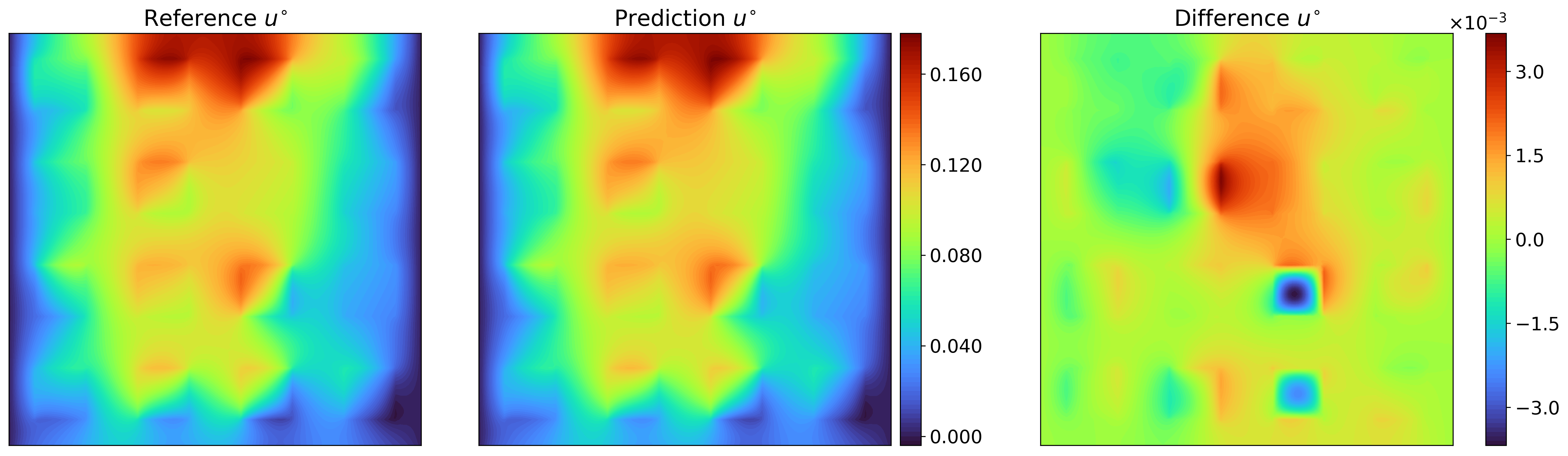}
    \includegraphics[width=\linewidth, valign=t]{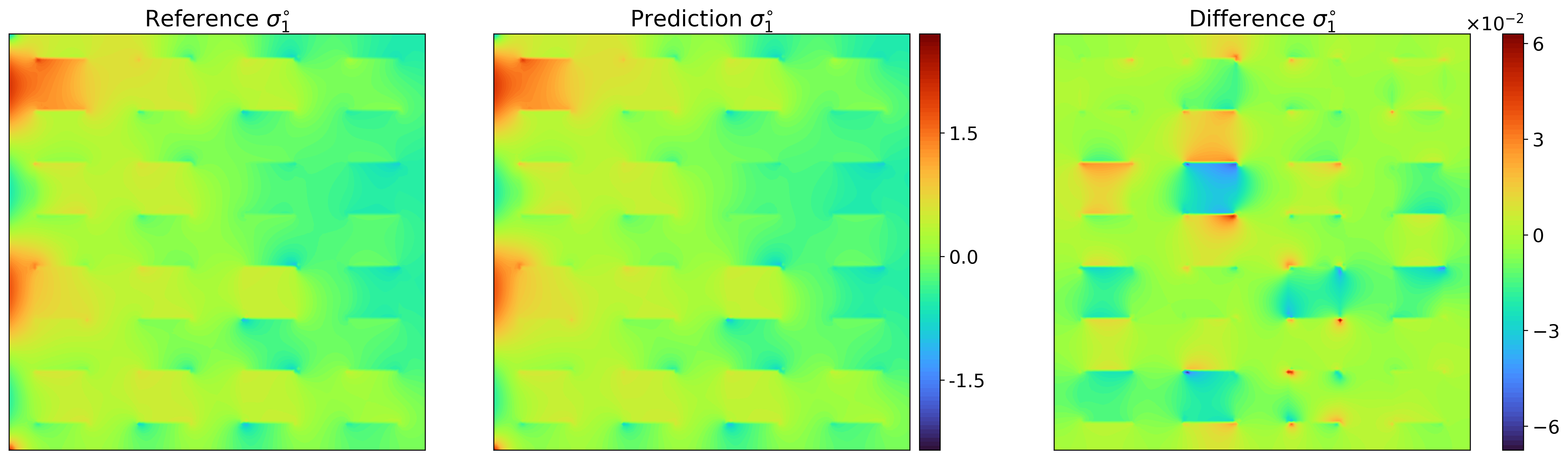}
    \includegraphics[width=\linewidth, valign=t]{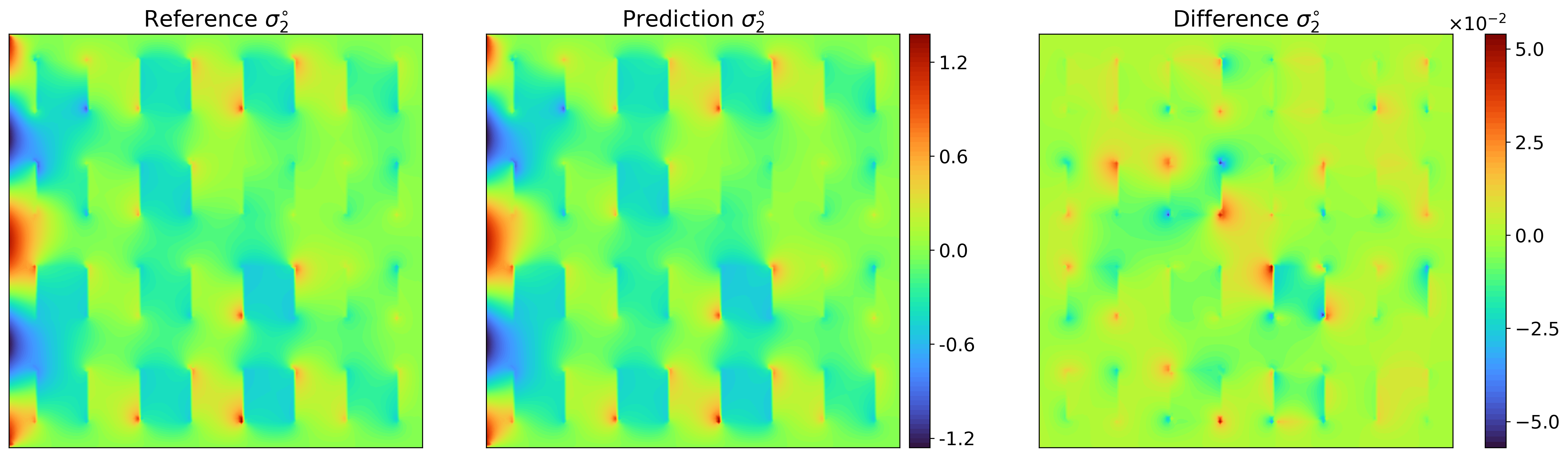}
    \caption{Reference-prediction-difference (left-middle-right) of $u^{\circ}$ (top) and $\sigma^{\circ}=(\sigma_1^{\circ}, \sigma_2^{\circ})$ (middle, bottom) [heat conduction].}
    \label{fig:ref_pred_diff_poisson_setup1}
\end{figure}

\begin{figure}[!h]
    \centering
    \includegraphics[width=\linewidth, valign=l]{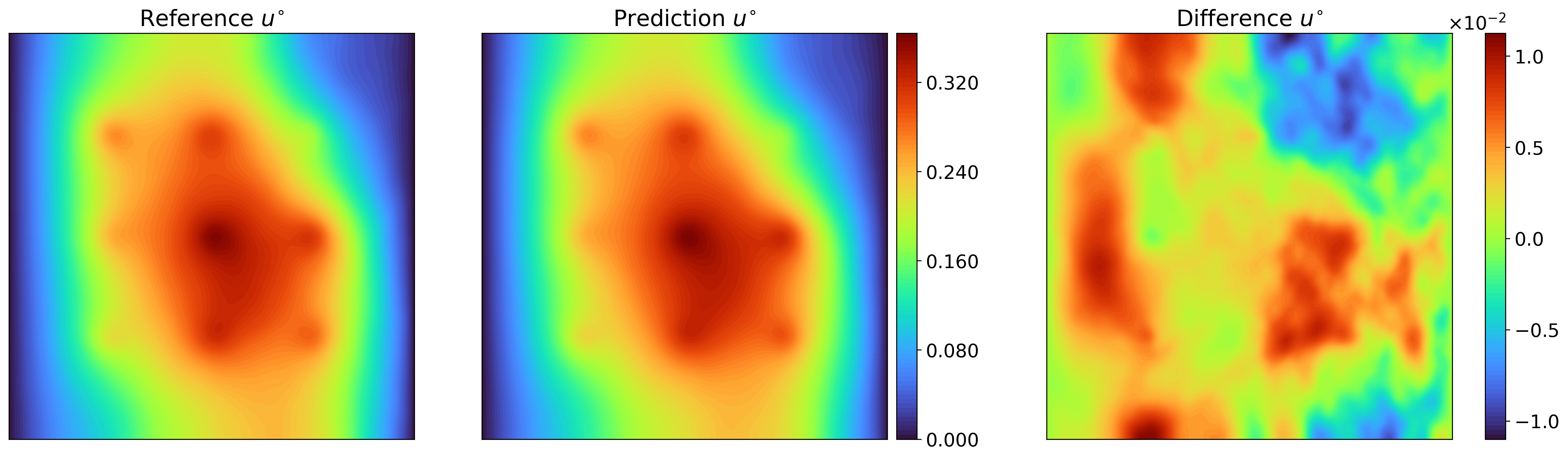}
    \includegraphics[width=\linewidth, valign=l]{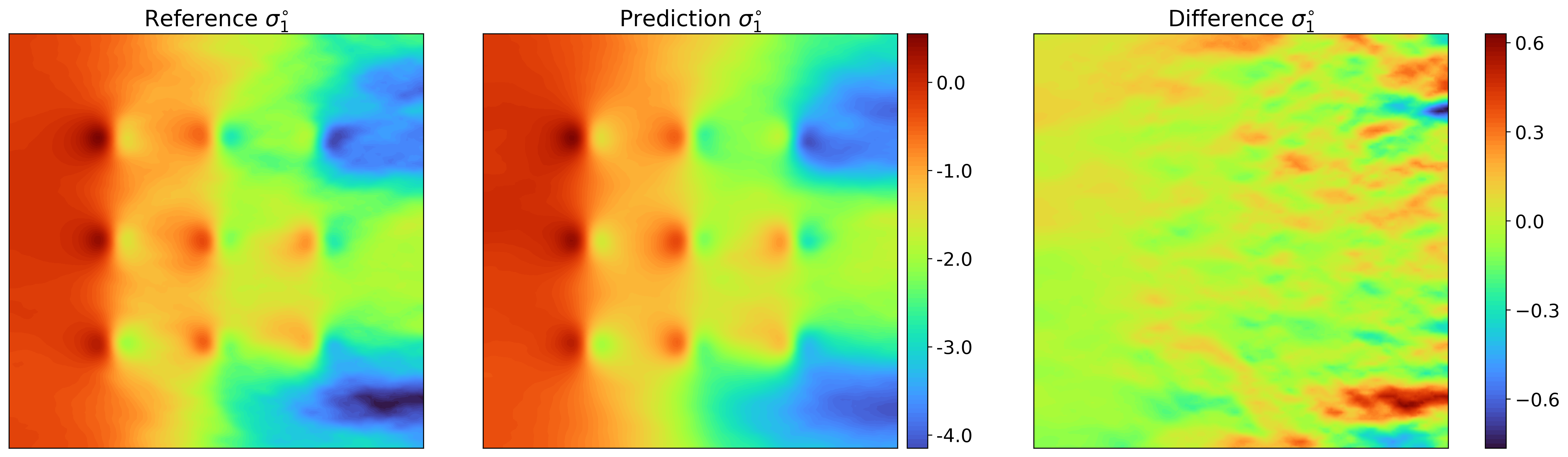}
    \includegraphics[width=\linewidth, valign=l]{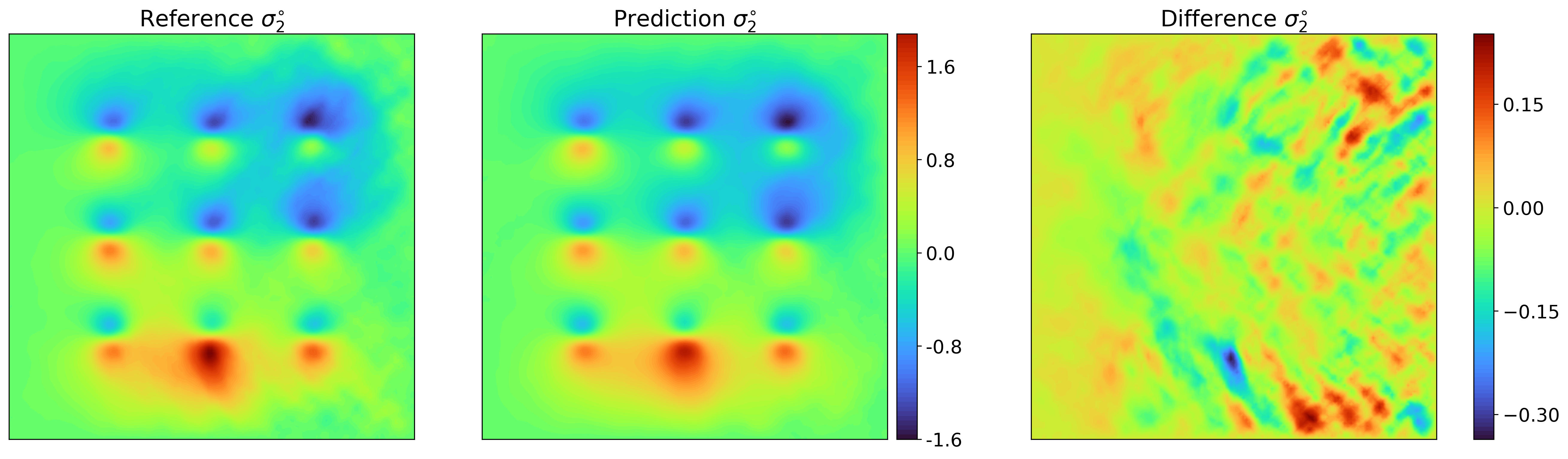}
  \caption{Reference-approximation difference (left-middle-right) of $u^{\circ}$ (top) and $\sigma^{\circ}=(\sigma_1^{\circ}, \sigma_2^{\circ})$ (middle, bottom) [Darcy flow].}
    \label{fig:ref_pred_diff_poisson_setup2}
\end{figure}

\begin{figure}[!h]
    \centering
    \includegraphics[width=0.45\linewidth]{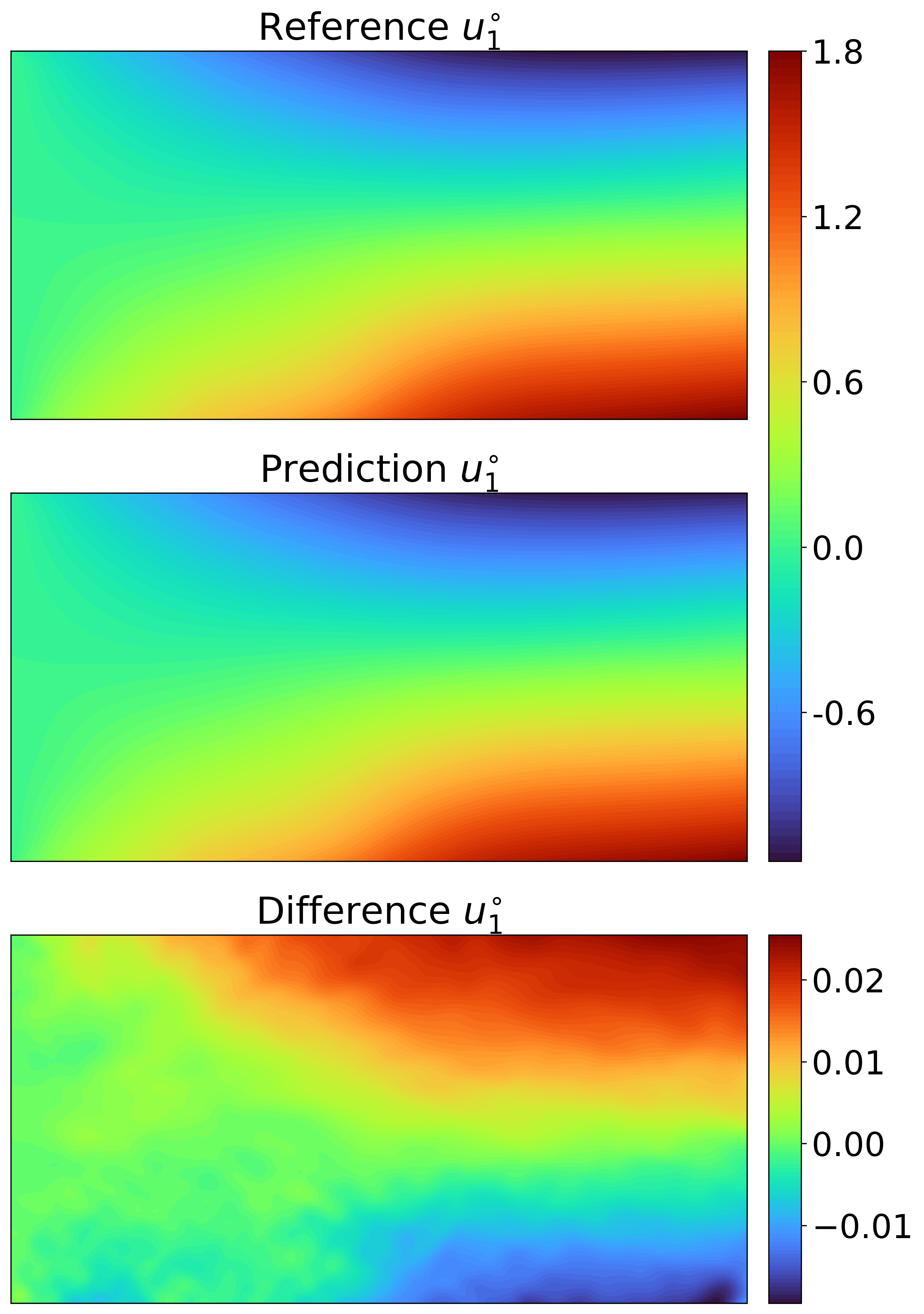}
    \includegraphics[width=0.46\linewidth]{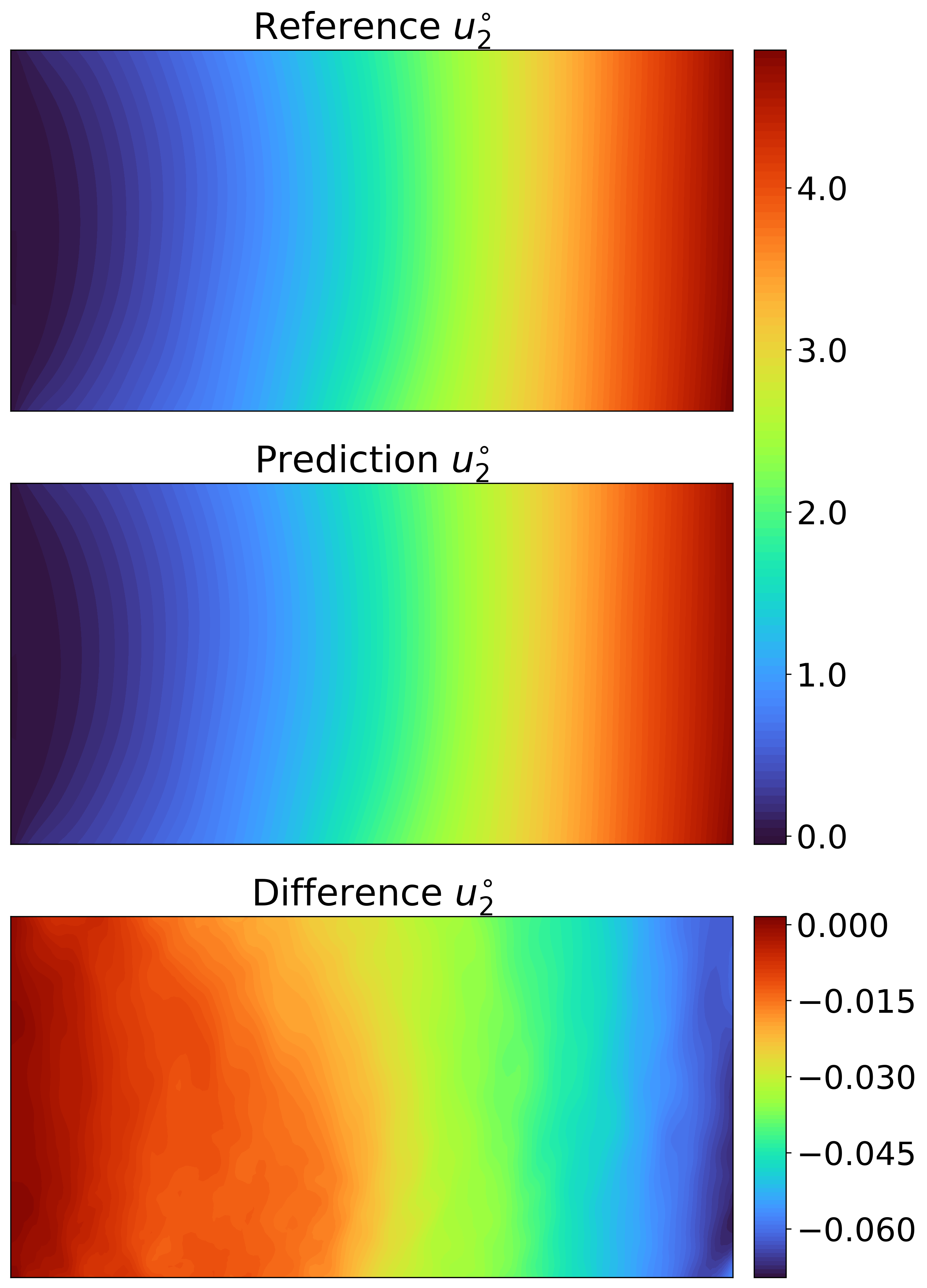}
    \caption{Reference-prediction-difference (top-middle-bottom) of $u_1$ and $u_2$ (left and right) [linear elasticity].}
    \label{fig:ref_pred_diff_elasticity_u}
\end{figure}

\begin{figure}[!ht]
    \centering
    \includegraphics[width=0.223\linewidth, valign=t]{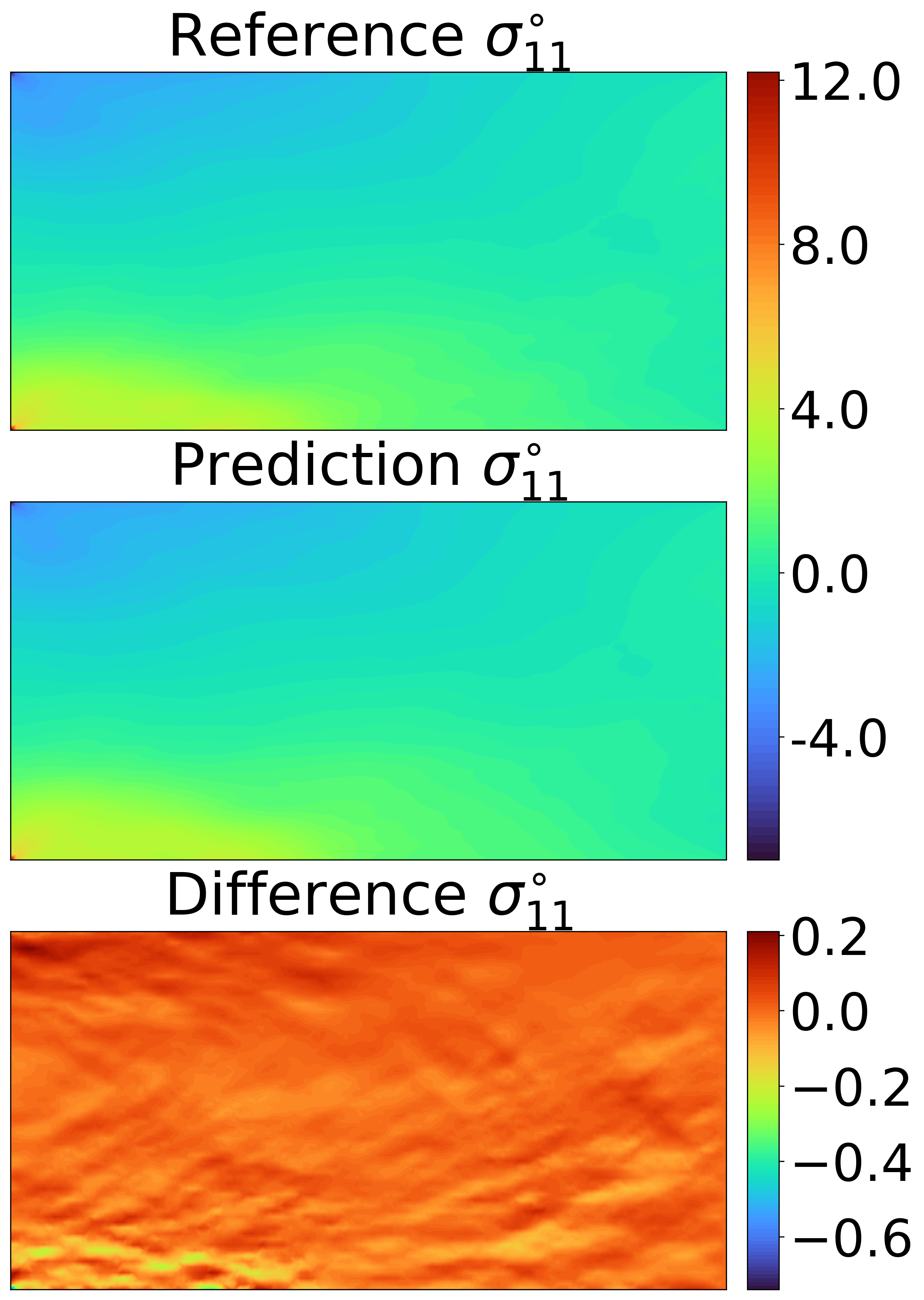}
    \includegraphics[width=0.23\linewidth, valign=t]{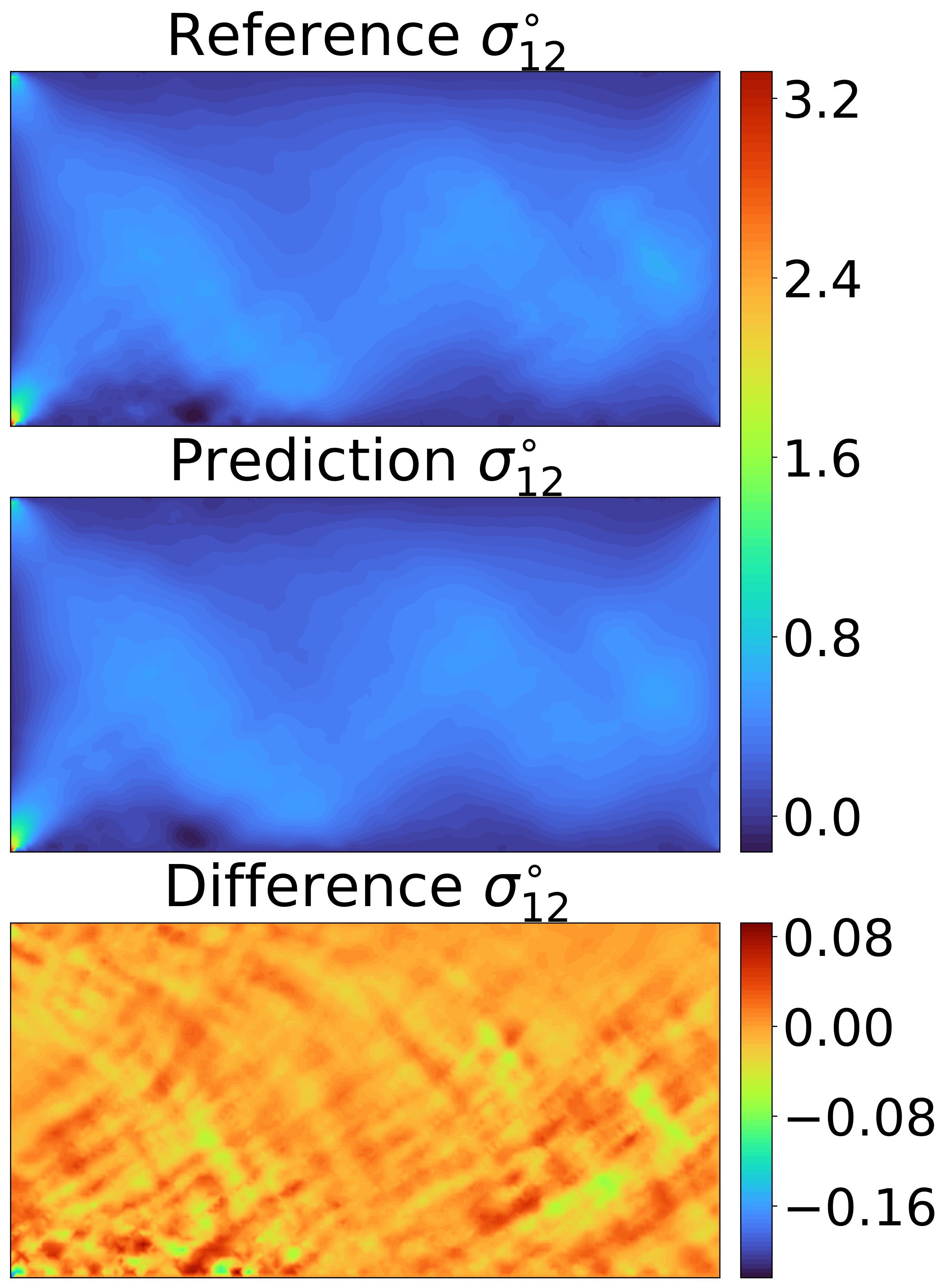}
    \includegraphics[width=0.23\linewidth, valign=t]{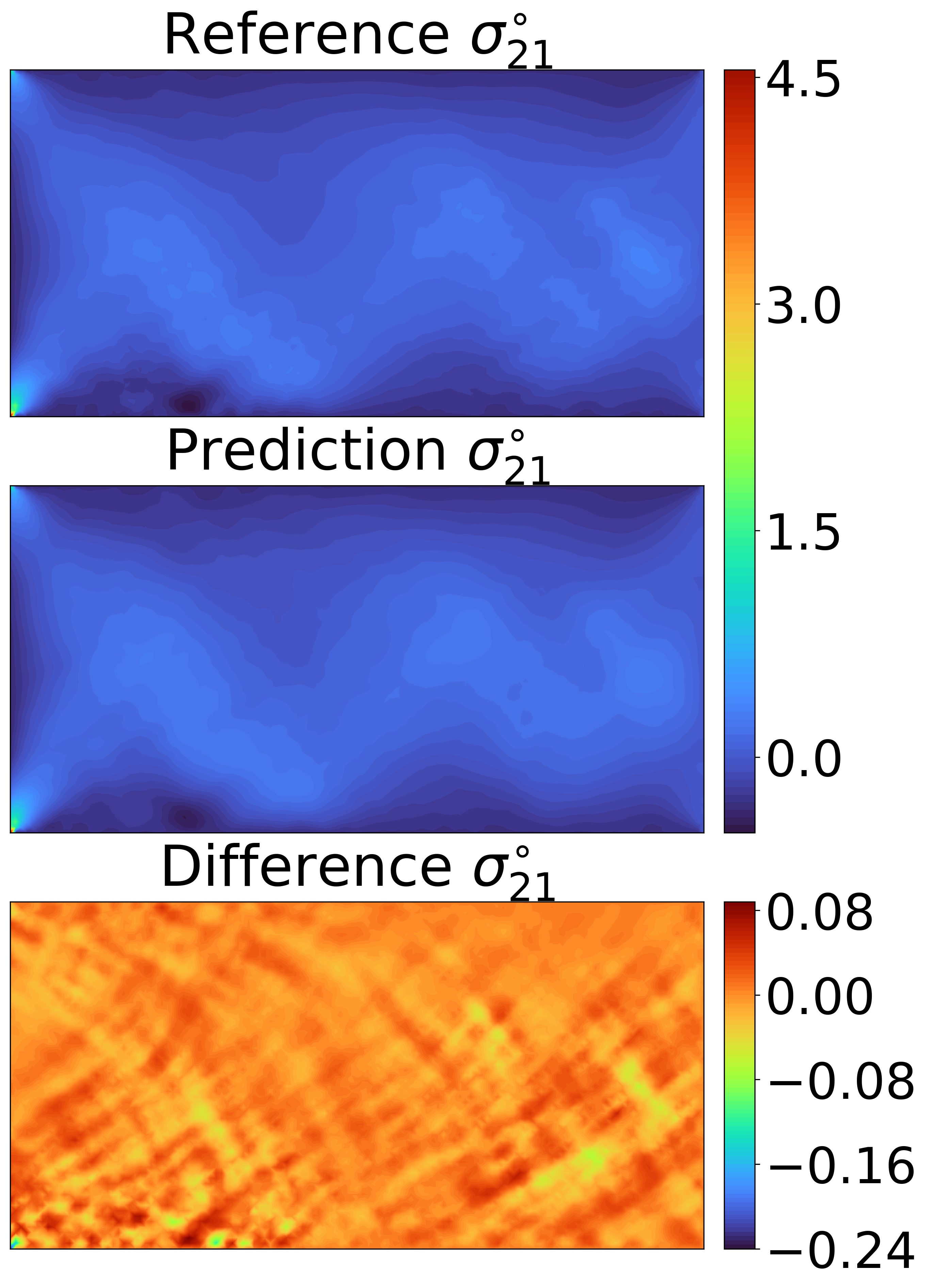}
    \includegraphics[width=0.23\linewidth, valign=t]{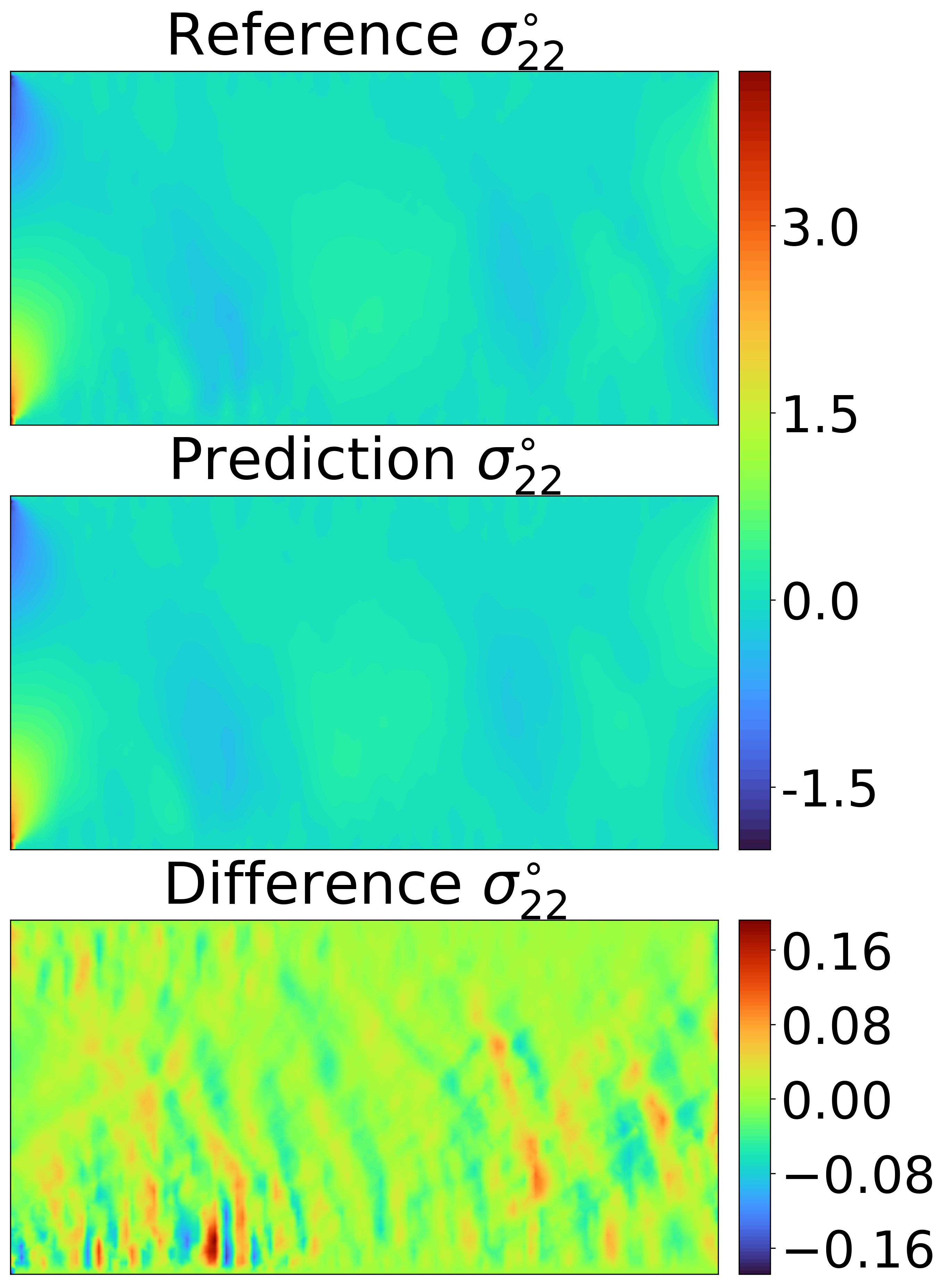}
    \caption{Reference-prediction-difference (top-middle-bottom) of $\sigma_{11}, \sigma_{12}, \sigma_{21}$ and $\sigma_{22}$ (from left to right) [linear elasticity].}
    \label{fig:ref_pred_diff_elasticity_sigma}
\end{figure}

\end{document}